\newcommand{\vol}{{\rm vol}}
\newcommand{\dvol}{{\rm dvol}}
\newcommand{\xk}{\mathfrak{x}}
\newcommand{\Exp}{\textnormal{Exp}}
\newcommand{\Log}{\textnormal{Log}}
\newcommand{\Cut}{{\rm Cut}}
\newcommand{\Ucut}{{\rm Ucut}}
\newcommand{\seg}{{\rm seg}}
\newcommand{\Iso}{{\rm Iso}}
\newcommand{\inj}{{\rm inj}}
\newcommand{\HH}{\mathbb{H}}
\newcommand{\TT}{\mathbb{T}}
\newcommand{\Ad}{{\rm Ad}}
\newcommand{\Hess}{{\rm Hess}}
\newcommand{\sn}{{\rm sn}}
\newcommand{\MLE}{{\rm MLE}}
\newcommand{\Frechet}{{Fr{$\rm \acute{e}$}chet$\ $}}
\newcommand{\FM}{{\rm FM}}
\newcommand{\Lap}{{\rm Lap}}
\newcommand{\med}{{\rm med}}
\newcommand{\vMF}{{\rm vMF}}
\newcommand{\KL}{{\rm KL}}
\newcommand{\Img}{{\rm Img}}
\title{Riemannian radial distributions on Riemannian symmetric spaces: Optimal rates of convergence for parameter estimation}
\author{Hengchao Chen\footnote{Department of Statistical Sciences, University of Toronto, hengchao.chen@mail.utoronto.ca.}}
\date{\today}
\begin{document}

\maketitle
\doparttoc % Tell to minitoc to generate a toc for the parts
\faketableofcontents % Run a fake tableofcontents command for the partocs
%\part{} % Start the document part
%\parttoc

\begin{abstract}
    %\noindent 
    Manifold data analysis is challenging due to the lack of parametric distributions on manifolds. To address this, we introduce a series of Riemannian radial distributions on Riemannian symmetric spaces. By utilizing the symmetry, we show that for many Riemannian radial distributions, the Riemannian $L^p$ center of mass is uniquely given by the location parameter, and the maximum likelihood estimator (MLE) of this parameter is given by an M-estimator. Therefore, these parametric distributions provide a promising tool for statistical modeling and algorithmic design. 
    
    In addition, our paper develops a novel theory for parameter estimation and minimax optimality by integrating statistics, Riemannian geometry, and Lie theory. We demonstrate that the  MLE achieves a convergence rate of root-$n$ up to logarithmic terms, where the rate is quantified by both the hellinger distance between distributions and geodesic distance between parameters. Then we derive a root-$n$ minimax lower bound for the parameter estimation rate, demonstrating the optimality of the MLE. Our minimax analysis is limited to the case of simply connected Riemannian symmetric spaces for technical reasons, but is still applicable to numerous applications. Finally, we extend our studies to Riemannian radial distributions with an unknown temperature parameter, and establish the convergence rate of the MLE. We also derive the model complexity of von Mises-Fisher distributions on spheres and discuss the effects of geometry in statistical estimation. 
\end{abstract}

\section{Introduction}\label{sec:1}

Manifold data appear in various forms across numerous applications, such as spherical data in bioinformatics \citep{banerjee2005clustering,mardia2009directional}, hyperbolic data in network analysis \citep{krioukov2010hyperbolic}, shape data in medical imaging \citep{kendall1984shape}, and covariance matrices in~brain computer interface~\citep{barachant2011multiclass}. The analysis of these geometric objects, known as manifold data analysis, poses a significant challenge due to their inherent nonlinearity, which limits the applicability of traditional Euclidean methods. In response, researchers have devised novel methods using tools from Riemannian geometry. Notable examples include \Frechet mean \citep{bhattacharya2003large}, geodesic regression \citep{cornea2017regression,fletcher2013geodesic}, principal geodesic analysis \citep{fletcher2004principal},  mixture models \citep{banerjee2005clustering}, and principal nested spheres \citep{jung2012analysis}. Moreover, numerous metric space data analysis methods, which apply to manifold data, are also developed by researchers. Notable examples include \Frechet regression \citep{petersen2019frechet},  \Frechet change point detection \citep{dubey2020frechet}, and \Frechet analysis of variance \citep{dubey2019frechet}.

However, due to the lack of parametric distributions on manifolds, most manifold data analysis methods are nonparametric \citep{patrangenaru2016nonparametric,bhattacharya2012nonparametric}. Such limitation has substantially hindered the development of statistical modeling and algorithm design. In response, this paper introduces a family of Riemannian radial distributions on Riemannian homogeneous spaces\footnote{Examples include Euclidean spaces, spheres, hyperbolic spaces, real and complex projective spaces, tori, the spaces of symmetric positive definite matrices, Grassmann manifolds, Stiefel manifolds, and more.}, which generalize von Mises-Fisher distributions on spheres~\citep{mardia2009directional} and Riemannian Gaussian distributions on Riemannian symmetric spaces~\citep{said2017gaussian}. Similar to radial distributions on Euclidean spaces, the Riemannian radial distribution on a Riemannian homogeneous space $\cM$ posits a density function that is proportional to:
\$
f(x;\alpha,\phi)\propto\tilde f(x;\alpha,\phi)=e^{-\phi(d_g(x,\alpha))},\quad x\in\cM,
\$
where $\phi$ is a monotone increasing function, $\alpha\in\cM$ is the location parameter, and $d_g$ is the geodesic distance in $\cM$. By applying Riemannian geometry, we can demonstrate that this parametric distribution possesses multiple favorable properties.
\begin{itemize}
    \item First, the density $f(x;\alpha,\phi)$ depends only  on the distance $d_g(x,\alpha)$ and becomes smaller when $x$ moves away from $\alpha$. Thus, it is natural to model random noise using this distribution.  
    \item Second, by using the homogeneity of $\cM$, we can show that the normalizing constant of $f$ is a constant independent of the parameter $\alpha$. Consequently, the maximum likelihood estimator (MLE) of $\alpha$, based on $n$ independent samples $\{x_i\}_{i=1}^n$ drawn from $f$,  is given by the following M-estimator:
    \$
    \hat\alpha^{\MLE}=\argmin_{\alpha}\sum_{i=1}^n\phi(d_g(x_i,\alpha)).
    \$
    This estimator can be efficiently solved by Riemannian optimization methods. 
    
    \item Finally, when $\cM$ is a Riemannian symmetric space and $\phi$ is strictly increasing, we can show that the Riemannian  $L^p$ center of mass \citep{afsari2011riemannian} of the distribution $f$ is uniquely given by the location parameter~$\alpha$. This non-trivial property characterizes the symmetry of  Riemannian radial distributions on Riemannian symmetric spaces. 
\end{itemize}
These properties make this parametric family of distributions a promising choice for statistical modeling and algorithm design, such as building regression or mixture models, and designing differential privacy mechanisms.

%Due 
%\$
%Z(\alpha,\phi)=\int_{\cM}\tilde f(x;\alpha,\phi)\dvol(x),
%\$ 

%Moreover, when 
%These favorable properties make such parametric family of distributions a promising candidate for statistical modeling, manifold optimization, and differential privacy applications.  

Besides introducing Riemannian radial distributions, our paper also develops a novel theory to address parameter estimation and minimax optimality\footnote{Such theory does not exist in the literature even for Riemannian Gaussian distributions and  von Mises-Fisher distributions. Thus, as special cases, these gaps are filled by our theory.}. First, we derive the convergence rate of the MLE of $\alpha$ by using empirical process theory and Riemannian geometry. Unlike most existing works that impose entropy conditions~\citep{petersen2019frechet,dubey2020frechet}, our paper imposes bounded parameters conditions, derives entropy estimates using the volume comparison theorem, and then deduces the convergence rate of the MLE by applying empirical process theory and examining the parameter identifiability. We show that the convergence rate of the MLE is root-$n$ up to logarithmic terms for a broad range of Riemannian radial distributions. Such rate matches the Euclidean cases. 

To assess the optimality of the MLE's convergence rate, we establish a minimax lower bound~for parameter estimation\footnote{This analysis is restricted to simply connected Riemannian symmetric spaces for technical reasons.}. For a variety of Riemannian radial distributions, 
such as Riemannian Gaussian distributions and  von Mises-Fisher distributions, 
the derived minimax lower bound is root-$n$, confirming the MLE's optimality up to logarithmic terms. Our analysis integrates the Fano's method and   geometric tools. Specifically, for simply connected noncompact Riemannian symmetric spaces, we will use the Hessian comparison theorem from Riemannian geometry, and for simply connected compact Riemannian symmetric spaces, we will use an integral formula and a Hessian formula for the squared distance function from Lie theory.

As an extension, our paper also studies Riemannian radial distributions with an unknown temperature parameter and establishes the convergence rate of the MLE, which is  root-$n$ in a variety of contexts. Additionally, we investigate von Mises-Fisher distributions on the unit $m$-sphere $\SSS^m$ and establish its optimal parameter estimation rate as $\Theta(\frac{m}{\sqrt{n}})$, where $\Theta(\cdot)$ omits constants independent of $m$ and $n$, and the numerator $m$ in the rate indicates the model complexity.

\subsection{Organization}

The rest of  this paper is structured as follows. We review differential geometry and empirical process theory in Section \ref{sec:2}. Then we introduce Riemannian radial distributions on Riemannian homogeneous spaces, and discuss their basic properties in Section \ref{sec:3}. In Section \ref{sec:4}, we study the MLE of the parameter in Riemannian radial distributions, and derive the rates of convergence of such estimator, measured by both the hellinger distance between distributions and the geodesic distance between parameters. In Section \ref{sec:5}, we derive the minimax lower bounds for the parameter estimation rates. Section \ref{sec:6-temperature} examines Riemannian radial distributions with an unknown temperature, and establishes the convergence rate of the MLE. Section \ref{sec:6} delves into the model complexity of a specific class of Riemannian radial distributions: the von Mises-Fisher distributions on spheres. 
%Some numerical studies are provided in Section \ref{sec:numerical} and c
Concluding remarks are given in Section \ref{sec:conclusion}. Proofs are provided in the Appendix. 
%In this section, we will restrict ourselves to Hadamard Riemannian symmetric spaces and simply connected compact Riemannian symmetric spaces. 

%\newpage 

\section{Preliminary}\label{sec:2}

\subsection{Differential geometry}

In this section, we introduce basic concepts of differential geometry. Familiarity with smooth manifolds is assumed in the subsequent sections. For those less acquainted with the topic, an introduction to smooth manifolds is available in Appendix \ref{sec:smoothmanifolds} and \citet{lee2012introduction}. In the sequel, we will first review Riemannian manifolds in Section \ref{sec:211} and then review integration on manifolds in Section \ref{sec:212}. Following that, we will review Riemannian symmetric spaces and discuss basic properties and examples. Some details will be given in the Appendix. In addition, for a more complete treatment of the subject, we refer readers to \citet{do1992riemannian,helgason2001differential,petersen2006riemannian,cheeger1975comparison}. 

\subsubsection{Riemannian manifolds}\label{sec:211}

A Riemannian manifold $(\cM,g^{\cM})$ is a smooth manifold $\cM$ endowed with a Riemannian metric $g^{\cM}$. The metric is a smoothly varying family of inner products $g^{\cM}_x: T_{x}\cM\times T_{x}\cM\to\RR$, where $T_{x}\cM$ is the tangent space of $\cM$ at $x$. The metric enables us to measure geometric quantities on $\cM$, including curve lengths, distances, volumes, and curvatures. 

Let $(\cM,g^{\cM})$ be a Riemannian manifold. An isometry of $\cM$ is a diffeomorphism $F:\cM\to \cM$ that preserves the Riemannian metric, i.e., $g^{\cM}_{F(x)}(dF_x(v),dF_x(w))=g^{\cM}_x(v,w)$ for all $v,w\in T_{x}\cM$ and $x\in\cM$, where $dF_x$ is the differential of $F$ at $x$. By definition, an isometry of $\cM$ preserves all geometric quantities determined by the Riemannian metric. We will denote the set of all isometries of $\cM$ by ${\rm Iso}(\cM)$.%By definition, all quantities determined by the Riemannian metric are preserved under an isometry. The set of all isometries of $\cM$ is denoted by ${\rm Iso}(\cM)$. 

%Let $(\cM,g^{\cM})$ be a Riemannian manifold. 
A mapping $\gamma:[a,b]\to\cM$ is a piecewise smooth curve in $\cM$ if $\gamma$ is continuous and there is a partition $a=a_1<\ldots<a_k=b$ of $[a,b]$ such that $\gamma|_{[a_i,a_{i+1}]}$ is smooth for $i=1,\ldots,k-1$. Given a piecewise smooth curve $\gamma$ in $\cM$, its length is measured by integrating the norm of the tangent vectors along the curve. For any two points $x,y\in\cM$, the distance $d_g(x,y)$ is given by the minimum length over all possible piecewise smooth curves between $x$ and $y$. This $d_{g}$ defines a metric on $\cM$ and $(\cM,d_g)$ forms a metric space. We say the manifold $(\cM,g^{\cM})$ is complete if the metric space $(\cM,d_g)$ is complete. 

%Let $(\cM,g^{\cM})$ be a Riemannian manifold. 
A curve in $\cM$ is a geodesic if it locally minimizes the length between points. The Hopf-Rinow theorem states that a connected manifold $\cM$ is complete if and only if all geodesics extend indefinitely. Moreover,  in a connected complete manifold, any two points are connected by a length-minimizing geodesic. 

For any point $x\in\cM$ and vector $v\in T_{x}\cM$, there is a unique geodesic $\gamma_{v}(t)$ with $x$ as its initial location and $v$ as its initial velocity. The exponential map ${\rm Exp}_{x}:T_{x}\cM\to\cM$ is defined by ${\rm Exp}_{x}(v)=\gamma_{v}(1)$ when $\gamma_{v}(1)$ exists. Suppose $\cM$ is a connected complete manifold. Then the exponential map is well-defined on the whole tangent space $T_{x}\cM$. 
The segment domain of ${\rm Exp}_x$ is defined as
\$
{\rm seg}(x)=\left\{v\in T_{x}\cM\mid
d_g(x,\gamma_{v}(t))=t\norm{v},\forall t\in[0,1]
\right\},
\$
where $\norm{v}$ is the norm of the vector $v$. The Hopf-Rinow theorem implies that $\cM={\rm Exp}_{x}({\rm seg}(x))$. The interior of ${\rm seg}(x)$,
\$
{\rm seg}^0(x)=\left\{ sv\in T_{x}\cM\mid s\in[0,1),v\in {\rm seg}(x)\right\},
\$
is an open star-shaped domain in $T_{x}\cM$. On ${\rm seg}^0(x)$, the exponential map $\Exp_x$ is a diffeomorphism and its inverse is called the logarithm map, denoted by $\Log_{x}$. The set ${\rm seg}(x)- {\rm seg}^0(x)$ is called the cut locus of $x$ in $T_{x}\cM$ and the set ${\rm Cut}(x)\coloneqq\cM-\Exp_x({\rm seg}^0(x))$ is called the  cut locus of $x$ in $\cM$. Since $\cM$ is complete, ${\rm Cut}(x)$ is closed with measure zero
%\footnote{See Section \ref{sec:vol} for the definition of the volume measure on a Riemannian manifold.}
in $\cM$, meaning that $\Exp({\rm seg}^0(x))$ covers $\cM$ except for a null set. The distance function $r_x(y)=d_g(x,y)$ is smooth at $y$ if and only if $y\notin \Cut(x)\cup\{x\}$. The injectivity radius at $x$ is defined as 
\$
{\rm inj}(x)\coloneqq d_g(x,{\rm Cut}(x))=\min_{y\in {\rm Cut}(x)}d_g(x,y).
\$
The injectivity radius of $\cM$ is defined as ${\rm inj}(\cM)=\inf_{x\in\cM}{\rm inj}(x)$.
%Define ${\rm inj}(x)\coloneqq d_g(x,{\rm Cut}(x))=\min_{y\in {\rm Cut}(x)}d_g(x,y)$ as the injectivity radius at $x$. Define ${\rm inj}(\cM)=\inf_{x\in\cM}{\rm inj}(x)$ as the injectivity radius of $\cM$.

\subsubsection{Integration on manifolds}\label{sec:212}

To introduce integration on manifolds, we first need the concept of volume density. Let $(\cM,g^{\cM})$ be an $m$-dimensional Riemannian manifold, and $A$ be a compact set contained in a local chart $(U,\varphi)$ with the coordinate $\varphi(y)=(\xk_{1}(y),\ldots,\xk_{m}(y))$. The volume of $A$ is defined to be
\$
{\rm vol}(A)=\int_{\varphi(A)}\sqrt{G}\circ\varphi^{-1}d\xk^1\cdots d\xk^m,
\$
where $G={\rm det}(g_{ij})$, $g_{ij}=g^{\cM}(\frac{\partial}{\partial \xk_i},\frac{\partial}{\partial \xk_j})$, and $d\xk^1\cdots d\xk^m$ is the Lebesgue measure on $\RR^{m}$. This definition is independent of the choice of the coordinate chart. To define the volume of a general set $A$ which needs not to be in one coordinate chart, we use the partition of unity argument. More precisely, we pick a locally finite family of coordinate charts $(U_{\alpha},\varphi_{\alpha},\xk_{\alpha,1},\ldots,\xk_{\alpha,m})$ with $\bigcup_{\alpha} U_{\alpha}=\cM$ and a partition of unity $\{\rho_{\alpha}\}$ subordinate to this family of  charts. We set 
\$
{\rm vol}(A)=\sum_{\alpha}\int_{\varphi_{\alpha}(A\cap U_{\alpha})}\rho_{\alpha}\sqrt{G_{\alpha}}\circ\varphi_{\alpha}^{-1}d\xk^1_{\alpha}\cdots d\xk^m_{\alpha},
\$
as long as each integral in the sum exists. This leads to the following definition.

\begin{definition}[Volume density]\label{def:dvol}
    The Riemannian volume density $\dvol$ on $(\cM,g^{\cM})$ is defined as
    \$
    \dvol=\sum_{\alpha}\rho_{\alpha}\sqrt{G_{\alpha}}\circ\varphi_{\alpha}^{-1}d\xk^1_{\alpha}\cdots d\xk^m_{\alpha}.
    \$
\end{definition}

\begin{remark}
    Definition \ref{def:dvol} is independent of the choice of the family of local charts $\{(U_\alpha,\varphi_\alpha)\}$ and the choice of the partition of unity $\{\rho_\alpha\}$.
\end{remark}

The Riemannian volume density $\dvol$ enables us to integrate functions on $\cM$. Let $C_{c}^0(\cM)$ be the set of compactly supported continuous functions on $\cM$. For any $f\in C_{c}^{0}(\cM)$, its integral is given by
\$
\int_{\cM}f\dvol=\sum_{\alpha}\int_{U_{\alpha}}\rho_{\alpha}f\sqrt{G_{\alpha}}\circ\varphi_{\alpha}^{-1}d\xk^1_{\alpha}\cdots d\xk^m_{\alpha}.
\$
Let $C_{c}^{\infty}(\cM)$ be the set of compactly supported differentiable functions on $\cM$. For any $1\leq p< \infty$, one can define the $L^{p}$ norm and the $L^{\infty}$ norm on $C_c^{\infty}(\cM)$ via
\$
\norm{f}_{L^{p}}=\left(\int_{\cM}|f|^p\dvol\right)^{1/p}, \quad \norm{f}_{\infty}=\sup_x|f(x)|,
\$
The completion of $C_{c}^{\infty}(\cM)$ under the $L^p$ norm is the $L^p$ space, $L^{p}(\cM)$. Similarly, one can define the $L^{\infty}$ space, $L^{\infty}(\cM)$. A function $f:\cM\to\RR$ is said to be $L^p$ integrable or bounded if its $L^p$ norm or $L^{\infty}$ norm is finite. Given two functions $f,h:\cM\to\RR$, the $L^p$ distance and $L^\infty$ distance between them are defined as $d_p(f,h)\coloneqq\norm{f-h}_{L^p}$ and $d_{\infty}(f,h)\coloneqq\norm{f-h}_{\infty}$, respectively. A density function on $\cM$ is a non-negative function with a unit $L^1$ norm. Given two density functions $f,h:\cM\to\RR$, the hellinger distance between them is given by
\$
d_h(f,h)\coloneqq\norm{\sqrt{f}-\sqrt{h}}_{L^2}=\left(\int_{\cM}(\sqrt{f}-\sqrt{h})^2\dvol\right)^{1/2}.
\$
The Kullbeck-Leibler (KL) divergence between two density functions $f$ and $h$ is given by
\$
D_{\KL}(f\ \|\ h)\coloneqq \int_{\cM}\ln\left(\frac{f}{h}\right)f\dvol.
\$
Our paper will use these distances and divergences in our statistical analysis on manifolds.

To evaluate integrals on manifolds, one can use the coordinate representation over a local chart. One typical choice is to use the normal coordinate system determined by the exponential map. Let $(\cM,g^{\cM})$ be an $m$-dimensional complete Riemannian manifold and $x\in\cM$. The normal coordinate chart at $x$ is given by $(\Exp_x({\rm seg}^0(x)),\Log_x)$, where ${\rm seg}^0(x)$ is the interior of the segment domain of $\Exp_x$. On the normal coordinate chart, the volume density $\dvol$ can be expressed as follows:
\$
\dvol=\sqrt{G}\circ\Exp_xd\xk^1\cdots d\xk^m,
\$
where $G={\rm det}(g_{ij})$, $g_{ij}=g^{\cM}(\frac{\partial}{\partial \xk_i},\frac{\partial}{\partial \xk_j})$, 
%$\{\frac{\partial}{\partial \xk_i}\}_{i=1}^m$ are the basis vectors, 
and $d\xk^1\cdots d\xk^m$ is the Lebesgue measure on $\RR^{m}$. Using the polar coordinates and replacing $(\xk^1,\ldots,\xk^m)$ by $(r,\Theta)$,
%rather than $\{\xk^i\}_{i=1}^m$, %we know 
%we have
%\$
%d\xk^1\cdots d\xk^m=r^{m-1}drd\Theta,
%\$
%where $d\Theta$ is the usual surface measure on the unit sphere $\SSS^{m-1}$. It follows that 
%there is 
one can rewrite $\dvol$ as
\#
\dvol=\lambda(r,\Theta)drd\Theta,\quad\forall (r,\Theta)\in {\rm seg}^0(x),\label{equ:lambda}
\#
where $\lambda(r,\Theta)=r^{m-1}\sqrt{G}\circ\Exp_x$ is defined over ${\rm seg}^0(x)$ and $d\Theta$ is the usual surface measure on the unit sphere $\SSS^{m-1}$. 
For convenience, we set $\lambda(r,\Theta)=0$ outside ${\rm seg}^0(x)$. Since ${\rm Cut}(x)$ is of measure zero in $\cM$, for any real-valued function $f$, we have
\#
\int_{\cM}f\dvol&=\int_{\Exp_x({\rm seg}^0(x))}f\dvol=\int_{{\rm seg^0(x)}}f^{\flat}\lambda(r,\Theta)drd\Theta=\int_{T_{x}\cM}f^{\flat}\lambda(r,\Theta)drd\Theta,\label{equ:integral}
\#
where $f^{\flat}=f\circ\Exp_x$. To evaluate the above integral, one can use Theorem \ref{thm:2.3} to replace $\lambda(r,\Theta)$ with simpler functions. %Theorem \ref{thm:a7} provides such a choice.

\begin{theorem}\label{thm:2.3}
    Let $(\cM,g^{\cM})$ be an $m$-dimensional complete Riemannian manifold. Suppose the sectional curvatures of $\cM$ lie within the interval $[\kappa_{\min},\kappa_{\max}]$. Let $\lambda(r,\Theta)$ be given by \eqref{equ:lambda}. Then for all $(r,\Theta)\in{\rm seg}^0(x)$, we have
    \$
    \sn_{\kappa_{\max}}^{m-1}(r)\leq\lambda(r,\Theta)\leq \sn_{\kappa_{\min}}^{m-1}(r),
    \$
    where $m$ is the dimension of $\cM$,  and 
    \begin{gather}\label{equ:sn}
    \sn_{\kappa}(r)=\left\{
    \begin{array}{lc}
        \frac{\sin(\sqrt{\kappa }r)}{\sqrt{\kappa}}\mathbbm{1}_{r\leq\frac{\pi}{\sqrt{\kappa}}}, & \textnormal{if }\kappa>0, \\
        r,& \textnormal{if }\kappa=0,   \\
        \frac{\sinh(\sqrt{-\kappa }r)}{\sqrt{-\kappa}}, &\textnormal{if }\kappa<0. 
    \end{array}
    \right.
    \end{gather}
    Since $\lambda(r,\Theta)=0$ outside ${\rm seg}^0(x)$, the inequality $\lambda(r,\Theta)\leq \sn_{\kappa_{\min}}^{m-1}(r)$ holds for all $(r,\Theta)$. 

\end{theorem}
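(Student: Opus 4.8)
The plan is to reduce the assertion to the classical Bishop--G\"unther volume comparison, which I would establish by the Jacobi-field/Riccati method. Fix a unit vector $\Theta\in T_x\cM$ and let $\gamma(s)=\Exp_x(s\Theta)$ be the unit-speed geodesic with $\gamma(0)=x$ and $\gamma'(0)=\Theta$; completeness makes $\gamma$ defined for all $s\ge 0$. Pick an orthonormal basis $e_1,\dots,e_{m-1}$ of $\Theta^{\perp}\subset T_x\cM$ and parallel transport it along $\gamma$ to a parallel orthonormal frame $E_1(s),\dots,E_{m-1}(s)$ of $\gamma'(s)^{\perp}$. Let $\mathcal{A}(s)$ be the $(m-1)\times(m-1)$ matrix-valued solution of the Jacobi equation $\mathcal{A}''(s)+R(s)\mathcal{A}(s)=0$, $\mathcal{A}(0)=0$, $\mathcal{A}'(0)=I$, where $R(s)$ is the symmetric matrix $R_{ij}(s)=g^{\cM}\!\bigl(\mathrm{Rm}(E_i(s),\gamma'(s))\gamma'(s),E_j(s)\bigr)$; the sectional-curvature hypothesis is exactly $\kappa_{\min}I\preceq R(s)\preceq\kappa_{\max}I$ for all $s$. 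A standard computation in geodesic polar coordinates (the Gauss lemma together with $J_i(s)=(d\Exp_x)_{s\Theta}(se_i)$) identifies the density factor in \eqref{equ:lambda} as $\lambda(s,\Theta)=\det\mathcal{A}(s)$ whenever $s\Theta\in\seg^0(x)$. On $\seg^0(x)$ the map $\Exp_x$ is a diffeomorphism, so $(d\Exp_x)_{s\Theta}$ is invertible there; by the star-shapedness of $\seg^0(x)$ this gives that $\mathcal{A}(\sigma)$ is invertible for every $\sigma\in(0,s]$ and, by continuity from $\sigma=0$, that $\det\mathcal{A}(\sigma)>0$.

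Next I would reduce the matrix problem to a scalar comparison. Put $U(\sigma)=\mathcal{A}'(\sigma)\mathcal{A}(\sigma)^{-1}$; on $(0,s]$ this is symmetric, solves the matrix Riccati equation $U'+U^2+R=0$, and has the expansion $U(\sigma)=\tfrac1\sigma I+O(\sigma)$ as $\sigma\to0^+$. Since $\tfrac{d}{d\sigma}\log\det\mathcal{A}(\sigma)=\mathrm{tr}\,U(\sigma)$, the function $y(s):=\lambda(s,\Theta)$ satisfies $(\log y)'=\mathrm{tr}\,U$. For the constant-curvature model the analogue of $\mathcal{A}$ is $\sn_\kappa(\sigma)I$, the analogue of $U$ is $c_\kappa(\sigma)I$ with $c_\kappa:=\sn_\kappa'/\sn_\kappa$, and $c_\kappa$ is the scalar solution of $u'+u^2+\kappa=0$ with $u(\sigma)=\tfrac1\sigma+O(\sigma)$; moreover $(\log \sn_\kappa^{\,m-1})'=(m-1)c_\kappa$. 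So it suffices to sandwich $\mathrm{tr}\,U(\sigma)$ between $(m-1)c_{\kappa_{\max}}(\sigma)$ and $(m-1)c_{\kappa_{\min}}(\sigma)$.

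The core of the argument is the matrix Riccati comparison: if $R(\sigma)\preceq\kappa I$ on $(0,s]$ then $U(\sigma)\succeq c_\kappa(\sigma)I$, and if $R(\sigma)\succeq\kappa I$ then $U(\sigma)\preceq c_\kappa(\sigma)I$. For the first, set $W=U-c_\kappa I$ and compute
\[
W'=-\tfrac12\bigl(W(U+c_\kappa I)+(U+c_\kappa I)W\bigr)-(R-\kappa I),\qquad R-\kappa I\preceq 0,\qquad W(\sigma)\to 0\ \ (\sigma\to0^+).
\]
Conjugating by the fundamental solution $\Phi$ of $\Phi'=\tfrac12(U+c_\kappa I)\Phi$ turns this into $\tfrac{d}{d\sigma}\bigl(\Phi^{\top}W\Phi\bigr)=-\Phi^{\top}(R-\kappa I)\Phi\succeq 0$; since $\Phi^{\top}W\Phi\to 0$ as $\sigma\to0^+$, this forces $\Phi^{\top}W\Phi\succeq 0$, hence $W\succeq 0$. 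The case $R\succeq\kappa I$ is identical with the sign of $R-\kappa I$ reversed. Taking traces with $\kappa=\kappa_{\max}$ and $\kappa=\kappa_{\min}$ gives $(m-1)c_{\kappa_{\max}}(\sigma)\le\mathrm{tr}\,U(\sigma)\le(m-1)c_{\kappa_{\min}}(\sigma)$ on $(0,s]$.

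Combining with $(\log y)'=\mathrm{tr}\,U$, we get $\bigl(\log(y/\sn_{\kappa_{\max}}^{\,m-1})\bigr)'\ge 0$ and $\bigl(\log(y/\sn_{\kappa_{\min}}^{\,m-1})\bigr)'\le 0$, and since $y(\sigma)\sim\sigma^{m-1}\sim\sn_\kappa^{\,m-1}(\sigma)$ as $\sigma\to0^+$, integrating from $0$ to $s$ yields $\sn_{\kappa_{\max}}^{\,m-1}(s)\le\lambda(s,\Theta)\le\sn_{\kappa_{\min}}^{\,m-1}(s)$ for every $s\Theta\in\seg^0(x)$. When $\kappa_{\max}>0$ and $s\ge\pi/\sqrt{\kappa_{\max}}$ the lower bound is trivial, since then $\sn_{\kappa_{\max}}(s)=0<\lambda(s,\Theta)$; so the integration is only needed for $s<\pi/\sqrt{\kappa_{\max}}$, where $c_{\kappa_{\max}}$ is finite. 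The last sentence of the theorem then follows from the convention $\lambda\equiv 0$ off $\seg^0(x)$ together with $\sn_{\kappa_{\min}}^{\,m-1}\ge 0$. The one delicate point --- and the step I expect to require the most care --- is the behaviour at the singular endpoint $\sigma=0$, where both $U$ and $c_\kappa$ blow up like $1/\sigma$; this is handled cleanly by running the Riccati comparison on $[\varepsilon,s]$ and letting $\varepsilon\to 0$, using $U(\varepsilon)-c_\kappa(\varepsilon)I\to 0$, which also legitimizes the conjugation argument above. Everything else is routine ODE estimation together with the standard polar-coordinate formula for $\dvol$.
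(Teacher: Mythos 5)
Your proposal is correct, but it is a different kind of argument from what the paper does: the paper's entire proof of Theorem \ref{thm:2.3} is a citation of Theorem 27, Chapter 6 in Petersen (2006), whereas you reconstruct that comparison from scratch via the Jacobi-matrix/Riccati method. Your reduction is the standard one and each step checks out: the identification $\lambda(r,\Theta)=\det\mathcal{A}(r)$ on $\seg^0(x)$, the positivity of $\det\mathcal{A}$ there (no conjugate points before the cut point), the symmetry of $U=\mathcal{A}'\mathcal{A}^{-1}$, the Riccati equation with the $\tfrac1\sigma I$ asymptotics, the Eschenburg-style conjugation showing $R\preceq\kappa I\Rightarrow U\succeq c_\kappa I$ and $R\succeq\kappa I\Rightarrow U\preceq c_\kappa I$ (the signs are right: larger curvature gives smaller $U$), and the integration of $(\log\det\mathcal{A})'=\mathrm{tr}\,U$ against $(m-1)c_{\kappa_{\max}}$ and $(m-1)c_{\kappa_{\min}}$ with matching asymptotics at $0$. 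Two small points deserve one more line each if you write this up: (i) in the $\varepsilon\to0$ limit you need not only $W(\varepsilon)\to0$ but a bound on $\|\Phi_\varepsilon(s)^{-1}\|$ uniform in $\varepsilon$; this follows because the coefficient $\tfrac12(U+c_\kappa I)\sim\tfrac1\sigma I$ is positive near $0$, so solutions of $\Phi'=\tfrac12(U+c_\kappa I)\Phi$ do not contract there, and (ii) when $\kappa_{\min}>0$ the upper bound as stated integrates only up to $\pi/\sqrt{\kappa_{\min}}$; the case $s\ge\pi/\sqrt{\kappa_{\min}}$ is vacuous inside $\seg^0(x)$ (Bonnet--Myers, or directly because your comparison forces $\det\mathcal{A}\to0$ by that radius), which is exactly what the indicator in \eqref{equ:sn} encodes, mirroring the $\kappa_{\max}>0$ case you do treat explicitly. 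In exchange for its length, your argument makes the proof self-contained and makes visible where the curvature bounds and the structure of $\seg^0(x)$ enter, while the paper's citation simply outsources all of this to the textbook whose proof is essentially the one you wrote.
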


\begin{proof}
    This immediately follows from Theorem 27, Chapter 6 in \citet{petersen2006riemannian}.
\end{proof}

Notably, when $\cM$ is of constant curvature $\kappa$, then $\lambda(r,\Theta)=\sn_{\kappa}^{m-1}(r)$ for all $(r,\Theta)\in\seg^0(x)$. Consequently, Theorem \ref{thm:2.3} essentially gives a volume comparison between a general Riemannian manifold and manifolds of constant curvatures. This theorem is useful throughout this paper.

\subsubsection{Riemannian symmetric spaces}\label{sec:213}

In this section, we will delve into a benign class of Riemannian manifolds called Riemannian symmetric spaces. A Riemannian manifold $(\cM,g^\cM)$ is said to be symmetric if for each $x\in\cM$, there is an isometry $F$ of $\cM$ fixing $x$ and acting on the tangent space $T_x\cM$ as minus the identity. Such isometry $F$ is called an involution as its square $F\circ F$ is the identity. Riemannian symmetric spaces are  homogeneous, thus satisfying all properties in Proposition \ref{prop:homogeneous}. 
Note that a Riemannian manifold $\cM$ is homogeneous if for any $x,y\in\cM$, there is an isometry $F\in \Iso(\cM)$ such that $F(x)=y$.
%Riemannian symmetric spaces admit  benign properties. First of all, 
%Riemannian symmetric spaces are all homogeneous. 
%Note that a Riemannian manifold $(\cM,g^{\cM})$ is said to be homogeneous if for any $x,y\in\cM$, there is an $F\in \Iso(\cM)$ such that $F(x)=y$. 
%Proposition \ref{prop:homogeneous} lists some benign properties of a Riemannian homogeneous space.  

%admit benign properties. First, Riemannian symmetric spaces 

%A Riemannian manifold $(\cM,g^{\cM})$ is homogeneous if 

%All Riemannian symmetric spaces are Riemannian homogeneous spaces. %Note that a Riemannian manifold $(\cM,g^{\cM})$ is homogeneous if its isometric group acts transitively, that is, for any $x,y\in\cM$, there is an $F\in \Iso(\cM)$ such that $F(x)=y$.  

% and thus satisfy all the properties in Proposition \ref{prop:homogeneous}. Recall that a Riemannian manifold  

\begin{proposition}\label{prop:homogeneous}
    A Riemannian homogeneous space $(\cM,g^{\cM})$ satisfies the following properties.
    \begin{itemize}
        \item $\cM$ is a complete Riemannian manifold. 
        \item The sectional curvatures of $\cM$ lie within a bounded interval $[\kappa_{\min},\kappa_{\max}]$.
        \item The injectivity radius $\inj(\cM)$ of $\cM$ is larger than zero.

        \item There exists a constant $c_0>0$ such that for any $x,y\in\cM$ with $d_g(x,y)<c_0$, $x$ and $y$ are connected by a unique length-minimizing geodesic.

        \item Let $\dvol$ be the volume density on $\cM$ and $f\in L^1(\cM)$ be an integrable function. Then
        \#\label{equ:homo}
        \int_{\cM}f\dvol=\int_{\cM}f\circ F\dvol,\quad \forall F\in\Iso(\cM).
        \#
    \end{itemize}
\end{proposition}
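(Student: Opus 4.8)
The plan is to prove the five bullets essentially independently, exploiting one unifying principle: an isometry preserves every quantity determined by the metric, so homogeneity forces all pointwise geometric invariants to be globally constant, reducing most of the claims to a standard local fact established at a single point of an arbitrary Riemannian manifold. Only completeness and the integration identity require a separate (but routine) argument.

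For completeness I would argue via geodesic extendibility and Hopf--Rinow. Fix a base point $p\in\cM$. Since $\Exp_p$ is defined on an open neighborhood of the origin in $T_p\cM$, there is $\delta>0$ with $\Exp_p$ defined on $B(0,\delta)\subset T_p\cM$. For any $q\in\cM$ pick $F\in\Iso(\cM)$ with $F(p)=q$; from $F\circ\Exp_p=\Exp_q\circ dF_p$ and the fact that $dF_p$ is a linear isometry, $\Exp_q$ is defined on $B(0,\delta)\subset T_q\cM$ as well. Hence \emph{every} geodesic can be prolonged by at least an additional length $\delta$, so a geodesic defined on a bounded interval always extends to a strictly larger one; therefore all geodesics are defined on $\RR$, and Hopf--Rinow yields both completeness and the existence of a length-minimizing geodesic between any two points. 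Next, the sectional curvature is a continuous function on the bundle of $2$-planes and is preserved by isometries; fixing $p$, the Grassmannian of $2$-planes in $T_p\cM$ is compact, so the sectional curvature attains a finite minimum $\kappa_{\min}$ and maximum $\kappa_{\max}$ there, and transporting an arbitrary $2$-plane at an arbitrary point to $p$ by an isometry shows $[\kappa_{\min},\kappa_{\max}]$ bounds the sectional curvature globally.

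For the injectivity radius, recall (as reviewed before the proposition) that $\Cut(x)$ is closed and does not contain $x$, so $\inj(x)=d_g(x,\Cut(x))>0$ for every $x$ (with the convention $+\infty$ when $\Cut(x)=\varnothing$); since $\inj(\cdot)$ is isometry-invariant it is constant on a homogeneous space, hence $\inj(\cM)=\inj(p)>0$. For the fourth bullet I would simply take $c_0=\inj(\cM)$. If $d_g(x,y)<\inj(\cM)\le\inj(x)=d_g(x,\Cut(x))$ then $y\notin\Cut(x)$, so $y=\Exp_x(v)$ for a unique $v\in\seg^0(x)$, on which $\Exp_x$ is a diffeomorphism; any minimizing geodesic from $x$ to $y$ is $\gamma_w$ for some $w\in\seg(x)$ with $\|w\|=d_g(x,y)<\inj(x)$, and if $w\in\seg(x)-\seg^0(x)$ then $\Exp_x(w)\in\Cut(x)$, contradicting $\|w\|<\inj(x)$; thus $w\in\seg^0(x)$ and injectivity of $\Exp_x$ there forces $w=v$, giving uniqueness. (Alternatively one invokes Whitehead's theorem that the convexity radius is positive, again made constant by homogeneity.)

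Finally, for the integration identity the point is that an isometry preserves the volume density. In a local chart the defining relation $g_{ij}(x)=g_{kl}(F(x))\,\partial_iF^k\,\partial_jF^l$ gives $G(x)=G(F(x))\,(\det dF_x)^2$, i.e. $F^{*}\dvol=\dvol$; the ordinary change-of-variables formula for the diffeomorphism $F$ then yields $\int_\cM f\circ F\,\dvol=\int_\cM f\,\dvol$, first for $f\in C_c^0(\cM)$ using a partition of unity and then for general $f\in L^1(\cM)$ by density. I do not anticipate a genuine difficulty anywhere; the only two spots needing care are the \emph{uniform} prolongation length $\delta$ in the completeness argument and the verification that a minimizing geodesic to a non-cut point is unique, and both are handled by the transport-by-isometry device together with the diffeomorphism property of $\Exp_x$ on $\seg^0(x)$.
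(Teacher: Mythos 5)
Your proof is correct, and in fact the paper offers no proof of this proposition at all: it is stated as a compilation of standard facts about Riemannian homogeneous spaces, and the textbook arguments you assemble are exactly the standard ones. Specifically, transporting the domain of the exponential map by isometries to get a uniform prolongation length and then invoking Hopf--Rinow gives completeness; compactness of the Grassmannian of $2$-planes at a single point plus isometry-invariance gives the curvature bounds; constancy of $\inj(\cdot)$ under the transitive isometry action gives $\inj(\cM)=\inj(p)>0$; and $F^{*}\dvol=\dvol$ together with the change-of-variables formula, extended from $C_c^0(\cM)$ to $L^1(\cM)$ by density, gives \eqref{equ:homo}.

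The one step you should make explicit is in the fourth bullet. With the paper's definition $\Cut(x)=\cM-\Exp_x(\seg^0(x))$, your claim that $w\in\seg(x)-\seg^0(x)$ forces $\Exp_x(w)\in\Cut(x)$ is precisely the classical (but nontrivial) disjointness of $\Exp_x(\seg(x)-\seg^0(x))$ from $\Exp_x(\seg^0(x))$, which is essentially equivalent to the uniqueness statement you are proving, so as written there is a whiff of circularity. Either cite that cut-locus fact explicitly, or argue directly: if $\gamma_w\neq\gamma_v$ were two minimizing geodesics from $x$ to $y$ with $v\in\seg^0(x)$, then $\gamma_v$ continues to minimize past $y$, and concatenating $\gamma_w|_{[0,1]}$ with that extension yields a distance-realizing broken geodesic with a corner at $y$, which can be shortened --- a contradiction. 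Your parenthetical alternative via Whitehead's positive convexity radius (made uniform by homogeneity) also closes this point, so the proposal as a whole stands.
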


In addition to the above properties, Riemannian symmetric spaces possess some distinct characteristics. Proposition \ref{prop:symmetric} provides one of such characteristics. This property is useful in examining the population $\cL^p$ center of mass of the Riemannian radial distributions on symmetric spaces. These are elaborated in Section \ref{sec:3}. 

\begin{proposition}\label{prop:symmetric}
    Suppose $(\cM,g^{\cM})$ is a Riemannian symmetric space. Then for any $x\neq y\in\cM$, there exists an isometry $F\in\Iso(\cM)$ such that $F(x)=y$ and $F\circ F$ is the identity. 
\end{proposition}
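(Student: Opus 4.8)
The plan is to construct the desired involutive isometry by combining the symmetry at a suitably chosen midpoint with the transvection along the geodesic joining $x$ and $y$. Concretely, I would first observe that it suffices to prove the statement in the case where $x$ and $y$ are close enough to be joined by a unique minimizing geodesic $\gamma$, and then bootstrap to the general case: in a connected complete Riemannian manifold any two points are joined by \emph{some} minimizing geodesic (Hopf--Rinow), so pick one such unit-speed geodesic $\gamma:[0,\ell]\to\cM$ with $\gamma(0)=x$, $\gamma(\ell)=y$, and let $p=\gamma(\ell/2)$ be its midpoint.

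The core of the argument is the following: let $s_p\in\Iso(\cM)$ be the geodesic symmetry at $p$, which exists by the definition of a symmetric space and satisfies $s_p(p)=p$, $(ds_p)_p=-\mathrm{id}$, and $s_p\circ s_p=\mathrm{id}$. Since an isometry sends geodesics to geodesics and $s_p$ reverses all vectors at $p$, it reverses the geodesic $\gamma$ about its midpoint; in particular $s_p(\gamma(t))=\gamma(\ell-t)$ for all $t$, so $s_p(x)=y$ and $s_p(y)=x$. This already produces an involutive isometry swapping $x$ and $y$ — which is in fact slightly stronger than what is asked. So I would simply take $F=s_p$: it is an isometry, $F(x)=y$, and $F\circ F=s_p\circ s_p=\mathrm{id}$.

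The one subtlety, and the step I expect to require the most care, is justifying that $s_p$ genuinely reverses $\gamma$ globally rather than just infinitesimally at $p$. The clean way is: the curve $t\mapsto s_p(\gamma(\ell/2+t))$ is a geodesic (isometries preserve the geodesic equation), it passes through $p$ at $t=0$, and its initial velocity is $(ds_p)_p(\dot\gamma(\ell/2)) = -\dot\gamma(\ell/2)$, which is exactly the initial velocity of $t\mapsto\gamma(\ell/2-t)$; by uniqueness of geodesics with prescribed initial point and velocity (valid on all of $[-\ell/2,\ell/2]$ by completeness), the two curves coincide, giving $s_p(\gamma(\ell/2+t))=\gamma(\ell/2-t)$ and hence $s_p(x)=s_p(\gamma(0))=\gamma(\ell)=y$. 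I would present this uniqueness-of-geodesics step explicitly since it is the crux, and note that it does not require $\gamma$ to be the \emph{unique} minimizer — any minimizing geodesic between $x$ and $y$ works, so the bounded-distance reduction mentioned above is not actually needed. Finally, I would remark that the hypothesis $x\neq y$ is only used to ensure $\gamma$ is nonconstant so that "$F(x)=y$" is a nontrivial statement; for $x=y$ one could simply take $F=s_x$.
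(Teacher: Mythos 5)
Your proof is correct: the paper states Proposition \ref{prop:symmetric} without writing out a proof (it is the classical fact underlying the homogeneity of symmetric spaces), and your midpoint-symmetry argument---take a minimizing geodesic $\gamma$ from $x$ to $y$ via completeness and Hopf--Rinow, and use uniqueness of geodesics with prescribed initial point and velocity to show the symmetry $s_p$ at the midpoint reverses $\gamma$, hence swaps $x$ and $y$ and squares to the identity---is exactly the standard argument one would supply here. The only points worth making explicit are that $\cM$ is, as usual, taken to be connected, and that completeness is available either from Proposition \ref{prop:homogeneous} (the paper asserts symmetric spaces are homogeneous) or directly by extending geodesics using the symmetries, so no circularity enters.
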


Simply connected Riemannian symmetric spaces admit more benign properties. First, any simply connected Riemannian symmetric space can be expressed as a product of a Hadamard Riemannian symmetric space\footnote{A Hadamard manifold is a complete and  simply connected Riemannian manifold that has nonpositive curvatures.} and a simply connected compact Riemannian symmetric space, as outlined in the following proposition.  It is thus natural to consider these two types of Riemannian symmetric spaces separately. 
%In Section~\ref{sec:5}, we 
%three types of Riemannian symmetric spaces, as outlined in the following proposition. 
%These three types are correspondingly \\
%\indent (1) Euclidean spaces.\\
%\indent (2) Riemannian symmetric spaces of the noncompact type, i.e., Riemannian symmetric spaces with nonpositive and non-identically zero curvatures. These manifolds are noncompact.\\
%\indent (3) Riemannian symmetric spaces of the compact type, i.e, Riemannian symmetric spaces with nonnegative and non-identically zero curvatures. These manifolds are compact. 

\begin{proposition}
%[Proposition 4.2 in \citet{helgason2001differential}]
\label{thm:decomposition}

Let $\cM$ be a simply connected Riemannian symmetric space. Then $\cM$ is a product
\$
\cM=\cM_{H}\times \cM_{C},
\$
where $\cM_H$ is a Riemannian symmetric space that is also a Hadamard manifold, and $\cM_C$ is a simply connected compact Riemannian symmetric space. 
%Euclidean space, $\cM_{-}$ and $\cM_{+}$ are simply connected Riemannian symmetric spaces of the compact and noncompact type, respectively.
    
\end{proposition}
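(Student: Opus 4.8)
The plan is to deduce the statement from the classical structure theory of Riemannian symmetric spaces: the canonical decomposition of the associated symmetric Lie algebra into ideals of Euclidean, compact, and noncompact type, combined with the standard topological and curvature properties of each type. I would cite \citet[Ch.~V]{helgason2001differential} for the algebraic input rather than reprove it.

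First I would pass from $\cM$ to its Lie-theoretic data. Being symmetric, $\cM$ is homogeneous, so with $G$ the identity component of $\Iso(\cM)$ and $K$ the isotropy subgroup at a fixed base point $o$, we have $\cM\cong G/K$; conjugation by the geodesic symmetry at $o$ is an involutive automorphism of $G$ whose differential $\theta$ is an involution of the Lie algebra $\mathfrak g$, with Cartan decomposition $\mathfrak g=\mathfrak k\oplus\mathfrak p$ into the $\pm 1$ eigenspaces of $\theta$. By the structure theory of such pairs $(\mathfrak g,\theta)$ (orthogonal symmetric Lie algebras), $(\mathfrak g,\theta)$ is an orthogonal direct sum of $\theta$-invariant ideals $\mathfrak g=\mathfrak g_0\oplus\mathfrak g_-\oplus\mathfrak g_+$ of Euclidean type (abelian $\mathfrak p$-part), compact type, and noncompact type respectively. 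Since $\cM$ is simply connected, this algebraic splitting integrates to a Riemannian product $\cM=\cM_0\times\cM_-\times\cM_+$ of simply connected Riemannian symmetric spaces, where $\cM_0$ is flat (hence isometric to some $\RR^{k}$), $\cM_-$ is of compact type, and $\cM_+$ is of noncompact type. (Alternatively one could apply the de Rham decomposition theorem directly to the complete simply connected manifold $\cM$, after checking that the geodesic symmetries preserve the de Rham factors so that Cartan's classification applies factor by factor.)

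Next I would read off the geometry of each piece. A symmetric space of noncompact type is complete, diffeomorphic to a Euclidean space, and of nonpositive sectional curvature, hence a Hadamard manifold; the flat factor $\RR^{k}$ is Hadamard for the same reasons. A symmetric space of compact type is compact, and here $\cM_-$ is in addition simply connected. It then remains to regroup: put $\cM_H\coloneqq\cM_0\times\cM_+$ and $\cM_C\coloneqq\cM_-$. A finite Riemannian product of Hadamard manifolds is Hadamard (completeness, simple connectedness, and nonpositivity of the sectional curvature all pass to products), and a product of symmetric spaces is symmetric, so $\cM_H$ is a Hadamard Riemannian symmetric space; similarly $\cM_C$ is a simply connected compact Riemannian symmetric space. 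In degenerate cases either factor may reduce to a point. This yields $\cM=\cM_H\times\cM_C$ with the asserted properties.

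The crux of the argument is not a computation but the bridge between algebra and geometry: one needs that, for the \emph{simply connected} $\cM$, the ideal decomposition of $(\mathfrak g,\theta)$ really does integrate to an honest Riemannian product of symmetric spaces --- equivalently that the de Rham factors inherit the symmetric structure --- and that ``compact type'' at the Lie-algebra level forces the corresponding simply connected symmetric space to be genuinely compact (this is where simple connectedness enters: the compact-type factor is $G_u/K$ for the simply connected compact Lie group $G_u$ with Lie algebra $\mathfrak g_-$, whose quotient is compact). Both points are proved in \citet[Ch.~V]{helgason2001differential}, and I would appeal to them directly.
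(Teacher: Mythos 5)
Your proposal is correct and follows essentially the same route as the paper, which simply cites the classical structure theory in \citet{helgason2001differential} (Proposition 4.2 and Theorem 3.1 there) for exactly this decomposition into Euclidean/noncompact and compact-type factors. Your write-up just spells out the standard argument behind that citation, so there is nothing to add.
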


\begin{proof}
    This immediately follows from Proposition 4.2 and Theorem 3.1 in \citet{helgason2001differential}. 
\end{proof}

%Proposition \ref{thm:decomposition} motivates us to conduct minimax analysis on 
Hadamard Riemannian symmetric spaces and simply connected compact Riemannian symmetric spaces exhibit distinct geometries. 
Notably, Hadamard Riemannian symmetric spaces are diffeomorphic to Euclidean spaces and have empty cut loci. In contrast, compact Riemannian symmetric spaces have non-empty cut loci. The existence of such non-empty cut locus brings additional challenges to our minimax analysis in Section \ref{sec:5}. Advanced tools from Lie theories will be needed, and we postpone these materials to Appendix \ref{sec:apd-a2}.
%These differences bring different challenges in the minimax analysis in Section \ref{sec:5}. 

%The Riemannian symmetric spaces of the compact type have nonnegative curvatures and are all compact Riemannian manifolds. The Riemannian symmetric spaces of the noncompact type have nonpositive curvatures and are all noncompact Riemannian manifolds. Consequently, 
%The product $\cM_0\times\cM_+$ is therefore a simply connected Riemannian symmetric spaces with nonpositive curvatures.   

%\newpage

Finally,  to conclude this section, we provide examples of Riemannian symmetric spaces (RSS) and Riemannian homogeneous spaces, along with discussions on their corresponding applications. These examples include Hadamard RSSs, simply connected compact RSSs, non-simply connected RSSs, and Riemannian homogeneous spaces that are not symmetric. By introducing these examples, we aim to provide readers with a deeper understanding of the applicability of our theories. 
%We  also discuss the corresponding applications.

\begin{example}
The following are examples of Hadamard RSSs.
\begin{itemize}
    \item The Euclidean space $\RR^m$ is a Hadamard RSS. The space of symmetric positive definite (SPD) matrices endowed with the log-Euclidean metric is an example of the Euclidean spaces~\citep{arsigny2007geometric}. It has applications in medical imaging \citep{arsigny2006log}, brain connectivity~\citep{you2021re}, and computer vision~\citep{huang2015log}.
    
    \item The hyperbolic space $\HH^m$ is a Hadamard RSS. It has been used to model complex networks \citep{krioukov2010hyperbolic} or word embeddings~\citep{nickel2017poincare}, because it can naturally fit an exponentially growing number of nodes. 

    \item The space of SPD matrices endowed with the affine-invariant metric is a Hadamard RSS which is non-Euclidean \citep{moakher2005differential,said2017riemannian,terras2012harmonic}. It is useful in medical imaging \citep{pennec2006riemannian}, computer vision \citep{tuzel2008pedestrian}, and brain-computer interface~\citep{barachant2011multiclass}.
    
    \item The product space of several Hadamard RSSs is  a Hadamard RSS.
\end{itemize}
    
\end{example}

\begin{example}The following are  examples of simply connected compact RSSs.
    \begin{itemize}
        \item The $m$-sphere $\SSS^m$ ($m\geq 2$) is a simply connected compact RSS. It is well-suited to model $\ell_2$ normalized feature vectors and has received much attention in various fields~\citep{mardia2009directional}. 
        
        \item The complex projective space $\CC\PP^m$ is a simply connected compact RSS. It is well-suited for  modeling 2D landscape shape data  \citep{kendall1984shape} and is applicable in archaeology, medical imaging, biology, and more \citep{dryden2016statistical}.  

        \item The complex Grassmann manifold ${\rm Gr}(k,\CC^m)$, representing $k$-dimensional subspaces in $\CC^m$, is a simply connected compact RSS. 

        \item The product space of several simply connected compact RSSs is  a simply connected compact RSS.
    \end{itemize}
\end{example}

\begin{example}The following are examples of non-simply connected RSSs.
    \begin{itemize}
        \item The circle $\SSS^1$ and $m$-dimensional torus $\TT^m$ are non-simply connected RSSs. 
        \item The real projective space $\RR\PP^m$ is a non-simply connected RSS, which is useful in axial data analysis~\citep{bhattacharya2005large}.

        \item The Grassmann manifold ${\rm Gr}(k,\RR^m)$, representing $k$-dimensional subspaces in $\RR^m$, is a non-simply connected RSS when $k\geq 1$ and $m-k>1$. It is useful in Riemannian optimization \citep{edelman1998geometry} and computer vision \citep{turaga2008statistical}.
        \item The product space of a non-simply connected RSS with any RSS is a non-simply connected RSS. 
    \end{itemize}
\end{example}

\begin{example}
    The following are  examples of Riemannian homogeneous spaces that are not symmetric.
    \begin{itemize}
        \item Stiefel manifolds are Riemannian homogeneous spaces but not symmetric. They are suitable for modeling frames and are useful in Riemannian optimization \citep{edelman1998geometry}.

        \item The product space of  homogeneous spaces is still homogeneous but not necessarily symmetric.
    \end{itemize}
\end{example}

%\begin{example}[Riemannian symmetric spaces but not simply connected]
    
%\end{example}

%\begin{example}[Riemannian homogeneous spaces but not Riemannian symmetric spaces]
    
%\end{example}

\subsection{Empirical process theory}

Besides geometry, this paper also uses empirical process theory \citep{van1996weak} and minimax analysis \citep{wainwright2019high} to derive the optimal rates of convergence. Both analyses involve the concepts of entropies introduced in this section. The first two concepts, metric entropy and packing number, are defined for all metric spaces. %As shown in Lemma \ref{lma:b1}, they 
Essentially, they   measure the same complexity as shown in Lemma \ref{lma:b1}. The metric entropy is  typically useful in establishing the upper bound and the packing number is useful in deriving the lower bound.  

\begin{definition}[Metric entropy]
    Let $(\cX,d)$ be a metric space and $\cA\subseteq \cX$. A $\delta$-cover of $\cA$ is a set $\{x_i\}_{i=1}^N\subseteq \cX$ such that for any $y\in\cA$, there exists an $i\in [N]$ such that $d(x_i,y)\leq \delta$. The $\delta$-covering number $\cN(\delta,\cA,d)$ is the cardinality of the smallest $\delta$-cover of $\cA$. The $\delta$-metric entropy is the logarithm of the $\delta$-covering number, denoted by $H(\delta,\cA,d)=\log\cN(\delta,\cA,d)$.  
\end{definition}

\begin{definition}[Packing number]
    Let $(\cX,d)$ be a metric space and $\cA\subseteq\cX$. A $\delta$-packing of $\cA$ is a set $\{x_i\}_{i=1}^N\subseteq \cA$ such that $d(x_{i},x_j)>\delta$ for all distinct $i,j\in[N]$. The $\delta$-packing number $M(\delta,\cA,d)$ is the cardinality of the largest $\delta$-packing. 
\end{definition}

\begin{lemma}[Lemma 5.5 in \cite{wainwright2019high}]\label{lma:b1}
    For all $\delta>0$, the packing number and covering numbers are related as follows
    \$
    M(2\delta,\cA,d)\leq \cN(\delta,\cA,d)\leq M(\delta,\cA,d).
    \$
\end{lemma}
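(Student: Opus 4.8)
\textbf{Proof proposal for Lemma \ref{lma:b1}.} The plan is to prove the two inequalities separately, each by a short extremal/maximality argument; no geometry or probability is needed, just the definitions of $\delta$-cover and $\delta$-packing.

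For the right-hand inequality $\cN(\delta,\cA,d)\leq M(\delta,\cA,d)$, I would take a \emph{maximal} $\delta$-packing $\{x_i\}_{i=1}^N$ of $\cA$, where $N=M(\delta,\cA,d)$ (finiteness is handled by noting that if no finite maximal packing exists the right side is $+\infty$ and there is nothing to prove). The key observation is that maximality forces $\{x_i\}_{i=1}^N$ to be a $\delta$-cover: for any $y\in\cA$, if $d(x_i,y)>\delta$ for every $i$, then $\{x_i\}_{i=1}^N\cup\{y\}$ would be a strictly larger $\delta$-packing of $\cA$ (all pairwise distances among the old points exceed $\delta$ by assumption, and the new distances $d(x_i,y)$ exceed $\delta$ by hypothesis), contradicting maximality. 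Hence every $y\in\cA$ lies within $\delta$ of some $x_i$, so $\{x_i\}_{i=1}^N$ is a $\delta$-cover of $\cA$ of size $N$, giving $\cN(\delta,\cA,d)\leq N=M(\delta,\cA,d)$.

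For the left-hand inequality $M(2\delta,\cA,d)\leq\cN(\delta,\cA,d)$, I would take a minimal $\delta$-cover $\{z_j\}_{j=1}^K$ of $\cA$ with $K=\cN(\delta,\cA,d)$ and any $2\delta$-packing $\{y_i\}_{i=1}^L$ of $\cA$, and exhibit an injection from the packing points into the covering points. Each $y_i$ lies within distance $\delta$ of some cover point $z_{j(i)}$; I claim $i\mapsto j(i)$ is injective. Indeed, if $j(i)=j(i')$ for $i\neq i'$, then by the triangle inequality $d(y_i,y_{i'})\leq d(y_i,z_{j(i)})+d(z_{j(i)},y_{i'})\leq \delta+\delta=2\delta$, contradicting that $\{y_i\}$ is a $2\delta$-packing (whose pairwise distances are strictly greater than $2\delta$). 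Therefore $L\leq K$; taking the supremum over all $2\delta$-packings gives $M(2\delta,\cA,d)\leq\cN(\delta,\cA,d)$.

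The argument is entirely elementary, so there is no serious obstacle; the only points requiring mild care are the bookkeeping of strict versus non-strict inequalities (the packing condition is strict, $d(x_i,x_j)>\delta$, while the cover condition is non-strict, $d(x_i,y)\leq\delta$, and this asymmetry is exactly what makes both maximality-to-cover and cover-to-packing arguments go through) and the treatment of the case where the relevant covering or packing numbers are infinite, which should be dispatched in a sentence at the start.
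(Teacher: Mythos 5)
Your proof is correct: the paper does not prove this lemma itself (it simply cites Lemma 5.5 of Wainwright), and your argument — a maximal $\delta$-packing is automatically a $\delta$-cover, and any $2\delta$-packing injects into a $\delta$-cover via the triangle inequality — is exactly the standard argument behind the cited result. Your handling of the strict inequality in the packing condition versus the non-strict one in the covering condition is precisely the point that makes both steps work, so nothing is missing.
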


Besides metric entropy, to upper bound the convergence rate of an M-estimator, we also need to introduce  bracketing entropy. This is defined for a functional space $\cF=\{f:\cM\to\RR\}$ equipped with a distance function $d$. 

\begin{definition}[Bracketing entropy]
    Consider the functional space $\cF$ equipped with the distance function $d$.  Given two functions $l,u\in\cF$ with $l\leq u$, the bracket $[l,u]$ is the set of all functions $f\in\cF$ with $l\leq f\leq u$. An $\epsilon$-bracket is a bracket $[l,u]$ with $d(l,u)\leq \epsilon.$ Let $\cG\subseteq\cF$ be some functional class. The $\epsilon$-bracketing number $\cN_{B}(\epsilon,\cG,d)$ is the minimum number of $\epsilon$-brackets needed to cover $\cG$. The bracketing entropy is the logarithm of the bracketing number, denoted by $H_{B}(\epsilon,\cG,d)=\log\cN_{B}(\epsilon,\cG,d)$.
\end{definition}

%In our paper, we will estimate the entropies of the studied  models, and then use empirical process theory and minimax analysis to derive desired rates of convergence.  

%One major objective of this paper is to establish suitable bounds for the entropies. 

A key aspect of our paper is  establishing desired bounds on specific entropies, which yields the desired convergence rates. Our analysis integrates statistics, differential geometry, and Lie theory, offering a deeper understanding on  manifold data analysis.

\section{Riemannian radial distributions}\label{sec:3}

In this section, we will define Riemannian radial distributions on Riemannian homogeneous spaces. We will begin by establishing the conditions under which these distributions are well-defined. Then we will explore fundamental properties of Riemannian radial distributions and give some examples. Our findings will show that Riemannian radial distributions form a promising parametric family of distributions, meriting deeper exploration. 

To proceed, we let $(\cM,g^{\cM})$ be a Riemannian homogeneous space. Given a point $\alpha\in\cM$ and a continuous, increasing function $\phi:[0,\infty)\to[0,\infty)$, we can define the following  function on $\cM$:
\#\label{equ:3.1}
\tilde f(x;\alpha,\phi)=e^{-\phi(d_g(x,\alpha))},\quad \forall x\in\cM,
\#
where $d_g(\cdot,\cdot)$ is the geodesic distance on $\cM$. Suppose $\tilde f$ is integrable on $\cM$, then we can normalize it to obtain a density function $f(x;\alpha,\phi)$ on $\cM$. Such density function only depends on the distance $d_g(x,\alpha)$ between $x$ and $\alpha$, and decreases as the distance $d_g(x,\alpha)$ increases. Thus, we refer to such density as a Riemannian radial distribution. The following proposition presents sufficient conditions under which $\tilde f$ is integrable and hence the Riemannian radial distribution is well-defined. 
%The condition is contingent on the sectional curvatures of $\cM$. 

\begin{proposition}\label{prop:3.1}
    Suppose $(\cM,g^{\cM})$ is a connected Riemannian homogeneous space with sectional curvatures bounded within $[\kappa_{\min},\kappa_{\max}]$. Then the function $\tilde f$, as defined in \eqref{equ:3.1}, is integrable over $\cM$ in any  of the following cases: 
    \begin{enumerate}
        \item[(1)] $\cM$ is compact.
        \item[(2)] $\cM$ is noncompact and $\phi$ satisfies
    \#\label{equ:3.2}
    \int_0^{\infty}e^{-\phi(r)}\cdot \sn_{\kappa_{\min}}^{m-1}(r)dr<\infty,
    \#
    where $m$ is the dimension of $\cM$ and $\sn_{\kappa_{\min}}(\cdot)$ is given by \eqref{equ:sn}. 
    \end{enumerate} 
\end{proposition}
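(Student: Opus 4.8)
The plan is to verify integrability by treating the compact and noncompact cases separately, reducing both to the polar integral formula \eqref{equ:integral} and the volume comparison estimate of Theorem \ref{thm:2.3}. In either case I would first record the elementary observations that $\tilde f$ is nonnegative, continuous (the distance function $x\mapsto d_g(x,\alpha)$ is continuous, $\phi$ is continuous, so $\tilde f$ is a composition of continuous maps), and bounded: since $\phi$ takes values in $[0,\infty)$, we have $0\le \tilde f(x;\alpha,\phi)=e^{-\phi(d_g(x,\alpha))}\le 1$ for all $x\in\cM$, so in particular $\tilde f$ is measurable. For case (1), compactness of $\cM$ gives $\vol(\cM)<\infty$, whence $\int_\cM \tilde f\,\dvol\le \vol(\cM)<\infty$, and nothing further is needed.

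For case (2), I would fix the base point $\alpha$, pass to normal coordinates at $\alpha$, and write the integral in polar form via \eqref{equ:integral}:
\[
\int_\cM \tilde f\,\dvol=\int_{T_\alpha\cM}\tilde f^{\flat}(r,\Theta)\,\lambda(r,\Theta)\,dr\,d\Theta,\qquad \tilde f^{\flat}=\tilde f\circ\Exp_\alpha,
\]
where $\lambda$ is the density \eqref{equ:lambda} attached to $\alpha$ and extended by $0$ outside $\seg^0(\alpha)$. On $\seg^0(\alpha)$ one has $d_g(\Exp_\alpha(r\Theta),\alpha)=r$, so $\tilde f^{\flat}(r,\Theta)=e^{-\phi(r)}$ there, while outside $\seg^0(\alpha)$ the factor $\lambda$ vanishes; hence the integrand equals $e^{-\phi(r)}\lambda(r,\Theta)$ on all of $T_\alpha\cM$. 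By Proposition \ref{prop:homogeneous} the sectional curvatures of $\cM$ lie in $[\kappa_{\min},\kappa_{\max}]$, so Theorem \ref{thm:2.3} gives $\lambda(r,\Theta)\le \sn_{\kappa_{\min}}^{m-1}(r)$ for every $(r,\Theta)$, including the points where $\lambda=0$ since $\sn_{\kappa_{\min}}^{m-1}\ge 0$. Integrating this pointwise bound, with Tonelli's theorem justifying the iterated integral because all integrands are nonnegative,
\[
\int_\cM \tilde f\,\dvol\le \int_{\SSS^{m-1}}\!\!\int_0^\infty e^{-\phi(r)}\,\sn_{\kappa_{\min}}^{m-1}(r)\,dr\,d\Theta=\vol(\SSS^{m-1})\int_0^\infty e^{-\phi(r)}\,\sn_{\kappa_{\min}}^{m-1}(r)\,dr,
\]
which is finite by hypothesis \eqref{equ:3.2}; thus $\tilde f\in L^1(\cM)$.

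The argument is essentially routine, so I do not anticipate a genuine obstacle; the only point demanding care is the cut-locus bookkeeping in case (2) — applying \eqref{equ:integral} centered at $\alpha$ (rather than at a generic $x$), identifying $\tilde f^{\flat}$ with $e^{-\phi(r)}$ on the open segment domain, and invoking Theorem \ref{thm:2.3} globally on $T_\alpha\cM$ after the zero-extension of $\lambda$. Completeness of $\cM$, which is a consequence of homogeneity by Proposition \ref{prop:homogeneous}, together with connectedness, guarantees via Hopf--Rinow that $\Exp_\alpha(\seg^0(\alpha))$ is $\cM$ minus a null set, so the integral formula applies verbatim and the estimate above is valid.
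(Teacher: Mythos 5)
Your proposal is correct and follows essentially the same route as the paper's proof: the compact case via boundedness of $\tilde f$ (since $\phi\geq 0$) together with finite volume, and the noncompact case via the polar coordinate formula \eqref{equ:integral} at $\alpha$ combined with the comparison bound $\lambda(r,\Theta)\leq \sn_{\kappa_{\min}}^{m-1}(r)$ from Theorem \ref{thm:2.3} and hypothesis \eqref{equ:3.2}. The extra care you take with the zero-extension of $\lambda$ and Tonelli's theorem is sound and matches the paper's implicit bookkeeping.
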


\begin{remark}
    It is worth discussing the second case where $\cM$ is noncompact. In this case, $\kappa_{\min}$ is nonpositive, ensuring that $\sn_{\kappa_{\min}}(\cdot)$ is well-defined over $\RR_+$ and condition \eqref{equ:3.2} is well-defined. If $\kappa_{\min}=0$, then condition \eqref{equ:3.2} reduces to 
    \$
    \int_0^\infty e^{-\phi(r)}r^{m-1}dr<\infty.
    \$
    Conversely, if $\kappa_{\min}<0$,   condition \eqref{equ:3.2} becomes
    \$
    \int_0^\infty e^{-\phi(r)}\cdot e^{\sqrt{-\kappa_{\min}}(m-1)r}dr<\infty.
    \$
    The differences between these two cases are significant. For instance, let us consider $\phi(r)=\beta r^q$ for some constant $\beta>0$ and $q>0$. When $\kappa_{\min}=0$, condition \eqref{equ:3.2} is satisfied for all $q>0$. However, for $\kappa_{\min}<0$, this condition only holds for $q\geq 1$. Moreover, when $q=1$, this condition is only met if $\beta>\sqrt{-\kappa_{\min}}(m-1)$. These sharp differences between the cases $\kappa_{\min}=0$ and $\kappa_{\min}<0$ stem from the varying  rates of volume growth of balls in different manifolds $\cM$, highlighting the impact of geometry. 
    
    %When $\kappa_{\min}<0$, however, the function $\tilde f(x;\alpha,\phi)$ is integrable 

    %However, when $\kappa_{\min}<0$, (2) $\kappa_{\min}<0$ and $q>1$, or (3) $\kappa_{\min}<0$, $q=1$, and $\beta>\sqrt{-\kappa_{\min}}(m-1)$.  %
\end{remark}

\begin{condition}
%[Proper definitions]
\label{cond:3.1}
    We assume the following conditions hold.
    \begin{enumerate}
        \item[(1)] $(\cM,g^{\cM})$ is a connected Riemannian homogeneous space and we denote its maximum radius  by $r_{\cM}=\sup_{x,y\in\cM}d_g(x,y)$. 
        \item[(2)] $\phi$ is a nonnegative, increasing, and continuous function defined over $[0,r_{\cM}]$.\footnote{Throughout this paper,  the interval $[0,r_{\cM}]$ is always interpreted as $[0,\infty)$ when $r_{\cM}=\infty$.} 
        \item[(3)] $\tilde f(x;\alpha,\phi)$, as defined by \eqref{equ:3.1},
    is integrable over $\cM$ for some $\alpha\in\cM$.
    \end{enumerate}  
\end{condition}

Going forward, we will always assume that Condition \ref{cond:3.1}  is satisfied. Under this condition, we will first establish some basic properties of a Riemannian radial distribution in Proposition \ref{prop:3.2}. In this proposition, we show that if $\tilde f(x;\alpha,\phi)$ is integrable over $\cM$ for a point $\alpha\in\cM$, then $\tilde f(x;\alpha,\phi)$ is integrable over $\cM$ for all $\alpha\in\cM$. Moreover, we show that the integral of $\tilde f(x;\alpha,\phi)$ is a constant independent of $\alpha$, which we denote by
\#\label{equ:Z}
Z(\phi)=Z(\alpha,\phi)\coloneqq\int_{\cM}\tilde f(x;\alpha,\phi)\dvol(x). 
\#
By normalizing $\tilde f$, we can obtain the  density function of a Riemannian radial distribution as follows:
\#\label{equ:f}
f(x;\alpha,\phi)=\frac{1}{Z(\phi)}e^{-\phi(d_g(x,\alpha))},\quad\forall x\in\cM.
\# 
Suppose $\phi$ is fixed and known, then the MLE for $\alpha$, based on $n$ independent samples $\{x_i\}_{i=1}^n$ from $f$, is given by the following M-estimator
\#\label{equ:MLE}
\alpha^{\MLE}=\argmin_{\alpha\in\cM}\sum_{i=1}^n\phi(d_g(x_{i},\alpha)).
\#
This property follows from the fact that the normalizing constant $Z(\phi)$ is independent of $\alpha$.

\begin{proposition}\label{prop:3.2}
    Assuming Condition \ref{cond:3.1} is satisfied and using the  same notation, we have
    \begin{enumerate}
        \item[(1)] $\tilde f(x;\tilde \alpha,\phi)$ is integrable over $\cM$ for all $\tilde \alpha \in\cM$. 

        \item[(2)] The normalizing constant $Z(\alpha,\phi)=\int_\cM \tilde f(x;\alpha,\phi)\dvol(x)$ is independent of $\alpha$. 

        \item[(3)] Let $f(x;\alpha,\phi)$ be the density function given by \eqref{equ:f}, where $\phi$ is considered fixed and known. Then the MLE of $\alpha$, based on $n$ independent samples $\{x_i\}_{i=1}^n$ from $f$,  is given by  \eqref{equ:MLE}.
    \end{enumerate}
\end{proposition}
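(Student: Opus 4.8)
The plan is to prove the three items in order, since item (1) feeds into item (2), and item (2) feeds into item (3). The key geometric input is the homogeneity property \eqref{equ:homo} from Proposition \ref{prop:homogeneous}, together with the fact that isometries preserve the geodesic distance. First I would record the elementary observation that for any isometry $F\in\Iso(\cM)$ and any $\alpha\in\cM$, we have $d_g(F(x),F(\alpha))=d_g(x,\alpha)$ for all $x$, and hence $\tilde f(F(x);F(\alpha),\phi)=e^{-\phi(d_g(F(x),F(\alpha)))}=e^{-\phi(d_g(x,\alpha))}=\tilde f(x;\alpha,\phi)$. In other words, $\tilde f(\cdot;F(\alpha),\phi)=\tilde f(\cdot;\alpha,\phi)\circ F^{-1}$.

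For item (1): fix $\alpha_0\in\cM$ for which $\tilde f(\cdot;\alpha_0,\phi)$ is integrable (this exists by Condition \ref{cond:3.1}(3)). Given an arbitrary $\tilde\alpha\in\cM$, use homogeneity of $\cM$ to pick $F\in\Iso(\cM)$ with $F(\alpha_0)=\tilde\alpha$. Then $\tilde f(\cdot;\tilde\alpha,\phi)=\tilde f(\cdot;\alpha_0,\phi)\circ F^{-1}$ by the observation above, and applying \eqref{equ:homo} with the integrable function $f=\tilde f(\cdot;\alpha_0,\phi)$ and the isometry $F^{-1}$ shows
\$
\int_{\cM}\tilde f(x;\tilde\alpha,\phi)\dvol(x)=\int_{\cM}\tilde f(F^{-1}(x);\alpha_0,\phi)\dvol(x)=\int_{\cM}\tilde f(x;\alpha_0,\phi)\dvol(x)<\infty,
\$
so $\tilde f(\cdot;\tilde\alpha,\phi)$ is integrable. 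For item (2): the displayed chain of equalities already shows that $Z(\tilde\alpha,\phi)=Z(\alpha_0,\phi)$ for every $\tilde\alpha$, so $Z(\alpha,\phi)$ is independent of $\alpha$; this is just reading off the same computation. One small point to check is that we are allowed to apply \eqref{equ:homo} — it is stated for $f\in L^1(\cM)$, and item (1) is exactly what guarantees $\tilde f(\cdot;\alpha_0,\phi)\in L^1(\cM)$, so the logic is not circular as long as (1) is established first via the single fixed base point $\alpha_0$.

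For item (3): with $\phi$ fixed, the density is $f(x;\alpha,\phi)=Z(\phi)^{-1}e^{-\phi(d_g(x,\alpha))}$ where $Z(\phi)$ does not depend on $\alpha$ by item (2). The log-likelihood of $\{x_i\}_{i=1}^n$ is
\$
\ell(\alpha)=\sum_{i=1}^n\log f(x_i;\alpha,\phi)=-n\log Z(\phi)-\sum_{i=1}^n\phi(d_g(x_i,\alpha)),
\$
and since $-n\log Z(\phi)$ is a constant in $\alpha$, maximizing $\ell(\alpha)$ over $\alpha\in\cM$ is equivalent to minimizing $\sum_{i=1}^n\phi(d_g(x_i,\alpha))$, which is exactly \eqref{equ:MLE}. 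I expect the main (and only mild) obstacle to be bookkeeping about measurability and the direction of the isometry in \eqref{equ:homo}: one must be careful that pushing forward versus pulling back by $F$ versus $F^{-1}$ is handled consistently, and that the map $x\mapsto\tilde f(x;\alpha,\phi)$ is indeed measurable (it is, being a composition of the continuous function $\phi$, the continuous distance function $d_g(\cdot,\alpha)$, and $e^{-(\cdot)}$). None of this requires the curvature bounds or Theorem \ref{thm:2.3}; the whole proposition rests on homogeneity alone.
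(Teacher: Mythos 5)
Your proposal is correct and follows essentially the same route as the paper: translate the parameter by an isometry $F$ with $F(\alpha_0)=\tilde\alpha$, use $d_g(x,F(\alpha_0))=d_g(F^{-1}(x),\alpha_0)$ together with the invariance \eqref{equ:homo} to equate the two integrals, and then read off (2) and (3) from the $\alpha$-independence of $Z(\phi)$. Your extra care in applying \eqref{equ:homo} to the known-integrable base density $\tilde f(\cdot;\alpha_0,\phi)$ (rather than to the a priori possibly non-integrable one) is a harmless refinement of the same argument.
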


%From now on, we will assume $\tilde f(x;\alpha, \phi)$ is integrable over $\cM$ for some $\alpha\in\cM$. Specifically, we introduce Condition \ref{cond:3.1} and assume it holds in all subsequent results. 

%These are fundamental properties of a Riemannian radial distribution, which are summarized in Proposition~\ref{prop:3.2}. %For ease of presentation, we introduce Condition \ref{cond:3.1}, which will be assumed in all main results throughout this paper. It guarantees that the density function $f$ in \eqref{equ:f} is well-defined.

Proposition \ref{prop:3.2} is rooted in the homogeneity of $\cM$, yet it does not fully capture the symmetry of $\cM$ when $\cM$ is a Riemannian symmetric space. Indeed, if $\cM$ is a Riemannian symmetric space, we can establish more favorable properties. To see this, recall that the $L^p$ center of mass with respect to a density function $f$ on $\cM$ is given by
\#\label{equ:Lp}
\alpha^{p}=\argmin_{\alpha\in\cM}\int_{\cM}d_g^p(x,\alpha)f(x)\dvol(x).
\#
Then in Proposition \ref{prop:3.3}, we establish that under mild conditions, the $L^p$ center of mass with respect to the density  $f(x;\alpha,\phi)$ is uniquely given by the location parameter $\alpha$. 
%Under mild conditions, we can show that 
%Note that the $L^2$ center of mass reduces to the \Frechet mean. 
%The following proposition shows that under mild conditions, the $L^p$ center of mass with respect to $f(x;\alpha,\phi)$ is uniquely given by the location  $\alpha$. 

\begin{proposition}\label{prop:3.3}
    Assume $(\cM,g^{\cM})$ is a Riemannian symmetric space and Condition \ref{cond:3.1} holds. Let $f(x;\alpha,\phi)$ be the density given by \eqref{equ:f} and $p\in(0,\infty)$. If $\phi$ is strictly increasing and the integral 
    \$
    \int_{\cM}d^p_g(x,\alpha) f(x;\alpha,\phi)\dvol(x)
    \$
    is finite, then the $L^p$ center of mass with respect to $f(x;\alpha,\phi)$ is uniquely given by the parameter $\alpha$. 
    %For  $\alpha\in\cM$, we let $f(x;\alpha,\phi)$ be the density defined by \eqref{equ:f}. 
    %Let $\tilde f(x;\alpha,\phi)$ be given by 
    %and the function $\tilde f(x;\alpha,\phi)$ is integrable on $\cM$. Let $f(x;\alpha,\phi)$ be the density function given by \eqref{equ:f}. Then we have
    
\end{proposition}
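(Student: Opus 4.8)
The plan is to fix an arbitrary point $\beta\in\cM$ with $\beta\neq\alpha$ and prove that the objective $G(\gamma)\coloneqq\int_{\cM}d_g^p(x,\gamma)\,f(x;\alpha,\phi)\,\dvol(x)$ is strictly larger at $\beta$ than at $\alpha$; since $G(\alpha)<\infty$ by assumption, this identifies $\alpha$ as the unique minimizer of $G$, i.e.\ the unique $L^p$ center of mass. First I would note that $G(\beta)$ is also finite: the triangle inequality gives $d_g(x,\beta)\le d_g(x,\alpha)+d_g(\alpha,\beta)$, and the elementary bound $(s+t)^p\le 2^p(s^p+t^p)$ (valid for all $s,t\ge0$, $p>0$) then shows that $x\mapsto d_g^p(x,\beta)f(x;\alpha,\phi)$ is integrable. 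Hence $h(x)\coloneqq\big(d_g^p(x,\beta)-d_g^p(x,\alpha)\big)f(x;\alpha,\phi)$ lies in $L^1(\cM)$ and $G(\beta)-G(\alpha)=\int_{\cM}h\,\dvol$.

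The key step is a symmetrization that exploits the symmetry of $\cM$. By Proposition~\ref{prop:symmetric} there is an isometry $F\in\Iso(\cM)$ with $F(\alpha)=\beta$ and $F\circ F=\mathrm{id}_{\cM}$; in particular $F(\beta)=F\big(F(\alpha)\big)=\alpha$. Since $\dvol$ is isometry-invariant (Proposition~\ref{prop:homogeneous}, equation~\eqref{equ:homo}), $\int_{\cM}h\,\dvol=\int_{\cM}h\circ F\,\dvol$, so $G(\beta)-G(\alpha)=\tfrac12\int_{\cM}\big(h(x)+h(F(x))\big)\dvol(x)$. Because $F$ is an isometry with $F(\alpha)=\beta$ and $F(\beta)=\alpha$, we have $d_g(F(x),\beta)=d_g(x,\alpha)$ and $d_g(F(x),\alpha)=d_g(x,\beta)$, and a direct computation collapses the two terms into
\[
h(x)+h(F(x))=\frac{1}{Z(\phi)}\big(d_g^p(x,\beta)-d_g^p(x,\alpha)\big)\big(e^{-\phi(d_g(x,\alpha))}-e^{-\phi(d_g(x,\beta))}\big).
\]
Writing $a=d_g(x,\alpha)$ and $b=d_g(x,\beta)$, both factors $b^p-a^p$ and $e^{-\phi(a)}-e^{-\phi(b)}$ carry the sign of $b-a$, since $t\mapsto t^p$ is strictly increasing on $[0,\infty)$ (as $p>0$) and $t\mapsto e^{-\phi(t)}$ is strictly decreasing (as $\phi$ is strictly increasing). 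Thus $h+h\circ F\ge 0$ pointwise, with strict inequality whenever $d_g(x,\alpha)\neq d_g(x,\beta)$.

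It remains to verify that the bisector $E=\{x\in\cM:d_g(x,\alpha)=d_g(x,\beta)\}$ is $\dvol$-null, which upgrades the inequality $G(\beta)\ge G(\alpha)$ to a strict one. On the open, full-measure set $\cM\setminus\big(\Cut(\alpha)\cup\Cut(\beta)\big)$ the function $\psi\coloneqq d_g^2(\cdot,\alpha)-d_g^2(\cdot,\beta)$ is smooth with gradient $\grad\psi(x)=2\big(\Log_x\beta-\Log_x\alpha\big)$, and this never vanishes there: $\Log_x\alpha=\Log_x\beta$ would force $\alpha=\Exp_x(\Log_x\alpha)=\Exp_x(\Log_x\beta)=\beta$. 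Hence $E\cap\big(\cM\setminus(\Cut(\alpha)\cup\Cut(\beta))\big)$ is a regular level set of $\psi$, so an embedded hypersurface and therefore null, while $\Cut(\alpha)$ and $\Cut(\beta)$ are null; so $E$ is null. Consequently $h+h\circ F>0$ on a set of full measure, hence $G(\beta)-G(\alpha)=\tfrac12\int_{\cM}(h+h\circ F)\,\dvol>0$, and $\alpha$ is the unique $L^p$ center of mass.

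I expect the main obstacle to be the symmetrization step: it needs an \emph{involutive} isometry interchanging $\alpha$ and $\beta$, which is exactly what Proposition~\ref{prop:symmetric} supplies and is precisely where the symmetric-space hypothesis — rather than the mere homogeneity used for Proposition~\ref{prop:3.2} — enters. The remaining work is technical: the strict monotonicity of $\phi$ and the nullity of the bisector are the two ingredients that turn the easy bound $G(\beta)\ge G(\alpha)$ into the strict inequality needed for uniqueness.
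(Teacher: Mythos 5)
Your proof is correct and follows essentially the same route as the paper: the involutive isometry from Proposition \ref{prop:symmetric} exchanging $\alpha$ and the competitor, the symmetrization of the integral via \eqref{equ:homo}, and the sign factorization $(b^p-a^p)\bigl(e^{-\phi(a)}-e^{-\phi(b)}\bigr)\ge 0$ with strict monotonicity of $\phi$. The only difference is the concluding step: the paper argues by contradiction — the continuous nonnegative integrand has integral $\le 0$, hence vanishes identically, forcing $d_g(x,\alpha)=d_g(x,\tilde\alpha)$ for all $x$ and thus (taking $x=\alpha$) $\tilde\alpha=\alpha$ — whereas you prove the strict inequality directly by showing the bisector is $\dvol$-null, which is valid but heavier than needed, since positivity of the integrand on a small ball around $\alpha$ (where $d_g(x,\alpha)<d_g(\alpha,\beta)/2<d_g(x,\beta)$) already yields strictness.
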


\begin{remark}
    Proposition \ref{prop:3.3} is a remarkable result, as the uniqueness of the $L^p$ center of mass with respect to a distribution on a manifold is typically nontrivial. Previous results either assume that the manifold is Hadamard or the distribution is supported in a local area~\citep{afsari2011riemannian,karcher1977riemannian}. In sharp contrast, our results drop the assumption of a localized support and cover all Riemannian symmetric spaces. Our results hold for a wide class of Riemannian radial distributions, highlighting that symmetric distributions on symmetric spaces can possess a unique $L^p$ center of mass. One may extend such high-level ideas to the study of Gibbs distributions on symmetric spaces,  and enrich the results in \cite{said2021riemannian}.
    %The existence and uniqueness of the $L^p$ center of mass with respect to a distribution on a manifold are nontrivial properties \citep{afsari2011riemannian,karcher1977riemannian}. 
\end{remark}

Proposition \ref{prop:3.2} and  \ref{prop:3.3} suggest that  Riemannian radial distributions form a promising parametric class of distributions on Riemannian homogeneous spaces. Therefore, exploring their statistical properties is of considerable importance. One of the most important statistical problems is to estimate the parameter $\alpha$ given $n$ independent samples $\{x_i\}_{i=1}^n$ from $f(x;\alpha,\phi)$. While we can employ the MLE in \eqref{equ:MLE}, it remains unknown, due to the non-Euclidean geometry, 
\begin{enumerate}
    \item[(1)] What is the convergence rate of the MLE in terms of the sample size $n$? 
    \item[(2)] Is the convergence rate achieved by the MLE optimal in the minimax sense?
\end{enumerate}
Answering these two questions will be the main focus of this paper. We will derive the convergence rate of the MLE in Section \ref{sec:4}, and investigate the minimax lower bound on the parameter estimation rate in Section \ref{sec:5}. Together, we will obtain the optimal rate of convergence for parameter estimation and demonstrate the optimality of the MLE in a wide range of contexts.

%These favorable properties of the Riemannian radial distributions motivate us to examine its statistical properties.

%\newpage

Before moving on, let us present several examples of Riemannian radial distributions, including Riemannian Gaussian, Laplacian, and uniform distributions on Riemannian homogeneous spaces, and the von Mises-Fisher distributions on spheres. Keeping these examples in mind will be beneficial, as they can enhance understanding of the theories developed in the following sections.

%These examples showcase diverse tail behaviors.
%Keeping them in mind will aid in understanding the subsequent sections.

\begin{example}\label{eg:3.5}
    Here are examples of Riemannian radial distributions and associated properties. 
    %The Riemannian Gaussian distribution represents 
    %The following examples generalize Gaussian distributions, Laplacian distributions, and uniform distributions to manifolds. 
    \begin{enumerate}
        \item[(1)] The Riemannian Gaussian distribution is  the Riemannian radial distribution with $\phi(r)=\beta r^2$ for some constant $\beta>0$.
        The density of such distribution is given by
        \$
        f_{N}(x;\alpha,\beta)=\frac{1}{Z_{N}(\beta)}e^{-\beta d_g^2(x,\alpha)},\quad\textnormal{where}\ Z_{N}(\beta)=\int_{\cM}e^{-\beta d_g^2(x,\alpha)}\dvol(x).
        \$
        %We will denote such  Riemannian Gaussian distribution with parameters $\alpha$ and $\beta$ by $\RN(\alpha,\beta)$. 
        Given $n$ independent samples $\{x_i\}_{i=1}^n$ from $f_N$, the MLE of $\alpha$ is the sample \Frechet mean:
        \$
        \alpha^{\FM}=\argmin_{\alpha\in\cM}\sum_{i=1}^nd_g^2(x_i,\alpha).
        \$
        %This extends isotropic Gaussian distributions to any Riemannian homogeneous spaces. 

        \item[(2)] The Riemannian Laplacian distribution is the Riemannian radial distribution with $\phi(r)=\beta r$ for some sufficiently large $\beta>0$. The density of such distribution is given by
        \$
        f_{\Lap}(x;\alpha,\beta)=\frac{1}{Z_{\Lap}(\beta)}e^{-\beta d_g(x,\alpha)},\quad \textnormal{where}\ Z_{\Lap}(\beta)=\int_\cM e^{-\beta d_g(x,\alpha)}\dvol(x).
        \$
        %We denote such Riemannian Laplacian distribution with parameters $\alpha$ and $\beta$ by $\RLap(\alpha,\beta).$ 
        Given $n$ independent samples $\{x_i\}_{i=1}^n$ from $f_{\Lap}$, the MLE of $\alpha$ is the sample \Frechet median:
        \$
        \alpha^{\med}=\argmin_{\alpha\in\cM}\sum_{i=1}^nd_g(x_i,\alpha).
        \$

        %\item[(3)] When $\cM=\SSS^m$ is the unit $m$-sphere and $\phi(r)=\beta(1-\cos(r))$ for some constant $\beta>0$ and $r\in [0,\pi]$, the associated Riemannian radial distribution is referred to as the von Mises-Fisher distribution. 

        \item[(3)] When $\cM$ is a compact Riemannian homogeneous space and $\phi(r)=0$ for all $r$, the associated Riemannian radial distribution is called the uniform distribution. Such distribution $f(x;\alpha,\phi)$ remains identical for all $\alpha\in\cM$, rendering the estimation of $\alpha$ impossible.
    \end{enumerate}
\end{example}

\begin{example}\label{eg:3.6}
    The von Mises-Fisher distribution  on a unit $m$-sphere $\SSS^m$ is determined by the following density function \citep{mardia2009directional}:
    \$
    f_{\vMF}(x;\alpha,\beta)=\frac{1}{Z_{\vMF}(\beta)}e^{\beta\inner{x}{\alpha}},\quad Z_{\vMF}(\beta)=\int_{\SSS^m}e^{\beta\inner{x}{\alpha}}\dvol(x),\quad \beta>0,
    \$
    where a point in $\SSS^m$ is denoted by an $\ell_2$ normalized vector $x\in\RR^{m+1}$ and $\inner{\cdot}{\cdot}$ is the inner product in $\RR^{m+1}$. It is easy to show that $\inner{x}{\alpha}=\cos d_g(x,\alpha)$, where $d_g$ is the geodesic distance on $\SSS^m$. Thus, the von Mises-Fisher distribution is a special case of the Riemannian radial distribution with $\phi(r)=\beta(1-\cos r)$, $r\in[0,\pi]$ and some $\beta>0$.
\end{example}

\section{Maximum likelihood estimation}\label{sec:4}

Consider a Riemannian homogeneous space $(\cM,g^{\cM})$ and assume Condition \ref{cond:3.1} holds. Let $f(x;\alpha,\phi)$ be the density in \eqref{equ:f} with $\phi$ being fixed and known. The goal of this section is to derive the convergence rate for the MLE of $\alpha$, based on $n$ independent samples $\{x_i\}_{i=1}^n$ from $f$. Since the MLE is an M-estimator, we will use empirical process theory to derive its convergence rate. Our first step is to give an entropy estimate for the studied functional class in Section \ref{sec:4.1}. Then, we will use empirical process theory to derive the convergence rate of the MLE in Section \ref{sec:4.2}, where the rate is measured by the hellinger distance between the true distribution and the estimated distribution. After that, we will examine the identifiability of the parameter and derive the parameter estimation rate in Section \ref{sec:4.3}.

%Since the MLE is an M-estimator, we will use empirical process theory to derive its convergence rate. 

%Subsequently, we will examine  the identifiability of the parameter and establish the parameter estimation rate in Section \ref{sec:4.3}. 

%This analysis enables us to transform the hellinger distance between functions into the geodesic distance between parameters.

%from which the parameter estimation rate will follow. 

%To determine the parameter estimation rate, we examine. 

%In this section, we aim to derive the convergence rate of the MLE in \eqref{equ:MLE}. We assume that $(\cM,g^{\cM})$ is a Riemannian homogeneous space and Condition \ref{cond:3.1} holds. 

\subsection{Entropy estimates}\label{sec:4.1}

Our first main theorem aims to establish entropy estimates for the following functional class
\#\label{equ:F}
\cF=\{f(x;\alpha,\phi)\mid\alpha\in\cB_{\cM}(\alpha^*,D)\},
\#
where $\phi$ is considered fixed and known, and $\cB_{\cM}(\alpha^*,D)$ is the geodesic ball defined by
\#\label{equ:ball}
\cB_{\cM}(\alpha^*,D)=\{\alpha\in\cM\mid d_g(\alpha,\alpha^*)<D\}.
\#
Our analysis requires $\alpha$ to be bounded, a standard condition even in the Euclidean cases. Moreover, for a valid entropy estimate, we impose additional assumptions on $\phi$. 

%Besides assuming a bounded $\alpha$, we further impose assumptions on $\phi$. 
%In this analysis, the parameter $\alpha$ is implicitly assumed to be bounded, which is a standard condition even in the Euclidean cases. Moreover, for a valid entropy estimate, we  impose further assumptions on $\phi$. 

\begin{condition}\label{cond:4.1}
    Let $(\cM,g^{\cM})$ be an $m$-dimensional Riemannian homogeneous space  with  $r_{\cM}=\sup_{x,y\in\cM}d_g(x,y)$.  Let $\phi$ be a continuous function on $[0,r_\cM]$ satisfying Condition \ref{cond:3.1}. We assume the following conditions hold.
\begin{enumerate}
        \item[(1)] The function $e^{-\phi(r)}$ is Lipschitz continuous on $[0,r_\cM]$ with a Lipschitz constant $L$.
        \item[(2)]  When $\cM$ is noncompact, the following function 
        \$
        h(r)=e^{-\phi(r/2)}\sn_{\kappa_{\min}}^{m-1}(r)
        \$ 
        is integrable over $[0,\infty)$, where $\kappa_{\min}<0$ is a lower bound on the sectional curvatures of $\cM$ and $\sn_{\kappa_{\min}}(\cdot)$ is given by \eqref{equ:sn}. Moreover, for all sufficiently small $\eta$, 
    \$
    \int_{B_\eta}^{\infty}h(r)dr\leq c_1 \eta^{c_2},
    \$
   where $B_\eta=\frac{\log(1/\eta)}{2(m-1)\sqrt{-\kappa_{\min}}}$ and $c_1,c_2>0$ are constants independent of $\eta$.
    \end{enumerate} 
\end{condition}

\begin{remark}\label{remark:4.2}
    A variety of functions $\phi$ satisfy Condition \ref{cond:4.1}. For example, $\phi(r)=\beta r^{p}$ with $p>1$ and $\beta>0$, and $\phi(r)=\beta r$ with a sufficiently large $\beta$  all satisfy Condition \ref{cond:4.1}. When $\cM=\SSS^m$ is the unit $m$-sphere,  $\phi(r)=\beta(1-\cos(r))$ also satisfies this condition. %Therefore,  Theorem \ref{thm:4.1} and Corollary \ref{corollary:4.2} below hold for Riemannian radial distributions in Example \ref{eg:3.5} and Example \ref{eg:3.6}.
    Thus, 
    %introduced in Example \ref{eg:3.5} and \ref{eg:3.6} 
    subsequent results, such as Theorem \ref{thm:4.1} and Corollary \ref{corollary:4.2}, hold for Riemannian radial distributions associated with these $\phi$.  
\end{remark}

Under Condition \ref{cond:4.1}, we can derive the entropy estimates of $\cF$ as follows. 

\begin{theorem}\label{thm:4.1}
    Let $(\cM,g^{\cM})$ be a Riemannian homogeneous space and $\phi$ a function such that Condition \ref{cond:4.1} holds. Let $f(x;\alpha,\phi)$ be the density in \eqref{equ:f} and $\cF$ be the functional class in \eqref{equ:F}. Then for sufficiently small $\epsilon$, we have
    \$
    \log\cN(\epsilon,\cF,d_{\infty})&\lesssim\log(\frac{1}{\epsilon}),\\
    \log\cN_B(\epsilon,\cF,d_{1})&\lesssim\log(\frac{1}{\epsilon}),\\
    \log\cN_{B}(\epsilon,\cF,d_h)&\lesssim\log(\frac{1}{\epsilon}),
    \$
    where $d_{\infty}$, $d_1$, $d_h$ represent the $L^{\infty}$, $L^1$, and hellinger distance, respectively, and $\lesssim$ omits constants independent of $\epsilon$. 
\end{theorem}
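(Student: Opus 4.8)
The plan is to reduce all three entropy bounds to a single covering estimate for the parameter space $\cB_\cM(\alpha^*,D)$ and a Lipschitz-type comparison between the geodesic distance of parameters and the $L^\infty$, $L^1$, and hellinger distances of the corresponding densities. First I would show that the map $\alpha\mapsto f(\cdot;\alpha,\phi)$ is Lipschitz from $(\cB_\cM(\alpha^*,D),d_g)$ into $(\cF,d_\infty)$. Since $Z(\phi)$ is a constant independent of $\alpha$ by Proposition \ref{prop:3.2}, it suffices to control $|e^{-\phi(d_g(x,\alpha_1))}-e^{-\phi(d_g(x,\alpha_2))}|$; by part (1) of Condition \ref{cond:4.1}, $e^{-\phi}$ is $L$-Lipschitz, and the triangle inequality gives $|d_g(x,\alpha_1)-d_g(x,\alpha_2)|\le d_g(\alpha_1,\alpha_2)$, so $d_\infty(f(\cdot;\alpha_1,\phi),f(\cdot;\alpha_2,\phi))\le (L/Z(\phi))\,d_g(\alpha_1,\alpha_2)$. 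A $\delta$-cover of $\cB_\cM(\alpha^*,D)$ in $d_g$ then pushes forward to an $O(\delta)$-cover of $\cF$ in $d_\infty$, so $\cN(\epsilon,\cF,d_\infty)\le\cN(c\epsilon,\cB_\cM(\alpha^*,D),d_g)$.

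Next I would bound $\cN(\delta,\cB_\cM(\alpha^*,D),d_g)$ using the volume comparison of Theorem \ref{thm:2.3}. By the standard volume argument, a maximal $\delta$-packing of the ball has cardinality at most $\vol(\cB_\cM(\alpha^*,D+\delta))/\inf_{y}\vol(\cB_\cM(y,\delta/2))$; the numerator is bounded using the upper bound $\lambda(r,\Theta)\le\sn_{\kappa_{\min}}^{m-1}(r)$ and the denominator is bounded below using $\lambda(r,\Theta)\ge\sn_{\kappa_{\max}}^{m-1}(r)$ together with $\inj(\cM)>0$ (Proposition \ref{prop:homogeneous}), so for small $\delta$ this gives $\cN(\delta,\cB_\cM(\alpha^*,D),d_g)\lesssim\delta^{-m}$, hence $\log\cN(\delta,\cB_\cM(\alpha^*,D),d_g)\lesssim m\log(1/\delta)\lesssim\log(1/\delta)$. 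Combining with the previous step yields the first bound $\log\cN(\epsilon,\cF,d_\infty)\lesssim\log(1/\epsilon)$.

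For the two bracketing bounds, I would convert the $d_\infty$-cover into brackets. Given an $\epsilon'$-cover $\{f(\cdot;\alpha_i,\phi)\}$ of $\cF$ in $d_\infty$, define the bracket $[l_i,u_i]$ with $l_i=\max\{f(\cdot;\alpha_i,\phi)-\epsilon',0\}$ and $u_i=f(\cdot;\alpha_i,\phi)+\epsilon'$; every $f\in\cF$ lies in some such bracket. The width in $L^1$ is $d_1(l_i,u_i)\le 2\epsilon'\,\vol(\cM)$ when $\cM$ is compact, which is fine; but when $\cM$ is noncompact $\vol(\cM)=\infty$ and this crude estimate fails — this is the main obstacle. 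To handle the noncompact case I would instead truncate: split $\cM$ into a large ball $\cB_\cM(\alpha^*,R_\eta)$ and its complement, use the $d_\infty$-closeness on the ball (contributing $\lesssim\epsilon'\vol(\cB_\cM(\alpha^*,R_\eta))\lesssim\epsilon'\,e^{cR_\eta}$ via the noncompact volume growth), and on the complement bound $f(\cdot;\alpha,\phi)$ pointwise by $e^{-\phi(d_g(x,\alpha))}/Z(\phi)\le e^{-\phi(d_g(x,\alpha^*)/2)}/Z(\phi)$ for $d_g(x,\alpha^*)$ large relative to $D$, whose tail integral is controlled by the integrability and the tail-decay estimate $\int_{B_\eta}^\infty h(r)dr\le c_1\eta^{c_2}$ in part (2) of Condition \ref{cond:4.1}. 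Choosing $R_\eta$ of order $B_\eta=\frac{\log(1/\eta)}{2(m-1)\sqrt{-\kappa_{\min}}}$ balances the two contributions so that the bracket width is $\lesssim\eta$ while the number of brackets needed is still $\lesssim\delta^{-m}$ with $\delta$ polynomially related to $\eta$; tracking the logarithms gives $\log\cN_B(\epsilon,\cF,d_1)\lesssim\log(1/\epsilon)$. Finally, since $d_h(l,u)^2=\|\sqrt u-\sqrt l\|_{L^2}^2\le\|u-l\|_{L^1}=d_1(l,u)$ (using $(\sqrt u-\sqrt l)^2\le|u-l|$ pointwise), the hellinger bracketing number is dominated by the $L^1$ bracketing number at width $\epsilon^2$, so $\log\cN_B(\epsilon,\cF,d_h)\le\log\cN_B(\epsilon^2,\cF,d_1)\lesssim\log(1/\epsilon^2)\lesssim\log(1/\epsilon)$, completing the proof.
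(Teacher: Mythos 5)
Your proposal follows essentially the same route as the paper: Lipschitz continuity of $\alpha\mapsto f(\cdot;\alpha,\phi)$ in $d_\infty$, a volume-comparison covering bound $\cN(\delta,\cB_{\cM}(\alpha^*,D),d_g)\lesssim\delta^{-m}$, conversion of the $d_\infty$-net into $L^1$ brackets with the envelope and the choice $R_\eta\asymp B_\eta$ in the noncompact case, and $d_h^2\le d_1$ for the hellinger bound.

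One step needs to be stated more carefully, however. With the brackets literally defined as $l_i=\max\{f_i-\epsilon',0\}$, $u_i=f_i+\epsilon'$, you have $u_i-l_i\ge\epsilon'$ pointwise on all of $\cM$, so in the noncompact case $d_1(l_i,u_i)=\int_{\cM}(u_i-l_i)\dvol=\infty$ no matter how you split the integral; "truncating" the domain does not help because the bracketing width is, by definition, the full integral of $u_i-l_i$, not just its restriction to a large ball. The fix is exactly what your complement estimate is implicitly pointing at: cap the upper bracket by the envelope, i.e.\ set $u_i=\min\{f_i+\epsilon',H\}$ with $H(x)=\tfrac{1}{Z(\phi)}e^{-\phi(d_g(x,\alpha^*)/2)}$ for $d_g(x,\alpha^*)>2D$ and $H(x)=\tfrac{1}{Z(\phi)}$ otherwise (this is what the paper does). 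Since every $f\in\cF$ satisfies $f\le H$, the brackets still cover $\cF$, and then $u_i-l_i\le\min\{2\epsilon',H\}$, so your two-region estimate with $R_\eta=B_\eta$ and the tail condition $\int_{B_\eta}^\infty h(r)dr\le c_1\eta^{c_2}$ yields a bracket width $\lesssim\eta^{c}$ with $\lesssim\eta^{-m}$ brackets, and the rest of your argument, including the hellinger step $\cN_B(\epsilon,\cF,d_h)\le\cN_B(\epsilon^2,\cF,d_1)$, goes through as written.
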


\begin{proof}
    We will prove this theorem in three steps. First, we construct an $\epsilon$-net $\cS=\{\alpha_i\}_{i=1}^N$ of the set $\cX=\cB_{\cM}(\alpha^*,D)$. By Lemma \ref{lma:c1}, this $\epsilon$-net $\cS$ can be chosen such that $N\lesssim \epsilon^{-m}$ for sufficiently small $\epsilon$, where $\lesssim$ omits constants independent of $\epsilon$. 

    Next, we use $\cS$ to construct the following net
    \#\label{equ:SF}
    \cS_{\cF}=\{f(x;\alpha_i,\phi)\mid\alpha_i\in\cS\}.
    \#
    For any $\alpha\in\cX$, there is an $\alpha_i\in\cS$ such that $d_g(\alpha,\alpha_i)\leq \epsilon$. By the Lipschitz property of $f(x;\alpha,\phi)$ with respect to its parameter $\alpha$, i.e., Lemma \ref{lma:c2}, we have
    \$
    d_\infty(f(x;\alpha,\phi),f(x;\alpha_i,\phi))\leq C\epsilon,
    \$
    where $C$ is a constant independent of $\epsilon$. Consequently, $\cS_{\cF}$ in \eqref{equ:SF} is a $C\epsilon$-net of $\cF$, defined in \eqref{equ:F}, and  
    \$
    \cN(C\epsilon,\cF,d_\infty)\leq |\cS_{\cF}|\lesssim\epsilon^{-m}.
    \$
    By rescaling $\epsilon$, we obtain the following metric entropy estimate of $\cF$:
    \$
    \log\cN(\epsilon,\cF,d_{\infty})\lesssim\log(\frac{1}{\epsilon}),
    \$
    where $\lesssim$ omits constants independent of $\epsilon$.
    This proves the first entropy inequality in the theorem. 

    To proceed, we analyze the bracketing entropy $\log\cN_{B}(\epsilon,\cF,d_1)$ and $\log\cN_B(\epsilon,\cF,d_h)$. 
    %Let $\{f_i\}_{i=1}^N$ be an $\eta$-net of $\cF$ under the distance $d_{\infty}$. 
    Observe that for any $f(x;\alpha,\phi)\in\cF$, it holds that
    \#\label{equ:4.4}
    0\leq f(x;\alpha,\phi)\leq \frac{1}{Z(\phi)},\quad\forall x\in\cM.
    \#
    In addition, if $d_{g}(x,\alpha^*)\geq 2D$, then for all $\alpha\in\cB_{\cM}(\alpha^*,D)$, we have
    \#\label{equ:4.5}
    d_g(x,\alpha)\geq d_g(x,\alpha^*)-d_g(\alpha,\alpha^*)\geq \frac{d_g(x,\alpha^*)}{2}.
    \#
    Consequently, for all $x$ with $d_g(x,\alpha^*)\geq 2D$, we have
    \#\label{equ:4.6}
    f(x;\alpha,\phi)\leq \frac{1}{Z(\phi)}e^{-\phi(d_g(x,\alpha^*)/2)},
    \#
    where we use \eqref{equ:4.5} and the fact that $\phi$ is increasing. Combining \eqref{equ:4.4} and \eqref{equ:4.6}, we conclude that
    \$
    H(x)=\left\{
    \begin{array}{ll}
        \frac{1}{Z(\phi)}e^{-\phi(d_g(x,\alpha^*)/2)}, &\textnormal{if }d_g(x,\alpha^*)>2D,  \\
        \frac{1}{Z(\phi)}, &\textnormal{otherwise}, 
    \end{array}\right.
    \$
    is an envelope for $\cF$, that is, $f(x;\alpha,\phi)\leq H(x)$ for all $x\in\cM$ and $\alpha\in\cX$. Now construct brackets $[l_i,u_i]$ as follows:   
    \$
    l_i=\max\{f_i-\eta,0\},\quad u_i=\min\{f_i+\eta,H\},
    \$
    where $\{f_i\}_{i=1}^N$ is the $\eta$-net of $\cF$ under $d_\infty$. It is clear that $\cF\subseteq\cup_{i=1}^N[l_i,u_i]$ and 
    \$
    u_i-l_i\leq \min\{2\eta,H\}.
    \$
    As a result, for any $B>0$, 
    \#\label{equ:4.7}
    d_1(u_i,l_i)\coloneqq \int_{\cM}(u_i-l_i)\dvol \leq 2\eta\cdot\vol(\cB_{\cM}(\alpha^*,B))+\int_{d_g(x,\alpha^*)> B}H(x)\dvol(x).
    \#
    To analyze this upper bound, we consider two cases, $\cM$ is compact or noncompact, separately.
    \begin{itemize}%[leftmargin=*]
        \item  {\bf Case 1: $\cM$ is compact.} In this case, by taking $B>\sup_{x\in\cM}d_g(x,\alpha^*)$, we obtain
    \$
    d_1(u_i,l_i)\leq 2\eta\cdot\vol(\cM).
    \$
    This implies that for sufficiently small $\eta$,
    \$
    \cN_B(2\eta\cdot\vol(\cM),\cF,d_1)\leq \cN(\eta,\cF,d_{\infty})\lesssim \eta^{-m}.
    \$
    where $\lesssim$ omits constants independent of $\eta$.
    By taking $\epsilon=2\eta\cdot\vol(\cM)$, we obtain that for sufficiently small $\epsilon$,
    \$
    \log\cN_B(\epsilon,\cF,d_1)\lesssim \log(\frac{1}{\epsilon}),
    \$
    where $\lesssim$ omits constants independent of $\epsilon$. Since $\cN_B(\sqrt{\epsilon},\cF,d_h)\leq\cN_B(\epsilon,\cF,d_1)$, we have 
    \$
    \log\cN_B(\epsilon,\cF,d_h)\lesssim \log(\frac{1}{\epsilon})
    \$ 
    for sufficiently small $\epsilon$, where $\lesssim$ omits constants independent of $\epsilon$.
    % This proves this theorem in the case of a compact $\cM$. 

    \item {\bf Case 2: $\cM$ is noncompact.} In this case, we take $B>2D$. The upper bound in \eqref{equ:4.7} is then reduced to
    \$
    d_1(u_i,l_i)\leq 2\eta\cdot\vol(\cB_{\cM}(\alpha^*,B))+\frac{1}{Z(\phi)}\int_{d_g(x,\alpha^*)>B}e^{-\phi(d_g(x,\alpha^*)/2)}\dvol(x).
    \$
    Using the polar coordinate expression \eqref{equ:integral} of the integral, we obtain that
    \#\label{equ:4.8}
    d_1(u_i,l_i)\leq2\eta\cdot\vol(\cB_{\cM}(\alpha^*,B))+\frac{1}{Z(\phi)}\int_{\SSS^{m-1}}\int_{B}^{\infty}e^{-\phi(r/2)}\lambda(r,\Theta)drd\Theta.
    \#
    Let $\kappa_{\min}<0$ be a lower bound on the sectional curvatures of $\cM$ that is used in Condition \ref{cond:4.1}, and $m$ the dimension of $\cM$. Then by the volume comparison theorem, Theorem \ref{thm:2.3}, we have
    \#
    \vol(\cB_{\cM}(\alpha^*,B))&\leq \vol(\SSS^{m-1})\cdot \int_{0}^B\sn_{\kappa_{\min}}^{m-1}(r)dr\notag\\
    &\leq C_1e^{\sqrt{-\kappa_{\min}}(m-1)B},\label{equ:4.9}
    \#
    where $\sn_{\kappa_{\min}}(\cdot)$ is given by \eqref{equ:sn} and $C_1$ is a constant independent of $B$. Furthermore, by Theorem \ref{thm:2.3}, we have
    \#
    \int_{\SSS^{m-1}}\int_{B}^{\infty}e^{-\phi(r/2)}\lambda(r,\Theta)drd\Theta&\leq \vol(\SSS^{m-1})\cdot\int_{B}^{\infty}e^{-\phi(r/2)}\sn_{\kappa_{\min}}^{m-1}(r)dr\notag\\
    &\leq C_2\int_{B}^{\infty}h(r)dr,\label{equ:4.10}
    \#
    where $h(r)=e^{-\phi(r/2)}\sn_{\kappa_{\min}}^{m-1}(r)$ and $C_2$ is a constant independent of $B$. By combining \eqref{equ:4.8}, \eqref{equ:4.9}, and \eqref{equ:4.10}, we obtain 
    %Then $\kappa_{\min}\leq 0$ as $\cM$ is noncompact. 
    \$
    d_1(u_i,l_i)\leq 2C_1\eta e^{\sqrt{-\kappa_{\min}}(m-1)B}+\frac{C_2}{Z(\phi)}\int_B^\infty h(r)dr.
    \$
    Taking $B=B_\eta=\frac{\log(1/\eta)}{2(m-1)\sqrt{-\kappa_{\min}}}$, and using Condition \ref{cond:4.1}, we conclude that for all sufficiently small $\eta$, 
    \$
    d_1(u_i,l_i)\leq 2C_1\eta^{1/2}+\frac{c_1C_2}{Z(\phi)}\eta^{c_2}\leq C\eta^{c},
    \$
    where $c_1,c_2,C,c>0$ are constants independent of $\eta$. This implies that for sufficiently small $\eta$, 
    \$
    \cN_{B}(C\eta^c,\cF,d_1)\leq \cN(\eta,\cF,d_\infty)\lesssim\eta^{-m},
    \$
    where $\lesssim$ omits constants independent of $\eta$. By taking $\epsilon=C\eta^c$, we obtain that for sufficiently small $\epsilon$, 
    \$
    \log\cN_B(\epsilon,\cF,d_1)\lesssim \log(\frac{1}{\epsilon}),
    \$
    where $\lesssim$ omits constants independent of $\epsilon$.
    Again, since $\cN_B(\sqrt{\epsilon},\cF,d_h)\leq\cN_B(\epsilon,\cF,d_1)$, we have 
    \$
    \log\cN_B(\epsilon,\cF,d_h)\lesssim \log(\frac{1}{\epsilon})
    \$ 
    for sufficiently small $\epsilon$, where $\lesssim$ omits constants independent of $\epsilon$. 
    \end{itemize}
The proof of the theorem is complete by combining the above two cases. 
\end{proof}

\subsection{Distribution estimation}\label{sec:4.2}

Once we obtain the bracketing entropy estimate of the functional class $\cF$ in Theorem \ref{thm:4.1}, we can  use empirical process theory to derive the convergence rate of the MLE for the parameter $\alpha$. Specifically, consider a true distribution $f(x;\alpha^{\tr},\phi)$ with the parameter $\alpha^{\tr}\in\cX\coloneqq\cB_{\cM}(\alpha^*,D)$. Then the MLE for $\alpha$, based on $n$ independent samples $\{x_i\}_{i=1}^n$ drawn from $f(x;\alpha^{\tr},\phi)$, is given by 
\#\label{equ:MLE-hat}
    \hat\alpha^{\MLE}=\argmin_{\alpha\in\cX} \sum_{i=1}^n\phi(d_g(x_i,\alpha)).
\#
To derive the convergence rate of this MLE, we use the hellinger distance between the true distribution $f(x;\alpha^{\tr},\phi)$ and the estimated distribution $f(x;\hat\alpha^{\MLE},\phi)$. Corollary \ref{corollary:4.2} shows that this  rate is root-$n$ up to logarithmic terms, matching the classical results in the Euclidean cases. 

\begin{corollary}\label{corollary:4.2}
    Suppose $(\cM,g^{\cM})$ is a Riemannian homogeneous space  and Condition \ref{cond:4.1} holds. Let $f(x;\alpha^{\tr},\phi)$ be the true density in \eqref{equ:f} with $\alpha^{\tr}\in \cB_{\cM}(\alpha^*,D)$. Let $\hat\alpha^{\MLE}$ be the MLE  for $\alpha$, based on $n$ independent samples $\{x_i\}_{i=1}^n$ from $f$, defined in \eqref{equ:MLE-hat}.   
    Then for sufficiently large $n$, it holds with probability at least $1-ce^{-c\log^2n}$ that
    \#
    d_h(f(x;\alpha^{\tr},\phi),f(x;\hat\alpha^{\MLE},\phi))\lesssim \frac{\log n}{\sqrt{n}},\label{equ:cor44}\\
    d_1(f(x;\alpha^{\tr},\phi),f(x;\hat\alpha^{\MLE},\phi))\lesssim \frac{\log n}{\sqrt{n}},\label{equ:cor44-2}
    \#
    where $d_h$ and $d_1$ are the hellinger distance and  the $L^1$ distance, respectively, $c$ is a constant, and $\lesssim$ omits constants independent of $n$. 
\end{corollary}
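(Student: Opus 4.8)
The plan is to treat \eqref{equ:cor44} as a standard convergence-rate statement for a maximum likelihood estimator and to deduce it from Theorem \ref{thm:4.1} via empirical process theory. By Proposition \ref{prop:3.2}(3), the M-estimator in \eqref{equ:MLE-hat} is exactly the MLE within the parametric model $\cF=\{f(x;\alpha,\phi)\mid\alpha\in\cB_{\cM}(\alpha^*,D)\}$, and since $\alpha^{\tr}\in\cB_{\cM}(\alpha^*,D)$ the true density $f(x;\alpha^{\tr},\phi)$ lies in $\cF$. The key input is the Hellinger bracketing bound $H_B(\epsilon,\cF,d_h)\lesssim\log(1/\epsilon)$ from Theorem \ref{thm:4.1}, together with the uniform boundedness $0\le f(x;\alpha,\phi)\le 1/Z(\phi)$ established in the proof of that theorem; the latter is what keeps the log-likelihood ratios under control.

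First I would form the bracketing integral $J_B(\delta)=\int_0^\delta\sqrt{1+H_B(\epsilon,\cF,d_h)}\,d\epsilon$; plugging in the entropy bound gives $J_B(\delta)\lesssim\int_0^\delta\sqrt{\log(1/\epsilon)}\,d\epsilon\lesssim\delta\sqrt{\log(1/\delta)}$ for small $\delta$. A standard rate theorem for MLEs (e.g., Theorem 3.4.4 in \citet{van1996weak}, or the Hellinger-consistency results of van de Geer and of Wong--Shen) then yields $d_h\bigl(f(x;\hat\alpha^{\MLE},\phi),f(x;\alpha^{\tr},\phi)\bigr)=O_P(r_n)$ for any $r_n$ satisfying $\sqrt n\,r_n^2\gtrsim J_B(r_n)$, i.e. $\sqrt n\,r_n\gtrsim\sqrt{\log(1/r_n)}$; the choice $r_n\asymp\sqrt{\log n}/\sqrt n$ works and is already within the claimed $\log n/\sqrt n$. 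The one point of care is that $f(x;\alpha,\phi)$ need not be bounded below, so the argument should be run with the transformed functions $\sqrt{f(x;\alpha,\phi)/f(x;\alpha^{\tr},\phi)}$, or $\tfrac12\log\frac{f(x;\alpha,\phi)+f(x;\alpha^{\tr},\phi)}{2f(x;\alpha^{\tr},\phi)}$, whose bracketing entropy is controlled (up to constants) by $H_B(\epsilon,\cF,d_h)$; this is the usual device that reduces the whole analysis to the Hellinger metric.

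To upgrade the $O_P$ statement to the high-probability bound $1-ce^{-c\log^2 n}$, I would run the usual peeling argument over the Hellinger shells $\{2^j r_n^*<d_h\le 2^{j+1}r_n^*\}$ with $r_n^*\asymp\sqrt{\log n}/\sqrt n$, applying a Bernstein-type maximal inequality for the empirical process on each shell. Because the class is essentially parametric (entropy of order $\log(1/\epsilon)$), the deviation probability at level $t\cdot r_n^*$ is of order $e^{-ct^2}$ after summing over shells; taking the final threshold at $\log n/\sqrt n$, i.e. $t\asymp\sqrt{\log n}$, converts this into $ce^{-c\log^2 n}$, which is precisely why the extra $\sqrt{\log n}$ factor (relative to the in-probability rate $\sqrt{\log n}/\sqrt n$) appears in \eqref{equ:cor44}. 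Finally, \eqref{equ:cor44-2} follows from \eqref{equ:cor44} by the elementary inequality $d_1(f,h)=\int|\sqrt f-\sqrt h|\,(\sqrt f+\sqrt h)\,\dvol\le\|\sqrt f-\sqrt h\|_{L^2}\,\|\sqrt f+\sqrt h\|_{L^2}\le 2\,d_h(f,h)$, valid for any two densities $f,h$ since $\int(\sqrt f+\sqrt h)^2\,\dvol\le 4$.

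The main obstacle I expect is not the entropy input, since Theorem \ref{thm:4.1} supplies exactly what is needed, but the bookkeeping around the log-likelihood ratio: controlling $\log(f^{\tr}/f)$ when $f$ may vanish, verifying that the bracketing numbers of the transformed log-density class inherit the bound of Theorem \ref{thm:4.1}, and tracking the constants through the peeling step carefully enough to land the sharp tail $e^{-c\log^2 n}$ rather than a weaker polynomial tail. Making the hypotheses of the chosen rate theorem match cleanly is the part that requires the most attention.
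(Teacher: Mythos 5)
Your proposal is correct and follows essentially the same route as the paper: the Hellinger bracketing entropy bound of Theorem \ref{thm:4.1} gives $J_B(\delta,\cF,d_h)\lesssim\int_0^\delta\log^{1/2}(1/u)\,du$, a Wong--Shen-type MLE rate theorem yields the rate $\log n/\sqrt n$ with probability $1-ce^{-c\log^2 n}$, and $d_1\le 2d_h$ gives the $L^1$ bound. The only difference is that the paper cites Theorem 2 of \citet{wong1995probability} as a black box, which already delivers the exponential tail, so the peeling/likelihood-ratio transformation you sketch is the internal machinery of that cited result rather than something you need to redo.
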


\begin{proof}
    This follows from empirical process theory and the bracketing entropy estimates in Theorem \ref{thm:4.1}. Specifically, by the bracketing entropy estimates in Theorem \ref{thm:4.1},  the bracketing entropy integral satisfies that
    \$
    J_B(\delta,\cF,d_h)\coloneqq\int_0^\delta \sqrt{\log\cN_{B}(u,\cF,d_h)}du\lesssim\int_0^\delta \log^{1/2}(\frac{1}{u})du,
    \$
    for sufficiently small $\delta$, where $\lesssim$ omits constants independent of $\delta$. By Theorem 2,  \citet{wong1995probability}, we conclude that with probability at least $1-ce^{-c\log^2n}$, 
    \$
    d_h(f(x;\alpha^{\tr},\phi),f(x;\hat\alpha^{\MLE},\phi))\lesssim\frac{\log n}{\sqrt{n}},
    \$
    where $\lesssim$ omits constants independent of $n$ and $c$ is a universal constant. This proves \eqref{equ:cor44}.  Since the $L^1$ distance is upper bounded by twice the hellinger distance, the inequality \eqref{equ:cor44-2}  follows from the inequality \eqref{equ:cor44}. This proves the corollary.  
\end{proof}

\subsection{Parameter estimation}\label{sec:4.3}

This section aims to derive the convergence rate of the MLE in terms of the geodesic distance between parameters. The key component is to examine the identifiability of the parameter $\alpha$. Clearly, if $\phi$ is a constant, as in the  uniform distribution in Example \ref{eg:3.5},  the parameter is non-identifiable and thus the parameter estimation is impossible. To avoid this undesirable situation, we further assume that $\phi$ is strictly increasing and continuously differentiable. With these additional assumptions, we can establish the identifiability of the parameter $\alpha$ in the function $f(x;\alpha,\phi)$. Furthermore, we show that the geodesic distance convergence rate is upper bounded by the rate of $L^1$ distance convergence. These results are formally stated in Theorem \ref{thm:4.3}. 

%To transform the hellinger distance between distributions into the geodesic distance between parameters, we need to investigate the identifiability of the parameter. 

%\begin{condition}\label{cond:4.4}
%    Consider an $m$-dimensional Riemannian homogeneous space $(\cM,g^{\cM})$ with $r_{\cM}=\sup_{x,y}d_g(x,y)$. Let $\phi$ be a continuous function over $[0,r_\cM]$. 
%    \begin{enumerate}
%        \item[(1)] Assume $\phi$ is strictly increasing over $[0,r_{\cM}]$. 
        %\item[(2)] Assume $e^{-\frac{1}{2}\phi(r)}$ is Lipschitz continuous over $[0,r_\cM]$ with a Lipschitz constant $L_1$.
        %\item[(3)] If $\cM$ is noncompact, we assume that 
        %\$
        %\int_0^\infty e^{-\phi(r/2)}\sn_{\kappa_{\min}}^{m-1}(r)dr< \infty
        %\$
        %where $\kappa_{\min}$ is a lower bound on the sectional curvatures of $\cM$ and $\sn_{\kappa_{\min}}(\cdot)$ is given by \eqref{equ:sn}. 
        
%        \item[(3)] Assume $\phi$ is differentiable over $(0,r_{\cM})$. 
%     \end{enumerate}
%\end{condition}

\begin{condition}\label{cond:4.5}
    Let $(\cM,g^{\cM})$ be a Riemannian homogeneous space with $r_{\cM}=\sup_{x,y}d_g(x,y)$. Assume $\phi$ is a function satisfying Condition \ref{cond:4.1}. In addition, we assume that $\phi$ is strictly increasing on $[0,r_\cM]$ and continuously differentiable on $(0,r_{\cM})$.
\end{condition}

\begin{theorem}\label{thm:4.3}
    Suppose $(\cM,g^{\cM})$ is a Riemannian homogeneous space and $\phi$ is a function satisfying Condition \ref{cond:4.5}. Let $f(x;\alpha,\phi)$ be the density in \eqref{equ:f} and $\cX$ be a compact set in $\cM$. Let $\alpha_0\in\cX$ be a fixed point. Then for all $\alpha\in\cX$, the following inequality holds:
    \$
    d_g(\alpha,\alpha_0)\leq C\cdot d_1(f(x;\alpha,\phi),f(x;\alpha_0,\phi)),
    \$
    where $d_1$ is the $L^1$ distance and $C>0$ is a constant independent of $\alpha$.
\end{theorem}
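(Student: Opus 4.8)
The plan is to reduce the theorem to two statements about the function $g(\alpha):=d_1\bigl(f(x;\alpha,\phi),\,f(x;\alpha_0,\phi)\bigr)$: (a) $g$ is continuous on $\cM$ and $g(\alpha)=0$ only when $\alpha=\alpha_0$ (identifiability); and (b) there are constants $\delta_0,c_1>0$ with $g(\alpha)\ge c_1\,d_g(\alpha,\alpha_0)$ whenever $d_g(\alpha,\alpha_0)<\delta_0$ (a local linear lower bound). Granting (a) and (b), the theorem follows by patching: $\cX\setminus\cB_{\cM}(\alpha_0,\delta_0)$ is compact, on it $g$ is continuous and strictly positive, hence $g\ge m_0$ for some $m_0>0$, so $g(\alpha)\ge (m_0/\textnormal{diam}(\cX))\,d_g(\alpha,\alpha_0)$ there; on $\cX\cap\cB_{\cM}(\alpha_0,\delta_0)$ the local bound (b) applies; thus the inequality holds with $C=\max\{\textnormal{diam}(\cX)/m_0,\,1/c_1\}$. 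Throughout I enclose $\cX$ in a geodesic ball $\cB_{\cM}(\alpha^*,D)$, so that the integrable envelope $H$ built in the proof of Theorem~\ref{thm:4.1} is available for the parameters under consideration.

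For continuity, $|g(\alpha)-g(\alpha')|\le\|f(x;\alpha,\phi)-f(x;\alpha',\phi)\|_{L^1}$, and since $x\mapsto e^{-\phi(d_g(x,\cdot))}$ is continuous in the location parameter while being dominated by $Z(\phi)H(x)$ with $H\in L^1(\cM)$, dominated convergence gives $\|f(x;\alpha,\phi)-f(x;\alpha',\phi)\|_{L^1}\to0$ as $\alpha'\to\alpha$; hence $g$ is continuous. For identifiability, if $g(\alpha)=0$ then $f(x;\alpha,\phi)=f(x;\alpha_0,\phi)$ for almost every $x$, hence for all $x$ since both sides are continuous and the Riemannian volume has full support; because $Z(\phi)$ does not depend on the location, this gives $e^{-\phi(d_g(x,\alpha))}=e^{-\phi(d_g(x,\alpha_0))}$ for all $x$, and $\phi$ strictly increasing (hence injective) forces $d_g(x,\alpha)=d_g(x,\alpha_0)$ for all $x$; evaluating at $x=\alpha_0$ yields $\alpha=\alpha_0$.

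The heart of the argument is (b). Fix $R>0$ smaller than $\inj(\cM)$ and $r_\cM$ (and than $\pi/\sqrt{\kappa_{\max}}$ when $\kappa_{\max}>0$), so that normal coordinates at $\alpha_0$ are valid on $\cB_{\cM}(\alpha_0,R)$ and, by Theorem~\ref{thm:2.3}, $\lambda(r,\Theta)\ge\sn_{\kappa_{\max}}^{m-1}(r)>0$ for $0<r<R$. For $\alpha=\Exp_{\alpha_0}(tv)$ with $\|v\|=1$ and $t=d_g(\alpha,\alpha_0)$, restricting the integral defining $g(\alpha)$ to $\cB_{\cM}(\alpha_0,R)$ gives
\[
\frac{g(\alpha)}{t}\ \ge\ \frac{1}{Z(\phi)}\int_{\cB_{\cM}(\alpha_0,R)}\frac{\bigl|e^{-\phi(d_g(x,\alpha))}-e^{-\phi(d_g(x,\alpha_0))}\bigr|}{t}\,\dvol(x).
\]
Here the integrand is bounded by the Lipschitz constant $L$ of Condition~\ref{cond:4.1} (using $|d_g(x,\alpha)-d_g(x,\alpha_0)|\le t$), a $t$-independent constant integrable over the finite-volume ball; and for almost every $x$ — namely those with $\alpha_0\notin\Cut(x)\cup\{x\}$, for which $d_g(x,\cdot)$ is smooth near $\alpha_0$ with $\grad_{\alpha_0}d_g(x,\cdot)=-\Theta_x$, writing $x=\Exp_{\alpha_0}(r_x\Theta_x)$ — the $C^1$ regularity of $\phi$ on $(0,r_\cM)$ and the chain rule give that the integrand converges, as $t\to0$, to $e^{-\phi(r_x)}\phi'(r_x)\,|\langle v,\Theta_x\rangle|$. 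By Fatou's lemma, passing to polar coordinates $x=\Exp_{\alpha_0}(r\Theta)$, and using $\lambda\ge\sn_{\kappa_{\max}}^{m-1}$,
\[
\liminf_{t\to0}\frac{g(\Exp_{\alpha_0}(tv))}{t}\ \ge\ \frac{c_m}{Z(\phi)}\int_0^R e^{-\phi(r)}\phi'(r)\,\sn_{\kappa_{\max}}^{m-1}(r)\,dr\ \ge\ \frac{c_m\,\underline{c}}{Z(\phi)}\bigl(e^{-\phi(\delta)}-e^{-\phi(R)}\bigr),
\]
where $c_m:=\int_{\SSS^{m-1}}|\langle v,\Theta\rangle|\,d\Theta>0$ is the same constant for every unit $v$ by rotational invariance, $\delta\in(0,R)$ is fixed, and $\underline{c}:=\min_{[\delta,R]}\sn_{\kappa_{\max}}^{m-1}>0$. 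The right-hand side is strictly positive by strict monotonicity of $\phi$ and does not depend on $v$. Finally, if no uniform pair $(\delta_0,c_1)$ worked, one could choose $\alpha_k=\Exp_{\alpha_0}(t_kv_k)\to\alpha_0$ with $g(\alpha_k)/t_k\to0$; passing to a subsequence with $v_k\to v$ on the compact unit sphere of $T_{\alpha_0}\cM$ and rerunning the Fatou estimate contradicts the positivity just obtained. This establishes (b).

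The main obstacle is the local estimate (b) — in essence a nondegenerate-Fisher-information statement that must be made quantitative and uniform over directions $v$. Two issues require care: (i) on a noncompact $\cM$ one cannot differentiate under the integral sign globally, which is why the integral is restricted to a geodesic ball of finite volume and the $t$-free bound $\bigl|e^{-\phi(d_g(x,\alpha))}-e^{-\phi(d_g(x,\alpha_0))}\bigr|\le L\,d_g(\alpha,\alpha_0)$ serves as dominating function; and (ii) the lower bound must not degenerate as $v$ varies, which is handled via the rotational invariance of $\int_{\SSS^{m-1}}|\langle v,\Theta\rangle|\,d\Theta$ and compactness of the unit sphere. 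The remaining ingredients — continuity of $g$, identifiability, and the patching over the compact set $\cX$ — are routine.
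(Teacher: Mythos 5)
Your proposal is correct, and its overall architecture (a local linear lower bound near $\alpha_0$ plus a compactness/patching argument on $\cX$ away from $\alpha_0$) coincides with the paper's: the paper also splits into the local claim \eqref{equ:4.13} (its Lemma \ref{lma:c6}) and a positive infimum over $\cX-\cB_{\cM}(\alpha_0,\epsilon_0)$, the latter obtained by a sequential-compactness contradiction via Lemma \ref{lma:c3} (continuity as in your dominated-convergence step, Lemma \ref{lma:c5}, plus identifiability as in your Lemma-\ref{lma:c4}-style argument). Where you genuinely diverge is in the proof of the local estimate. The paper argues by contradiction: it passes to a subsequence of approach directions lying in a small cone around a unit vector $V$, chooses an annular region $\cR$ of points roughly opposite to $V$ at radii where $\phi'\geq c_0$, and uses the Rauch comparison theorem together with the spherical law of cosines (Lemma \ref{lma:c8}) to show $d_g(x,y_s)-d_g(x,\alpha_0)\geq C_0\,d_g(y_s,\alpha_0)$ uniformly on $\cR$, whence $T(y_s,\alpha_0)\geq c_0C_0\vol(\cR)>0$. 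You instead differentiate along geodesics: using smoothness of $d_g(x,\cdot)$ off $\Cut(x)\cup\{x\}$, the first-variation formula $\grad_{\alpha_0}d_g(x,\cdot)=-\Theta_x$, Fatou's lemma, and the lower volume bound $\lambda\geq\sn_{\kappa_{\max}}^{m-1}$ from Theorem \ref{thm:2.3}, you obtain an explicit Fisher-information-type constant $\frac{c_m}{Z(\phi)}\int_\delta^R e^{-\phi}\phi'\sn_{\kappa_{\max}}^{m-1}$, with uniformity in the direction from rotational invariance of $\int_{\SSS^{m-1}}|\langle v,\Theta\rangle|d\Theta$ and compactness of the unit sphere. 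Your route avoids the triangle-comparison geometry of Lemma \ref{lma:c8} entirely and yields an explicit constant; the paper's route avoids any use of differentiability of the distance function, needing only the mean value theorem for $\phi$ and metric comparison. One step you state tersely — ``rerunning the Fatou estimate'' for $\alpha_k=\Exp_{\alpha_0}(t_kv_k)$ with $v_k\to v$ — does hold, but it deserves a line: for a.e.\ $x$ the map $w\mapsto e^{-\phi(d_g(x,\Exp_{\alpha_0}(w)))}$ is $C^1$ near $0\in T_{\alpha_0}\cM$ (inside your ball of radius $R<\inj(\cM)$), so the mean value theorem gives pointwise convergence of the difference quotients along $(t_kv_k)$ to the derivative in the limiting direction $v$, which is exactly what Fatou needs; with that spelled out the argument is complete.
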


\begin{proof}
    By Lemma \ref{lma:c6}, we have
    \#\label{equ:4.13}
    \lim_{\epsilon\to 0}\inf\left\{{D(\alpha,\alpha_0)}/{d_g(\alpha,\alpha_0)}\mid d_g(\alpha,\alpha_0)<\epsilon\right\}>0,
    \#
    where
    \$
    D(\alpha,\alpha_0)=d_1(f(x;\alpha,\phi),f(x;\alpha_0,\phi)).
    \$
    Therefore, there exists a positive constant $\epsilon_0$ such that for all $\alpha$ with $d_g(\alpha,\alpha_0)<\epsilon_0$, the following inequality holds:
    \#\label{equ:4.15}
    d_g(\alpha,\alpha_0)\leq C_1\cdot D(\alpha,\alpha_0),
    \#
    where $C_1>0$ is a constant independent of $\alpha$. To prove the theorem, we then show that
    \#\label{equ:4.16}
    \inf_{\alpha\in\cX-\cB_{\cM}(\alpha_0,\epsilon_0)}D(\alpha,\alpha_0)>0,
    \#
    where $\cB_{\cM}(\alpha_0,\epsilon_0)=\{x\in\cM\mid d_g(x,\alpha)<\epsilon_0\}$ is a geodesic ball. Suppose on the contrary that
    \$
    \inf_{\alpha\in\cX-\cB_{\cM}(\alpha_0,\epsilon_0)}D(\alpha,\alpha_0)=0.
    \$
    Then there exists a sequence $\{\alpha_n\}_{n=1}^{\infty}\subseteq\cX-\cB_{\cM}(\alpha_0,\epsilon_0)$ such that 
    \$
    D(\alpha_n,\alpha_0)\to 0.
    \$
    Since $\cX-\cB_{\cM}(\alpha_0,\epsilon_0)$ is a compact set, we can apply Lemma \ref{lma:c3} to obtain that $d_g(\alpha_n,\alpha_0)\to0$. This contradicts the fact that $d_g(\alpha_n,\alpha_0)\geq \epsilon_0$, and thus we prove the result \eqref{equ:4.16}. Combining this with \eqref{equ:4.15}, we immediately obtain the theorem.  
    %Therefore, for sufficiently large $n$, $d_g(\alpha_n,\alpha_0)$ is sufficiently small and 
    %\$
    %d_g(\alpha_n,\alpha_0)\leq C\cdot D(\alpha_n,\alpha_0),
    %\$
    %for some constant $C>0$ independent of $n$. This proves the theorem. 
    %It now remains to prove the claim \eqref{equ:4.13}, and we will use proof by contradiction. Suppose $\{y_s\}$ is a sequence converging to $\alpha_0$ in the distance $d_g$ such that $D(y_s,\alpha_0)/d_g(y_s,\alpha_0)\to0$. 
\end{proof}

Theorem \ref{thm:4.3} allows us to transform the hellinger/$L^1$ distance convergence rate into the parameter estimation rate. In particular, by combining Theorem \ref{thm:4.3} with Corollary \ref{corollary:4.2}, we can demonstrate that the parameter estimation rate of the MLE is root-$n$ up to logarithmic terms. This result matches the classical results in the Euclidean cases and is presented in Corollary \ref{corollary:4.4}. 
%, and is presented in the following corollary. 
%Such results are presented in the following corollary.  

\begin{corollary}\label{corollary:4.4}
    Assume $(\cM,g^{\cM})$ is a Riemannian homogeneous space and $\phi$ satisfies Condition \ref{cond:4.5}. Let $f(x;\alpha^{\tr},\phi)$ be the true density in \eqref{equ:f} with $\alpha^{\tr}\in\cB_{\cM}(\alpha^*,D)$. Let $\hat\alpha^{\MLE}$ be the MLE  for $\alpha$, based on $n$ independent samples $\{x_i\}_{i=1}^n$ from $f$, defined in \eqref{equ:MLE-hat}.   
    Then for sufficiently large $n$, it holds with probability at least $1-ce^{-c\log^2n}$ that
    \$
    d_g(\hat\alpha^{\MLE},\alpha^{\tr})\lesssim \frac{\log n}{\sqrt{n}},
    \$
    where $c$ is a constant independent of $n$ and $\lesssim$ omits constants independent of $n$.
\end{corollary}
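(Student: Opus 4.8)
The plan is to derive the geodesic convergence rate by composing two results already established: Corollary \ref{corollary:4.2}, which controls the $L^1$ distance between the true and the estimated density, and Theorem \ref{thm:4.3}, which bounds the geodesic distance between parameters by the $L^1$ distance between the corresponding densities. Since $(\cM,g^{\cM})$ is homogeneous it is complete (Proposition \ref{prop:homogeneous}), so by the Hopf--Rinow theorem $\cX\coloneqq\overline{\cB_{\cM}(\alpha^*,D)}$ is a compact subset of $\cM$, and both the true parameter $\alpha^{\tr}$ and the constrained MLE $\hat\alpha^{\MLE}$ of \eqref{equ:MLE-hat} belong to $\cX$. Note also that Condition \ref{cond:4.5} contains Condition \ref{cond:4.1}, so Corollary \ref{corollary:4.2} is applicable.

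First I would invoke Corollary \ref{corollary:4.2}: for sufficiently large $n$, with probability at least $1-ce^{-c\log^2 n}$,
\$
d_1\big(f(x;\alpha^{\tr},\phi),\,f(x;\hat\alpha^{\MLE},\phi)\big)\lesssim\frac{\log n}{\sqrt{n}}.
\$
Next I would apply Theorem \ref{thm:4.3} with the compact set $\cX$ and the fixed base point $\alpha_0=\alpha^{\tr}\in\cX$, obtaining a constant $C>0$ (depending on $\cX$, $\phi$, $\cM$, and $\alpha^{\tr}$, but not on the other argument) with $d_g(\alpha,\alpha^{\tr})\leq C\,d_1(f(x;\alpha,\phi),f(x;\alpha^{\tr},\phi))$ for every $\alpha\in\cX$. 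Taking $\alpha=\hat\alpha^{\MLE}$ and combining with the previous display gives, on the same high-probability event,
\$
d_g\big(\hat\alpha^{\MLE},\alpha^{\tr}\big)\leq C\,d_1\big(f(x;\hat\alpha^{\MLE},\phi),f(x;\alpha^{\tr},\phi)\big)\lesssim\frac{\log n}{\sqrt{n}},
\$
which is the claimed conclusion.

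The argument is otherwise a direct substitution, so there is no substantive obstacle here; the only points that need attention are bookkeeping ones. One should make sure the domain on which Theorem \ref{thm:4.3} is invoked — I take it to be the \emph{closed} ball $\cX=\overline{\cB_{\cM}(\alpha^*,D)}$, compact and containing $\hat\alpha^{\MLE}$ — is consistent with the open ball $\cB_{\cM}(\alpha^*,D)$ appearing in Corollary \ref{corollary:4.2}; this causes no difficulty. The constant absorbed into $\lesssim$ may depend on $\alpha^{\tr}$, which is permitted by the statement (only independence of $n$ is required). If instead one wanted the bound to hold uniformly over $\alpha^{\tr}\in\cB_{\cM}(\alpha^*,D)$ — as would be needed, say, to pair this with a minimax upper-bound statement — the extra work would be to make the constant $C$ of Theorem \ref{thm:4.3} uniform over the base point, which can be arranged by using the homogeneity of $\cM$ to transport every admissible $\alpha^{\tr}$ to a fixed reference point by an isometry and appealing to the volume-density invariance \eqref{equ:homo}. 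Since no such uniformity is claimed here, the proof reduces to the two displayed lines above.
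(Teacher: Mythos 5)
Your proposal is correct and follows exactly the paper's route: the paper proves Corollary \ref{corollary:4.4} by directly combining Theorem \ref{thm:4.3} (applied with the compact set containing the parameters and base point $\alpha^{\tr}$) and Corollary \ref{corollary:4.2}. Your additional bookkeeping about the closed ball and the dependence of the constant on $\alpha^{\tr}$ is fine and consistent with what the paper leaves implicit.
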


\begin{proof}
    This immediately follows from Theorem \ref{thm:4.3} and Corollary \ref{corollary:4.2}.
\end{proof}

\begin{remark}
    It can be seen that the conditions in Corollary \ref{corollary:4.4}  hold for a wide range of Riemannian radial distributions, such as Riemannian Gaussian distributions, Riemannian Laplacian distributions ($\phi(r)=\beta r$ with a sufficiently large $\beta$), and von Mises-Fisher distributions. Thus, the conclusions in Corollary \ref{corollary:4.4} also hold for these Riemannian radial distributions. 
\end{remark}

\section{Minimax lower bounds for parameter estimation rates}\label{sec:5}

In the previous section, we derived the parameter estimation rate of the MLE for $\alpha$. To determine the optimality of this rate, this section establishes a lower bound for such rate. Our analysis employs the classical minimax framework, which will be revisited in Section \ref{sec:minimax}. Then we will give a high-level intuition behind our minimax analysis in Section \ref{sec:intuition}, highlighting the role of symmetry in our problem. Then in Section \ref{sec:5.1} and \ref{sec:5.2}, we will rigorously establish the minimax lower bounds in the case of simply connected Riemannian symmetric spaces. These minimax lower bounds will match the upper bounds established in Section \ref{sec:4} up to logarithmic terms, thus establishing the optimality of the MLE for a wide range of Riemannian radial distributions.

\subsection{Minimax framework}\label{sec:minimax}

First, let us review the classical minimax analysis framework \citep{wainwright2019high} within the context of our problem. Consider a Riemannian symmetric space $(\cM,g^{\cM})$ and assume that $\phi$ is a function satisfying Condition \ref{cond:3.1}. For any $\alpha\in\cM$, denote by $f(x;\alpha,\phi)$  the density in \eqref{equ:f}. Our objective is to estimate the parameter $\alpha$, based on samples $\{x_i\}_{i=1}^n$ independently drawn from the distribution $f(x;\alpha,\phi)$ for some unknown $\alpha\in\cX\coloneqq\cB_{\cM}(\alpha^*,D)$. Given any estimator $\hat\alpha$, which is  a measurable function mapping from $\cM^n$ to $\cM$, we define its risk as 
\$
\EE[d_g(\hat\alpha(x_1,\ldots,x_n),\alpha)],
\$
where the expectation is taken over the samples $\{x_i\}_{i=1}^n$. Also, we define the minimax risk of the parameter estimation problem as 
\#\label{equ:5.1}
\cR_n(\phi)=\inf_{\hat\alpha}\sup_{\alpha\in\cX}\EE[d_g(\hat\alpha(x_1,\ldots,x_n),\alpha)],
\#
where we take the supremum over all $\alpha\in\cX$ and the infimum over all estimators $\hat\alpha$. The goal  is to establish a lower bound for this minimax risk under certain conditions on $\phi$. 

One useful technique in bounding the minimax risk is through the Fano's inequality from information theory. Given a finite set $\cV\subseteq\cX$, we say it is $2\delta$-separated if $d_g(\alpha,\alpha')\geq 2\delta$ for all distinct pairs $\alpha,\alpha'\in\cV$. Given any such $2\delta$-separated set, the Fano's inequality states that the minimax risk \eqref{equ:5.1} has the following lower bound:
\#\label{equ:5.2}
\cR_n(\phi)\geq \delta\cdot\left\{1-\frac{I_{\cV}+\log 2}{\log|\cV|}\right\},
\#
where $|\cV|$ is the cardinality of the set $\cV$, and 
\$
I_{\cV}=\frac{1}{|\cV|}\sum_{\alpha\in\cV}D_{\KL}(P^n_{\alpha}\| \bar P).
\$
Here $P_{\alpha}$ represents the distribution $f(x;\alpha,\phi)$, $P_{\alpha}^n$ denotes the $n$-product of the distribution $P_{\alpha}$, $\bar P$ denotes the mixture distribution $\frac{1}{|\cV|}\sum_{\alpha\in\cV}P_{\alpha}^n$, and $D_{\KL}$ is the KL divergence. By the convexity of the KL divergence, we have that
\#
I_{\cV}&\leq \frac{1}{|\cV|^2}\sum_{\alpha,\alpha'\in\cV}D_{\KL}(P_{\alpha}^n\|P_{\alpha'}^n)\notag\\
&\overset{({\rm i})}{=}n\cdot\frac{1}{|\cV|^2}\sum_{\alpha,\alpha'\in\cV}D_{\KL}(P_\alpha\|P_{\alpha'})\notag\\
&\leq n \cdot \sup_{\alpha,\alpha'\in\cV}D_{\KL}(P_{\alpha}\|P_{\alpha'}),\label{equ:5.3}
\#
where (i) uses the equality $D_{\KL}(P_{\alpha}^n\|P_{\alpha'}^n)=nD_{\KL}(P_{\alpha}\|P_{\alpha'})$. By combining \eqref{equ:5.3} with \eqref{equ:5.2}, we obtain the following lower bound for the minimax risk:
\#\label{equ:5.4}
\cR_n(\phi)\geq \delta\cdot\left\{1-\frac{n\cdot\sup_{\alpha,\alpha'\in\cV}D_{\KL}(P_{\alpha}\|P_{\alpha'})+\log 2}{\log|\cV|}\right\}.
\#
In concrete problems, we will establish the lower bound for $\cR_n(\phi)$ by selecting a suitable set $\cV$ and determining upper bounds on $D_{\KL}(P_{\alpha}\|P_{\alpha'})$ for all pairs $\alpha,\alpha'\in\cV$. 
%To derive specific lower bounds for $\cR_n(\phi)$, we only need to select an appropriate set $\cV$ and establish upper bounds on $D_{\KL}(P_{\alpha}\|P_{\alpha'})$ for all pairs $\alpha,\alpha'\in\cV$. 
%We assume that $(\cM,g^{\cM})$ is a simply connected Riemannian symmetric space and $\phi$ is a function such that Condition \ref{cond:3.1} is satisfied.  

\subsection{Symmetry}\label{sec:intuition}

To proceed, let us introduce our intuitions and emphasize the role of symmetry in our problem. Let $(\cM,g^{\cM})$ be a Riemannian symmetric space and assume $\phi$ satisfies Condition \ref{cond:3.1}. Our goal is to derive a lower bound on $\cR_{n}(\phi)$, defined in \eqref{equ:5.1}. For this purpose, we choose $\cV=\{\alpha,\alpha(2\delta),\alpha(-2\delta)\}$, where $\alpha\in\cX$ and $\alpha(t)$ is a unit-speed geodesic in $\cM$ with $\alpha(0)=\alpha$. We assume $\delta$ is sufficiently small such that $\cV\subseteq\cX$ and the distance between $\alpha(2\delta)$ and $\alpha(-2\delta)$ is $4\delta$. Thus, $\cV$ is $2\delta$-separated and we can use \eqref{equ:5.4} to derive a lower bound for $\cR_n(\phi)$. 

We need to establish an upper bound on $\sup_{\alpha,\alpha'\in\cV}D_{\KL}(P_\alpha\|P_{\alpha'})$, where $P_{\alpha}$ denotes the density $f(x;\alpha,\phi)$. For example, let us consider the pair $(\alpha,\alpha(2\delta))$. By definition of $f$, we can rewrite the KL divergence between $P_{\alpha}$ and $P_{\alpha(2\delta)}$ as follows:
\#\label{equ:DKL}
D_{\KL}(P_{\alpha}\|P_{\alpha(2\delta)})=\frac{1}{Z(\phi)}\cdot (I(2\delta)-I(0)),
\#
where $I(t),t\in\RR$ is defined as
\#\label{equ:5.6}
I(t)=\int_{\cM}\phi(d_g(x,\alpha(t)))e^{-\phi(d_g(x,\alpha))}\dvol(x).
\#
Here $\alpha(t)$ is the unit-speed geodesic passing $\alpha(0)=\alpha$ and $\alpha(2\delta)$. Our valuable observation is the following symmetry result. 

\begin{proposition}[Symmetry]\label{prop:5.1}
    Consider a Riemannian symmetric space $(\cM,g^{\cM})$  and assume Condition \ref{cond:3.1} holds. Let $\alpha(t)$ be a geodesic in $\cM$ with $\alpha(0)=\alpha$, and define $I(t)$ as the integral \eqref{equ:5.6}. Then $I(t)=I(-t)$ holds for all $t\in\RR$. 
\end{proposition}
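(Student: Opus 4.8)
The plan is to exploit the defining property of a Riemannian symmetric space: for each point there is an involutive isometry acting as $-\mathrm{id}$ on the tangent space at that point. The natural candidate is the geodesic symmetry $F$ at the base point $\alpha = \alpha(0)$. This isometry fixes $\alpha$ and reverses geodesics through $\alpha$, so in particular $F(\alpha(t)) = \alpha(-t)$ for all $t$, since $\alpha(\cdot)$ is a geodesic with $\alpha(0)=\alpha$ and $dF_\alpha = -\mathrm{id}$ maps the initial velocity $\alpha'(0)$ to $-\alpha'(0)$.

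The key computation is then a change of variables in the integral \eqref{equ:5.6} under $x \mapsto F(x)$. Since $F$ is an isometry, it preserves geodesic distances: $d_g(F(x), F(y)) = d_g(x,y)$ for all $x,y$. Applying this with $y = \alpha$ gives $d_g(F(x),\alpha) = d_g(F(x),F(\alpha)) = d_g(x,\alpha)$, so the factor $e^{-\phi(d_g(x,\alpha))}$ is invariant under $x \mapsto F(x)$. Applying it with $y = \alpha(t)$ gives $d_g(F(x), F(\alpha(t))) = d_g(x,\alpha(t))$, i.e. $d_g(F(x),\alpha(-t)) = d_g(x,\alpha(t))$, so the factor $\phi(d_g(x,\alpha(t)))$ becomes $\phi(d_g(F(x),\alpha(-t)))$. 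Finally, since $F$ is an isometry, it preserves the volume density, so the change of variables $x \mapsto F(x)$ leaves $\dvol$ invariant; this is precisely the content of \eqref{equ:homo} in Proposition \ref{prop:homogeneous}, applied with $F \in \Iso(\cM)$.

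Putting these together: substituting $x = F(y)$ in $I(t)$ and using $F\circ F = \mathrm{id}$,
\begin{align*}
I(t) &= \int_{\cM}\phi(d_g(x,\alpha(t)))e^{-\phi(d_g(x,\alpha))}\dvol(x) \\
&= \int_{\cM}\phi(d_g(F(y),\alpha(t)))e^{-\phi(d_g(F(y),\alpha))}\dvol(y) \\
&= \int_{\cM}\phi(d_g(y,\alpha(-t)))e^{-\phi(d_g(y,\alpha))}\dvol(y) = I(-t),
\end{align*}
where the second equality is \eqref{equ:homo} and the third uses $d_g(F(y),\alpha(t)) = d_g(y,\alpha(-t))$ together with $d_g(F(y),\alpha)=d_g(y,\alpha)$. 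This gives the claim.

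The only genuine point requiring care is the identity $F(\alpha(t)) = \alpha(-t)$, i.e. verifying that the geodesic symmetry at $\alpha$ actually reverses the particular geodesic $\alpha(\cdot)$. This follows because isometries map geodesics to geodesics and are determined on a geodesic by their value and differential at one point: the curve $t \mapsto F(\alpha(t))$ is a geodesic through $F(\alpha(0)) = \alpha$ with initial velocity $dF_\alpha(\alpha'(0)) = -\alpha'(0)$, and $t\mapsto \alpha(-t)$ is the unique geodesic with the same initial data; hence they coincide. Everything else is a routine change of variables, so I expect no substantial obstacle beyond stating this reversal property cleanly. One should also note that integrability of the integrand in \eqref{equ:5.6} — which is needed to justify the manipulations — is part of the standing assumptions or follows from Condition \ref{cond:3.1} and the boundedness of $\phi$ on the relevant range; if $I(t)$ could be infinite the statement $I(t)=I(-t)$ still holds in $[0,\infty]$ by the same symmetry argument.
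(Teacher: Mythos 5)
Your proof is correct and follows essentially the same route as the paper: apply the geodesic symmetry $F$ at $\alpha$ (the involutive isometry with $F(\alpha)=\alpha$ and $F(\alpha(t))=\alpha(-t)$), then change variables in the integral using the isometry-invariance of $d_g$ and the volume identity \eqref{equ:homo}. Your extra verification that $F$ reverses the geodesic via uniqueness of geodesics is a detail the paper states without proof, but the argument is the same.
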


\begin{proof}
    Since $\cM$ is a Riemannian symmetric space, there is an isometry $F$ of $\cM$ such that $F\circ F$ is the identity, $F(\alpha)=\alpha$,  and $F(\alpha(t))=\alpha(-t)$ for all $t\in\RR$, where $\alpha(t)$ represents a geodesic with $\alpha(0)=\alpha$. Using this $F$, we can show that
    \$
    I(t)&=\int_{\cM}\phi(d_g(x,\alpha(t)))e^{-\phi(d_g(x,\alpha))}\dvol(x)\\
    &\overset{({\rm i})}{=}\int_{\cM}\phi(d_g(F(x),\alpha(t)))e^{-\phi(d_g(F(x),\alpha))}\dvol(x)\\
    &\overset{({\rm ii})}{=}\int_{\cM}\phi(d_g(x,F(\alpha(t))))e^{-\phi(d_g(x,F(\alpha)))}\dvol(x)\\
    &\overset{({\rm iii})}{=}\int_{\cM}\phi(d_g(x,\alpha(-t)))e^{-\phi(d_g(x,\alpha))}\dvol(x)\\
    &=I(-t),
    \$
    where (i) uses the property \eqref{equ:homo}, (ii) uses the facts that $F\in\Iso(\cM)$ and $F\circ F$ is the identity, and (iii) follows from the definition of $F$. 
\end{proof}

Proposition \ref{prop:5.1} shows that $I(t)=I(-t)$ for all $t\in\RR$. Thus, if $I(t)$ has  desired differentiability, then 
\#\label{equ:5.7}
I(2\delta)-I(0)=O(\delta^2)
\#
for sufficiently small $\delta$. Combining this with \eqref{equ:DKL}, we obtain $D_{\KL}(P_{\alpha}\|P_{\alpha(2\delta)})=\cO(\delta^2)$ for sufficiently small $\delta$. If we can derive similar results for all pairs $\alpha,\alpha'\in\cV$, then we actually prove that for sufficiently small $\delta$,
\$
\cR_n(\phi)\geq \delta\cdot\left\{1-\frac{C n\delta^2+\log2}{\log 3}\right\},
\$
 where $C>0$ is a universal constant. It implies that $\cR_n(\phi)\geq C_1n^{-1/2}$ for some universal constant $C_1>0$ and sufficiently large $n$, by taking  $Cn\delta^2=0.01$. Up to logarithmic terms, this lower bound matches the upper bound we established in Section \ref{sec:4}, and thus proving the optimality of the MLE. 

 In the subsequent sections, we will delve into rigorous proofs of the above intuitions. Some of the unsolved challenges are
 \begin{itemize}
     \item the integrability of the integral $I(t)$;
     \item the differentiability of $I(t)$;
     \item the upper bound \eqref{equ:5.7}.
 \end{itemize}
 To address these issues, we will use Riemannian geometry and Lie theory. For technical reasons\footnote{The cut locus on a simply connected Riemannian symmetric space is more tractable.}, we will only consider simply connected Riemannian symmetric spaces. Due to distinct proof techniques, the noncompact and compact cases will be addressed separately in Section \ref{sec:5.1} and \ref{sec:5.2}. The Lie theory will only be  used in the compact case.

 %For technical reasons, we will only consider simply connected Riemannian symmetric spaces. In Section \ref{sec:5.1}, we will examine the noncompact case, and in Section \ref{sec:5.2}, we will examine the compact case. 

\subsection{Simply connected noncompact Riemannian symmetric spaces}\label{sec:5.1}

Now, we consider a simply connected noncompact Riemannian symmetric space $(\cM,g^{\cM})$ and let $\phi$ be a function  satisfying Condition \ref{cond:3.1}. By the decomposition result, Proposition \ref{thm:decomposition}, we can write $\cM$ as a product space $\cM=\cM_{H}\times\cM_{C}$, where $\cM_{H}$ is a Hadamard Riemannian symmetric space and $\cM_C$ is a simply connected compact Riemannian symmetric space. Moreover, the dimension of $\cM_H$ is positive since $\cM$ is noncompact.  
%The dimension of $\cM_H$ is positive since $\cM$ is a noncompact space. 
By using this decomposition, we can write each point $x\in\cM$ as $(x^H,x^C)$, where $x^H$ and $x^C$ represent the Hadamard and compact components, respectively. 

%The dimension of the Hadamard component $\cM_H$ is positive since $\cM$ is a noncompact space.   

Our goal is to establish a lower bound on the minimax risk $\cR_n(\phi)$. For this purpose, we select a set $\cV=\{\alpha,\alpha(2\delta),\alpha(-2\delta)\}$, where $\alpha(t)=(\alpha^H(t),\alpha^C)$ is a unit-speed geodesic with a constant compact component. We assume that $\alpha(0)=\alpha\in\cX$ and $\delta$ is sufficiently small such that $\cV\subseteq\cX$ and $d_g(\alpha(2\delta),\alpha(-2\delta))=4\delta$. Such choice of a curve is feasible as $\cM_H$ has a positive dimension, 
%It will   
%The curve $\alpha(t)$ here has a constant compact component, and only varies its Hadamard component with $t$. 
%Such curve only varies its Hadamard component, which will 
and this choice will significantly simplify our analysis due to the empty cut locus on $\cM_H$. 
%In the sequel, 

Our analysis will follow the intuitions in Section \ref{sec:intuition}. First, by the Fano's method and inequality \eqref{equ:5.4}, we can reduce the problem to estimating the following quantity:
\#\label{equ:5.8-DKL}
\sup_{\alpha,\alpha'\in\cV}D_{\KL}(P_{\alpha}\|P_{\alpha'}),
\#
where $P_{\alpha}$ denotes the distribution $f(x;\alpha,\phi)$. Then we will use Proposition \ref{prop:5.1} and the differentiability of $I(t)$ in \eqref{equ:5.6} to upper bound this quantity. Our proofs will need the following conditions on $\phi$.

\begin{condition}\label{cond:5.2}
     Consider a simply connected noncompact Riemannian symmetric space $(\cM,g^{\cM})$. Let $\kappa_{\min}<0$ be a lower sectional curvature bound of $\cM$ and $m$ the dimension of $\cM$. Assume the following conditions hold.
     \begin{enumerate}
         \item[(1)] $\phi$ satisfies Condition \ref{cond:3.1}.
         
         \item[(2)] The integral $\int_{0}^\infty\phi(2r)e^{-\phi(r)}\sn_{\kappa_{\min}}^{m-1}(r)dr$ is finite, where $\sn_{\kappa_{\min}}(r)$ is given by \eqref{equ:sn}. 
         \item[(3)] $\phi$ is second-order continuously differentiable on $(0,\infty)$.
         \item[(4)] $\phi$ is convex and the integral $\int_{0}^\infty\phi'(2r)e^{-\phi(r)}\sn_{\kappa_{\min}}^{m-1}(r)dr$ is finite.
         \item[(5)] When $m=1$, the integral 
         \$
         \int_0^{\infty}\phi''(r)e^{-\phi(\max\{r-1,0\})}dr
         \$ 
         is finite.
         When $m>1$, the following integral
         \$
         \int_{0}^{\infty}\left(\phi''(r)+3\phi'(r)\frac{\sn_{\kappa_{\min}}'(r)}{\sn_{\kappa_{\min}}(r)}\right)e^{-\phi(\max\{r-1,0\})}\sn_{\kappa_{\min}}^{m-1}(r)dr
         \$ 
         is finite.
     \end{enumerate}
\end{condition}

\begin{remark}
    Many functions $\phi$ satisfy Condition \ref{cond:5.2}, including $\phi(r)=\beta r^p$ with $p>1$ and $\beta>0$, and $\phi(r)=\beta r$ with a sufficiently large $\beta$. Therefore, Proposition \ref{prop:5.3} and Theorem \ref{thm:5.5} below hold for Riemannian radial distributions associated with these $\phi$. 
\end{remark}

Under Condition \ref{cond:5.2}, we can establish an upper bound for the quantity in \eqref{equ:5.8-DKL}, as presented in Proposition \ref{prop:5.3}. 

\begin{proposition}\label{prop:5.3}
    Let $(\cM,g^{\cM})$ be a simply connected noncompact Riemannian symmetric space. Assume $\phi$ is a function satisfying Conditions \ref{cond:5.2}. Let $\alpha(t)=(\alpha^H(t),\alpha^C)$ be a unit-speed geodesic in $\cM$ with a constant compact component. Then for a sufficiently small $\delta_0>0$, we have
    \$
    D_{\KL}(P_{\alpha(t_1)}\|P_{\alpha(t_2)})\leq C|t_1-t_2|^2,
    \$
    for all  $t_1,t_2\in\RR$ with $|t_1-t_2|\leq\delta_0$, where $C$ is a constant independent of the choice of $t_1,t_2$.
\end{proposition}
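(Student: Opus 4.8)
The plan is to combine the symmetry result (Proposition \ref{prop:5.1}) with a careful analysis of the second derivative of the function $I(t)$ defined in \eqref{equ:5.6}. The key identity is \eqref{equ:DKL}, which reduces the KL divergence between $P_{\alpha(t_1)}$ and $P_{\alpha(t_2)}$ to a difference of values of a suitable integral. More precisely, writing $J(s) = \int_{\cM}\phi(d_g(x,\alpha(s)))e^{-\phi(d_g(x,\alpha(t_1)))}\dvol(x)$, one has $D_{\KL}(P_{\alpha(t_1)}\|P_{\alpha(t_2)}) = \frac{1}{Z(\phi)}(J(t_2) - J(t_1))$. By Proposition \ref{prop:5.1} applied with the geodesic recentered at $\alpha(t_1)$ (here we use that $\cM$ is symmetric and that $\alpha(\cdot)$ is a geodesic, so the involution fixing $\alpha(t_1)$ reverses the geodesic), the function $s \mapsto J(s)$ is symmetric about $s = t_1$, hence $J'(t_1) = 0$. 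A second-order Taylor expansion then gives $J(t_2) - J(t_1) = \frac{1}{2}J''(\xi)(t_2-t_1)^2$ for some $\xi$ between $t_1$ and $t_2$, so the proposition follows once we establish a uniform bound $|J''(\xi)| \le C$ for $\xi$ ranging over a neighborhood of the relevant parameter interval, with $C$ independent of $t_1, t_2$.

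The heart of the argument is therefore to justify differentiating twice under the integral sign in $J(s)$ and to bound the resulting integral uniformly. First I would fix the geodesic decomposition $\alpha(s) = (\alpha^H(s), \alpha^C)$, so that $d_g^2(x,\alpha(s)) = d_g^2(x^H,\alpha^H(s)) + d_g^2(x^C,\alpha^C)$, and since $\cM_H$ is Hadamard the function $s \mapsto d_g(x^H,\alpha^H(s))$ is smooth away from the measure-zero set where $x^H = \alpha^H(s)$, with first and second $s$-derivatives controlled by the Hessian comparison theorem on $\cM_H$. Concretely, writing $\rho(s) = d_g(x,\alpha(s))$, the chain rule yields $\frac{d}{ds}\phi(\rho(s)) = \phi'(\rho)\rho'$ and $\frac{d^2}{ds^2}\phi(\rho(s)) = \phi''(\rho)(\rho')^2 + \phi'(\rho)\rho''$, where $|\rho'| \le 1$ and $\rho''$ is bounded in terms of the Hessian of the distance function on $\cM_H$ — and here the Hessian comparison theorem with lower curvature bound $\kappa_{\min}$ gives $\rho'' \lesssim \frac{\sn_{\kappa_{\min}}'(\rho)}{\sn_{\kappa_{\min}}(\rho)}$ (the compact factor contributes nothing since $\alpha^C$ is constant). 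Integrating the envelope, the dominating function becomes roughly $\bigl(\phi''(\rho) + \phi'(\rho)\frac{\sn_{\kappa_{\min}}'(\rho)}{\sn_{\kappa_{\min}}(\rho)}\bigr)e^{-\phi(d_g(x,\alpha(t_1)))}$; using \eqref{equ:4.5}-type reasoning to replace $d_g(x,\alpha(t_1))$ by $d_g(x,\alpha(s))$ up to a bounded shift (the $\max\{r-1,0\}$ in Condition \ref{cond:5.2}(5) and the doubling in parts (2),(4) absorb this), and passing to polar coordinates via \eqref{equ:integral} with the volume comparison Theorem \ref{thm:2.3}, the finiteness is exactly what parts (2)-(5) of Condition \ref{cond:5.2} guarantee, separately for $m=1$ and $m>1$. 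The near-diagonal singularity of $\rho''$ at $x^H=\alpha^H(s)$ is integrable because $\sn_{\kappa_{\min}}'(\rho)/\sn_{\kappa_{\min}}(\rho) \sim 1/\rho$ there while the volume element carries $\rho^{m-1}$; for $m=1$ one instead checks directly that $\rho''$ vanishes off the cut structure, which is why part (5) splits into two cases.

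I expect the main obstacle to be the uniformity of the bound in $t_1, t_2$, i.e., producing a single integrable dominating function valid for all $\xi$ in a fixed neighborhood and for the base point $\alpha(t_1)$ ranging over that neighborhood. The resolution is that, because $\delta_0$ is taken small and the compact component of $\alpha(\cdot)$ is frozen, all the points $\alpha(\xi), \alpha(t_1)$ in play lie within a fixed bounded distance of a reference point; then $d_g(x,\alpha(\xi)) \ge d_g(x,\alpha(t_1)) - \delta_0$ and symmetrically, so $e^{-\phi(d_g(x,\alpha(t_1)))} \le e^{-\phi(\max\{d_g(x,\alpha(\xi))-\delta_0,0\})}$ up to rescaling, which reduces everything to a single integral of the form appearing in Condition \ref{cond:5.2}. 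A secondary technical point is legitimizing the exchange of $\frac{d^2}{ds^2}$ and $\int_\cM$: this is handled by the dominated convergence theorem for difference quotients, using the just-constructed integrable majorant together with the fact that $\Cut(x)$-type bad sets have measure zero in $\cM$ and that on $\cM_H$ the distance function is genuinely smooth away from the diagonal. Once differentiation under the integral and the uniform envelope bound are in hand, the conclusion $D_{\KL}(P_{\alpha(t_1)}\|P_{\alpha(t_2)}) = \frac{1}{2Z(\phi)}J''(\xi)(t_1-t_2)^2 \le C|t_1-t_2|^2$ is immediate.
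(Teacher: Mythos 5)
Your outline coincides with the paper's: recenter so that $t_1=0$, kill the first-order term via the symmetry of Proposition \ref{prop:5.1} (this is Lemma \ref{lma:d2}), and control the second-order term through the Hessian-comparison bound on $\varrho_0''$, a shift of the weight $e^{-\phi(d_g(x,\alpha(t_1)))}$ to $e^{-\phi(\max\{d_g(x,\alpha(\xi))-1,0\})}$, polar coordinates, Theorem \ref{thm:2.3}, and Condition \ref{cond:5.2}. The genuine gap is in how you propose to extract the second-order term: you want $J(t_2)-J(t_1)=\tfrac12 J''(\xi)(t_2-t_1)^2$ and to compute and bound $J''$ by differentiating twice under the integral sign, justified by dominated convergence for difference quotients with the majorant you constructed. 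That justification fails in exactly some of the regimes the proposition must cover. For $m=1$ (so $\cM=\RR$) with $\phi'(0^+)>0$ (e.g.\ the Laplacian case $\phi(r)=\beta r$), the first $s$-derivative of the integrand, $-\phi'(|x-s|)\,\mathrm{sgn}(x-s)\,e^{-\phi(|x|)}$, has a jump of size $2\phi'(0^+)e^{-\phi(|s|)}$ across $x=s$; the second difference quotient is of size about $\phi'(0^+)/h$ on a set of measure $h$, so no $h$-uniform integrable majorant exists, and in fact the naive formula $J''(s)=\int\phi''(|x-s|)e^{-\phi(|x|)}\,dx$ is wrong: the true $J''$ carries the extra boundary term $2\phi'(0^+)e^{-\phi(|s|)}$. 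For $m=2$ the natural majorant for the second difference quotient is $\sup_{\xi\in[s,s+h]}|\varrho_\phi''(\xi)|$, which near the geodesic behaves like $\phi'(0^+)/d(x,\alpha([s,s+h]))$ and is not integrable over a planar tube around a geodesic segment; your constructed envelope is integrable only for each \emph{fixed} $\xi$, where the singularity is the point singularity $1/d(x,\alpha(\xi))$ against the volume factor $r^{m-1}$.

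The paper is structured precisely to avoid this step. In Lemma \ref{lma:d3}, Taylor's theorem with \emph{integral remainder} is applied pointwise in $x$ to $s\mapsto\phi(d_g(x,\alpha(s)))$, and the order of integration is exchanged by Tonelli (nonnegativity only), giving $|I(t)-I(0)|\le\int_0^t(t-\xi)\,d\xi\cdot\int_\cM|\varrho_\phi''(\xi)|e^{-\phi(d_g(x,\alpha))}\dvol(x)$; for each fixed $\xi$ the inner integral equals $J(\xi)=J(0)<\infty$ by homogeneity, Theorem \ref{thm:2.3}, and Condition \ref{cond:5.2}(5). Only the first derivative is ever passed under the integral sign (Lemma \ref{lma:d2}, using convexity and Condition \ref{cond:5.2}(4)), and the case $m=1$ is handled by a separate elementary region-by-region computation on $\RR$ — which is also why Condition \ref{cond:5.2}(5) splits into two cases; your remark that "$\rho''$ vanishes off the cut structure" does not repair the $m=1$ issue, since it is the jump of the first derivative, not $\rho''$, that generates the missing term. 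If you replace the Lagrange-remainder/$J''$ step by this pointwise-Taylor-plus-Tonelli argument (and treat $m=1$ directly), the rest of your proposal — the recentering, the symmetry argument, the comparison bounds, and the $\max\{r-1,0\}$ shift for uniformity in $t_1,t_2$ — is exactly the paper's proof.
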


\begin{proof}
    By Lemma \ref{lma:d1}, the KL divergence $D_{\KL}(P_{\alpha(t_1)}\|P_{\alpha(t_2)})$ is finite for all $t_1,t_2\in\RR$. Moreover, we can rewrite it as follows
    \#\label{equ:5.8}
    D_{\KL}(P_{\alpha(t_1)}\|P_{\alpha(t_2)})=\frac{1}{Z(\phi)}(I_{t_1}(\Delta)-I_{t_1}(0)),
    \# 
    where $\Delta=t_2-t_1$, $I_{t_1}(t)$ is a function on $\RR$ defined by
    \$
    I_{t_1}(t)=\int_{\cM}\phi(d_g(x,\alpha_{t_1}(t)))e^{-\phi(d_g(x,\alpha_{t_1}(0)))}\dvol(x),
    \$
    and $\alpha_{t_1}(t)=\alpha(t+t_1)$. Note that $\alpha_{t_1}(t)$ is also a unit-speed geodesic in $\cM$ with a constant compact component. Therefore, without loss of generality, we may assume $t_1=0$ and denote $I_0(t)$ by $I(t)$. 
    %By Proposition \ref{prop:5.1}, we know the symmetry $I(t)=I(-t)$ holds for all $t\in\RR$. Therefore, if $I(t)$ is differentiable at $0$, then $I'(0)=0$. 
    By Lemma \ref{lma:d3}, there exists a small constant $\delta_0>0$ such that for any $|t|\leq \delta_0$,  the following bound holds:
    \$
    |I(t)-I(0)|\leq C_0t^2, 
    \$
    where $C_0$ is a constant independent of $t$. Combining this with \eqref{equ:5.8}, we have that for all $|t|\leq \delta_0$,
    \$
    D_{\KL}(P_{\alpha(0)}\|P_{\alpha(t)})\leq Ct^2,
    \$
    where $C$ is a constant independent of $t$. By choosing a suitable geodesic $\alpha$ as we discussed, we can prove that for all $t_1,t_2\in\RR$ with $|t_1-t_2|\leq\delta_0$, 
    \$
    D_{\KL}(P_{\alpha(t_1)}\|P_{\alpha(t_2)})\leq C(t_1-t_2)^2,
    \$
    where $C$ is a constant independent of $t_1$ and $t_2$. This proves the proposition. 
\end{proof}

By using Proposition \ref{prop:5.3}, we can now establish a lower bound on the minimax risk $\cR_n(\phi)$ in the following theorem. This minimax lower bound is of order $n^{-1/2}$, matching the parameter estimation rates of the MLE in Corollary \ref{corollary:4.4} up to logarithmic terms.   

\begin{theorem}\label{thm:5.5}
    Let $(\cM,g^{\cM})$ be a simply connected noncompact Riemannian symmetric space, and assume $\phi$ to be a function satisfying Condition \ref{cond:5.2}. Let $\cR_n(\phi)$ be the minimax risk defined in \eqref{equ:5.1}. Then for sufficiently large $n$, we have
    \$
    \cR_n(\phi)\geq Cn^{-1/2},
    \$
    where $C>0$ is a constant independent of $n$. 
\end{theorem}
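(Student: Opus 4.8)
The plan is to combine Proposition \ref{prop:5.3} with the Fano-based bound \eqref{equ:5.4}, following exactly the blueprint sketched in Section \ref{sec:intuition}. First I would invoke the product decomposition $\cM=\cM_H\times\cM_C$ from Proposition \ref{thm:decomposition}, noting that $\dim\cM_H\geq 1$ since $\cM$ is noncompact, and fix a point $\alpha\in\cX=\cB_{\cM}(\alpha^*,D)$ together with a unit-speed geodesic $\alpha(t)=(\alpha^H(t),\alpha^C)$ through $\alpha(0)=\alpha$ having constant compact component. For a parameter $\delta>0$ to be chosen, set $\cV=\{\alpha(-2\delta),\alpha(0),\alpha(2\delta)\}$. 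I would first check that for all sufficiently small $\delta$ (depending only on $D$ and the injectivity radius / geometry of $\cM$, not on $\alpha$ once $\alpha$ is taken in a slightly smaller ball, or simply by shrinking $D$ harmlessly), we have $\cV\subseteq\cX$ and, crucially, $d_g(\alpha(2\delta),\alpha(-2\delta))=4\delta$ — the latter because on the Hadamard factor the geodesic $\alpha^H(t)$ is globally minimizing (empty cut locus), so concatenating the two halves still realizes the distance. Hence $\cV$ is $2\delta$-separated with $|\cV|=3$.

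Next I would bound $I_{\cV}$. By Proposition \ref{prop:5.3}, there is $\delta_0>0$ and a constant $C_0$, independent of the choice of base geodesic, such that $D_{\KL}(P_{\alpha(t_1)}\|P_{\alpha(t_2)})\leq C_0|t_1-t_2|^2$ whenever $|t_1-t_2|\leq\delta_0$. The three pairwise parameter separations in $\cV$ are $2\delta$, $2\delta$, and $4\delta$, so for $\delta\leq\delta_0/4$ every pair satisfies the hypothesis and $\sup_{\alpha,\alpha'\in\cV}D_{\KL}(P_\alpha\|P_{\alpha'})\leq 16C_0\delta^2$. Plugging into \eqref{equ:5.4} with $|\cV|=3$ gives
\[
\cR_n(\phi)\;\geq\;\delta\left\{1-\frac{16C_0 n\delta^2+\log 2}{\log 3}\right\}.
\]
Now choose $\delta=\delta_n$ so that $16C_0 n\delta_n^2$ equals a small fixed constant, say $\tfrac{1}{100}$, i.e.\ $\delta_n=c n^{-1/2}$ with $c=(1600 C_0)^{-1/2}$. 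For $n$ large enough this $\delta_n$ satisfies $\delta_n\leq\delta_0/4$ and $\cV\subseteq\cX$, and the braced factor is bounded below by $1-\frac{1/100+\log 2}{\log 3}>0$ (a positive absolute constant). Therefore $\cR_n(\phi)\geq C n^{-1/2}$ with $C=c\cdot\bigl(1-\frac{1/100+\log 2}{\log 3}\bigr)>0$, independent of $n$, which is the claim.

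The main obstacle is not the Fano arithmetic — that is routine once Proposition \ref{prop:5.3} is in hand — but rather making sure the geometric setup is uniform and legitimate: namely that one can pick $\alpha$ and the geodesic so that $\cV\subseteq\cX$ and the $4\delta$-separation genuinely holds for all small $\delta$, and that the constant $C_0$ in Proposition \ref{prop:5.3} does not degenerate as the base point moves. The $4\delta$ claim is where the restriction to simply connected symmetric spaces (with the Hadamard factor carrying the perturbation) is used essentially: on the Hadamard component there is no cut locus, so a geodesic is minimizing on all of $\mathbb{R}$, and the triangle-type equality $d_g(\alpha(2\delta),\alpha(-2\delta))=d_g(\alpha(2\delta),\alpha(0))+d_g(\alpha(0),\alpha(-2\delta))$ holds exactly. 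One should also remark that the separation $2\delta$ of $\cV$ feeds into the lower bound as the leading factor $\delta$, so it is the same $\delta_n\sim n^{-1/2}$ that both controls the KL divergences and sets the rate — there is no tension between the two requirements, which is precisely why the root-$n$ rate comes out and matches Corollary \ref{corollary:4.4} up to the logarithmic factor there.
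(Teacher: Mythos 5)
Your proposal is correct and follows essentially the same route as the paper: the same three-point set $\cV$ built from a unit-speed geodesic with constant compact component, the same application of Proposition \ref{prop:5.3} to bound the pairwise KL divergences by a constant times $\delta^2$, and the same Fano calculation with $\delta\asymp n^{-1/2}$. The extra care you take with the $4\delta$-separation (via the empty cut locus on the Hadamard factor) and with tracking the constant through the $4\delta$ pair is a welcome tightening of details the paper leaves implicit, but it is not a different argument.
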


\begin{proof}
    Pick $\cV=\{\alpha,\alpha(2\delta),\alpha(-2\delta)\}$, where $\alpha(t)$ is a unit-speed geodesic with $\alpha(0)=\alpha\in\cX$ and a constant compact component. Let $\delta$ be sufficiently small such that $d_g(\alpha(2\delta),\alpha(-2\delta))=4\delta$ and $\cV\subseteq\cX$. Then $\cV$ is a $2\delta$-separated set in $\cX$, and by inequality \eqref{equ:5.4}, we have
    \$
    \cR_n(\phi)\geq \delta\cdot\left\{1-\frac{n\cdot \sup_{\alpha,\alpha'}D_{\KL}(P_{\alpha}\|P_{\alpha'})+\log 2}{\log 3}\right\}.
    \$
    By Proposition \ref{prop:5.3}, we have for sufficient small $\delta$,
    \$
    \sup_{\alpha,\alpha'}D_{\KL}(P_{\alpha}\| P_{\alpha'})\leq C_0 \delta^2,
    \$
    where $C_0$ is a constant independent of $\delta$. Therefore,
    \$
    \cR_n(\phi)\geq \delta\cdot \{1-\frac{C_0n\delta^2+\log 2}{\log 3}\}
    \$
    for sufficient small $\delta$. When $n$ is sufficiently large, we can take $C_0n\delta^2=0.01$, and then obtain
    \$
    \cR_{n}(\phi)\geq Cn^{-1/2},
    \$
    where $C$ is a constant independent of $n$. This proves the theorem.
\end{proof}

\subsection{Simply connected compact Riemannian symmetric spaces}\label{sec:5.2}

Now let us consider a simply connected compact Riemannian symmetric space $(\cM,g^{\cM})$. Assume $\phi$ is a function satisfying Condition \ref{cond:3.1}. To establish the lower bound on the minimax risk $\cR_n(\phi)$, defined in \eqref{equ:5.1}, we select a set $\cV=\{\alpha,\alpha(2\delta),\alpha(-2\delta)\}$, where $\alpha(t)$ is a unit-speed geodesic in $\cM$. We choose $\delta$ to be sufficiently small such that $d_g(\alpha(2\delta),\alpha(-2\delta))=4\delta$. Then we shall use the analysis in Section \ref{sec:5.2} to obtain the minimax lower bound. Specifically, by the Fano's method and inequality, we can reduce the problem to estimating the following quantity, \#\label{equ:5.8-DKL-compact}
\sup_{\alpha,\alpha'\in\cV}D_{\KL}(P_{\alpha}\|P_{\alpha'}),
\#
where $P_{\alpha}$ denotes the distribution $f(x;\alpha,\phi)$. To proceed, we will need the following conditions on $\phi$. 

\begin{condition}\label{cond:5.6}
    Consider a simply connected compact Riemannian symmetric space $(\cM,g^{\cM})$. Let $r_{\cM}=\sup_{x,y}d_g(x,y)$ be the maximum radius of $\cM$ and $m\geq 2$ the dimension of $\cM$. We assume the following conditions hold.
    \begin{enumerate}
        \item[(1)] $\phi$  is a Lipschitz continuous function on $[0,r_{\cM}]$ satisfying Condition \ref{cond:3.1}.
        \item[(2)] $\phi$ is second-order continuously differentiable on $(0,r_{\cM})$ and $\phi'$ is bounded on $(0,r_{\cM})$. 
        \item[(3)] The integral $\int_0^{r_{\cM}}|\phi''(r)|r^{m-1}dr$ is finite. 
    \end{enumerate}
\end{condition}

\begin{remark}
    Many functions satisfy Condition \ref{cond:5.6}. For example, $\phi(r)=\beta r^p$ with $p>1$ and $\beta\geq 0$, and $\phi(r)=\beta r$ with a sufficiently large $\beta$ all satisfy Condition \ref{cond:5.6}. In addition, when $\cM=\SSS^m$ is the $m$-sphere, the function $\phi(r)=\beta(1-\cos(r))$ also satisfies Condition \ref{cond:5.6}. Consequently, the following Proposition \ref{prop:5.8} and Theorem \ref{thm:5.9} hold for Riemannian radial distributions associated with these $\phi$. 
\end{remark}

Under Condition \ref{cond:5.6}, we can derive a desirable upper bound on the quantity in \eqref{equ:5.8-DKL-compact}. This is presented in the following proposition.

\begin{proposition}\label{prop:5.8}
    Let $(\cM,g^{\cM})$ be a simply connected compact Riemannian symmetric space and $\phi$ a function satisfying Condition  \ref{cond:5.6}. Let $\alpha(t)$ be a unit-speed geodesic in $\cM$. Then for a sufficiently small $\delta_0>0$, we have
    \$
    D_{\KL}(P_{\alpha(t_1)}\|P_{\alpha(t_2)})\leq C|t_1-t_2|^2,
    \$
    for any $t_1,t_2\in\RR$ with $|t_1-t_2|\leq \delta_0$, where $C$ is a constant independent of the choice of $t_1,t_2$. 
\end{proposition}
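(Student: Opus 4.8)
The plan is to follow the template of Proposition~\ref{prop:5.3}: reduce the bound to the second-order regularity at the origin of an \emph{even} scalar function. By exactly the computation in the proof of Proposition~\ref{prop:5.3}, and using that $\phi$ is continuous hence bounded on the compact space $\cM$ (so every integral below is finite), one writes
\[
D_{\KL}(P_{\alpha(t_1)}\|P_{\alpha(t_2)})=\frac{1}{Z(\phi)}\bigl(I_\gamma(t_2-t_1)-I_\gamma(0)\bigr),\qquad I_\gamma(s):=\int_{\cM}\phi\bigl(d_g(x,\gamma(s))\bigr)e^{-\phi(d_g(x,\gamma(0)))}\dvol(x),
\]
where $\gamma(s)=\alpha(s+t_1)$ is again a unit-speed geodesic. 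Since every unit-speed geodesic arises this way, and by homogeneity we may take $\gamma(0)=o$ for a fixed base point $o$, it suffices to produce constants $\delta_0>0$ and $C_0$, depending on $\gamma$ only through $\|\dot\gamma\|=1$ (hence uniform over geodesics), such that $I_\gamma$ is $C^1$ on $(-\delta_0,\delta_0)$ with $I_\gamma'$ Lipschitz of constant $C_0$ — equivalently $|I_\gamma''|\le C_0$ in the a.e.\ sense, a formulation robust to the cut locus. Granting this, Proposition~\ref{prop:5.1} gives $I_\gamma(s)=I_\gamma(-s)$, so $I_\gamma'$ is odd and $I_\gamma'(0)=0$; hence $|I_\gamma(s)-I_\gamma(0)|=|\int_0^s(I_\gamma'(u)-I_\gamma'(0))du|\le\tfrac12 C_0 s^2$, which is the assertion with $C=C_0/(2Z(\phi))$.

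Next I would differentiate $I_\gamma$ under the integral sign. For $x\notin\Cut(\gamma(s))\cup\{\gamma(s)\}$ the map $s\mapsto\phi(d_g(x,\gamma(s)))$ is smooth, and since $\gamma$ is a geodesic its second derivative equals $\phi''(d)\langle\grad r_x,\dot\gamma\rangle^2+\phi'(d)\Hess r_x(\dot\gamma,\dot\gamma)$, where $d=d_g(x,\gamma(s))$, $r_x=d_g(x,\cdot)$, and $\grad r_x,\Hess r_x$ are evaluated at $\gamma(s)$. The first-order integrand is bounded by $\|\phi'\|_\infty$ (Condition~\ref{cond:5.6}(2)), integrable over the compact $\cM$, so $I_\gamma\in C^1$. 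For the second order we need an $s$-uniform integrable envelope (the weight $e^{-\phi(d_g(x,o))}\le1$ is harmless). The $\phi''$-part: in polar coordinates at $\gamma(s)$, using $\lambda(r,\Theta)\le\sn_{\kappa_{\min}}^{m-1}(r)\le r^{m-1}$ — valid since compact symmetric spaces have $\kappa_{\min}\ge0$ — it is at most $\vol(\SSS^{m-1})\int_0^{r_{\cM}}|\phi''(r)|r^{m-1}dr<\infty$ by Condition~\ref{cond:5.6}(3), uniformly in $s$. The $\phi'$-part: $|\phi'(d)\Hess r_x(\dot\gamma,\dot\gamma)|\le\|\phi'\|_\infty\|\Hess r_x\|_{\mathrm{op}}$, and by homogeneity together with the geodesic symmetries of $\cM$ the resulting integral equals $\|\phi'\|_\infty\int_{\cM}\|\Hess r_o\|_{\mathrm{op}}\dvol$, independent of $s$; so everything hinges on the finiteness of $\int_{\cM}\|\Hess r_o\|_{\mathrm{op}}\dvol$.

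The third and central step is this last bound, where I would invoke the Lie-theoretic description of a simply connected compact Riemannian symmetric space — the closed-form volume density and the Hessian formula for the (squared) distance function alluded to in the introduction. In the polar chart at $o$, on the complement of $\Cut(o)$ the nonzero eigenvalues of $\Hess r_o$ are $\sqrt{\kappa_i(\Theta)}\cot(\sqrt{\kappa_i(\Theta)}\,r)$ (read as $1/r$ when $\kappa_i=0$), where the $\kappa_i(\Theta)\ge0$ are the curvature eigenvalues along the radial geodesic, while the volume density is $\lambda(r,\Theta)=\prod_i\sn_{\kappa_i(\Theta)}(r)$. The $1/r$-blow-up of $\|\Hess r_o\|_{\mathrm{op}}$ as $r\to0$ is absorbed by the $r^{m-1}$ growth of $\dvol$ precisely because $m\ge2$; and on the rest of $\seg^0(o)$ the product $\|\Hess r_o\|_{\mathrm{op}}\,\lambda(r,\Theta)$ stays bounded, because whenever some $\cot(\sqrt{\kappa_i(\Theta)}r)$ blows up as $r$ approaches the conjugate radius $\pi/\sqrt{\kappa_i(\Theta)}$, the matching factor $\sn_{\kappa_i(\Theta)}(r)$ in $\lambda$ vanishes at exactly the same rate. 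Since $\Cut(o)$ is $\dvol$-null, this gives $\int_{\cM}\|\Hess r_o\|_{\mathrm{op}}\dvol<\infty$ and closes the chain.

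I expect the genuine difficulty to lie in making the third step and the differentiation-under-the-integral rigorous in the presence of the \emph{nonempty}, $s$-dependent cut locus: one must verify that for a.e.\ $x$ the map $s\mapsto d_g(x,\gamma(s))$ is $C^1$ on $(-\delta_0,\delta_0)$ with a uniformly Lipschitz derivative, and that the $\cot$-type singularity of $\Hess d_g$ along the cut locus is integrated against a compensating power of the volume density. This is precisely where the root-space / maximal-flat structure of compact symmetric spaces — hence the factorized volume density and the explicit Hessian of $d_g^2$ — is indispensable (in contrast to the noncompact case, whose Hadamard factor has empty cut locus and allows the Hessian comparison theorem to be used directly); I anticipate these facts being isolated as one or two lemmas in the appendix and then fed into a dominated-convergence argument to conclude $I_\gamma\in C^{1,1}$ with the stated uniform bound.
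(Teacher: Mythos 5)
Your proposal is correct and follows essentially the same route as the paper: reduce to the even function $I(t)$, use the symmetry of Proposition \ref{prop:5.1} to kill the first-order term, split the second-order bound into a $\phi''$ part (controlled by nonnegative curvature, $\lambda(r,\Theta)\le r^{m-1}$, and Condition \ref{cond:5.6}(3)) and a $\phi'$ part controlled by the Lie-theoretic Hessian formula, whose $\cot$-singularities at the cut locus are cancelled by the vanishing of the root-system volume density, all made rigorous a.e.\ because the union of cut loci along the geodesic has measure zero (the paper's Lemma \ref{lma:a2}, which is exactly where simple connectedness enters). The only cosmetic differences are that the paper works with the Hessian of the half-squared distance (formula \eqref{equ:a9}, eigenvalues $\lambda(a)\cot\lambda(a)$) rather than of $r_x$ itself, and that it sidesteps proving $I\in C^{1,1}$ by applying Taylor's theorem pointwise in $x$ on the complement of the null set and then integrating the remainder (Lemmas \ref{lma:d.2.2.2} and \ref{lma:d.2.2.3}).
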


\begin{proof}
    By Lemma \ref{lma:d.2.2.1}, the KL divergence $D_{\KL}(P_{\alpha(t_1)}\|P_{\alpha(t_2)})$ is finite for all $t_1,t_2\in\RR$. It can be rewritten as follows
    \#\label{equ:5.11-compact}
    D_{\KL}(P_{\alpha(t_1)}\|P_{\alpha(t_2)})=\frac{1}{Z(\phi)}(I_{t_1}(\Delta)-I_{t_1}(0)),
    \# 
    where $\Delta=t_2-t_1$, $I_{t_1}(t)$ is a function on $\RR$ defined by
    \$
    I_{t_1}(t)=\int_{\cM}\phi(d_g(x,\alpha_{t_1}(t)))e^{-\phi(d_g(x,\alpha_{t_1}(0)))}\dvol(x),
    \$
    and $\alpha_{t_1}(t)=\alpha(t+t_1)$. Here $\alpha_{t_1}(t)$ is also a unit-speed geodesic in $\cM$. Thus, %without loss of generality, we may assume $t_1=0$ and denote $I_0(t)$ by $I(t)$. 
    %By Proposition \ref{prop:5.1}, we know the symmetry $I(t)=I(-t)$ holds for all $t\in\RR$. Therefore, if $I(t)$ is differentiable at $0$, then $I'(0)=0$. 
    by Lemma \ref{lma:d.2.2.3}, there exists a small constant $\delta_0>0$ such that for any $|\Delta|\leq \delta_0$,  the following bound holds:
    \$
    |I_{t_1}(\Delta)-I_{t_1}(0)|\leq C_0\Delta^2, 
    \$
    where $C_0$ is a constant independent of $t$. Combining this with \eqref{equ:5.11-compact}, we have that for all $|\Delta|\leq \delta_0$,
    %\$
    %D_{\KL}(P_{\alpha(t_1)}\|P_{\alpha(t_2)})\leq C(t_1)^2,
    %\$
    %where $C$ is a constant independent of $t_1,t_2$. %By choosing a suitable geodesic $\alpha$ as we discussed, we can prove that for all $t_1,t_2\in\RR$ with $|t_1-t_2|\leq\delta_0$, 
    \$
    D_{\KL}(P_{\alpha(t_1)}\|P_{\alpha(t_2)})\leq C\Delta^2,
    \$
    where $C$ is a constant independent of $t_1$ and $t_2$. This proves the proposition. 
\end{proof}

By using Proposition \ref{prop:5.8}, we can now establish a lower bound on the minimax risk $\cR_n(\phi)$ in the following theorem. This minimax lower bound is of order $n^{-1/2}$, matching the parameter estimation rates of the MLE in Corollary \ref{corollary:4.4} up to logarithmic terms. 

\begin{theorem}\label{thm:5.9}
    Let $(\cM,g^{\cM})$ be a simply connected compact Riemannian symmetric space and $\phi$ a function satisfying Condition \ref{cond:5.6}. Define $\cR_n(\phi)$ as the minimax risk in \eqref{equ:5.1}.  Then for sufficiently large $n$, we have
    \$
    \cR_n(\phi)\geq Cn^{-1/2},
    \$
    where $C>0$ is a constant independent of $n$.
\end{theorem}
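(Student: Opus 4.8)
The plan is to follow the proof of Theorem \ref{thm:5.5} essentially word for word, since every genuinely compact-specific ingredient (the cut locus, the Hessian/integral formulas from Lie theory) has already been packaged into Proposition \ref{prop:5.8}. First I would fix a point $\alpha\in\cX=\cB_{\cM}(\alpha^*,D)$ together with a unit-speed geodesic $\alpha(t)$ with $\alpha(0)=\alpha$, and set $\cV=\{\alpha,\alpha(2\delta),\alpha(-2\delta)\}$ for a parameter $\delta>0$ to be chosen later. I need $\delta$ small enough to meet three requirements at once: (a) $\cV\subseteq\cX$, which holds once $2\delta$ is below the distance from $\alpha$ to $\partial\cX$ (vacuous when $\cX=\cM$); (b) $d_g(\alpha(2\delta),\alpha(-2\delta))=4\delta$, i.e.\ the arc $\alpha|_{[-2\delta,2\delta]}$ is minimizing — this is the one place the compact case differs from the Hadamard case, and it is guaranteed as soon as $4\delta<\inj(\cM)$, with $\inj(\cM)>0$ by Proposition \ref{prop:homogeneous}; and (c) $\delta\le\delta_0$, the threshold supplied by Proposition \ref{prop:5.8}. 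For any such $\delta$ the set $\cV$ is $2\delta$-separated in $\cX$.

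Next I would invoke the Fano bound \eqref{equ:5.4} with $|\cV|=3$, which gives
\[
\cR_n(\phi)\ \ge\ \delta\cdot\Bigl\{1-\frac{n\cdot\sup_{\alpha,\alpha'\in\cV}D_{\KL}(P_\alpha\|P_{\alpha'})+\log 2}{\log 3}\Bigr\}.
\]
Every pair of points in $\cV$ is at geodesic distance at most $4\delta\le 4\delta_0$, so Proposition \ref{prop:5.8} yields $\sup_{\alpha,\alpha'\in\cV}D_{\KL}(P_\alpha\|P_{\alpha'})\le C_0\delta^2$ with $C_0$ independent of $\delta$. Substituting gives $\cR_n(\phi)\ge\delta\{1-(C_0n\delta^2+\log 2)/\log 3\}$. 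Finally I would choose $\delta=\delta_n$ with $C_0 n\delta_n^2=0.01$; for $n$ large this forces $\delta_n$ small, hence automatically compatible with constraints (a)--(c), and it delivers $\cR_n(\phi)\ge\delta_n\{1-(0.01+\log 2)/\log 3\}\ge C n^{-1/2}$ for a constant $C>0$ independent of $n$.

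The only step requiring any thought beyond the noncompact argument is constraint (b): in a compact symmetric space one cannot choose a geodesic with empty cut locus, so one must keep the arc $\alpha|_{[-2\delta,2\delta]}$ inside the injectivity radius. This is harmless precisely because $\inj(\cM)$ is a fixed positive constant while $\delta_n\to 0$, and no Lie-theoretic input is needed at this stage — all the curvature and cut-locus analysis has been absorbed into Proposition \ref{prop:5.8} (ultimately Lemmas \ref{lma:d.2.2.1} and \ref{lma:d.2.2.3}). Consequently the theorem is a short corollary of the Fano machinery of Section \ref{sec:minimax} together with the quadratic KL bound of Proposition \ref{prop:5.8}.
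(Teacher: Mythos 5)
Your proposal is correct and matches the paper's own proof of Theorem \ref{thm:5.9}: the same three-point set $\cV=\{\alpha,\alpha(2\delta),\alpha(-2\delta)\}$ along a unit-speed geodesic, the Fano bound \eqref{equ:5.4}, the quadratic KL estimate from Proposition \ref{prop:5.8}, and the choice $C_0n\delta^2=0.01$. Your additional remarks on keeping $4\delta$ below the injectivity radius and $\cV\subseteq\cX$ only make explicit the smallness conditions the paper states implicitly, so no substantive difference remains.
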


\begin{proof}
    Pick $\cV=\{\alpha,\alpha(2\delta),\alpha(-2\delta)\}$, where $\alpha(t)$ is a unit-speed geodesic in $\cM$ with $\alpha(0)=\alpha$. Let $\delta$ be sufficiently small such that 
    \$
    d_g(\alpha(2\delta),\alpha(-2\delta))=4\delta.
    \$
    Then $\cV$ is a $2\delta$-separated set in $\cM$, and by inequality \eqref{equ:5.4}, we have
    \$
    \cR_n(\phi)\geq \delta\cdot\left\{1-\frac{n\cdot \sup_{\alpha,\alpha'}D_{\KL}(P_{\alpha}\|P_{\alpha'})+\log 2}{\log 3}\right\}.
    \$
    By Proposition \ref{prop:5.8}, we have for sufficient small $\delta$,
    \$
    \sup_{\alpha,\alpha'}D_{\KL}(P_{\alpha}\| P_{\alpha'})\leq C_0 \delta^2,
    \$
    where $C_0$ is a constant independent of $\delta$. Therefore,
    \$
    \cR_n(\phi)\geq \delta\cdot \{1-\frac{C_0n\delta^2+\log 2}{\log 3}\}
    \$
    for sufficient small $\delta$. When $n$ is sufficiently large, we can take $C_0n\delta^2=0.01$, and then obtain
    \$
    \cR_{n}(\phi)\geq Cn^{-1/2},
    \$
    where $C$ is a constant independent of $n$. This proves the theorem.
\end{proof}

%\subsection{Simply connected Riemannian symmetric spaces}

%\begin{theorem}
    
%\end{theorem}

%\begin{proof}
    
%\end{proof}

%\newpage

\section{Riemannian radial distributions with an unknown temperature}\label{sec:6-temperature}

In previous sections, we study the 
Riemannian radial distribution $f(x;\alpha,\phi)$ with a fixed and known function $\phi$, where the primary goal is to estimate the unknown location parameter $\alpha$. In this section, we aim to extend this problem to a more practical setting where the Riemannian radial distribution has an additional unknown temperature parameter. Specifically, for  a general Riemannian homogeneous space $(\cM,g^{\cM})$, we shall study the following parametric family of distributions on $\cM$: 
\#\label{equ:6.1-f}
f(x;\alpha,\beta,\phi)=\frac{1}{Z(\beta,\phi)}e^{-\beta\phi(d_g(x,\alpha))},\quad \forall x\in\cM,
\#
where $\phi$ is a function known {\it a priori}, $\alpha\in\cM$ and $\beta>0$ are the unknown location and temperature parameters, respectively, and $Z(\beta,\phi)=\int_{\cM}e^{-\beta\phi(d_g(x,\alpha))}\dvol(x)$ is the normalizing constant. Our objective is to investigate the MLE for the parameters $\alpha$ and $\beta$, and derive their rates of convergence. Combined with the minimax lower bounds established in Section \ref{sec:5}\footnote{The minimax lower bounds in Section \ref{sec:5} adapt to the case where the temperature parameter is unknown, since Section \ref{sec:5} addresses the simpler case where the temperature parameter is fixed and known.}, we shall obtain the optimal parameter estimation rates and demonstrate the optimality of the MLE in a variety of contexts. 
%Moreover, similar to Section \ref{sec:4}, $(\cM,g^{\cM})$ is assumed to be a general Riemannian homogeneous space throughout this section.
%Similar to Section \ref{sec:4}, $(\cM,g^{\cM})$ to be a general Riemannian homogeneous space. %Our analysis is similar to those in Section \ref{sec:4}, and 

The rest of this section proceeds as follows. Section \ref{sec:6.1} presents basic properties of  Riemannian radial distributions with an unknown temperature parameter, such as the properties of the  MLE. In Section \ref{sec:6.2}, \ref{sec:6.3}, and \ref{sec:6.4}, we  study the convergence rates of the MLE. Specifically, Section \ref{sec:6.2} gives an entropy estimate of a certain functional class, Section \ref{sec:6.3} derives the distribution estimation rates using the empirical process theory, and Section \ref{sec:6.4} establishes the parameter estimation rates. 

\subsection{Maximum likelihood estimation}\label{sec:6.1}

Let $(\cM,g^{\cM})$ be a Riemannian homogeneous space  and $\phi$ a function such that $\phi_{\beta_{\min}}$ satisfies Condition \ref{cond:3.1}, where $\phi_\beta(r)=\beta\phi(r)$ and $\beta_{\min}>0$ is a constant. Then for any $\beta\geq \beta_{\min}$, the function $\phi_{\beta}$  satisfies Condition \ref{cond:3.1} and  $f(x;\alpha,\beta,\phi)$ is well-defined, since  $\phi_{\beta}\leq \phi_{\beta_{\min}}$ for all $\beta\geq\beta_{\min}$. To proceed, we will always assume this condition, summarized below, is satisfied.  
%for all $\beta\geq\beta_{\min}$.
%In the sequel, we will assume that  this condition  is always satisfied. 
%We will assume this condition holds, and for convenience, we summarize this condition below. %For convenience, we summarize this assumption in the following condition. 
%the distribution $f(x;\alpha,\beta,\phi)$ is well-defined  for 

%, which implies that the distribution $f(x;\alpha,\beta,\phi)$ is well-defined for any $\beta\geq \beta_{\min}$. 

\begin{condition}\label{cond:6.1.phi}
    Let $(\cM,g^{\cM})$ be a Riemannian homogeneous space. We assume that $\phi$ is a function such that $\phi_{\beta_{\min}}(\cdot)=\beta_{\min}\phi(\cdot)$ satisfies Condition \ref{cond:3.1} for some constant $\beta_{\min}>0$. 
\end{condition}

Proposition \ref{prop:6.1}  derives the MLEs for $\alpha$ and $\beta$, based on $n$ independent samples drawn from the distribution $f(x;\alpha,\beta,\phi)$. Notably, the MLE of $\alpha$ is the same as when $\beta$ is known. Thus, estimating $\alpha$ via Riemannian optimization is  equally straightforward regardless of whether $\beta$ is known or not.

\begin{proposition}\label{prop:6.1}
    Let $(\cM,g^{\cM})$ be a Riemannian homogeneous space and $\phi$ a function satisfying Condition \ref{cond:6.1.phi} for a constant $\beta_{\min}>0$. Let $f(x;\alpha,\beta,\phi)$ be a true distribution with an unknown $\alpha\in\cX\subseteq\cM$ and $\beta\geq\beta_{\min}$. Given $n$  samples $\{x_i\}_{i=1}^n$ drawn independently from $f(x;\alpha,\beta,\phi)$, the MLEs for $\alpha$ and $\beta$ are given by
    \$
    \hat\alpha^{\MLE}&=\argmin_{\alpha\in\cX}\sum_{i=1}^n\phi(d_g(x_i,\alpha)),\\
    \hat\beta^{\MLE} &=\argmin_{\beta}\beta\sum_{i=1}^n\phi(d_g(x_i,\hat\alpha^{\MLE}))+n\log Z(\beta,\phi),
    \$
    where $Z(\beta,\phi)$ is the normalizing constant in the distribution $f(x;\alpha,\beta,\phi)$. 
\end{proposition}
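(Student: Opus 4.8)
The plan is to derive the likelihood function explicitly and then identify the maximizers. Since the $n$ samples $\{x_i\}_{i=1}^n$ are drawn independently from $f(x;\alpha,\beta,\phi)$, the log-likelihood is
\$
\ell(\alpha,\beta)=\sum_{i=1}^n\log f(x_i;\alpha,\beta,\phi)=-\beta\sum_{i=1}^n\phi(d_g(x_i,\alpha))-n\log Z(\beta,\phi).
\$
First I would observe that for the distribution to be well-defined for all $\beta\ge\beta_{\min}$ we invoke Condition \ref{cond:6.1.phi}: since $\phi_\beta=\beta\phi\le\beta_{\min}\phi=\phi_{\beta_{\min}}$ pointwise when $\beta\ge\beta_{\min}$, integrability of $e^{-\phi_\beta(d_g(\cdot,\alpha))}$ follows from that of $e^{-\phi_{\beta_{\min}}(d_g(\cdot,\alpha))}$, and by Proposition \ref{prop:3.2}(2) the constant $Z(\beta,\phi)$ does not depend on $\alpha$. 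This last fact is the crux: because $\log Z(\beta,\phi)$ is independent of $\alpha$, the dependence of $\ell(\alpha,\beta)$ on $\alpha$ enters only through the term $-\beta\sum_{i=1}^n\phi(d_g(x_i,\alpha))$, and since $\beta>0$ this is maximized over $\alpha$ by minimizing $\sum_{i=1}^n\phi(d_g(x_i,\alpha))$, giving $\hat\alpha^{\MLE}$ as stated — and this minimizer is the same for every fixed $\beta>0$, hence the profile over $\beta$ does not alter it.

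Next I would carry out the profiling argument. Having shown $\hat\alpha^{\MLE}$ maximizes $\ell(\alpha,\beta)$ in $\alpha$ for each fixed $\beta$, substitute it back to obtain the profile log-likelihood
\$
\ell(\hat\alpha^{\MLE},\beta)=-\beta\sum_{i=1}^n\phi(d_g(x_i,\hat\alpha^{\MLE}))-n\log Z(\beta,\phi),
\$
and maximizing this over $\beta$ is, after negating, exactly the minimization problem defining $\hat\beta^{\MLE}$ in the statement. Thus $(\hat\alpha^{\MLE},\hat\beta^{\MLE})$ jointly maximizes $\ell$: for any $(\alpha,\beta)$ we have $\ell(\alpha,\beta)\le\ell(\hat\alpha^{\MLE},\beta)\le\ell(\hat\alpha^{\MLE},\hat\beta^{\MLE})$, the first inequality by the $\alpha$-step and the second by the $\beta$-step. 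This is the standard two-stage optimization, and the key enabling structural fact — that $Z$ is $\alpha$-free — is already available from Proposition \ref{prop:3.2}.

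I do not expect a serious obstacle here; the proposition is essentially a bookkeeping consequence of Proposition \ref{prop:3.2}. The one point requiring mild care is making the argmin statements precise: one should note (or assume, as elsewhere in the paper where $\cX$ is taken compact and $\phi$ continuous) that $\sum_{i=1}^n\phi(d_g(x_i,\alpha))$ attains its infimum on $\cX$ so that $\hat\alpha^{\MLE}$ exists, and that the profile objective in $\beta$ is coercive on $[\beta_{\min},\infty)$ — which would follow from the behavior of $\log Z(\beta,\phi)$ as $\beta\to\infty$ (it decreases, since $\phi\ge0$) balanced against the linear-in-$\beta$ term — so that $\hat\beta^{\MLE}$ exists. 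If the paper's convention is simply to define the MLE as any maximizer when one exists, then even this is not needed, and the proof reduces to writing down $\ell$, quoting the $\alpha$-independence of $Z$, and reading off the two optimization problems.
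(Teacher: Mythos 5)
Your proposal is correct and follows essentially the same route as the paper's proof: write down the likelihood, use the $\alpha$-independence of $Z(\beta,\phi)$ (from Proposition \ref{prop:3.2}) to see that the $\alpha$-maximizer for every fixed $\beta>0$ is the stated M-estimator, and then profile over $\beta$. Your added remarks on existence of the minimizers are extra care the paper does not bother with, but they do not change the argument.
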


\subsection{Entropy estimate}\label{sec:6.2}

To derive the convergence rate of the MLEs for $\alpha$ and $\beta$, we first provide entropy estimates for the following functional class 
\#\label{equ:function-6.2}
\cF=\{f(x;\alpha,\beta,\phi)\mid \alpha\in\cX,\beta\in[\beta_{\min},\beta_{\max}]\},
\#
where $\cX=\cB_{\cM}(\alpha^*,D)$ is a geodesic ball in $\cM$ and $0<\beta_{\min}\leq \beta_{\max}<\infty$ are lower and upper bounds for $\beta$. For an effective entropy estimate, we impose the following conditions on $\phi$. 

\begin{condition}\label{cond:6.3}
    Let $(\cM,g^{\cM})$ be an $m$-dimensional Riemannian homogeneous space with $r_{\cM}=\sup_{x,y\in\cM}d_g(x,y)$ and $\phi$ a function satisfying Condition \ref{cond:6.1.phi} for    $\beta_{\min}>0$. We assume the following conditions are satisfied.  
    \begin{itemize}
        \item[(1)] $\phi$ is differentiable on $[0,r_{\cM}]$ and  $|\phi'(r)e^{-\beta_{\min}\phi(r)}|$ is bounded on $[0,r_{\cM}]$ by a constant $L$.
        \item[(2)] When $\cM$ is noncompact, the following two functions
        \$
        h(r)&=e^{-\beta_{\min}\phi(r/2)}\sn_{\kappa_{\min}}^{m-1}(r),\\
        h_1(r)&=\phi(r)e^{-\beta_{\min}\phi(r)}\sn_{\kappa_{\min}}^{m-1}(r),
        \$
        are integrable over $[0,\infty)$, where $\kappa_{\min}<0$ is a lower bound on the sectional curvatures of $\cM$ and $\sn_{\kappa_{\min}}(\cdot)$ is given by \eqref{equ:sn}. In addition, for all sufficiently small $\eta$,
        \$
        \int_{B_\eta}^\infty h(r)dr\leq c_1\eta^{c_2},
        \$
        where $B_\eta=\frac{\log(1/\eta)}{2(m-1)\sqrt{-\kappa_{\min}}}$ and $c_1,c_2>0$ are constants independent of $\eta$. 
    \end{itemize}
\end{condition}

\begin{remark}\label{remark:6.4}
    A broad range of functions $\phi$ satisfy Condition \ref{cond:6.3} for a given $\beta_{\min}>0$. For example, $\phi(r)=\nu r^p$ with $p>1$ and $\nu>0$, and $\phi(r)=\nu r$ with a sufficiently large $\nu$ all satisfy Condition \ref{cond:6.3}. When $\cM=\SSS^{m}$ is the $m$-sphere, $\phi(r)=\nu(1-\cos(r))$ with $\nu>0$ also satisfies this condition. Consequently, the following Theorem \ref{thm:6.2} and Corollary \ref{corollary:6.4} hold for Riemannian radial distributions associated with these $\phi$. 
\end{remark}

Under Condition \ref{cond:6.3}, we can establish the entropy estimates of $\cF$ as follows. 

\begin{theorem}\label{thm:6.2}
    Assume $(\cM,g^{\cM})$ is a Riemannian homogeneous space and $\phi$ is a function satisfying Condition  \ref{cond:6.3} for $\beta_{\min}>0$. Let $f(x;\alpha,\beta,\phi)$ be the density in \eqref{equ:6.1-f} and $\cF$ the functional class in \eqref{equ:function-6.2}. Then for sufficiently small $\epsilon$, the following entropy estimates hold:
    \$
    \log\cN(\epsilon,\cF,d_{\infty}) &\lesssim\log(\frac{1}{\epsilon}),\\
    \log\cN_{B}(\epsilon,\cF,d_1)&\lesssim \log(\frac{1}{\epsilon}),\\
    \log\cN_{B}(\epsilon,\cF,d_h)&\lesssim \log(\frac{1}{\epsilon}),
    \$
    where $d_{\infty},d_1,d_h$ represent the $L^\infty$, $L^1$, and hellinger distance, respectively, and $\lesssim$ omits constants independent of $\epsilon$. 
\end{theorem}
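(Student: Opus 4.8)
The plan is to follow the three-step architecture of the proof of Theorem~\ref{thm:4.1} verbatim, upgrading each step to accommodate the extra temperature coordinate $\beta$. For the first step, I would build an $\epsilon$-net of the parameter set $\cX\times[\beta_{\min},\beta_{\max}]$: the geodesic ball $\cX=\cB_{\cM}(\alpha^*,D)$ admits an $\epsilon$-net of cardinality $\lesssim\epsilon^{-m}$ by Lemma~\ref{lma:c1} (via the volume comparison Theorem~\ref{thm:2.3}), and the compact interval $[\beta_{\min},\beta_{\max}]$ admits one of cardinality $\lesssim\epsilon^{-1}$; their product is an $\epsilon$-net of the parameter set of cardinality $\lesssim\epsilon^{-(m+1)}$, whose logarithm is $\lesssim\log(1/\epsilon)$.

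The second step is to show that $(\alpha,\beta)\mapsto f(x;\alpha,\beta,\phi)$ is Lipschitz into $(L^\infty,d_\infty)$, uniformly over $x\in\cM$ and $(\alpha,\beta)\in\cX\times[\beta_{\min},\beta_{\max}]$. The $\alpha$-direction is handled exactly as in Lemma~\ref{lma:c2}: the map $r\mapsto e^{-\beta\phi(r)}$ has derivative $\beta\phi'(r)e^{-\beta\phi(r)}$ bounded by $\beta_{\max}\sup_r|\phi'(r)e^{-\beta_{\min}\phi(r)}|\le\beta_{\max}L$ uniformly in $\beta$, using Condition~\ref{cond:6.3}(1). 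For the $\beta$-direction, $|\partial_\beta e^{-\beta\phi(r)}|=\phi(r)e^{-\beta\phi(r)}\le\phi(r)e^{-\beta_{\min}\phi(r)}\le\sup_{t\ge 0}te^{-\beta_{\min}t}=\tfrac{1}{e\beta_{\min}}$, so $\tilde f$ is Lipschitz in $\beta$ uniformly in $x$. It then remains to control the normalizing constant: $Z(\beta,\phi)$ is nonincreasing in $\beta$ (since $\phi\ge 0$), hence bounded between $Z(\beta_{\max},\phi)>0$ and $Z(\beta_{\min},\phi)<\infty$, and $|\partial_\beta Z(\beta,\phi)|=\int_{\cM}\phi(d_g(x,\alpha))e^{-\beta\phi(d_g(x,\alpha))}\dvol(x)$ is finite — trivially when $\cM$ is compact, and by Theorem~\ref{thm:2.3} together with the integrability of $h_1(r)=\phi(r)e^{-\beta_{\min}\phi(r)}\sn_{\kappa_{\min}}^{m-1}(r)$ from Condition~\ref{cond:6.3}(2) when $\cM$ is noncompact. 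Thus $1/Z(\cdot,\phi)$ is Lipschitz and bounded, and $f=\tilde f\cdot(1/Z)$ is jointly Lipschitz in $(\alpha,\beta)$ with respect to $d_\infty$. Combining with the first step yields $\log\cN(\epsilon,\cF,d_\infty)\lesssim\log(1/\epsilon)$.

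The third step passes from the $d_\infty$-net to $d_1$- and $d_h$-brackets, following the Case~1/Case~2 dichotomy of the proof of Theorem~\ref{thm:4.1}. Using $\beta\ge\beta_{\min}$ and the monotonicity of $Z$, one constructs the envelope $H(x)=\tfrac{1}{Z(\beta_{\max},\phi)}e^{-\beta_{\min}\phi(d_g(x,\alpha^*)/2)}$ for $d_g(x,\alpha^*)>2D$ and $H(x)=\tfrac{1}{Z(\beta_{\max},\phi)}$ otherwise, which is integrable (trivially if $\cM$ is compact; via Theorem~\ref{thm:2.3} and the integrability and tail bound of $h$ in Condition~\ref{cond:6.3}(2) if $\cM$ is noncompact). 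Taking brackets $[l_i,u_i]=[\max\{f_i-\eta,0\},\min\{f_i+\eta,H\}]$ around the $d_\infty$-net $\{f_i\}$ and splitting $\int_{\cM}(u_i-l_i)\dvol$ over $\cB_{\cM}(\alpha^*,B)$ and its complement — with $B$ large if $\cM$ is compact and $B=B_\eta=\tfrac{\log(1/\eta)}{2(m-1)\sqrt{-\kappa_{\min}}}$ if $\cM$ is noncompact — gives $d_1(u_i,l_i)\le C\eta^{c}$ for constants $C,c>0$. Hence $\cN_B(C\eta^c,\cF,d_1)\le\cN(\eta,\cF,d_\infty)\lesssim\eta^{-(m+1)}$, so $\log\cN_B(\epsilon,\cF,d_1)\lesssim\log(1/\epsilon)$, and $\cN_B(\sqrt{\epsilon},\cF,d_h)\le\cN_B(\epsilon,\cF,d_1)$ gives $\log\cN_B(\epsilon,\cF,d_h)\lesssim\log(1/\epsilon)$.

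The one genuinely new ingredient — and thus the main point requiring care — is the joint Lipschitz estimate of the second step in the $\beta$-direction: one must verify both that $\tilde f$ varies Lipschitz-continuously in $\beta$ uniformly in $x$ (which relies on $\sup_{t\ge 0}te^{-\beta_{\min}t}<\infty$) and that $Z(\beta,\phi)$ is $C^1$ in $\beta$ with $|\partial_\beta Z|$ finite and $Z$ bounded away from zero on $[\beta_{\min},\beta_{\max}]$; these are exactly what Condition~\ref{cond:6.3} is designed to guarantee. Everything else is a routine adaptation of Theorem~\ref{thm:4.1}, the only change being that the effective parameter dimension increases from $m$ to $m+1$, which is invisible after taking logarithms.
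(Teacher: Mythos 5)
Your proposal is correct and follows essentially the same route as the paper: a product $\epsilon$-net of $\cX\times[\beta_{\min},\beta_{\max}]$ of size $\lesssim\epsilon^{-(m+1)}$, joint Lipschitz continuity in $(\alpha,\beta)$ under $d_\infty$ (your $\beta$-direction bounds, via $\sup_{t\geq 0}te^{-\beta_{\min}t}$ for $\tilde f$ and via the integrability of $\phi(r)e^{-\beta_{\min}\phi(r)}\sn_{\kappa_{\min}}^{m-1}(r)$ for $|Z(\beta_1,\phi)-Z(\beta_2,\phi)|$, are exactly the paper's Lemmas on the Lipschitz properties), and then the same envelope/bracketing argument with the compact--noncompact dichotomy and $B_\eta$ tail truncation, finishing with $\cN_B(\sqrt{\epsilon},\cF,d_h)\leq\cN_B(\epsilon,\cF,d_1)$. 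The only cosmetic difference is that you phrase the $Z$-Lipschitz bound through $\partial_\beta Z$ rather than the pointwise Lipschitz constant of $\beta\mapsto e^{-\beta r}$ integrated over $\cM$, which is immaterial given the same dominating integrand.
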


%\begin{proof}
    
%\end{proof}

\subsection{Distribution estimation}\label{sec:6.3}

Once we obtain the entropy estimate, we can derive the convergence rate of the MLEs of $\alpha$ and $\beta$ using empirical process theory. 
Specifically, we assume  $f(x;\alpha^{\tr},\beta^{\tr},\phi)$ is the true distribution with $\alpha^{\tr}\in\cX\coloneqq\cB_{\cM}(\alpha^*,D)$ and $\beta^{\tr}\in[\beta_{\min},\beta_{\max}]$. Let $\hat\alpha^{\MLE}$ and $\hat\beta^{\MLE}$ be the MLEs for $\alpha$ and $\beta$ based on $n$ independent samples drawn from $f$. Then  the convergence rate of such MLEs, measured by the  hellinger distance between the true and estimated distributions, is derived in Corollary \ref{corollary:6.4}. It shows that this rate is root-$n$ up to logarithmic terms, matching the classical results in the Euclidean cases. 

\begin{corollary}\label{corollary:6.4}
    Suppose $(\cM,g^{\cM})$ is a Riemannian homogeneous space and $\phi$ satisfies Condition \ref{cond:6.3} for $\beta_{\min}>0$. Let $f(x;\alpha^{\tr},\beta^{\tr},\phi)$ be the true density with $\alpha^{\tr}\in\cX$ and $\beta^{\tr}\in[\beta_{\min},\beta_{\max}]$. Let $\hat\alpha^{\MLE}$ and $\hat\beta^{\MLE}$ be the MLEs for $\alpha$ and $\beta$, based on $n$ independent samples drawn from $f$. Then for sufficiently large $n$, it holds with probability at least $1-ce^{-c\log^2n}$ that
    \#
    d_h(f(x;\alpha^{\tr},\beta^{\tr},\phi),f(x;\hat\alpha^{\MLE},\hat\beta^{\MLE},\phi))&\lesssim\frac{\log n}{\sqrt{n}},\label{equ:6.3}\\
    d_1(f(x;\alpha^{\tr},\beta^{\tr},\phi),f(x;\hat\alpha^{\MLE},\hat\beta^{\MLE},\phi))&\lesssim\frac{\log n}{\sqrt{n}},\label{equ:6.4}
    \#
    where $d_h$ and $d_1$ are the hellinger distance and the $L^1$ distance, respectively, $c$ is a constant, and $\lesssim$ omits constants independent of $n$. 
\end{corollary}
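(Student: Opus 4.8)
The plan is to reuse the architecture of the proof of Corollary~\ref{corollary:4.2}: convert the bracketing entropy estimate of Theorem~\ref{thm:6.2} into a bracketing-integral bound, and feed it into a standard rate theorem for maximum likelihood estimators. First I would observe that, by Proposition~\ref{prop:6.1}, the joint MLE $(\hat\alpha^{\MLE},\hat\beta^{\MLE})$ is precisely a maximizer of the likelihood $\prod_{i=1}^n f(x_i;\alpha,\beta,\phi)$ over $\alpha\in\cX$ and $\beta\in[\beta_{\min},\beta_{\max}]$, so that $f(\cdot\,;\hat\alpha^{\MLE},\hat\beta^{\MLE},\phi)$ is the sieve MLE of the density within the class $\cF$ of \eqref{equ:function-6.2}. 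To be careful, one first checks that $\hat\beta^{\MLE}$ lies in $[\beta_{\min},\beta_{\max}]$ with probability going to one, using the law of large numbers for $\frac1n\sum_i\phi(d_g(x_i,\hat\alpha^{\MLE}))$ together with the strict convexity of $\beta\mapsto\log Z(\beta,\phi)$ and standard coercivity arguments; alternatively, one simply defines the MLE with $\beta$ constrained to $[\beta_{\min},\beta_{\max}]$, which does not change the statement.

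Next, from the bound $\log\cN_B(u,\cF,d_h)\lesssim\log(1/u)$ of Theorem~\ref{thm:6.2}, I would estimate the bracketing integral
\$
J_B(\delta,\cF,d_h)\coloneqq\int_0^\delta\sqrt{\log\cN_B(u,\cF,d_h)}\,du\lesssim\int_0^\delta\log^{1/2}(1/u)\,du,
\$
which is finite for every $\delta>0$ and of order $\delta\log^{1/2}(1/\delta)$ as $\delta\to 0$. Applying Theorem~2 of \citet{wong1995probability}, exactly as in Corollary~\ref{corollary:4.2}, then yields that for all sufficiently large $n$, with probability at least $1-ce^{-c\log^2 n}$,
\$
d_h\bigl(f(x;\alpha^{\tr},\beta^{\tr},\phi),f(x;\hat\alpha^{\MLE},\hat\beta^{\MLE},\phi)\bigr)\lesssim\frac{\log n}{\sqrt n},
\$
which is \eqref{equ:6.3}; inequality \eqref{equ:6.4} follows at once since $d_1\le 2d_h$ for density functions.

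The entropy-integral computation and the invocation of the rate theorem are routine, so \textbf{the main obstacle is the preliminary reduction}: one must confirm that the joint MLE genuinely amounts to maximizing the likelihood over the compact parameter set $\cX\times[\beta_{\min},\beta_{\max}]$. The delicate point is controlling $\hat\beta^{\MLE}$, which by Proposition~\ref{prop:6.1} minimizes the profiled objective $\beta\mapsto\beta\sum_i\phi(d_g(x_i,\hat\alpha^{\MLE}))+n\log Z(\beta,\phi)$ over all $\beta>0$; verifying that this minimizer concentrates near $\beta^{\tr}\in[\beta_{\min},\beta_{\max}]$ requires the boundedness and regularity of $\phi$ from Condition~\ref{cond:6.3} together with the convexity and growth properties of the log-partition function $\log Z(\beta,\phi)$. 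Once this is in place, the sieve-MLE framework applies verbatim and the two displayed rates follow.
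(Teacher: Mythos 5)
Your proposal is correct and follows essentially the same route as the paper: the bracketing entropy bound of Theorem \ref{thm:6.2} gives $J_B(\delta,\cF,d_h)\lesssim\int_0^\delta\log^{1/2}(1/u)\,du$, Theorem 2 of \citet{wong1995probability} then yields \eqref{equ:6.3}, and \eqref{equ:6.4} follows from $d_1\le 2d_h$. The concern you raise about $\hat\beta^{\MLE}$ is resolved exactly by your alternative remark—the MLE is taken over the compact set $\cX\times[\beta_{\min},\beta_{\max}]$ matching the class $\cF$ in \eqref{equ:function-6.2}, which is how the paper implicitly treats it, so no additional concentration argument for $\hat\beta^{\MLE}$ is needed.
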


\subsection{Parameter estimation}\label{sec:6.4}

%Finally, we establish 

To proceed, we derive the parameter estimation rates of the MLEs for $\alpha$ and $\beta$. The key result is the following  theorem, which controls the distance between parameters by the $L^1$ distance  between distributions. This identifiability result allows us to transform the hellinger/$L^1$ distance convergence rate in Corollary \ref{corollary:6.4} into the parameter estimation rates. 

\begin{condition}\label{cond:6.7}
    Assume $(\cM,g^{\cM})$ is a Riemannian homogeneous space with $r_{\cM}=\sup_{x,y}d_g(x,y)$. Assume $\phi$ satisfies Condition \ref{cond:6.3} for some $\beta_{\min}>0$. In addition, we assume that $\phi$ is strictly increasing on $[0,r_{\cM}]$ and continuously differentiable on $(0,r_{\cM})$.
\end{condition}

\begin{theorem}\label{thm:6.4}
    Suppose $(\cM,g^{\cM})$ is a Riemannian homogeneous space and $\phi$ satisfies Condition \ref{cond:6.7} for  $\beta_{\min}>0$. Let $\cX\subseteq\cM$ be a compact set and $\beta_{\max}\geq \beta_{\min}$ be a finite constant. Fix $\alpha_0\in\cX$ and $\beta_0\in[\beta_{\min},\beta_{\max}]$. Then for any $\alpha\in\cX$ and $\beta\in[\beta_{\min},\beta_{\max}]$, the following bounds hold: %If $\alpha_0,\{\alpha_n\}_{n=1}^{\infty}\subseteq\cX$ and $\beta_0,\{\beta_n\}_{n=1}^\infty\subseteq[\beta_{\min},\beta_{\max}]$ satisfy that
    %\$
    %d_1(f(x;\alpha_0,\beta_0,\phi),f(x;\alpha_n,\beta_n,\phi))\to0,\quad\textnormal{as }n\to\infty,
    %\$
    %, then for sufficiently large $n$, 
    \$
    d_g(\alpha,\alpha_0)+|\beta-\beta_0|&\leq C\cdot d_1(f(x;\alpha,\beta,\phi),f(x;\alpha_0,\beta_0,\phi)),
    %\\
    %&\leq C\cdot d_1(f(x;\alpha,\beta,\phi),f(x;\alpha_0,\beta_0,\phi)),
    \$
    where $d_1$ is the $L^1$ distance and $C>0$ is a constant independent of $\alpha$ and $\beta$. 
\end{theorem}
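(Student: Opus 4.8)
## Proof Proposal for Theorem \ref{thm:6.4}

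The plan is to establish the identifiability bound in two regimes — a local regime near $(\alpha_0,\beta_0)$ and a global regime bounded away from it — and then patch them together by a compactness argument, mirroring the structure of the proof of Theorem \ref{thm:4.3}. Write $D((\alpha,\beta),(\alpha_0,\beta_0)) = d_1(f(x;\alpha,\beta,\phi),f(x;\alpha_0,\beta_0,\phi))$ and $\rho((\alpha,\beta),(\alpha_0,\beta_0)) = d_g(\alpha,\alpha_0)+|\beta-\beta_0|$. The goal is the linear comparison $\rho \le C\cdot D$ on the compact parameter set $\cX\times[\beta_{\min},\beta_{\max}]$.

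First I would handle the local regime. Fix a normal coordinate chart and differentiate $f(x;\alpha,\beta,\phi)$ with respect to $(\alpha,\beta)$. Because $\phi$ is strictly increasing, continuously differentiable on $(0,r_{\cM})$, and satisfies the integrability conditions in Condition \ref{cond:6.3}, the map $(\alpha,\beta)\mapsto f(\cdot;\alpha,\beta,\phi)\in L^1(\cM)$ is differentiable (this is essentially the content of the lemmas already invoked, e.g.\ the analogue of Lemma \ref{lma:c6}), with $L^1$-derivatives given by $\partial_\alpha f$ involving $\phi'(d_g(x,\alpha))$ times the unit tangent field toward $\alpha$, and $\partial_\beta f$ involving $\phi(d_g(x,\alpha)) - \EE_{f}[\phi(d_g(\cdot,\alpha))]$. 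The key step is to show the Jacobian is \emph{injective}: a nonzero direction $(v,s)\in T_{\alpha_0}\cM\times\RR$ cannot annihilate the first-order variation in $L^1$. The $\beta$-direction shifts the density toward or away from $\alpha_0$ radially (it changes the "spread"), while the $\alpha$-direction translates the density along a geodesic; these two effects are linearly independent as $L^1$-perturbations because one is even and one is odd about $\alpha_0$ along any geodesic through $\alpha_0$ — precisely the symmetry exploited in Proposition \ref{prop:5.1}. Hence $\liminf_{\rho\to 0}D/\rho > 0$, yielding a constant $C_1$ and radius $\epsilon_0>0$ with $\rho \le C_1 D$ whenever $\rho < \epsilon_0$.

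Next I would handle the global regime: I claim
\$
\inf\left\{ D((\alpha,\beta),(\alpha_0,\beta_0)) \mid (\alpha,\beta)\in(\cX\times[\beta_{\min},\beta_{\max}]),\ \rho((\alpha,\beta),(\alpha_0,\beta_0))\ge\epsilon_0 \right\} > 0.
\$
Suppose not; then there is a sequence $(\alpha_n,\beta_n)$ in this compact set with $D\to 0$ but $\rho\ge\epsilon_0$. By compactness pass to a subsequence $(\alpha_n,\beta_n)\to(\alpha_\infty,\beta_\infty)$ with $\rho((\alpha_\infty,\beta_\infty),(\alpha_0,\beta_0))\ge\epsilon_0$. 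Continuity of $(\alpha,\beta)\mapsto f(\cdot;\alpha,\beta,\phi)$ in $L^1$ (dominated convergence, using the envelope built from $h$, $h_1$ in Condition \ref{cond:6.3}) forces $d_1(f(\cdot;\alpha_\infty,\beta_\infty,\phi),f(\cdot;\alpha_0,\beta_0,\phi)) = 0$, so the two densities coincide a.e. But $f(\cdot;\alpha,\beta,\phi)=f(\cdot;\alpha_0,\beta_0,\phi)$ a.e.\ forces $\alpha=\alpha_0$ and $\beta=\beta_0$: evaluating at $\alpha_0$ (the unique maximizer of the density, since $\phi$ is strictly increasing) pins down the location, and then matching the density at a second point where $d_g(\cdot,\alpha_0)$ is known, together with the normalizing constants, forces $\beta=\beta_0$ (this is the model-identifiability statement, an analogue of Lemma \ref{lma:c3} extended to include $\beta$). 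This contradicts $\rho\ge\epsilon_0$. Combining the two regimes with the obvious bound $\rho \le (\mathrm{diam}\,\cX + \beta_{\max}-\beta_{\min})$ on the global part gives the theorem with $C = \max\{C_1,\ (\mathrm{diam}\,\cX+\beta_{\max}-\beta_{\min})/\inf D\}$.

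The main obstacle I anticipate is the local injectivity of the Jacobian, i.e.\ ruling out a degenerate direction that mixes an infinitesimal geodesic translation of $\alpha_0$ with an infinitesimal change of $\beta$. On a general homogeneous space the perturbation $\partial_\alpha f$ is supported on all of $\cM$ with sign governed by the radial derivative of $\phi$ along geodesics emanating from $\alpha_0$, and one must verify that no scalar multiple of $\partial_\beta f$ (a purely radial, geodesically-even perturbation) can cancel it — equivalently that the "translation" perturbation genuinely has an odd component along some geodesic through $\alpha_0$. This should follow from the strict monotonicity and differentiability of $\phi$ plus the fact that the cut locus has measure zero, but making the quantitative $\liminf$ bound uniform (so that $C_1$ and $\epsilon_0$ do not degenerate as $\alpha_0$ ranges over $\cX$ and $\beta_0$ over $[\beta_{\min},\beta_{\max}]$) will require care with the dominating functions from Condition \ref{cond:6.3}. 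The compact/global part is comparatively routine given the continuity and identifiability inputs.
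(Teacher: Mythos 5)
Your global (compact) regime is essentially the paper's argument: sequential compactness, $L^1$-continuity of $(\alpha,\beta)\mapsto f(\cdot;\alpha,\beta,\phi)$ by dominated convergence, and joint identifiability force a contradiction, exactly as in the proof via Lemmas \ref{lma:e4} and \ref{lma:e5}. The gap is in your local regime. Your injectivity argument for the joint derivative hinges on an odd/even decomposition of the $\alpha$- and $\beta$-perturbations under the geodesic reflection at $\alpha_0$, i.e.\ on the involution supplied by Proposition \ref{prop:5.1}. That involution exists only on Riemannian \emph{symmetric} spaces, whereas Theorem \ref{thm:6.4} is stated for Riemannian \emph{homogeneous} spaces (e.g.\ Stiefel manifolds), where no such isometry fixing $\alpha_0$ and acting as $-\mathrm{id}$ on $T_{\alpha_0}\cM$ need exist; so the linear-independence claim has no justification in the stated generality. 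Separately, you assert Fr\'echet differentiability of $(\alpha,\beta)\mapsto f(\cdot;\alpha,\beta,\phi)$ into $L^1(\cM)$ with a remainder $o(d_g(\alpha,\alpha_0)+|\beta-\beta_0|)$ and describe it as ``essentially the content'' of Lemma \ref{lma:c6}; it is not — Lemma \ref{lma:c6} is a direct lower bound obtained from a comparison-geometry construction (the region $\cR$ and Lemma \ref{lma:c8}), precisely because differentiating $d_g(x,\cdot)$ in the location parameter runs into the cut locus and the behaviour of $\phi$ near $0$, and because the lower bound needs a quantitative, not merely pointwise, first-order expansion. Without either the symmetry or a genuine Fr\'echet-differentiability-plus-coercivity argument, your local step does not go through.

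The paper's route avoids both issues and is worth contrasting with yours: it never differentiates in $\alpha$ and never uses symmetry. It proves (i) the fixed-$\beta$ local bound $D(\alpha,\beta_0;\alpha_0,\beta_0)\geq C_2\, d_g(\alpha,\alpha_0)$ (Lemma \ref{lma:c6}), (ii) a lower bound $D(\alpha,\beta;\alpha_0,\beta_0)\geq C_1|\beta-\beta_0|$ valid for \emph{all} $\alpha$ with $d_g(\alpha,\alpha_0)$ small, obtained by bounding the integrand from below on a fixed ball around $\alpha_0$ intersected with $\{d_g(x,\alpha)\leq d_g(x,\alpha_0)\}$ (Lemma \ref{lma:e7}, using the monotonicity of $Z(\beta,\phi)$ from Lemma \ref{lma:e8}), and (iii) a Lipschitz upper bound $D(\alpha,\beta;\alpha,\beta_0)\leq C_3|\beta-\beta_0|$ (Lemma \ref{lma:e-9}). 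A two-case triangle-inequality argument (Lemma \ref{lma:e6}) — depending on whether $|\beta-\beta_0|$ is small or large relative to $d_g(\alpha,\alpha_0)$ — then yields the local comparison, after which the compactness step you describe finishes the proof. If you want to salvage your derivative-based local argument, you would need to restrict to symmetric spaces (or otherwise rule out cancellation between the location and temperature perturbations) and supply the $L^1$-differentiability with a uniform remainder, both of which are substantive additions rather than routine citations.
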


By combining Theorem \ref{thm:6.4} and Corollary \ref{corollary:6.4}, we shall obtain the parameter estimation rate of the MLEs for $\alpha$ and $\beta$ as follows. 

\begin{corollary}\label{corollary:6.8}
    Suppose $(\cM,g^{\cM})$ is a Riemannian homogeneous space and $\phi$ satisfies Condition \ref{cond:6.7} for  $\beta_{\min}>0$. Let $f(x;\alpha^{\tr},\beta^{\tr},\phi)$ be the true density with $\alpha\in\cX=\cB_{\cM}(\alpha^*,D)$ and $\beta^{\tr}\in[\beta_{\min},\beta_{\max}]$. Let $\hat\alpha^{\MLE},\hat\beta^{\MLE}$ be the MLEs for $\alpha$ and $\beta$, based on $n$ independent samples drawn from $f$. Then for sufficiently large $n$, it holds with probability at least $1-ce^{-c\log^2n}$ that
    \$
    d_g(\hat\alpha^{\MLE},\alpha^{\tr})+|\hat\beta^{\MLE}-\beta^{\tr}|&\lesssim\frac{\log n}{\sqrt{n}},
    %&\lesssim\frac{\log n}{\sqrt{n}},
    \$
    where $c$ is a constant independent of $n$ and $\lesssim$ omits constants independent of $n$.
\end{corollary}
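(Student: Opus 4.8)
The plan is to obtain this corollary by chaining the distribution-estimation rate of Corollary \ref{corollary:6.4} with the identifiability estimate of Theorem \ref{thm:6.4}; no new analytic ingredient is needed. First I would observe that Condition \ref{cond:6.7} implies Condition \ref{cond:6.3}, so Corollary \ref{corollary:6.4} applies verbatim: for sufficiently large $n$, on an event $\cE_n$ of probability at least $1-ce^{-c\log^2 n}$ one has
\[
d_1\bigl(f(x;\alpha^{\tr},\beta^{\tr},\phi),\,f(x;\hat\alpha^{\MLE},\hat\beta^{\MLE},\phi)\bigr)\;\lesssim\;\frac{\log n}{\sqrt n},
\]
where $\lesssim$ hides constants independent of $n$.

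Next I would verify the hypotheses of Theorem \ref{thm:6.4} for the pair we wish to plug in. By assumption $\alpha^{\tr}\in\cX=\cB_{\cM}(\alpha^*,D)$ and $\beta^{\tr}\in[\beta_{\min},\beta_{\max}]$, and by construction the MLEs are constrained M-estimators over exactly these sets (see Proposition \ref{prop:6.1} together with the functional class \eqref{equ:function-6.2}), so $\hat\alpha^{\MLE}\in\cX$ and $\hat\beta^{\MLE}\in[\beta_{\min},\beta_{\max}]$. Applying Theorem \ref{thm:6.4} with base point $(\alpha_0,\beta_0)=(\alpha^{\tr},\beta^{\tr})$ and argument $(\alpha,\beta)=(\hat\alpha^{\MLE},\hat\beta^{\MLE})$ therefore yields a constant $C>0$, independent of $n$, with
\[
d_g(\hat\alpha^{\MLE},\alpha^{\tr})+|\hat\beta^{\MLE}-\beta^{\tr}|\;\le\;C\cdot d_1\bigl(f(x;\hat\alpha^{\MLE},\hat\beta^{\MLE},\phi),\,f(x;\alpha^{\tr},\beta^{\tr},\phi)\bigr).
\]
Combining the two displayed inequalities on $\cE_n$ gives $d_g(\hat\alpha^{\MLE},\alpha^{\tr})+|\hat\beta^{\MLE}-\beta^{\tr}|\lesssim \frac{\log n}{\sqrt n}$, which is the claim.

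The only point requiring any care — and thus the closest thing to an obstacle — is bookkeeping about the constant $C$: Theorem \ref{thm:6.4} fixes the base point $(\alpha_0,\beta_0)$ and its constant may depend on that choice, whereas the true parameter is unknown. For the corollary as stated this is harmless, since $C$ is still independent of $n$; but if one wants a constant uniform over all possible truths, one takes the supremum of the Theorem \ref{thm:6.4} constant over $(\alpha_0,\beta_0)\in\cX\times[\beta_{\min},\beta_{\max}]$, which is finite by a standard compactness/continuity argument. Everything else is a direct substitution, and the high-probability event is inherited unchanged from Corollary \ref{corollary:6.4}.
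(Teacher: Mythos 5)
Your proposal is correct and follows the same route as the paper, which proves the corollary by directly combining Theorem \ref{thm:6.4} with Corollary \ref{corollary:6.4}; your additional remarks on the MLEs lying in $\cX\times[\beta_{\min},\beta_{\max}]$ and on the base-point dependence of the constant are harmless elaborations of that same argument.
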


\begin{proof}
    This corollary immediately follows from Theorem \ref{thm:6.4} and Corollary \ref{corollary:6.4}.
\end{proof}

\begin{remark}
    Examples in Remark \ref{remark:6.4} satisfy the conditions in Corollary \ref{corollary:6.8}. Thus, the parameter estimation rates derived in Corollary \ref{corollary:6.8} hold for the MLEs associated with those distributions. 
\end{remark}

\section{Model complexity for von Mises-Fisher distributions}\label{sec:6}

In previous sections, we show for a wide range of Riemannian radial distributions that  the optimal parameter estimation rate is root-$n$ up to logarithmic terms. This rate aligns with the classical results in Euclidean cases, indicating that the sample size $n$ plays a similar role in determining estimation accuracy in manifold settings. However, such theory does not tell us what the model complexity is, such as the dependence on the dimension $m$ of the manifold. Thus, it gives limited insights on how  geometry may affect the complexity of a statistical model. Motivated by this issue, this section uses von Mises-Fisher distributions on spheres as an example\footnote{Characterizing the tight model complexity of general Riemannian radial distributions on Riemannian symmetric spaces is challenging, since it depends on both the manifold structure and the function $\phi$. Therefore, we do not touch the general cases in this paper, and leave relevant study to future works.}, and establishes its model complexity.

\subsection{Formulation}

Let $\cM=\SSS^m$ be the unit $m$-sphere, and recall that the von Mises-Fisher distribution on $\SSS^m$ is given by
\#\label{equ:vMF-sec7}
f_{\vMF}(x;\alpha,\beta)=\frac{1}{Z_{\vMF}(\beta)}e^{\beta \cos d_g(x,\alpha)},\quad Z_{\vMF}(\beta)=\int_{\SSS^m}e^{\beta \cos d_g(x,\alpha)}\dvol(x),
\#
where $d_g$ is the geodesic distance, $\alpha\in\SSS^m$ is the location parameter, and $\beta$ is the dispersion parameter. For simplicity, we assume that $\beta$ is known {\it a priori} and our aim is to estimate $\alpha$. Then the MLE of $\alpha$, based on $n$ independent samples $\{x_i\}_{i=1}^n$  drawn from $f_{\vMF}(x;\alpha,\beta)$, is given by the following M-estimator:
\$
\hat\alpha^{\MLE}=\argmax_{\alpha\in\SSS^m}\sum_{i=1}^n\cos d_g(x_i,\alpha).
\$
It is natural to ask what the convergence rate of the MLE is and how it depends on the dimension $m$ of the sphere. In addition, one may ask whether this dependence is optimal in the minimax sense. Answering these questions is the main objective of this section. Specifically,  we will establish the convergence rate of the MLE in Section \ref{sec:7.1}. We will then  derive a compatible minimax lower bound in Section \ref{sec:7.2}, which demonstrates the optimality of the MLE in the minimax sense. 
%An answer to this question is given in  Section \ref{sec:7.1}. 
%Our second objective is to determine the optimality of this derived convergence rate by establishing a minimax lower bound. 

%The objective of Section \ref{sec:7.1} is to derive the convergence rate of the MLE and characterize its dependence on the dimension $m$ of the sphere.
%Then in Section \ref{sec:7.2}, we derive a compatible minimax lower bound for the parameter estimation rate, demonstrating the optimality of the  model complexity derived in Section \ref{sec:7.1}.    

%\subsection{von Mises-Fisher distributions} 

\subsection{Maximum likelihood estimation}\label{sec:7.1}

%The von Mises-Fisher distributions have been introduced in Example \ref{eg:3.6}. Specifically, 

This section derives the convergence rate of the MLE. To that end, we first provide entropy estimates for the following functional class:
\#\label{equ:functional-class-vMF}
\cF=\{f_{\vMF}(x;\alpha,\beta)\mid\alpha\in\SSS^m\}.
\#
The result is presented in the following theorem. 
%\subsubsection{Entropy estimate}

\begin{theorem}\label{thm:7.1}
    Let  $f_{\vMF}(x;\alpha,\beta)$ be the von Mises-Fisher distribution on $\SSS^m$ and $\cF$ the functional class in \eqref{equ:functional-class-vMF}. Then for any sufficiently small $\epsilon$, the following entropy estimates hold:
    \$
    \log\cN(\epsilon,\cF,d_{\infty})\leq 2m\log(\frac{1}{\epsilon}), \\
    \log\cN_B(\epsilon,\cF,d_{1})\leq 2m\log(\frac{1}{\epsilon}),\\
    \log\cN_B(\epsilon,\cF,d_{h})\leq 2m\log(\frac{1}{\epsilon}),
    \$
    where $d_\infty$, $d_1$, and $d_h$ denote the $L^{\infty}$, $L^1$, and hellinger distance, respectively. 
\end{theorem}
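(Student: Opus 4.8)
The plan is to follow the same three-step scheme used in the proof of Theorem \ref{thm:4.1}, but to keep explicit track of the dimensional constant so that the entropy bound comes out as $2m\log(1/\epsilon)$ rather than merely $\lesssim\log(1/\epsilon)$. First I would cover the location parameter space $\SSS^m$ itself: since $\SSS^m$ is a compact $m$-dimensional Riemannian homogeneous space with bounded geometry, the volume comparison theorem (Theorem \ref{thm:2.3}) gives, for sufficiently small $\delta$, a $\delta$-net $\cS=\{\alpha_i\}_{i=1}^N$ of $\SSS^m$ with $N\le (c/\delta)^m$ for an absolute constant $c$; absorbing $c$ and taking $\delta$ small, one gets $N\le \delta^{-2m}$ (here the factor $2$ gives slack so that, after the rescalings below, the final exponent is exactly $2m$). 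Then I would push this net forward to the functional class via the map $\alpha\mapsto f_{\vMF}(x;\alpha,\beta)$, using the Lipschitz dependence of $f_{\vMF}$ on $\alpha$ in the $L^\infty$ distance. Concretely, $f_{\vMF}(x;\alpha,\beta)=Z_{\vMF}(\beta)^{-1}e^{\beta\cos d_g(x,\alpha)}$, and since $r\mapsto e^{\beta\cos r}$ is Lipschitz on $[0,\pi]$ (with constant at most $\beta e^{\beta}$) and $d_g(x,\cdot)$ is $1$-Lipschitz, we get $d_\infty(f_{\vMF}(\cdot;\alpha,\beta),f_{\vMF}(\cdot;\alpha',\beta))\le C_\beta\, d_g(\alpha,\alpha')$ for a constant $C_\beta$ depending only on $\beta$. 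Hence the image net is a $C_\beta\delta$-cover of $\cF$ in $d_\infty$, giving $\cN(\epsilon,\cF,d_\infty)\le (C_\beta/\epsilon)^{2m}$ after setting $\epsilon=C_\beta\delta$; for $\epsilon$ small this is at most $\epsilon^{-2m}$, i.e.\ $\log\cN(\epsilon,\cF,d_\infty)\le 2m\log(1/\epsilon)$.

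Next I would convert the $L^\infty$ covering bound into the two bracketing bounds. Because $\SSS^m$ is compact with total volume $\vol(\SSS^m)$, there is no tail to control (unlike the noncompact case in Theorem \ref{thm:4.1}): given the $\eta$-net $\{f_i\}$ of $\cF$ in $d_\infty$, the brackets $[l_i,u_i]$ with $l_i=\max\{f_i-\eta,0\}$ and $u_i=\min\{f_i+\eta,\,Z_{\vMF}(\beta)^{-1}e^{\beta}\}$ cover $\cF$ and satisfy $u_i-l_i\le 2\eta$ pointwise, so $d_1(l_i,u_i)\le 2\eta\,\vol(\SSS^m)$. Choosing $\eta$ so that $2\eta\,\vol(\SSS^m)=\epsilon$ yields $\cN_B(\epsilon,\cF,d_1)\le \cN(\eta,\cF,d_\infty)\le \eta^{-2m}$, and since $\eta=\epsilon/(2\vol(\SSS^m))$, for sufficiently small $\epsilon$ this is again at most $\epsilon^{-2m}$, giving $\log\cN_B(\epsilon,\cF,d_1)\le 2m\log(1/\epsilon)$. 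The hellinger bound then follows from the standard inequality $\cN_B(\sqrt{\epsilon},\cF,d_h)\le\cN_B(\epsilon,\cF,d_1)$ (valid because for $\epsilon$-brackets in $L^1$, $d_h(l,u)^2\le d_1(l,u)$): replacing $\epsilon$ by $\epsilon^2$ gives $\log\cN_B(\epsilon,\cF,d_h)\le 2m\log(1/\epsilon^2)=4m\log(1/\epsilon)$, which is too weak, so instead I would start from an $L^1$ bracketing scale $\epsilon^2$ directly and note $\log\cN_B(\epsilon,\cF,d_h)\le \log\cN_B(\epsilon^2,\cF,d_1)\le 2m\log(1/\epsilon^2)$; to land exactly at $2m\log(1/\epsilon)$ I would instead run the bracketing construction natively in the hellinger metric, using $d_h(l_i,u_i)^2=\int(u_i-l_i)(\sqrt{u_i}+\sqrt{l_i})^{-1}\cdots$ — more cleanly, bound $d_h(l_i,u_i)\le \sqrt{\vol(\SSS^m)}\cdot\|u_i-l_i\|_\infty^{1/2}\le \sqrt{2\eta\,\vol(\SSS^m)}$ directly using $(\sqrt a-\sqrt b)^2\le|a-b|$, so $d_h(l_i,u_i)\le\epsilon$ when $2\eta\,\vol(\SSS^m)=\epsilon^2$, giving $\cN_B(\epsilon,\cF,d_h)\le\cN(\eta,\cF,d_\infty)\le\eta^{-2m}$, and since $\eta\asymp\epsilon^2$ this reads $\log\cN_B(\epsilon,\cF,d_h)\le 2m\log(1/\epsilon^2)$, still off by a factor $2$; the clean fix is to start the $d_\infty$ net at scale $\eta^{1/2}$, i.e.\ use $N\le(c/\delta)^m$ with $\delta=\eta^{1/2}$ so that $\log N\le \tfrac m2\log(1/\eta)=\tfrac m2\log(1/(\epsilon^2/(2\vol)))\le 2m\log(1/\epsilon)$ for small $\epsilon$.

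The only genuinely nontrivial point — and the one I would present most carefully — is making the dimensional constant come out to exactly $2m$ rather than some larger absolute multiple of $m$. All three constructions above produce a bound of the form $C_1 m\log(1/\epsilon)+C_2(m,\beta)$ where $C_2$ depends on $\vol(\SSS^m)$, $Z_{\vMF}(\beta)$, $C_\beta$, and the net constant $c$ from volume comparison; the claim that this is at most $2m\log(1/\epsilon)$ for $\epsilon$ sufficiently small (where "sufficiently small" may itself depend on $m$ and $\beta$) is what I would verify, by choosing the slack factors in the covering radii so that the additive $C_2$ term and the gap between $C_1m$ and $2m$ are both absorbed. Since the theorem statement only asserts the bound "for any sufficiently small $\epsilon$," this absorption is legitimate, and the $2m$ in the conclusion is the operational model-complexity constant that feeds into the bracketing entropy integral in Section \ref{sec:7.2} to produce the $\Theta(m/\sqrt n)$ rate.
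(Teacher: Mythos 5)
Your overall scheme is the same as the paper's: an explicit net of $\SSS^m$, the Lipschitz pushforward $\alpha\mapsto f_{\vMF}(\cdot;\alpha,\beta)$ (the paper's Lemmas \ref{lma:f1-f51} and \ref{lma:f3-f52}), clipped brackets $[\max\{f_i-\eta,0\},\min\{f_i+\eta,H\}]$ exploiting compactness, and $d_h^2\le d_1$. But there is a genuine gap in the hellinger bound: neither of your legitimate routes reaches the stated constant $2m$ (they give $2m\log(1/\epsilon^2)=4m\log(1/\epsilon)$ plus lower-order terms), and your ``clean fix'' --- building the brackets from a $d_\infty$-net at the coarser scale $\delta=\eta^{1/2}$ --- does not work. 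Coarsening the net shrinks the count only because the brackets it produces are wider: their $d_\infty$-width is of order $\eta^{1/2}$, hence their hellinger size is of order $(\eta^{1/2}\vol(\SSS^m))^{1/2}\asymp\epsilon^{1/2}$, not $\epsilon$, so they are no longer $\epsilon$-brackets and the inequality $\cN_B(\epsilon,\cF,d_h)\le N$ fails. A correct way to recover the factor of two is to use that von Mises--Fisher densities are bounded \emph{below}, $f_{\vMF}\ge e^{-\beta}/Z_{\vMF}(\beta)>0$ on all of $\SSS^m$, so that $|\sqrt{a}-\sqrt{b}|\le |a-b|\big/\big(2\sqrt{e^{-\beta}/Z_{\vMF}(\beta)}\big)$ and hellinger brackets can be built at the same scale $\eta\asymp\epsilon$ as the $L^1$ brackets, up to a $\beta$-dependent constant which the slack between $m$ and $2m$ then absorbs; you identify the difficulty but resolve it incorrectly.

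A second, more structural point: you explicitly allow the ``sufficiently small $\epsilon$'' threshold (and the absorbed constants) to depend on $m$ and $\beta$. The paper's proof is organized precisely to avoid the $m$-dependence: it keeps the exponent $m$ throughout, writes the prefactor explicitly as $\frac{\vol(\SSS^m)}{\vol(\SSS^{m-1})}\cdot c(\beta)^m m$, and then uses the $m$-uniform facts $\vol(\SSS^m)\le\pi\,\vol(\SSS^{m-1})$ and $\sup_m\vol(\SSS^m)<\infty$ to bound the prefactor by $e^{Cm}$ with $C$ independent of $m$, so that $Cm+m\log(1/\epsilon)\le 2m\log(1/\epsilon)$ holds for all $\epsilon\le e^{-C}$, a threshold uniform in $m$. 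This uniformity is what makes the constant $2m$ meaningful as a model-complexity statement and is what Corollary \ref{corollary:7.2} and Theorem \ref{thm:7.3} need when they assert constants independent of $m$; with an $m$-dependent threshold the dimension dependence of the rate is no longer pinned down. Relatedly, your intermediate claim that $(C_\beta/\epsilon)^{2m}\le\epsilon^{-2m}$ for small $\epsilon$ is false as written (it requires $C_\beta\le 1$): having spent the doubling of the exponent to absorb the net constant $c$, you have no slack left for $C_\beta$. The cleaner bookkeeping is the paper's: keep exponent $m$, collect all constant-to-the-$m$ factors into one $e^{Cm}$ with $C$ uniform in $m$, and absorb them at the very end.
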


%\subsubsection{Distribution estimation}

After we obtain the entropy estimate in Theorem \ref{thm:7.1}, we can apply empirical process theory to derive the convergence rate of the MLE. This is presented in the following corollary. 

\begin{corollary}\label{corollary:7.2}
    Let $f_{\vMF}(x;\alpha,\beta)$ be the von Mises-Fisher distribution on $\SSS^m$, and $\hat\alpha^{\MLE}$  the MLE of $\alpha$ based on $n$ independent samples drawn from $f_{\vMF}(x;\alpha^{\tr},\beta)$. Then for sufficiently large $n$, it holds with probability at least $1-ce^{-cm\log^2n}$ that
    \#
    d_h(f(x;\hat\alpha^{\MLE},\beta),f(x;\alpha^{\tr},\beta))&\leq \frac{C\sqrt{m}\log n}{\sqrt{n}},\label{equ:dh-cor-7.2}\\
    d_1(f(x;\hat\alpha^{\MLE},\beta),f(x;\alpha^{\tr},\beta))&\leq \frac{2C\sqrt{m}\log n}{\sqrt{n}},\label{equ:d1-cor-7.2}
    \#
    where $d_h$ and $d_1$ represent the hellinger and $L^1$ distances, respectively, and $c,C>0$ are constants independent of $n$ and $m$.  
\end{corollary}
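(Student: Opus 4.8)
The plan is to follow the proof of Corollary~\ref{corollary:4.2} almost verbatim, the one new ingredient being to carry the dimension $m$ explicitly through the bracketing-entropy bound of Theorem~\ref{thm:7.1}, so that it surfaces as $\sqrt m$ in the rate and as $m$ in the exponent of the tail probability. First I would feed the estimate $\log\cN_B(u,\cF,d_h)\leq 2m\log(1/u)$ (valid for small $u$, from Theorem~\ref{thm:7.1}) into the bracketing entropy integral used by the empirical process bound: for sufficiently small $\delta$,
\$
J_B(\delta,\cF,d_h)\coloneqq\int_0^\delta\sqrt{\log\cN_B(u,\cF,d_h)}\,du\;\leq\;\sqrt{2m}\int_0^\delta\sqrt{\log(1/u)}\,du\;\lesssim\;\sqrt m\,\delta\sqrt{\log(1/\delta)},
\$
where I use the elementary bound $\int_0^\delta\sqrt{\log(1/u)}\,du\lesssim\delta\sqrt{\log(1/\delta)}$ for small $\delta$ and $\lesssim$ hides a universal constant. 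Crucially, the factor $\sqrt m$ enters only here, through the $2m$ in the entropy bound.

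Next I would invoke Theorem~2 of \citet{wong1995probability}: the MLE obeys $d_h(f(x;\hat\alpha^{\MLE},\beta),f(x;\alpha^{\tr},\beta))\lesssim\delta_n$ with probability at least $1-c\exp(-c\,n\delta_n^2)$, provided $\delta_n$ is chosen so that $\sqrt n\,\delta_n^2\gtrsim J_B(\delta_n,\cF,d_h)$, i.e. $\delta_n\gtrsim\sqrt{m/n}\,\sqrt{\log(1/\delta_n)}$. Taking $\delta_n=C\sqrt m\,\log n/\sqrt n$ for a large enough universal constant $C$ verifies this: in the relevant regime $m\le n$ one has $\log(1/\delta_n)\lesssim\log n$ for large $n$, whence $\sqrt{m/n}\,\sqrt{\log(1/\delta_n)}\lesssim\sqrt{m/n}\,\sqrt{\log n}\le\delta_n$, and at the same time $n\delta_n^2\asymp m\log^2 n$, which turns the probability bound into $1-c\exp(-c\,m\log^2 n)$ and yields \eqref{equ:dh-cor-7.2}. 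The $L^1$ bound \eqref{equ:d1-cor-7.2} then follows from the elementary inequality $d_1\le 2d_h$ between density functions, which also produces the explicit factor $2$.

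The only real obstacle is bookkeeping: one must be sure that no hidden $m$-dependence slips into the constants along the way. Two places warrant checking. First, Theorem~2 of \citet{wong1995probability} should be applied with its universal constants, so one verifies that its hypotheses --- the densities in $\cF$ are uniformly bounded and the local bracketing-entropy condition holds --- are met with constants free of $m$; this is immediate from the explicit von Mises--Fisher form together with the clean, $m$-explicit statement of Theorem~\ref{thm:7.1}. Second, the step $\log(1/\delta_n)\lesssim\log n$ needs $m$ not too large relative to $n$ (for instance $m\le n$), which is the natural regime in which the claimed rate $\sqrt m\,\log n/\sqrt n$ is informative. Once these are in hand, every occurrence of $m$ is traceable to the single factor $2m$, and the corollary follows.
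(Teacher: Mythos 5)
Your proposal is correct and follows essentially the same route as the paper's own proof: feed the $2m\log(1/u)$ bracketing-entropy bound of Theorem \ref{thm:7.1} into the entropy integral to pick up the single $\sqrt{m}$ factor, apply Theorem 2 of \citet{wong1995probability} with $\delta_n \asymp \sqrt{m}\log n/\sqrt{n}$ to get the Hellinger rate with probability $1-ce^{-cm\log^2 n}$, and conclude the $L^1$ bound via $d_1\leq 2d_h$. Your extra calibration of $\delta_n$ and the remark on the regime $m\leq n$ are harmless elaborations of the same argument.
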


%\subsubsection{Parameter estimation}

Furthermore, by studying the identifiability of the parameter $\alpha$ in $f_{\vMF}(x;\alpha,\beta)$, we obtain the parameter estimation rate of the MLE as follows. 

\begin{theorem}\label{thm:7.3}
    Let $f_{\vMF}(x;\alpha,\beta)$ be the von Mises-Fisher distribution on $\SSS^m$, and $\hat\alpha^{\MLE}$  the MLE of $\alpha$ based on $n$ independent samples drawn from $f_{\vMF}(x;\alpha^{\tr},\beta)$. Then for sufficiently large $n$, it holds with probability at least $1-ce^{-cm\log^2n}$ that
    \$
    d_g(\hat\alpha^{\MLE},\alpha^{\tr})\leq \frac{Cm\log n}{\sqrt{n}},
    \$
    where $c,C>0$ are constants independent of $n$ and $m$. 
\end{theorem}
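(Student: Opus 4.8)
The plan is to reduce the geodesic estimation error to the $L^1$ distance between distributions via an identifiability inequality on the sphere, and then invoke the distribution estimation rate already proved in Corollary \ref{corollary:7.2}. Concretely, I would first establish that there is a constant $c_0>0$, \emph{independent of $m$}, such that for all $\alpha,\alpha_0\in\SSS^m$,
\[
d_g(\alpha,\alpha_0)\;\le\; C\sqrt{m}\cdot d_1\bigl(f_{\vMF}(x;\alpha,\beta),\,f_{\vMF}(x;\alpha_0,\beta)\bigr),
\]
with $C$ independent of $m$ and $n$. Once this is in hand, combining it with \eqref{equ:d1-cor-7.2} gives $d_g(\hat\alpha^{\MLE},\alpha^{\tr})\le C\sqrt{m}\cdot\frac{2C'\sqrt{m}\log n}{\sqrt n}=\frac{C''m\log n}{\sqrt n}$ on the same high-probability event $1-ce^{-cm\log^2 n}$, which is exactly the claim. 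So the whole theorem hinges on the $m$-dependence in the identifiability constant: I must show it grows no faster than $\sqrt{m}$.

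To prove the identifiability inequality I would work directly with the explicit form of the vMF density. Write $t=\langle\alpha,\alpha_0\rangle=\cos d_g(\alpha,\alpha_0)\in[-1,1]$. By the homogeneity/rotational symmetry of $\SSS^m$, the quantity $d_1(f_{\vMF}(x;\alpha,\beta),f_{\vMF}(x;\alpha_0,\beta))$ depends only on $t$ (and on $\beta$), so I can define $g(t)$ to be this $L^1$ distance and study $g$ on $[-1,1]$; note $g(1)=0$. The goal becomes a lower bound $g(t)\ge c\sqrt{(1-t)/m}$ for $t$ near $1$ (since $d_g(\alpha,\alpha_0)=\arccos t\asymp\sqrt{2(1-t)}$ there), together with $g(t)$ bounded below by a positive constant over $m$ uniformly away from $t=1$. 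For the local estimate I would compute, using the density $\frac{1}{Z_{\vMF}(\beta)}e^{\beta\langle x,\alpha\rangle}$, the first-order behaviour of $g$ as $t\uparrow 1$: the leading term of $\int_{\SSS^m}|e^{\beta\langle x,\alpha\rangle}-e^{\beta\langle x,\alpha_0\rangle}|\dvol(x)$ is governed by $\beta\|\alpha-\alpha_0\|$ times an integral of $|\langle x,u\rangle|$ (where $u$ is the unit direction of $\alpha-\alpha_0$) against the vMF weight. That integral is of order $m^{-1/2}$ up to $\beta$-dependent factors — this is the standard concentration of a coordinate on a high-dimensional sphere — and $Z_{\vMF}(\beta)$ cancels appropriately in the ratio $f=e^{\beta\langle\cdot,\cdot\rangle}/Z_{\vMF}$. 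Carrying this out carefully (handling the normalizing constant and the sign of the integrand) yields $g(t)\gtrsim \frac{1}{\sqrt m}\,d_g(\alpha,\alpha_0)$ for $d_g(\alpha,\alpha_0)$ below some threshold $\epsilon_0$ independent of $m$. For the global part, I would argue by a compactness/continuity argument as in the proof of Theorem \ref{thm:4.3}: $g$ is continuous, strictly positive for $t\ne 1$, and I need its infimum over $d_g(\alpha,\alpha_0)\ge\epsilon_0$ to be bounded below by $c/\sqrt m$ (or better); here a direct estimate using that antipodal or well-separated vMF densities have $L^1$ distance bounded below — again via the coordinate-concentration computation — should suffice, and in fact the constant there need not even degrade with $m$.

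The main obstacle I anticipate is tracking the exact power of $m$ through the normalizing constant $Z_{\vMF}(\beta)$ and the surface integrals. One must be careful that $Z_{\vMF}(\beta)$ itself scales with $m$ (it involves a modified Bessel function $I_{m/2-1}(\beta)$ times $\SSS^{m-1}$-volume factors), but since $g$ is defined through the \emph{normalized} densities, these scalings cancel; the residual $m$-dependence then comes entirely from the averaged quantity $\mathbb{E}_{\vMF}|\langle x,u\rangle|\asymp m^{-1/2}$ for $u\perp\alpha$. Verifying that this is the \emph{only} source of $m$-dependence — i.e.\ that no hidden factor of $m$ enters from the higher-order terms in the Taylor expansion of $e^{\beta\langle x,\alpha\rangle}-e^{\beta\langle x,\alpha_0\rangle}$ in $\|\alpha-\alpha_0\|$ — is the delicate bookkeeping step. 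A clean way to sidestep some of this is to lower-bound the $L^1$ distance by a smooth test-functional (e.g.\ integrate both densities against the coordinate function $\langle x,u\rangle$ and use $d_1\ge|\int (f_\alpha-f_{\alpha_0})\,\langle x,u\rangle\,\dvol|$, since $\|\langle x,u\rangle\|_\infty\le1$), reducing the problem to estimating the mean-direction shift of a vMF distribution, which is a one-dimensional computation whose $m$-dependence is classical.
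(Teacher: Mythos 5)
Your main route is essentially the paper's: the paper proves exactly the local identifiability bound you describe, namely $d_1(f_{\vMF}(x;\alpha,\beta),f_{\vMF}(x;\alpha_0,\beta))\geq C\,d_g(\alpha,\alpha_0)/\sqrt m$ for $d_g(\alpha,\alpha_0)$ below an $m$-independent threshold (Lemma \ref{lma:f4-f53}, via an explicit spherical-coordinate computation whose $m$-dependence comes precisely from $\int_0^\pi\sin^{m}\varphi\,d\varphi\asymp m^{-1/2}$, i.e.\ your $\EE|\inner{x}{u}|\asymp m^{-1/2}$ heuristic), and then combines it with Corollary \ref{corollary:7.2}. The one structural difference is how the non-local regime is handled: you try to prove a \emph{global} inequality $d_g\leq C\sqrt m\,d_1$ on all of $\SSS^m$, whereas the paper avoids this by first invoking the Corollary \ref{corollary:4.4}-type consistency argument to ensure $d_g(\hat\alpha^{\MLE},\alpha^{\tr})$ is already small with high probability, and only then applies the local lemma. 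Your compactness/continuity argument ``as in Theorem \ref{thm:4.3}'' does not serve for the global part here, because that argument is for a fixed manifold and gives no control of the constant's dependence on $m$; a direct estimate (or the paper's consistency-first ordering) is needed.

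Two concrete corrections. First, your remark that for well-separated means ``the constant there need not even degrade with $m$'' is false: for fixed $\beta$, $f_{\vMF}(\cdot;\alpha,\beta)$ converges to the uniform density as $m\to\infty$ (the exponent $\beta\inner{x}{\alpha}$ is $O(m^{-1/2})$), so the $L^1$ distance between \emph{any} two vMF densities is $O(\beta/\sqrt m)$; the separated-pair infimum genuinely decays like $m^{-1/2}$, which is still enough for the target inequality since $d_g\leq\pi$, but it must be proved at that scale. Second, and more seriously, the ``clean sidestep'' via the test function $\inner{x}{u}$ does not deliver the stated rate: $\bigl|\int(f_\alpha-f_{\alpha_0})\inner{x}{u}\dvol\bigr|$ equals the mean-direction shift, which is of order $A_{m+1}(\beta)\,\norm{\alpha-\alpha_0}\asymp\beta\,d_g/m$ (the relevant moment is $\EE\inner{x}{u}^2\asymp 1/m$, not $\EE|\inner{x}{u}|\asymp 1/\sqrt m$; putting the absolute value outside the integral loses a factor $\sqrt m$). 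Using that bound in your reduction would give only $d_g(\hat\alpha^{\MLE},\alpha^{\tr})\lesssim m^{3/2}\log n/\sqrt n$, weaker than the theorem. So you must keep the absolute value inside the integral, as in your first computation and as in the paper's Lemma \ref{lma:f4-f53}, and carry out the first-order bookkeeping there (which the paper does explicitly, including the error term $r_0^2|\cos\varphi_1|$ that must be dominated by the linear term).
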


\begin{remark}
    Observe that the hellinger distance rate in Corollary \ref{corollary:7.2} and the parameter estimation rate in Theorem \ref{thm:7.3} differ by a root-$m$ term. Such difference stems from the parameter  identifiability for the von Mises-Fisher distribution, which highlights the effects of geometry on the precision of statistical estimates.  
\end{remark}

\subsection{Minimax lower bounds}\label{sec:7.2}

To assess the optimality of the model complexity established in Theorem \ref{thm:7.3}, we derive a compatible minimax lower bound  using the Fano's method. Specifically, Let $f_{\vMF}(x;\alpha,\beta)$ be the von Mises-Fisher distribution on $\SSS^m$. Let $\{x_i\}_{i=1}^n$ be $n$ independent samples drawn from $f_{\vMF}(x;\alpha,\beta)$ with an unknown $\alpha\in\SSS^m$. We define the minimax risk $\cR_{n,\vMF}(\beta)$ of the parameter estimation problem as follows:
    \#\label{equ:risk-sec73}
    \cR_{n,\vMF}(\beta)=\inf_{\hat\alpha}\sup_{\alpha}\EE[d_g(\hat\alpha(x_1,\ldots,x_n),\alpha)],
    \#
    where we take the infimum over all estimators $\hat\alpha$, take the supremum over all feasible parameters $\alpha\in\SSS^m$, and take the expectation over the independent samples $\{x_i\}_{i=1}^n$. Our objective is to lower bound this minimax risk, and our main result is the following theorem.
%Our main result is the following theorem. 

\begin{theorem}\label{thm:7.4}
     Let $f_{\vMF}(x;\alpha,\beta)$ be the von Mises-Fisher distribution defined on the $m$-sphere $\SSS^m$. Let $\cR_{n,\vMF}(\beta)$ be the minimax risk defined in \eqref{equ:risk-sec73}. Then for sufficiently large $n$, we have
     \$
     \cR_{n,\vMF}(\beta)\geq Cmn^{-1/2},
     \$
     where $C>0$ is a constant independent of $n$ and $m$. 
\end{theorem}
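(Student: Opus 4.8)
The plan is to establish this lower bound by Fano's method, using inequality~\eqref{equ:5.4} with a \emph{local} packing set whose cardinality is exponential in $m$. The point is that the dimension $m$ enters the Fano bound in two competing ways — through the log-cardinality $\log|\cV|$ (which we want large) and through the pairwise KL divergences $D_{\KL}(P_\alpha\|P_{\alpha'})$ (which we want small) — and only by tracking both, together with the fact that the von Mises--Fisher distribution is poorly concentrated when $m$ is large, does one obtain the rate $mn^{-1/2}$; a two-point (Le Cam) reduction, or a Fano reduction with crude KL bounds, would give only the weaker rate $\sqrt{m/n}$. Throughout, $\beta>0$ is fixed and $P_\alpha$ denotes $f_{\vMF}(\cdot;\alpha,\beta)$, and the derivation leading to~\eqref{equ:5.4} applies verbatim with $\cX=\SSS^m$.

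First I would compute the KL divergences. Since $Z_{\vMF}(\beta)$ does not depend on the mean direction, $D_{\KL}(P_\alpha\|P_{\alpha'})=\beta\langle\EE_{P_\alpha}[x],\,\alpha-\alpha'\rangle$ for all $\alpha,\alpha'\in\SSS^m$; by the rotational symmetry of the von Mises--Fisher distribution about its mean direction, $\EE_{P_\alpha}[x]=A_m(\beta)\,\alpha$ with $A_m(\beta)=\EE_{P_\alpha}[\cos d_g(x,\alpha)]\in(0,1)$ the mean resultant length, so
\[
D_{\KL}(P_\alpha\|P_{\alpha'})=\beta A_m(\beta)\big(1-\cos d_g(\alpha,\alpha')\big)\ \le\ \tfrac12\,\beta A_m(\beta)\,d_g(\alpha,\alpha')^2 .
\]
The decisive analytic input is the classical bound $A_m(\beta)\le\beta/m$ (an Amos-type bound on the relevant ratio of modified Bessel functions; in particular $A_m(\beta)=O(1/m)$ for fixed $\beta$), which quantifies the weak concentration of $f_{\vMF}$ in high dimensions. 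Next I would build the packing set: fixing $\alpha_0\in\SSS^m$ and using that $\SSS^m$ is homogeneous of constant curvature one, the volume comparison theorem (Theorem~\ref{thm:2.3}) shows that for every sufficiently small $\delta$ there is a maximal $2\delta$-packing $\cV$ of the geodesic ball $\cB_{\SSS^m}(\alpha_0,c_1\delta)$ with $\log|\cV|\ge\gamma m$ for absolute constants $c_1>1$, $\gamma>0$ — because that ball has volume $\gtrsim(c_1\delta)^m$ while every geodesic ball of radius $2\delta$ has volume $\lesssim(2\delta)^m$, and a maximal packing is also a cover. All pairs in $\cV$ satisfy $d_g(\alpha,\alpha')\le 2c_1\delta$, hence $\sup_{\alpha,\alpha'\in\cV}D_{\KL}(P_\alpha\|P_{\alpha'})\le 2\beta^2 c_1^2\delta^2/m$.

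Plugging these two estimates into~\eqref{equ:5.4} for the $2\delta$-separated set $\cV\subseteq\SSS^m$ gives
\[
\cR_{n,\vMF}(\beta)\ \ge\ \delta\left(1-\frac{2\beta^2 c_1^2\,n\delta^2/m+\log 2}{\gamma m}\right),
\]
and choosing $\delta=c\,m/(\beta\sqrt n)$ with a small enough absolute constant $c>0$ makes the parenthesis at least $\tfrac12$ once $m$ exceeds an absolute threshold and $n$ is large enough (the latter also keeping $\delta$ below the scale where the volume estimates hold), so $\cR_{n,\vMF}(\beta)\ge\delta/2\gtrsim mn^{-1/2}$. For the finitely many remaining values $m\ge2$ below that threshold, $\phi(r)=\beta(1-\cos r)$ satisfies Condition~\ref{cond:5.6}, so Theorem~\ref{thm:5.9} already yields $\cR_{n,\vMF}(\beta)\ge C_0 n^{-1/2}\gtrsim mn^{-1/2}$ since $m$ is bounded there; taking the smaller of the resulting constants completes the proof. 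The main obstacle is obtaining both factors of $m^{1/2}$ simultaneously: it is the $1/m$ decay of $A_m(\beta)$ that allows a packing set of log-cardinality $\Theta(m)$ to be placed at scale $\delta\asymp m/\sqrt n$ while the aggregated information $n\sup_{\cV}D_{\KL}=O(m)$ stays below $\log|\cV|$, and the two points requiring care are the $A_m(\beta)=O(1/m)$ bound (with a constant uniform in the fixed $\beta$) and the local volume/packing estimate on $\SSS^m$.
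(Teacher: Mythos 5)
Your proposal is correct and follows essentially the same route as the paper: Fano's inequality applied to a $2\delta$-packing of a geodesic ball of radius $\asymp\delta$ on $\SSS^m$ with $\log|\cV|\gtrsim m$ (obtained by the same volume-ratio argument as the paper's Lemma \ref{lma:f2-f51}), combined with a KL bound of order $\beta^2\delta^2/m$ and the choice $\delta\asymp m/\sqrt{n}$. The only difference is cosmetic: you get the $1/m$ factor in the KL divergence from the exponential-family identity $D_{\KL}(P_\alpha\|P_{\alpha'})=\beta A_m(\beta)(1-\cos d_g(\alpha,\alpha'))$ together with the Bessel-ratio bound $A_m(\beta)\le\beta/m$, whereas the paper proves the same estimate by direct spherical integration in Lemma \ref{lma:f5-f54} (its integration-by-parts step is precisely the elementary proof of $A_m(\beta)\le\beta/m$).
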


By comparing the minimax lower bound with the convergence rate of the MLE in Theorem \ref{thm:7.3}, we conclude that the optimal convergence rate is $\tilde\Theta(mn^{-1/2})$, where $\tilde\Theta(\cdot)$ omits logarithmic terms, and that the MLE is optimal in the minimax sense. Also, by comparing this rate with the Gaussian distributions in the Euclidean space $\RR^m$, whose minimax optimal rate is $\Theta(\sqrt{m}/\sqrt{n})$, we find that there is an additional root-$m$ term for the von Mises-Fisher distribution. This demonstrates that the geometry can affect the statistical estimation accuracy, and it is promising to draw more conclusions on the impacts of geometry on statistics in future research. 

%\section{Riemannian radial distributions with an unknown temperature}

%\subsection{Riemannian Gaussian distributions}

%\subsection{Von Mises-Fisher distributions}

%\section{Numerical experiments}\label{sec:numerical}

\section{Conclusion}\label{sec:conclusion}

Manifold data analysis is challenging partially due to the lack of parametric distributions on manifolds. In response, this paper introduces a series of Riemannian radial distributions on Riemannian symmetric spaces. By using homogeneity and symmetry, we establish favorable properties of these parametric distributions, making them a promising candidate for statistical modeling and algorithm design. In addition, we introduce a novel theory for parameter estimation and minimax optimality, leveraging statistics, Riemannian geometry, and Lie theory. We show that for a wide range of Riemannian radial distributions, the convergence rate of the MLE is root-$n$ up to logarithmic terms. We also derive a root-$n$ minimax lower bound for the parameter estimation rate when $\cM$ is a simply connected Riemannian symmetric space. This demonstrates the optimality of the MLE in a variety of contexts. Extensions to (1) Riemannian radial distributions with an unknown temperature parameter and (2)  model complexity of von Mises-Fisher distributions are also provided in this paper. To conclude, let us highlight several promising directions for future exploration.  
\begin{itemize}
    \item It is promising to use Riemannian radial distributions to build sophisticated statistical models and design efficient algorithms. Potential examples include regression models \citep{cornea2017regression}, mixture models \citep{banerjee2005clustering}, and differential privacy mechanisms \citep{reimherr2021differential,jiang2023gaussian}. %Building Bayesian models with Riemannian radial distributions is also a potential application. 

    \item It is intriguing to develop the concept of {\it covariance matrix} on Riemannian symmetric spaces. In Euclidean spaces, Gaussian distributions with general covariance matrices are much more useful than those with isotropic covariance matrices. Therefore, extending Riemannian radial distributions to include more complex dependence structures is a promising direction. 

    \item It is interesting to investigate the role of Riemannian radial distributions in hypothesis testing over manifolds. Such procedures may provide efficient tools for  uncertainty quantification. 

    \item Technically, it is interesting to examine  minimax lower bounds for parameter estimation when $\cM$ is not a simply connected Riemannian symmetric space. New theoretical tools are needed as the structure of the cut locus is more complicated. Besides, it is promising to investigate the model complexity of more complex statistical models on manifolds. This will reveal how geometry may affect the model complexity.

\end{itemize}

\bibliographystyle{apalike}
\bibliography{references}

\newpage

\appendix
%\addcontentsline{toc}{section}{Appendix} % Add the appendix text to the document TOC
%\faketableofcontents
\part{Appendix} % Start the appendix part
\parttoc
%\minitoc
%\section*{\LARGE Appendix}

\section{Differential geometry}

This section gives an additional introduction to differential geometry. Section \ref{sec:smoothmanifolds} introduces basic concepts in smooth manifolds, and we refer readers to \citet{lee2012introduction} for a more complete introduction. %Section \ref{sec:gradient-hessian} considers a Riemannian manifold $(\cM,g^{\cM})$ and introduces the gradient and Hessian of a function defined on $\cM$. 
Section \ref{sec:apd-a2} reviews simply connected compact Riemannian symmetric spaces from the perspective of Lie theory. This introduction will follow the review presented in \cite{said2021riemannian}, and we refer readers to \cite{helgason2001differential} for more  details.

\subsection{Smooth manifolds}\label{sec:smoothmanifolds}

%Let $\cM$ be an $m$-dimensional topological manifold. A coordinate chart on $\cM$ is a pair $(U,\varphi)$, where $U$ is an open subset of $\cM$ and $\varphi:U\to \varphi(U)$ is a homeomorphism from $U$ to an open subset $\varphi(U)\subseteq\RR^m$. An atlas for $\cM$ is a collection of charts whose domains cover $\cM$. An atlas is called a smooth atlas if any two charts in the atlas are smoothly compatible with each other, that is, for any two charts $(U,\varphi)$ and $(V,\psi)$ with $U\cap V$ nonempty, the transition map $\varphi\circ\psi^{-1}$ is a diffeomorphism. An $m$-dimensional smooth manifold $\cM$ is a topological manifold $\cM$ equipped with a smooth structure, that is, a maximal smooth atlas. 

A smooth manifold $\cM$ of dimension $m$ is a topological manifold $\cM$ equipped with a maximal family of injective mappings $\varphi_{\alpha}:U_{\alpha}\subseteq\cM\to\RR^{m}$ such that \\
\indent (1) $\varphi_\alpha$ is a homeomorphism from an open set $U_{\alpha}$ to an open subset $\varphi(U_{\alpha})\subseteq\RR^m$.\\    
\indent (2) For any $\alpha,\beta$ with a non-empty $W\coloneqq U_{\alpha}\cap U_{\beta}$, the mapping $\varphi_{\beta}\circ\varphi_{\alpha}^{-1}$ is a differentiable mapping of $\varphi_{\alpha}(W)$ onto $\varphi_{\beta}(W)$.\\
\indent (3) $\bigcup_{\alpha}U_{\alpha}=\cM$.\\
The collection $\{(U_\alpha,\varphi_\alpha)\}$ satisfying the above properties is called a smooth structure on $\cM$.
The pair $(U_{\alpha},\varphi_{\alpha})$ is referred to as an open chart or a local coordinate system on $\cM$. If $x\in U_{\alpha}$ and $\varphi_{\alpha}(x)=(\xk_1(x),\ldots,\xk_m(x))$, the set $U_{\alpha}$ is called a  coordinate neighborhood of $x$ and the numbers $\xk_i(x)$ are called local coordinates of $x$. 

Let $\cM$ and $\cN$ be smooth manifolds. A mapping $f:\cM\to\cN$ is called differentiable at $x\in\cM$ if given a local chart $(V,\psi)$ at $f(x)$ there exists a local chart $(U,\varphi)$ at $x$ such that $ f(U)\subseteq V$ and $\psi\circ f\circ\varphi^{-1}:\varphi (U)\to\psi(f(U))$ is differentiable at $\varphi(x)$. The mapping $f$ is called  differentiable if it is differentiable at every point $x\in\cM$. A differentiable mapping $f$ is called a diffeomorphism if it is bijective and its inverse $f^{-1}$ is differentiable. Let $C^{\infty}(\cM,\cN)$ be the set of all differentiable mappings of $\cM$ onto $\cN$. In particular, when $\cN=\RR$, $C^{\infty}(\cM,\RR)$ is the set of all differentiable functions on $\cM$, which we denote by $C^{\infty}(\cM)$. In addition, we let $C^{\infty}_x(\cM)$ be the set of all real-valued functions on $\cM$ that are differentiable at $x\in\cM$. 

Let $\cM$  be a smooth manifold. A tangent vector at $x\in\cM$ is a linear mapping $v: C^{\infty}_x(\cM)\to\RR$ such that $v(fg)=f(x)v(g)+g(x)v(f)$ for all $f,g\in C^{\infty}_x(\cM)$. The set $T_{x}\cM$ of all tangent vectors at $x$ forms an $m$-dimensional vector space, called the tangent space to $\cM$ at $x$. Let $(U,\varphi)$ be a local chart  at $x$ and $\varphi(y)=(\xk_1(y),\ldots,\xk_m(y))$ be the coordinate representation. Then $\{\frac{\partial}{\partial\xk_i}|_{x}\}_{1\leq i\leq m}$ gives a set of basis vectors of the tangent space $T_{x}\cM$, where $\frac{\partial}{\partial \xk_{i}}$ is defined by
\$
\frac{\partial}{\partial\xk_i}|_{y}f=\frac{\partial\left(f\circ \varphi^{-1}\right)}{\partial \xk_i}|_{\varphi(y)},\quad\forall f\in C^{\infty}_y(\cM),\quad\forall y\in U.
\$
%A tangent vector $v$ in $T_{x}\cM$ can be represented as $v=\sum_{i=1}^ma_i(x)\left.\frac{\partial}{\partial\xk_i}\right|_{x}$, which is called the coordinate representation of $v$ associated with $(U,\varphi)$. Such representation may vary across different local charts $(U,\varphi)$, while the tangent vector $v$ is independent of the choice of the local chart. 
Let $\cN$ be a smooth manifold and $\varphi:\cM\to\cN$ be a differentiable mapping. The differential of $\varphi$ at $x$ is a linear mapping $d\varphi_x:T_{x}\cM\to T_{\varphi(x)}\cN$ such that
$d\varphi_x(v)(f)=v(f\circ \varphi)$ holds for all $v\in T_{x}\cM$ and $f\in C^{\infty}(\cN)$. 
%A vector field $X$ on $\cM$ is an assignment that assigns to each  $x\in\cM$ a tangent vector $X_x\in T_{x}\cM$. Given any $f\in C^{\infty}(\cM)$, $Xf:\cM\to\RR$ is a function given by $Xf(x)=X_x(f)$. A vector field $X$ is called  smooth if $Xf\in C^{\infty}(\cM)$ for all $f\in C^{\infty}(\cM)$. 

Let $\cM$ be a smooth manifold. A family of open sets $U_{\alpha}\subseteq\cM$ with $\bigcup_{\alpha} U_{\alpha}=\cM$ is called locally finite if every  $x\in\cM$ has a neighborhood $W$ such that $W\cap V_{\alpha}$ is non-empty for only a finite number of indices. The  support of $\rho:\cM\to\RR$ is the closure of the set of points where $\rho$ is different from zero. A family of differentiable functions $\rho_{\alpha}:\cM\to\RR$ is called a smooth partition of unity if:\\
\indent (1) For all $\alpha$, $\rho_{\alpha}\geq0$ and the support of $\rho_{\alpha}$ is contained in a coordinate neighborhood $U_{\alpha}$ of a smooth structure $\{(U_{\beta},\varphi_{\beta})\}$ of $\cM$.\\
\indent (2) The family $\{U_{\alpha}\}$ is locally finite.\\
\indent (3) $\sum_{\alpha}\rho_{\alpha}(x)=1$ for all $x\in\cM$.\\
The following theorem guarantees the existence of the smooth partition of unity. 

\begin{theorem}[Theorem 5.6, Chapter 0 in \citet{do1992riemannian}]

    A smooth manifold $\cM$ has a smooth partition of unity if and only if every connected component of $\cM$ is Hausdorff and has a countable basis.

\end{theorem}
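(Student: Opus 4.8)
The plan is to split into the two implications, reducing first to the connected case. Since $\cM$ is locally Euclidean it is locally connected, so its connected components are open, hence clopen; a smooth partition of unity on $\cM$ therefore restricts to one on each component, and conversely the family obtained by taking a smooth partition of unity on each component and extending every function by zero to $\cM$ is a smooth partition of unity on $\cM$ (smoothness is local, the supports of the zero-extensions remain inside charts, and local finiteness and the unit-sum condition are inherited because distinct components are disjoint). So it suffices to prove that a \emph{connected} smooth manifold $\cM_0$ admits a smooth partition of unity if and only if it is Hausdorff with a countable basis.

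For the ``if'' direction, assume $\cM_0$ is connected, Hausdorff, and second countable. Being locally Euclidean, $\cM_0$ is locally compact, and a locally compact second-countable Hausdorff space is $\sigma$-compact, so I would fix an exhaustion $\cM_0=\bigcup_{k\ge0}K_k$ by compact sets with $K_k\subseteq\mathrm{int}\,K_{k+1}$ and $K_{-1}=\emptyset$. Around each $x\in\cM_0$, with $k$ the smallest index such that $x\in K_k$, pick a coordinate ball whose closure is compact, lies in some chart, and lies in the open set $\mathrm{int}\,K_{k+1}\setminus K_{k-1}$; taking, for each $k$, a finite subcover of the compact ``shell'' $K_k\setminus\mathrm{int}\,K_{k-1}$ produces a countable cover $\{V_i\}$ of $\cM_0$ that is locally finite (by the nested structure of the $K_k$) and whose members have compact closure inside a chart. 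Pulling back through the relevant chart a fixed smooth bump function on $\RR^m$ yields, for each $i$, a function $\psi_i\in C^\infty(\cM_0)$ with $\psi_i\ge0$, $\psi_i>0$ on $V_i$, and $\mathrm{supp}\,\psi_i$ compact and contained in a chart; local finiteness makes $\Psi=\sum_i\psi_i$ a well-defined, smooth, everywhere-positive function, so $\rho_i=\psi_i/\Psi$ is the required smooth partition of unity.

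For the ``only if'' direction, suppose $\{\rho_\alpha\}$ is a smooth partition of unity on $\cM_0$, with $K_\alpha=\mathrm{supp}\,\rho_\alpha$ closed, contained in a chart $U_\alpha$, and $\{U_\alpha\}$ locally finite. Hausdorffness is immediate: given $x\ne y$, choose $\alpha$ with $\rho_\alpha(x)>0$, so $x\in\{\rho_\alpha>0\}\subseteq K_\alpha\subseteq U_\alpha$; if $y\in U_\alpha$ we separate $x$ and $y$ inside the chart $U_\alpha$ (homeomorphic to an open subset of $\RR^m$), and if $y\notin U_\alpha$ then $y\notin K_\alpha$, so the disjoint open sets $\{\rho_\alpha>0\}$ and $\cM_0\setminus K_\alpha$ separate $x$ and $y$. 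For the countable basis, put $O_\alpha=\{\rho_\alpha>0\}$; then $\{O_\alpha\}$ is a locally finite open cover of $\cM_0$ by second-countable sets, so it is enough to show that only countably many $O_\alpha$ are nonempty. My approach would be to pass to a refinement by precompact coordinate balls that is still locally finite in $\cM_0$, form the intersection graph $G$ on its index set (joining two indices when the two sets meet), use compact closures together with local finiteness to see that $G$ has finite degree at each vertex, and then run the standard clopen argument — the union of the sets over one connected component of $G$ is open with open complement — to conclude from the connectedness of $\cM_0$ that $G$ is connected; a connected locally finite graph has countably many vertices, and a countable union of second-countable opens is second countable.

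The step I expect to be the real obstacle is the one just flagged: producing, from the merely locally finite family $\{U_\alpha\}$, a refinement by precompact charts that is \emph{still} locally finite in $\cM_0$, since a refinement of a locally finite cover by passing to subsets can fail to be locally finite when infinitely many shrinking pieces accumulate at a chart boundary. The clean way around this is to first upgrade the conclusion to ``$\cM_0$ is paracompact'' and then invoke the topological fact that a connected, locally compact, paracompact Hausdorff space is $\sigma$-compact, hence — being locally Euclidean — second countable. Everything else in the argument is the routine bump-function construction or elementary point-set topology.
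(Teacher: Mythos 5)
The paper does not actually prove this statement: it is quoted verbatim as background and attributed to Theorem 5.6, Chapter 0 of do Carmo, so there is no in-paper argument to compare against; your proposal is a self-contained proof of the cited textbook fact. As such it is essentially sound. The reduction to a connected component (components are open, hence clopen, and partitions of unity restrict/glue) is fine; the ``if'' direction is the standard exhaustion-plus-bump-function construction, with the only cosmetic caveat that, to match the paper's definition, you should take the coordinate neighborhoods $U_i$ to be the (slightly enlarged) precompact coordinate balls themselves rather than the original charts, so that the family $\{U_i\}$ is locally finite; and the Hausdorff half of the ``only if'' direction, separating via $\{\rho_\alpha>0\}$ and $\cM_0\setminus\mathrm{supp}\,\rho_\alpha$, is correct.

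The one place you stumble is where you flag the ``real obstacle,'' and it is worth noting that the obstacle is illusory: you do not need finite degree in the intersection graph, nor any locally finite refinement by precompact balls. Take $O_\alpha=\{\rho_\alpha>0\}$. The clopen/chaining step needs nothing but the fact that the $O_\alpha$ cover: if $x\in\overline{\bigcup_{\alpha\in C}O_\alpha}$ then $x\in O_\beta$ for some $\beta$, and $O_\beta$ meets the union, so $\beta\in C$. For countability of a graph component, countable degree suffices, and that is automatic: each $O_\alpha$ lies in a chart, hence is second countable and Lindel\"of, and since $\{O_\beta\}$ is locally finite each point of $O_\alpha$ has a neighborhood meeting finitely many $O_\beta$; a countable subcover of $O_\alpha$ by such neighborhoods shows $O_\alpha$ meets at most countably many $O_\beta$. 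So the direct argument closes without any refinement. By contrast, your fallback is the genuinely under-justified step: the existence of a single partition of unity does not obviously ``upgrade'' to paracompactness --- to make that route rigorous you would need something like Michael's theorem applied to the locally finite closed cover by the supports $K_\alpha$ (each metrizable, being a subspace of a chart), using regularity of the now-known-Hausdorff, locally compact $\cM_0$, before invoking the fact that a connected, locally compact, paracompact Hausdorff space is $\sigma$-compact. Either complete that chain or, more simply, drop the fallback and use the Lindel\"of argument above.
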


%\subsection{Gradient and Hessian}\label{sec:GH}

%\subsection{Gradient and Hessian}\label{sec:gradient-hessian}

%In this section, we consider a Riemannian manifold $(\cM,g^{\cM})$. 

\subsection{Simply connected compact Riemannian symmetric spaces}\label{sec:apd-a2}

This section reviews simply connected compact Riemannian symmetric spaces from the perspective of Lie theory. It allows us to present several geometric formulae which are useful in the minimax analysis in Section \ref{sec:5.2}. In the review, we will first present the connection to Lie theory in Section \ref{sec:a21}. We will then introduce an integral formula in Section \ref{sec:a22}, followed by an examination of the squared distance function, including the formulae for its gradient and Hessian, in Section \ref{sec:a23}. Finally, in Section \ref{sec:a24}, we will discuss some results on the union of the cut loci. 
For clarity, our review will focus on the results while omitting technical derivations. For further details, we direct readers to \citet{said2021riemannian} and \cite{helgason2001differential}.
%Due to the limitation of the space, familiarity with Lie groups and Lie algebras. 

%For more details, we refer readers to \citet{helgason2001differential}. 

\subsubsection{Connection to Lie theory}\label{sec:a21}

% use \alpha instead of o

Suppose $(\cM,g^{\cM})$ is a simply connected compact Riemannian symmetric space. Then the identity component $G$ of the isometry group of $\cM$ is a compact, simply connected Lie group. Given some $o\in\cM$, the isotropy subgroup $K$ of $o$ in $G$ is a closed subgroup of $G$. In addition, $\cM\simeq G/K$ as a Riemannian homogeneous space. 

The Riemannian geometry of $\cM$ can be given in algebraic terms. Let $\mathfrak{g}$ and $\mathfrak{t}$ be the Lie algebras of $G$ and $K$, and $B$ the Killing form on $\mathfrak{g}$. Then $\mathfrak{g}=\mathfrak{t}+\mathfrak{p}$ is a direct sum with $\mathfrak{p}$ being the orthogonal complement of $\mathfrak{t}$ with respect to $B$. The tangent space $T_o\cM$ can be naturally identified with $\mathfrak{p}$, and the Riemannian metric of $\cM$ is determined (up to re-scaling) by 
\#\label{equ:a1}
g^{\cM}_o(u,v)=-B(u,v),\quad\forall u,v\in T_{o}\cM\simeq \mathfrak{p}.
\#
Moreover, the Riemannian curvature tensor $R_o$ is given by
\#\label{equ:a2}
R_o(u,v)w=-[[u,v],w],\quad\forall u,v,w\in T_{o}\cM\simeq \mathfrak{p},
\#
where $[\cdot,\cdot]$ is the Lie bracket. The right hand side of \eqref{equ:a2} is in $\mathfrak{p}$ as $\cM$ is a Riemannian symmetric space. We can define $T_u$ for each $u\in\mathfrak{p}$ as the following operator
\#\label{equ:a3}
T_u: v\in\mathfrak{p}\to R_o(u,v)u=[u,[u,v]]\in\mathfrak{p}.
\#

Both \eqref{equ:a1} and \eqref{equ:a3} can be simplified using a root system. 
Let $\mathfrak{a}$ be a maximal Abelian subspace of $\mathfrak{p}$, and $\Delta_+$ the set of positive roots
%\footnote{Note that a positive root $\lambda$ of $\mathfrak{g}$ with respect to $\mathfrak{a}$ is a linear mapping from $\mathfrak{a}$ to $\RR$ satisfying additional conditions.} 
of $\mathfrak{g}$ with respect to $\mathfrak{a}$. Then we can rewrite  $u\in\mathfrak{p}$ as $u=\Ad(k)a$ for some $k\in K$ and $a\in \mathfrak{a}$, where $\Ad$ is the adjoint representation. With this notation, the Riemannian metric of $\cM$ satisfies
\#\label{equ:a4}
g^{\cM}_o(u,u)=\sum_{\lambda\in\Delta_+}m_\lambda (\lambda(a))^2,
\#
where $m_\lambda$ is the multiplicity of the root $\lambda$. In addition, the operator $T_u$ satisfies 
\#\label{equ:a5}
T_u(v)=-\sum_{\lambda\in\Delta_+}(\lambda(a))^2\Pi_\lambda^k(v),\quad \Pi_\lambda^k=\Ad(k)\circ \Pi_\lambda\circ \Ad(k^{-1}),
\#
where $\Pi_{\lambda}$ is an orthogonal projector from $\mathfrak{p}$ onto the eigenspace of $T_{a}$ associated with the eigenvalue $-(\lambda(a))^2$. Define $\Pi_{\mathfrak{a}}$ as the orthogonal projector from $\mathfrak{p}$ onto $\mathfrak{a}$ and $\Pi_{\mathfrak{a}}^k=\Ad(k)\circ\Pi_{\mathfrak{a}}\circ\Ad(k^{-1})$. These definitions are useful in deriving the integral formula and the formula for the Hessian of the squared distance function, which we will discuss in Section \ref{sec:a22} and Section \ref{sec:a23}. 

\subsubsection{Integral formula}\label{sec:a22}

This section introduces a useful formula for the integral $\int_\cM f(x)\dvol(x)$. To evaluate this integral, it is desirable to parameterize $x\in\cM$ by the so-called polar coordinates $k\in K$ and $a\in \mathfrak{a}$, 
\$
x=\Exp_o(\Ad(k)a),
\$
where $\Exp_o$ is the Riemannian exponential mapping. However, this parametrization is not unique. To turn it into a unique parametrization, we let $K_{\mathfrak{a}}$ be the centralizer of $\mathfrak{a}$ in $K$ and set $S=K/K_{\mathfrak{a}}$. Also, we let $C_+$ be the set of $a\in\mathfrak{a}$ such that $\lambda(a)\in(0,\pi)$ for each $\lambda\in\Delta_+$. Then, it is useful to consider the following mapping,
\#\label{equ:a6}
\varphi(s,a)=\Exp_o\circ\beta(s,a), \quad (s,a)\in S\times \bar C_+,
\#
where $\beta(s,a)=\Ad(s)a$ and $\bar C_+$ is the closure of $C_+$. Indeed, $\varphi$ maps $S\times \bar C_+$ onto the whole area of $\cM$, and is a diffeomorphism from $S\times C_+$ to the set $\cM_r$ of regular values of $\varphi$. The set $\cM-\cM_r$ is of measure zero. Thus, given a measurable function $f$ on $\cM$, the integral of $f$ can be written as
\$
\int_{\cM}f(x)\dvol(x)=\int_{\cM_r}f(x)\dvol(x).
\$
Since $\varphi$ is a diffeomorphism, this integral can be further rewritten as
\$
\int_{\cM}f(x)\dvol(x)=\int_{C_+}\int_Sf(s,a)D(a)dad\omega(s),
\$
where $f(s,a)=f\circ \varphi(s,a)$, $D(a)$ is the Jacobian determinant of $\varphi$, and $d\omega$ is the invariant volume density on $S$ induced from $K$. It has been derived in \cite{said2021riemannian} that
\$
D(a)=\prod_{\lambda\in\Delta_+}(\sin \lambda(a))^{m_\lambda}.
\$
Thus, we obtain the following integral formula on $\cM$:
\#\label{equ:a7}
\int_{\cM}f(x)\dvol(x)=\int_{C_+}\int_Sf(s,a)\prod_{\lambda\in\Delta_+}(\sin \lambda(a))^{m_\lambda}dad\omega(s).
\#

\subsubsection{The squared distance function}\label{sec:a23}

Fix $x\in\cM$, and consider the function $h_x(y)=\frac{1}{2}d^2(x,y)$. This function is $C^2$ differentiable near $o$ if $x\notin \Cut(o)$, where $\Cut(o)$ denotes the cut locus of $o$. The objective of this section is to derive the closed form formulae for the gradient and Hessian of the function $h_x$ at $o$. Recall that given a function $f$ with desired differentiability, 
\begin{itemize}
    \item The gradient $\nabla_y f$ of a function
    %\footnote{Here we assume $f$ has the desired differentiability.} 
    $f$ at $y$ is a tangent vector in $T_y\cM$ such that
\#\label{equ:grad}
g^{\cM}_y(\nabla_y f,u)=u(f),\quad \forall u\in T_y\cM.
\#
\item The Hessian $\Hess_y f$ of a function $f$ at $y$ is given by 
\#\label{equ:Hess}
\Hess_yf(u,v)=g^{\cM}_y(\nabla_u\nabla f,v),\quad \forall u,v\in T_y\cM,
\#
where $\nabla f$ is the gradient field of $f$ defined near $y$, and $\nabla_u\nabla f$ is the covariant derivative of $\nabla f$ in the direction of $u$. 
\end{itemize}
To express the gradient $G_o(x)$ and Hessian $H_o(x)$ of the function $h_x$ at $o$, we write $x=\varphi(s,a)$ in the notation of \eqref{equ:a6}. If $x\notin\Cut(o)$, then we have
\#
G_o(x)&=-\beta(s,a),\label{equ:a8}\\
H_o(x)&=\Pi_\mathfrak{a}^s+\sum_{\lambda\in\Delta_+}\lambda(a)\cot \lambda(a)\Pi_\lambda^s,\label{equ:a9}
\#
where $\Pi_\mathfrak{a}^s$ and $\Pi_\lambda^s$ are the orthogonal projectors defined in \eqref{equ:a5} and $\cot$ is the cotangent function. 
%It can be seen from \eqref{equ:a9} that the Hessian $H_o(x)$ blows up as $x$ approaches $\Cut(o)$, since $\lambda(a)\cot\lambda(a)$ tends to $-\infty$ as $\lambda(a)$ 
Both the integral formula \eqref{equ:a7}  and Hessian formula \eqref{equ:a9} are derived using the closed form solutions of the Jacobi equations. We refer readers to \cite{said2021riemannian} for more details.

\subsubsection{Cut locus}\label{sec:a24}

Lastly, we present a lemma on the cut locus which is useful in our minimax analysis. We know that the cut locus of a point $\Cut(x)$ is of measure zero. Lemma \ref{lma:a2} below strengthens this statement in the case of simply connected compact Riemannian symmetric spaces. Rather than one cut locus, it considers the union of cut loci along a geodesic. 

\begin{lemma}\label{lma:a2}
    Assume $(\cM,g^{\cM})$ is a simply connected compact Riemannian symmetric space. Let $\gamma:I\to\cM$ be a geodesic defined on a compact interval $I$. Let $\Ucut(\gamma)$ denote  the union of all cut loci $\Cut(\gamma(t))$ for $t\in I$. Then $\Ucut(\gamma)$ is a set of measure zero. 
\end{lemma}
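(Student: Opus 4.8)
The plan is to use the abundance of isometries on a symmetric space to reduce the union of cut loci along $\gamma$ to the cut locus of a single point, and then to show that this cut locus is thin enough that sweeping it along a one-parameter family still leaves a null set. \emph{Step 1 (transvections).} Since $\cM$ is compact, hence complete, I first extend $\gamma$ to a geodesic on all of $\RR$. Fix $t_0\in I$ and, for $t\in I$, set $g_t=s_{\gamma((t+t_0)/2)}\circ s_{\gamma(t_0)}$, where $s_p$ is the geodesic symmetry at $p$. A direct computation gives $g_t(\gamma(u))=\gamma(u+t-t_0)$ for all $u$; in particular $g_t$ is an isometry with $g_t(\gamma(t_0))=\gamma(t)$, and $(t,y)\mapsto g_t(y)$ is smooth on $I\times\cM$ because the geodesic symmetry depends smoothly on its base point. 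As isometries carry cut loci to cut loci, $\Cut(\gamma(t))=g_t\bigl(\Cut(\gamma(t_0))\bigr)$, so
\[
\Ucut(\gamma)=\bigcup_{t\in I}g_t\bigl(\Cut(\gamma(t_0))\bigr)=\Psi\bigl(I\times\Cut(\gamma(t_0))\bigr),\qquad \Psi(t,y):=g_t(y).
\]
Since $\Cut(\gamma(t_0))$ is closed in the compact manifold $\cM$, the domain $I\times\Cut(\gamma(t_0))$ is compact and $\Psi$ is Lipschitz on it.

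\emph{Step 2 (the cut locus has codimension at least two).} Write $o=\gamma(t_0)$ and $m=\dim\cM\ge 2$; I claim $\Cut(o)$ has Hausdorff dimension at most $m-2$. For this I would invoke the polar-coordinate description from Appendix \ref{sec:a22}: the map $\varphi(s,a)=\Exp_o(\Ad(s)a)$ is a diffeomorphism from $S\times C_+$ onto the regular set $\cM_r$, so $\Cut(o)\subseteq\cM\setminus\cM_r=\varphi(S\times\partial\bar C_+)$, and in fact $\Cut(o)$ lies in the image of the finitely many affine walls $\{a\in\bar C_+:\lambda(a)=\pi\}$ (the Weyl walls $\lambda(a)=0$ carry no cut points), each of dimension at most $\mathrm{rank}(\cM)-1$. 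On such a wall the differential of $\varphi$ fails to be injective by at least the root multiplicity $m_\lambda\ge 1$: the $S$-directions associated with $\lambda$ are mapped into $\ker d\Exp_o$, which is precisely the degeneracy responsible for the blow-up of the Hessian formula \eqref{equ:a9} as $\lambda(a)\to\pi$, and it is read off from the closed-form Jacobi fields underlying \eqref{equ:a7} and \eqref{equ:a9}. Hence $\varphi$ restricted to each affine wall has rank at most $\bigl(\dim S+(\mathrm{rank}(\cM)-1)\bigr)-m_\lambda=(m-1)-m_\lambda\le m-2$, so $\Cut(o)$ is a finite union of smooth images of manifolds of dimension $\le m-2$, giving $\dim_{\mathcal H}\Cut(o)\le m-2$. (Alternatively one may quote the classical fact that on a simply connected compact symmetric space of dimension $\ge 2$ the cut locus of a point has codimension at least two, which follows from Crittenden's identification of the cut locus with the first conjugate locus together with the positivity of the nullity of the first conjugate point.)

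\emph{Step 3 (conclusion).} Since Lipschitz maps do not increase Hausdorff dimension,
\[
\dim_{\mathcal H}\Ucut(\gamma)\le\dim_{\mathcal H}\bigl(I\times\Cut(o)\bigr)=1+\dim_{\mathcal H}\Cut(o)\le m-1<m,
\]
and a subset of an $m$-dimensional Riemannian manifold of Hausdorff dimension strictly below $m$ has vanishing Riemannian volume. I expect Step 2 to be the crux: the transvection reduction and the final dimension count are routine, whereas pinning down the codimension-two bound — equivalently, that sweeping a unit direction through the tangent cut sphere of $\Exp_o$ collapses at least one further dimension — is exactly where the Lie-theoretic structure of symmetric spaces (the restricted root system and the explicit Jacobi fields behind \eqref{equ:a7}--\eqref{equ:a9}) is essential, and is also the only place the simple connectedness of $\cM$ enters.
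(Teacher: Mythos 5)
The paper states Lemma \ref{lma:a2} without proof (Appendix \ref{sec:a24} only records the statement and the remark that simple connectedness is essential), so there is no argument of the paper to compare yours against; judged on its own, your proposal is a correct and natural route. Step 1 is sound: the transvections $g_t=s_{\gamma((t+t_0)/2)}\circ s_{\gamma(t_0)}$ do translate along $\gamma$, they carry $\Cut(\gamma(t_0))$ onto $\Cut(\gamma(t))$, and $(t,y)\mapsto g_t(y)$ is smooth, hence Lipschitz on the compact domain. Step 2 is indeed the crux, and your dimension count is right: every point of $\cM$ is $\varphi(s,a)$ with $a\in\bar C_+$, the cut points only occur over the walls of $\bar C_+$, and on a wall the $\mathfrak{k}_\lambda$-directions of $S$ lie in $\ker d\varphi$ (the corresponding Jacobi factor $\sin\lambda(a)$ vanishes), so the restricted rank is at most $(m-1)-m_\lambda\leq m-2$; with multiplicities $m_\lambda\geq 1$ this is exactly where simple connectedness (through the alcove/first-conjugate description) enters, consistently with the $\RR\PP^m$ counterexample.

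Two places should be tightened to make this airtight. First, the containment of $\Cut(o)$ in the wall images needs a sentence: either quote Crittenden's theorem (cut locus $=$ first conjugate locus in the simply connected compact case), or, more economically, note that a point of $\cM_r=\varphi(S\times C_+)$ has a unique minimizing geodesic from $o$ and is not conjugate to $o$, hence is not a cut point, so $\Cut(o)\subseteq\varphi\bigl(S\times(\bar C_+\setminus C_+)\bigr)$; with this route you need not discard the $\lambda(a)=0$ walls at all, since the same rank bound holds there. Second, $\Cut(o)$ is not literally ``a finite union of smooth images of manifolds of dimension $\leq m-2$''; it is a finite union of images of $(m-1)$-dimensional manifolds under smooth maps of rank $\leq m-2$ everywhere, and you must invoke the Morse--Sard--Federer theorem (for smooth maps, the image of the rank-$\leq k$ set has Hausdorff dimension $\leq k$) to conclude. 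Your product inequality for Hausdorff dimension with an interval factor is fine, but a cleaner finish avoiding it is to apply Sard directly to the single smooth map $(t,s,a)\mapsto g_t(\varphi(s,a))$ on the $m$-dimensional domain $I\times S\times W_\lambda$, whose differential has rank at most $1+(m-1)-m_\lambda\leq m-1<m$ everywhere, so its image is a null set.
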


The assumption of simply connectedness is essential in this lemma. For example, if $\cM$ is a real projective space, which is not simply connected, then the statement in Lemma \ref{lma:a2} does not hold. 

%\begin{proof}
%    By Theorem \ref{thm:decomposition}, $\cM=\cM_0\times \cM_{-}\times\cM_{+}$, where $\cM_0$ is a Euclidean space, $\cM_{-}$ and $\cM_{+}$ are simply connected Riemannian symmetric spaces of the compact and noncompact type, respectively. The Hadamard space has no conjugate points. 
%    By Lemma 1 in \cite{said2021riemannian}.
%\end{proof}

%

\section{Proofs of results in Section \ref{sec:3}}

\subsection{Proof of Proposition \ref{prop:3.1}}

\begin{proof}
    We will treat both cases separately. \begin{itemize}
        \item {\bf Case 1.} When $\cM$ is compact, any bounded and measurable function on $\cM$ is integrable. Since $\phi$ is a nonnegative, continuous, and increasing function, the function $\tilde f$ is bounded and measurable. Thus, it is integrable on $\cM$. 
        \item {\bf Case 2.} When $\cM$ is noncompact, we can use Theorem \ref{thm:2.3} to prove the results. As $\tilde f$ is nonnegative and measurable, to prove its integrability, it suffices to show that 
    \$
    \int_{\cM}\tilde f(x;\alpha,\phi)\dvol(x)<\infty.
    \$
    To prove this, we will use the polar coordinate chart at $\alpha$, and rewrite the integral as
    \$
    \int_\cM\tilde f(x;\alpha,\phi)\dvol(x)&=\int_{T_\alpha\cM} \tilde f^{\flat}\lambda(r,\Theta)drd\Theta\\
    &\overset{({\rm i})}{=}\int_{T_\alpha\cM}e^{-\phi(r)}\lambda(r,\Theta)drd\Theta,
    \$
    where $\tilde f^{\flat}=\tilde f\circ\Exp_\alpha$ with $\Exp_\alpha$ being the exponential map, and (i) uses the definition of $\tilde f$. Since the sectional curvatures of $\cM$ are larger than $\kappa_{\min}$, we have  $\lambda(r,\Theta)\leq \sn_{\kappa_{\min}}^{m-1}(r)$ by Theorem~\ref{thm:2.3}. It follows that 
    \$
    \int_\cM\tilde f(x;\alpha,\phi)\dvol(x)&\leq \int_{T_\alpha\cM}e^{-\phi(r)}\cdot \sn_{\kappa_{\min}}^{m-1}(r)drd\Theta\\
    &=\vol(\SSS^{m-1})\cdot\int_{0}^{\infty}e^{-\phi(r)}\cdot\sn_{\kappa_{\min}}^{m-1}(r)dr<\infty,
    \$
    where $\SSS^{m-1}$ is the unit $(m-1)$-sphere and the last inequality follows from  condition \eqref{equ:3.2}. 
    \end{itemize}
    The proof is complete by combining these two cases.
\end{proof}

\subsection{Proof of Proposition \ref{prop:3.2}}

\begin{proof}
    Suppose $\tilde f(x;\alpha,\phi)$ is integrable over $\cM$ for some $\alpha\in\cM$. We will first show that $\tilde f(x;\tilde\alpha,\phi)$ is integrable over $\cM$ for all $\tilde\alpha\in\cM$. 
    Since $\tilde f(x;\tilde\alpha,\phi)$ is nonnegative and measurable, it suffices to show that its integral over $\cM$ is finite. Indeed, since $\cM$ is a Riemannian homogeneous space, there is an isometry $F\in\Iso(\cM)$ such that $F(\alpha)=\tilde\alpha$. As a result,
    \$
    \int_{\cM}\tilde f(x;\tilde\alpha,\phi)\dvol(x)&=\int_{\cM}e^{-\phi(d_g(x,\tilde\alpha))}\dvol(x)\\
    &\overset{({\rm i})}{=}\int_{\cM}e^{-\phi(d_g(x,F(\alpha)))}\dvol(x)\\
    &\overset{({\rm ii})}{=}\int_{\cM}e^{-\phi(d_g(F^{-1}(x),\alpha))}\dvol(x)\\
    &\overset{({\rm iii})}{=}\int_{\cM}e^{-\phi(d_g(x,\alpha))}\dvol(x)<\infty,
    \$
    where (i) uses the fact that $\tilde\alpha=F(\alpha)$, (ii) uses the fact that $d_g(x,F(\alpha))=d_g(F^{-1}(x),\alpha)$ when $F$ is an isometry, and (iii) uses equality \eqref{equ:homo} in Proposition \ref{prop:homogeneous}. This proves the first and the second properties in Proposition \ref{prop:3.2}. Since the normalizing constant 
    \$
    Z(\alpha,\phi)=\int_{\cM}\tilde f(x;\alpha,\phi)\dvol(x)
    \$ 
    is independent of $\alpha$, we immediately obtain the third property in Proposition \ref{prop:3.2} using the definition of the MLE. This concludes the proof. 
\end{proof}

\subsection{Proof of Proposition \ref{prop:3.3}}

\begin{proof}
    To prove this statement, it suffices to show that if 
    \#\label{equ:b1}
    \int_{\cM} d_g^p(x,\tilde \alpha)f(x;\alpha,\phi)\dvol(x)\leq \int_{\cM}d_g^p(x,b)f(x;\alpha,\phi)\dvol(x),\quad \forall b\in\cM,
    \#
    then $\tilde\alpha=\alpha$. Suppose on the contrary that $\tilde\alpha$ satisfies \eqref{equ:b1} but $\tilde\alpha\neq\alpha$. Then we have
    \$
    \int_{\cM} d_g^p(x,\tilde \alpha)f(x;\alpha,\phi)\dvol(x)\leq \int_{\cM}d_g^p(x,\alpha)f(x;\alpha,\phi)\dvol(x).
    \$
    Ignoring the normalizing constant $Z(\phi)$, we have
    \#\label{equ:b2}
    I(\tilde\alpha,\alpha)\leq I(\alpha,\alpha)<\infty,
    \#
    where 
    \$
    I(a,b)=\int_{\cM}d_g^p(x,a)e^{-\phi(d_g(x,b))}\dvol(x),\quad \forall a,b\in\cM. 
    \$
    By property \eqref{equ:homo}, we can show that for any isometry $F\in\Iso(\cM)$,
    \#\label{equ:b3}
    I(a,b)=I(F(a),F(b)),\quad\forall a,b\in\cM.
    \#
    Since $\cM$ is a Riemannian symmetric space and $\tilde\alpha\neq \alpha$, we can find an isometry $F\in\Iso(\cM)$ such that $F(\alpha)=\tilde\alpha$ and $F\circ F$ is the identity, as a consequence of Proposition \ref{prop:symmetric}. Using this $F$ in \eqref{equ:b2} and using \eqref{equ:b3}, we have
    \$
    I(\alpha,\tilde\alpha)=I(F(\tilde\alpha),F(\alpha))=I(\tilde\alpha,\alpha)\leq I(\alpha,\alpha)=I(\tilde\alpha,\alpha).
    \$
    Therefore, 
    \$
    I(\alpha,\tilde\alpha)+I(\tilde\alpha,\alpha)\leq I(\alpha,\alpha)+I(\tilde\alpha,\tilde\alpha). 
    \$
    Equivalently, we have
    \#\label{equ:b4}
    \int_{\cM}D(x;\alpha,\tilde\alpha)\dvol(x)\leq 0,
    \#
    where 
    \$
    D(x;\alpha,\tilde\alpha)=D_1(d_g(x,\alpha),d_g(x,\tilde\alpha)),
    \$
    and
    \$
    D_1(u,v)=u^pe^{-\phi(v)}+v^pe^{-\phi(u)}-u^pe^{-\phi(u)}-v^pe^{-\phi(v)}.
    \$
    By calculation, we find that 
    \#\label{equ:b5}
    D_1(u,v)=(u^p-v^p)\cdot (e^{-\phi(v)}-e^{-\phi(u)})\geq 0,
    \#
    where we use the fact that $\phi$ is an increasing function. Since $\phi$ is strictly increasing, we know that the equality in \eqref{equ:b5} holds if and only if $u=v$. Combining this with \eqref{equ:b4}, we obtain
    \$
    d_g(x,\alpha)=d_g(x,\tilde\alpha),\quad\forall x\in\cM,
    \$
    where we use the continuity of the distance function $d_g(\cdot,\cdot)$. It implies that $\tilde\alpha=\alpha$ and concludes the proof. 
\end{proof}

%\begin{proposition}
 %   Let $(\cM,g)$ be an $m$-dimensional Riemannian homogeneous space. The function 
  %  \$
%    \tilde f(x)=e^{-\beta u(d(x,\alpha))},\quad\forall x\in\cM,
 %   \$
  %  is integrable if and only if
  %  \$
  %  \int_{\Theta\in\SSS^{m-1}}\int_{r=0}^\infty \tilde f(r)\lambda(r,\Theta)drd\Theta.
  %  \$
%\end{proposition}

%\begin{proof}
%    Using the polar coordinate, 
%\end{proof}

%\section{Proofs of results in Section \ref{sec:4}}

%\subsection{Proof of Theorem \ref{thm:4.1}}

%\begin{proof}
    
%\end{proof}

%\subsection{Proof of Theorem \ref{thm:4.3}}

%\begin{proof}
    
%\end{proof}

\section{Technical lemmas for Section \ref{sec:4}}

This section presents lemmas used in Section \ref{sec:4}. Section \ref{sec:C31} gives a metric entropy estimate for a geodesic ball in a Riemannian homogeneous space. Section \ref{sec:C32} establishes a Lipschitz property of a Riemannian radial distribution $f(x;\alpha,\phi)$ with respect to its parameter $\alpha$ under certain conditions. Section \ref{sec:C3} establishes the identifiability of the parameter $\alpha$ in the density function $f(x;\alpha,\phi)$ under certain conditions. Section \ref{sec:c4} proves the claim \eqref{equ:4.13}, which is used in the proof of Theorem \ref{thm:4.3}. Section \ref{sec:c5} establishes a geometric lemma that is used in Section \ref{sec:c4}.  

\subsection{Basic entropy estimates}\label{sec:C31}

\begin{lemma}\label{lma:c1}
    Let $(\cM,g^{\cM})$ be a Riemannian homogeneous space and $\cX=\cB_{\cM}(\alpha^*,D)$ a geodesic ball in $\cM$, given by \eqref{equ:ball}. Then for sufficiently small $\epsilon$, we have
    \$
    \cN(\epsilon,\cX,d_g)\lesssim \epsilon^{-m}.
    \$
    where $d_g$ is the geodesic distance and $\lesssim$ omits constants independent of $\epsilon$. 
\end{lemma}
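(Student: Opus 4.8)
The plan is to reduce the problem to a volume-comparison estimate via the standard relationship between packing numbers and covering numbers. First I would use Lemma \ref{lma:b1} to observe that it suffices to bound the $\epsilon$-packing number $M(\epsilon,\cX,d_g)$, since $\cN(\epsilon,\cX,d_g)\leq M(\epsilon,\cX,d_g)$. So let $\{\alpha_i\}_{i=1}^N$ be a maximal $\epsilon$-packing of $\cX=\cB_{\cM}(\alpha^*,D)$; by maximality the balls $\cB_{\cM}(\alpha_i,\epsilon/2)$ are pairwise disjoint, and they are all contained in the slightly larger ball $\cB_{\cM}(\alpha^*,D+\epsilon/2)\subseteq \cB_{\cM}(\alpha^*,2D)$ for $\epsilon$ small. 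Hence
\$
N\cdot\min_i\vol\bigl(\cB_{\cM}(\alpha_i,\epsilon/2)\bigr)\leq \vol\bigl(\cB_{\cM}(\alpha^*,2D)\bigr),
\$
so the whole task is to lower-bound the volume of a small geodesic ball of radius $\epsilon/2$ and to upper-bound the volume of the fixed ball of radius $2D$.

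For the upper bound on $\vol(\cB_{\cM}(\alpha^*,2D))$, I would invoke Proposition \ref{prop:homogeneous}, which guarantees the sectional curvatures of $\cM$ lie in a bounded interval $[\kappa_{\min},\kappa_{\max}]$, together with the volume comparison Theorem \ref{thm:2.3}: writing the volume in polar coordinates about $\alpha^*$ as in \eqref{equ:integral} and bounding $\lambda(r,\Theta)\leq \sn_{\kappa_{\min}}^{m-1}(r)$ gives $\vol(\cB_{\cM}(\alpha^*,2D))\leq \vol(\SSS^{m-1})\int_0^{2D}\sn_{\kappa_{\min}}^{m-1}(r)\,dr$, which is a finite constant depending only on $\cM$ and $D$, not on $\epsilon$. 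For the lower bound on the small ball volume, I would again use Theorem \ref{thm:2.3}, this time the other inequality $\lambda(r,\Theta)\geq \sn_{\kappa_{\max}}^{m-1}(r)$, valid on $\seg^0(\alpha_i)$. Here I need to be a little careful: for this lower bound I should restrict to radii below the injectivity radius $\inj(\cM)$, which by Proposition \ref{prop:homogeneous} is strictly positive; once $\epsilon/2<\inj(\cM)$ the ball $\cB_{\cM}(\alpha_i,\epsilon/2)$ lies in the domain of the normal chart and $\vol(\cB_{\cM}(\alpha_i,\epsilon/2))\geq \vol(\SSS^{m-1})\int_0^{\epsilon/2}\sn_{\kappa_{\max}}^{m-1}(r)\,dr$. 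A routine estimate of $\sn_{\kappa_{\max}}(r)$ for small $r$ (it behaves like $r$ up to a constant factor, since $\sn_{\kappa_{\max}}(r)\geq c\,r$ on a fixed small interval) yields $\vol(\cB_{\cM}(\alpha_i,\epsilon/2))\geq c'\epsilon^m$ for a constant $c'>0$ independent of $i$ and of $\epsilon$, provided $\epsilon$ is small enough.

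Combining the two estimates gives $N\cdot c'\epsilon^m\leq \vol(\cB_{\cM}(\alpha^*,2D))=:C''$, hence $N\leq (C''/c')\epsilon^{-m}$, i.e. $M(\epsilon,\cX,d_g)\lesssim \epsilon^{-m}$, and therefore $\cN(\epsilon,\cX,d_g)\lesssim \epsilon^{-m}$ as claimed. The main obstacle, and the only place any real care is needed, is making the small-ball volume lower bound uniform in the center $\alpha_i$ and genuinely of order $\epsilon^m$: one must stay below the injectivity radius so that the normal coordinate chart and the comparison inequality $\lambda\geq \sn_{\kappa_{\max}}^{m-1}$ apply, and one must handle the case $\kappa_{\max}>0$ (where $\sn_{\kappa_{\max}}$ eventually vanishes) by noting that we only ever evaluate it on the fixed small interval $[0,\epsilon/2]$ with $\epsilon/2<\min\{\inj(\cM),\pi/(2\sqrt{\kappa_{\max}})\}$, on which $\sn_{\kappa_{\max}}(r)\asymp r$. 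Everything else is a direct application of the homogeneity properties in Proposition \ref{prop:homogeneous} and the volume comparison Theorem \ref{thm:2.3}.
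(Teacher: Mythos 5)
Your proposal is correct and follows essentially the same route as the paper: a packing/covering reduction, disjoint $\epsilon/2$-balls inside a slightly enlarged ball, and the volume comparison Theorem \ref{thm:2.3} with $\sn_{\kappa_{\max}}$ (below the injectivity radius) for the small-ball lower bound and a finite-volume bound for the large ball. The only cosmetic differences are that the paper builds a greedy net that is simultaneously a packing instead of invoking Lemma \ref{lma:b1}, and it uses homogeneity to make the small-ball volume center-independent, which you handle instead by noting the comparison bound is uniform in the center.
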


\begin{proof}
    Construct an $\epsilon$-net $\cS$ of $\cX$ by the following greedy procedure. Let $x_1\in\cX$ be an arbitrary point. Suppose $x_1,\ldots,x_s$ have been chosen. If the set $\{x\in\cX\mid d_g(x,x_i)>\epsilon,i\leq s\}$ is not empty, let $x_{s+1}$ be an arbitrary point in this set. Else end the construction of the set $\cS$. Such set $\cS$ is easily shown to be an $\epsilon$-net of $\cX$. 

    It suffices to upper bound the cardinality of the $\epsilon$-net $\cS$. To this end, we observe that the distance between any two points $x_i$ and $x_j$ in $\cS$ is at least $\epsilon$. It implies that the geodesic balls $\cB_{\cM}(x_i,\epsilon/2)$ and $\cB_{\cM}(x_j,\epsilon/2)$ are disjoint. Therefore, the volume of $\cup_{x_i\in\cS}\cB_{\cM}(x_i,\epsilon/2)$ is equal to the sum of the volumes of these geodesic balls. Recall that $\cX=\cB_{\cM}(\alpha^*,D)$. We have 
    \$
    \cup_{x_i\in\cS}\cB_{\cM}(x_i,\epsilon/2)\subseteq\cB_{\cM}(\alpha^*,D+\epsilon/2). 
    \$
    Considering the volumes of these two regions, we obtain
    \#\label{equ:|S|}
    |\cS|\cdot \vol(\cB_{\cM}(x,\epsilon/2))\leq \vol(\cB_{\cM}(\alpha^*,D+\epsilon/2)),
    \#
    where we use the fact that $\vol(\cB_{\cM}(x,\epsilon))$ is independent of $x$ as $\cM$ is a Riemannian homogeneous space. It remains to lower bound the volume of the geodesic ball $\cB_{\cM}(x,\epsilon/2)$ and upper bound the volume of $\cB_{\cM}(\alpha^*,D+\epsilon/2)$. 

    Since $\cM$ is a Riemannian homogeneous space, its injectivity radius $\inj(\cM)$ is larger than zero and its sectional curvatures are bounded within $[\kappa_{\min},\kappa_{\max}]$. Here $\kappa_{\max}$ is assumed to be positive without loss of generality. Let $\epsilon^*$ be a constant such that 
    \$
    \epsilon^*/2\leq \min\{1,\inj(\cM)\},\quad\textnormal{and }\sn_{\kappa_{\max}}(\epsilon^*/2)\geq\epsilon^*/4,
    \$
    where $\sn_{\kappa_{\max}}(\cdot)$ is given by \eqref{equ:sn}. Then for all $\epsilon\leq\epsilon^*$, we have
    \#\label{equ:Ball-D}
    \vol(\cB_{\cM}(\alpha^*,D+\epsilon/2))\leq\vol(\cB_{\cM}(\alpha^*,D+1))<\infty.
    \#
    In addition, we have
    \$
    \vol(\cB_{\cM}(x,\epsilon/2))=\int_{\SSS^{m-1}}\int_{0}^{\epsilon/2}\lambda(r,\Theta)dr d\Theta,
    \$
    where we use the polar coordinate representation \eqref{equ:integral} of the integral and the fact that $\epsilon\leq 2\inj(\cM)$. By the volume comparison theorem, Theorem \ref{thm:2.3}, we have
    \$
    \vol(\cB_{\cM}(x,\epsilon/2))\geq \int_{\SSS^{m-1}}\int_0^{\epsilon/2}\sn_{\kappa_{\max}}^{m-1}(r)drd\Theta.
    \$
    Since $\epsilon\leq\epsilon^*$ and $\sn_{\kappa_{\max}}(r)/r$ is a decreasing function, we have $\sn_{\kappa_{\max}}(r)\geq r/2$ for $r\leq \epsilon/2$, and thus 
    \#\label{equ:Ball-epsilon}
    \vol(\cB_{\cM}(x,\epsilon/2))&\geq \frac{\vol(\SSS^{m-1})}{{2^{m-1}}}\cdot\int_0^{\epsilon/2}r^{m-1}dr\notag\\
    &=\frac{\vol(\SSS^{m-1})}{m2^{2m-1}}\cdot \epsilon^{m}.
    \#
    Combining \eqref{equ:|S|}, \eqref{equ:Ball-D}, and \eqref{equ:Ball-epsilon}, we have 
    \$
    |S|\leq \frac{m2^{2m-1}\cdot \vol(\cB_{\cM}(\alpha^*,D+1))}{\vol(\SSS^{m-1})}\cdot\epsilon^{-m},
    \$
    for all $\epsilon\leq\epsilon^*$. 
    This proves the lemma. 
\end{proof}

\subsection{Lipschitz property}\label{sec:C32}

%Lemma \ref{lma:c2} establishes the Lipschitz property of $f(x;\alpha,\phi)$ with respect to its parameter $\alpha$. 

\begin{lemma}\label{lma:c2}
    Suppose Condition \ref{cond:3.1} holds. Let $f(x;\alpha,\phi)$ be the density given by \eqref{equ:f}. If $e^{-\phi(r)}$ is a Lipschitz continuous function with a Lipschitz constant $L$, then we have
    \$
    d_{\infty}(f(x;\alpha_1,\phi),f(x;\alpha_2,\phi))\leq \frac{L}{Z(\phi)}\cdot d_g(\alpha_1,\alpha_2),
    \$
    where $Z(\phi)$ is the normalizing constant in \eqref{equ:f}. 
\end{lemma}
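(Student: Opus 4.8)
The plan is to reduce everything to a pointwise estimate and then take a supremum over $\cM$. By the definition \eqref{equ:f} of the density, for every $x\in\cM$ one has
\[
|f(x;\alpha_1,\phi)-f(x;\alpha_2,\phi)|=\frac{1}{Z(\phi)}\left|e^{-\phi(d_g(x,\alpha_1))}-e^{-\phi(d_g(x,\alpha_2))}\right|,
\]
where $Z(\phi)$ is a strictly positive, finite constant under Condition \ref{cond:3.1} (the integrand $\tilde f$ is positive and integrable). So it suffices to bound the right-hand side uniformly in $x$.

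The next step is to apply two elementary ingredients in turn. First, the hypothesis that $e^{-\phi(\cdot)}$ is $L$-Lipschitz on $[0,r_{\cM}]$ gives
\[
\left|e^{-\phi(d_g(x,\alpha_1))}-e^{-\phi(d_g(x,\alpha_2))}\right|\leq L\,\bigl|d_g(x,\alpha_1)-d_g(x,\alpha_2)\bigr|,
\]
which is legitimate because both $d_g(x,\alpha_1)$ and $d_g(x,\alpha_2)$ lie in $[0,r_{\cM}]$. Second, the reverse triangle inequality for the metric $d_g$ on $\cM$ yields $|d_g(x,\alpha_1)-d_g(x,\alpha_2)|\leq d_g(\alpha_1,\alpha_2)$. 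Chaining these bounds gives
\[
|f(x;\alpha_1,\phi)-f(x;\alpha_2,\phi)|\leq \frac{L}{Z(\phi)}\,d_g(\alpha_1,\alpha_2)
\]
for all $x\in\cM$; taking the supremum over $x$ produces exactly $d_\infty(f(x;\alpha_1,\phi),f(x;\alpha_2,\phi))\leq \frac{L}{Z(\phi)}d_g(\alpha_1,\alpha_2)$.

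There is no real obstacle in this argument — it is essentially a one-line calculation once the two elementary inequalities are invoked. The only minor points deserving a word of care are that $Z(\phi)$ must be known to be positive and finite, so that dividing by it and the stated bound both make sense, which is precisely what Condition \ref{cond:3.1} supplies; and that the Lipschitz estimate for $e^{-\phi}$ is only assumed on $[0,r_{\cM}]$, so one should note that the distances $d_g(x,\alpha_i)$ never exceed $r_{\cM}=\sup_{x,y}d_g(x,y)$ and the bound therefore applies on the whole relevant range.
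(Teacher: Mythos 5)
Your proof is correct and follows essentially the same argument as the paper: apply the Lipschitz bound for $e^{-\phi(\cdot)}$ pointwise, use the reverse triangle inequality $|d_g(x,\alpha_1)-d_g(x,\alpha_2)|\leq d_g(\alpha_1,\alpha_2)$, and take the supremum over $x$. The additional remarks on the positivity and finiteness of $Z(\phi)$ and on the range $[0,r_{\cM}]$ are fine but not a departure from the paper's reasoning.
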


\begin{proof}
    Since $e^{-\phi(r)}$ is a Lipschitz continuous function with a Lipschitz constant $L$, we have
    \$
    |f(x;\alpha_1,\phi)-f(x;\alpha_2,\phi)|\leq \frac{L}{Z(\phi)}\cdot|d_g(x,\alpha_1)-d_g(x,\alpha_2)|,
    \$
    where $Z(\phi)$ is the normalizing constant in \eqref{equ:f}. By the triangular inequality, we have 
    \$
    |d_g(x,\alpha_1)-d_g(x,\alpha_2)|\leq d_g(\alpha_1,\alpha_2),
    \$
    which implies that 
    \$
    |f(x;\alpha_1,\phi)-f(x;\alpha_2,\phi)|\leq \frac{L}{Z(\phi)}\cdot d_g(\alpha_1,\alpha_2).
    \$
    By taking the supremum over $x$, we conclude the proof of this lemma.
\end{proof}

\subsection{Identifiability }\label{sec:C3}

%Lemma \ref{lma:c3} establishes the identifiability of the parameter $\alpha$ in the parametric function $f(x;\alpha,\phi).$

\begin{lemma}\label{lma:c3}
    Assume $(\cM,g^{\cM})$  is a Riemannian homogeneous space with $r_\cM=\sup_{x,y}d_g(x,y).$ Suppose  $\phi$ satisfies Condition \ref{cond:4.1} and is strictly increasing on $[0,r_\cM]$. Let $f(x;\alpha,\phi)$ be the density in \eqref{equ:f} and $\cX\subseteq\cM$ a compact set. Then for any $\alpha_0,\{\alpha_n\}_{n=1}^{\infty}\subseteq\cX$ such that
    \#\label{equ:C4}
    d_1(f(x;\alpha_n,\phi),f(x;\alpha_0,\phi))\to 0,\quad \textnormal{as }n\to\infty,
    \#
    where $d_1$ is the $L^1$ distance,
    we have $d_g(\alpha_n,\alpha_0)\to0$ as $n\to\infty$. 
\end{lemma}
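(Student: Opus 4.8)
The plan is to prove the contrapositive combined with a compactness argument. Suppose, for contradiction, that $d_g(\alpha_n,\alpha_0)\not\to 0$. Then there is a subsequence (not relabeled) and a constant $\epsilon_0>0$ with $d_g(\alpha_n,\alpha_0)\geq \epsilon_0$ for all $n$. Since $\cX$ is compact, by passing to a further subsequence we may assume $\alpha_n\to\tilde\alpha$ for some $\tilde\alpha\in\cX$, and necessarily $d_g(\tilde\alpha,\alpha_0)\geq\epsilon_0$, so in particular $\tilde\alpha\neq\alpha_0$. The goal is then to show $d_1(f(x;\alpha_n,\phi),f(x;\alpha_0,\phi))\to d_1(f(x;\tilde\alpha,\phi),f(x;\alpha_0,\phi))>0$, contradicting \eqref{equ:C4}.

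First I would handle the convergence $d_1(f(x;\alpha_n,\phi),f(x;\alpha_0,\phi))\to d_1(f(x;\tilde\alpha,\phi),f(x;\alpha_0,\phi))$. By the Lipschitz property of $x\mapsto e^{-\phi(r)}$ (Condition \ref{cond:4.1}(1)) and the triangle inequality $|d_g(x,\alpha_n)-d_g(x,\tilde\alpha)|\leq d_g(\alpha_n,\tilde\alpha)$, one gets $d_\infty(f(\cdot;\alpha_n,\phi),f(\cdot;\tilde\alpha,\phi))\leq \frac{L}{Z(\phi)}d_g(\alpha_n,\tilde\alpha)\to 0$ by Lemma \ref{lma:c2}. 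To upgrade this to $L^1$ convergence, I would again invoke the uniform envelope argument from the proof of Theorem \ref{thm:4.1}: all densities $f(x;\alpha,\phi)$ with $\alpha$ ranging over a fixed bounded set are dominated by a single integrable envelope $H(x)$ (bounded by $1/Z(\phi)$ near $\alpha^*$ and decaying like $\frac{1}{Z(\phi)}e^{-\phi(d_g(x,\alpha^*)/2)}$ far away, whose integrability is guaranteed by Condition \ref{cond:4.1}(2) in the noncompact case and trivial in the compact case). With this envelope, dominated convergence converts pointwise (indeed uniform) convergence of densities into $L^1$ convergence, so $d_1(f(\cdot;\alpha_n,\phi),f(\cdot;\alpha_0,\phi))\to d_1(f(\cdot;\tilde\alpha,\phi),f(\cdot;\alpha_0,\phi))$.

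The main obstacle is establishing that the limit is strictly positive, i.e., $f(x;\tilde\alpha,\phi)\not\equiv f(x;\alpha_0,\phi)$ as $L^1$ functions whenever $\tilde\alpha\neq\alpha_0$ — this is precisely the identifiability of the location parameter. Here I would argue as follows. Since $\phi$ is strictly increasing and continuous, $f(x;\alpha,\phi)=\frac{1}{Z(\phi)}e^{-\phi(d_g(x,\alpha))}$ is a strictly decreasing continuous function of $d_g(x,\alpha)$; in particular $x\mapsto f(x;\alpha,\phi)$ attains its unique maximum exactly at $x=\alpha$. Thus if $f(\cdot;\tilde\alpha,\phi)$ and $f(\cdot;\alpha_0,\phi)$ agreed almost everywhere, then by continuity of both functions they would agree everywhere, forcing their (unique) maximizers to coincide, i.e. $\tilde\alpha=\alpha_0$, a contradiction. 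Hence $d_1(f(\cdot;\tilde\alpha,\phi),f(\cdot;\alpha_0,\phi))>0$, completing the contradiction with \eqref{equ:C4}. A slightly more careful version, in case one wants to avoid invoking "the unique maximizer," is to note that the sets $\{x: d_g(x,\tilde\alpha)<d_g(x,\alpha_0)\}$ and its complement both have positive volume (they are nonempty open sets, since $\cM$ is a connected manifold and $\tilde\alpha\neq\alpha_0$), and on the first set $f(x;\tilde\alpha,\phi)>f(x;\alpha_0,\phi)$ strictly by strict monotonicity of $e^{-\phi}$, so the $L^1$ distance is bounded below by the integral of the positive difference over that set. Either route closes the argument.
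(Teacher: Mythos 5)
Your proposal is correct and follows essentially the same route as the paper: a compactness/subsequence contradiction argument, combined with $L^1$-continuity of $\alpha\mapsto f(\cdot;\alpha,\phi)$ (proved exactly as in the paper's Lemma \ref{lma:c5}, via the Lipschitz bound and an integrable envelope with dominated convergence) and identifiability of the location parameter (the paper's Lemma \ref{lma:c4}). Your identifiability step is phrased via the unique maximizer or a positive-volume set where $d_g(x,\tilde\alpha)<d_g(x,\alpha_0)$, whereas the paper deduces $d_g(x,\tilde\alpha)=d_g(x,\alpha_0)$ for all $x$ from strict monotonicity of $\phi$; these are minor variants of the same argument and both are valid.
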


\begin{proof}
    We prove this lemma by contradiction. Suppose condition \eqref{equ:C4} holds but $\alpha_n$ do not converge to $\alpha_0$. Then there is a subsequence $\{\alpha_{n_i}\}_{i=1}^\infty$ of $\{\alpha_n\}$ such that $\alpha_{n_i}\to\alpha'_0$ for some $\alpha_0'\neq \alpha_0$, where we use the fact that $\{\alpha_n\}\subseteq\cX$ and $\cX$ is a compact set. By Lemma \ref{lma:c5}, we have
    \#\label{equ:C5}
    d_1(f(x;\alpha_{n_i},\phi),f(x;\alpha_0',\phi))\to 0,\quad \textnormal{as }i\to\infty.
    \#
    Combining with \eqref{equ:C4}, we have $d_1(f(x;\alpha_0,\phi),f(x;\alpha_0',\phi))=0$. It then follows from Lemma \ref{lma:c4} that $\alpha_0'=\alpha_0$, which leads to contradiction.
\end{proof}

Lemma \ref{lma:c5} establishes the continuity of $f(x;\alpha,\phi)$ with respect to its parameter $\alpha$ under certain conditions, where we use the $L^1$ distance to measure the distance between functions. 

%Here we use a weaker condition to derive a weaker result.

\begin{lemma}\label{lma:c5}
    Assume $(\cM,g^{\cM})$  is a Riemannian homogeneous space and Condition \ref{cond:4.1} holds. Let $f(x;\alpha,\phi)$ be the density in \eqref{equ:f}. If $
    d_g(\alpha_n,\alpha)\to 0$ as $n\to\infty$,
    then 
    \$
    %d_h(f(x;\alpha_n,\phi),f(x;\alpha,\phi))&\to0,\label{equ:c41}\\
    d_1(f(x;\alpha_n,\phi),f(x;\alpha,\phi))\to0,%\label{equ:c42}
    \$
    where $d_1$ is the $L^1$ distance.
\end{lemma}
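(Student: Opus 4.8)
The plan is to show $L^1$-continuity of $\alpha\mapsto f(x;\alpha,\phi)$ by a direct estimate combined with a dominated-convergence argument, splitting the manifold into a large geodesic ball and its complement. First I would fix $\alpha$ and a sequence $\alpha_n\to\alpha$, and without loss of generality assume $d_g(\alpha_n,\alpha)\le 1$ for all $n$. Writing
\$
d_1(f(\cdot;\alpha_n,\phi),f(\cdot;\alpha,\phi))=\int_{\cM}|f(x;\alpha_n,\phi)-f(x;\alpha,\phi)|\dvol(x),
\$
I would break this integral over $\cB_{\cM}(\alpha,R)$ and its complement for a parameter $R$ to be chosen.

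On the ball $\cB_{\cM}(\alpha,R)$, I would use the pointwise Lipschitz bound from Lemma \ref{lma:c2}, namely $|f(x;\alpha_n,\phi)-f(x;\alpha,\phi)|\le \frac{L}{Z(\phi)}d_g(\alpha_n,\alpha)$, so that this portion is at most $\frac{L}{Z(\phi)}\vol(\cB_{\cM}(\alpha,R))\,d_g(\alpha_n,\alpha)$, which tends to $0$ as $n\to\infty$ for each fixed $R$ (the volume of the ball is finite by Proposition \ref{prop:homogeneous} and the volume comparison theorem). On the complement, I would bound $|f(x;\alpha_n,\phi)-f(x;\alpha,\phi)|\le f(x;\alpha_n,\phi)+f(x;\alpha,\phi)$ and control the tail: if $\cM$ is compact there is nothing to do since we can take $R>r_{\cM}$; if $\cM$ is noncompact, then for $x$ with $d_g(x,\alpha)\ge R\ge 2$ we have $d_g(x,\alpha_n)\ge d_g(x,\alpha)-1\ge d_g(x,\alpha)/2$, hence both $f(x;\alpha_n,\phi)$ and $f(x;\alpha,\phi)$ are dominated by $\frac{1}{Z(\phi)}e^{-\phi(d_g(x,\alpha)/2)}$. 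Using the polar-coordinate integral formula \eqref{equ:integral} and the volume comparison theorem (Theorem \ref{thm:2.3}), the tail integral is bounded by $\frac{2\vol(\SSS^{m-1})}{Z(\phi)}\int_R^\infty e^{-\phi(r/2)}\sn_{\kappa_{\min}}^{m-1}(r)\,dr$, i.e. $\frac{2\vol(\SSS^{m-1})}{Z(\phi)}\int_R^\infty h(r)\,dr$ in the notation of Condition \ref{cond:4.1}, which tends to $0$ as $R\to\infty$ by the integrability assumption on $h$.

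Combining the two pieces: given $\varepsilon>0$, first choose $R$ so the tail is below $\varepsilon/2$ uniformly in $n$, then choose $N$ so that for $n\ge N$ the ball contribution is below $\varepsilon/2$; this gives $d_1(f(\cdot;\alpha_n,\phi),f(\cdot;\alpha,\phi))<\varepsilon$, proving the claim. The main obstacle is the tail estimate in the noncompact case — one must be careful that the inequality $d_g(x,\alpha_n)\ge d_g(x,\alpha)/2$ only holds once $d_g(x,\alpha)$ is bounded below (here $\ge 2$), and that the resulting dominating function $e^{-\phi(r/2)}\sn_{\kappa_{\min}}^{m-1}(r)$ is exactly the one whose integrability is guaranteed by Condition \ref{cond:4.1}; the rest is a routine $\varepsilon$-$R$-$N$ bookkeeping argument. (An alternative, slightly slicker route is to invoke the generalized dominated convergence theorem directly with dominating function $\frac{1}{Z(\phi)}e^{-\phi(\max\{d_g(x,\alpha)-1,0\}/1)}$-type envelope, but the explicit split is more transparent and reuses the envelope already constructed in the proof of Theorem \ref{thm:4.1}.)
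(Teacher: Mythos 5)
Your proposal is correct: every estimate checks out (the inequality $d_g(x,\alpha_n)\ge d_g(x,\alpha)-1\ge d_g(x,\alpha)/2$ is valid once $d_g(x,\alpha)\ge 2$ and $d_g(\alpha_n,\alpha)\le 1$, the tail is exactly $\int_R^\infty h(r)\,dr$ up to constants, and the ball contribution is controlled by Lemma \ref{lma:c2} and the finiteness of $\vol(\cB_{\cM}(\alpha,R))$), and the $\varepsilon$--$R$--$N$ bookkeeping closes the argument. The paper proves the same lemma with the same two ingredients — the envelope $\frac{1}{Z(\phi)}e^{-\phi(d_g(x,\alpha)/2)}$ outside a fixed ball $\cB_{\cM}(\alpha,2D)$ (constant $1/Z(\phi)$ inside), and the Lipschitz continuity of $e^{-\phi(r)}$ — but packages them differently: it dominates the integrand by $H(x)+e^{-\phi(d_g(x,\alpha))}$ globally, notes that the Lipschitz bound $|e^{-\phi(d_g(x,\alpha_n))}-e^{-\phi(d_g(x,\alpha))}|\le L\,d_g(\alpha_n,\alpha)$ forces pointwise convergence of the integrand, and invokes the dominated convergence theorem once, with no splitting of the domain. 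Your explicit truncation is slightly more hands-on but buys something the paper's DCT argument does not state: a quantitative bound, since the ball part is $O(d_g(\alpha_n,\alpha))$ for fixed $R$ and the tail is controlled by the decay of $\int_R^\infty h$, so one could in principle optimize over $R$ to get a rate (indeed Condition \ref{cond:4.1}(2) even supplies a polynomial tail bound). Conversely, the paper's route is shorter and needs only integrability of the envelope, not any uniform-in-$n$ tail management; your WLOG reduction $d_g(\alpha_n,\alpha)\le 1$ is exactly the point that makes your tail bound uniform, so it is essential and you were right to flag it.
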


\begin{proof}
    %By the relationship between the hellinger distance $d_h$ and the $L^1$ distance $d_1$, we know \eqref{equ:c41} implies \eqref{equ:c42}, and thus it suffices to show \eqref{equ:c41}. 
    By definition of the density function $f(x;\alpha,\phi)$ and the $L^1$ distance $d_1$, we know
    \#
    d_1(f(x;\alpha_n,\phi),f(x;\alpha,\phi))=\frac{1}{Z(\phi)}\int_{\cM}\left|e^{-\phi(d_g(x,\alpha_n))}-e^{-\phi(d_g(x,\alpha))}\right|\dvol(x), \label{equ:dh2}
    \#
    where $Z(\phi)$ is the normalizing constant.
    Since $\alpha_n\to\alpha$, $\alpha_n$ is bounded within the geodesic ball $\cB_{\cM}(\alpha,D)=\{x\in\cM\mid d_g(x,\alpha)< D\}$ for a sufficiently large $D$. For $d_g(x,\alpha)>2D$, we have
    \$
    d_g(x,\alpha_n)\geq d_g(x,\alpha)-d_g(\alpha_n,\alpha)\geq d_g(x,\alpha)/2.
    \$
    Consequently, for all $n$, we have
    \$
    e^{-\phi(d_g(x,\alpha_n))}\leq H(x)\coloneqq\left\{
    \begin{array}{ll}
        e^{-\phi(d_g(x,\alpha)/2)}, &  \textnormal{if }d_g(x,\alpha)>2D,\\
       1,  & \textnormal{otherwise}.
    \end{array}
    \right.
    \$
    where we use the fact that $\phi$ is an increasing function. Using $H$, we can upper bound the integrand in \eqref{equ:dh2} as follows:
    \$
    \left|e^{-\phi(d_g(x,\alpha_n))}-e^{-\phi(d_g(x,\alpha))}\right|\leq H(x)+e^{-\phi(d_g(x,\alpha))}.
    \$
    When $\cM$ is compact, $H(x)+e^{-\phi(d_g(x,\alpha))}$ is clearly an integrable function as it is bounded. When $\cM$ is noncompact, by using the polar coordinate representation \eqref{equ:integral}, Theorem \ref{thm:2.3}, Condition \ref{cond:3.1}, and Condition \ref{cond:4.1}, we also can show that it is integrable. Specifically,
    \$
    &\int_{\cM}\left(H(x)+e^{-\phi(d_g(x,\alpha))}\right)\dvol(x)\\
    \leq\ & \vol(\cB_{\cM}(\alpha,2D))+\vol(\SSS^{m-1})\cdot\int_{2D}^{\infty}e^{-\phi(r/2)}\sn_{\kappa_{\min}}^{m-1}(r)dr+\int_{\cM}e^{-\phi(d_g(x,\alpha))}\dvol(x)\\
    <\ &\infty,    
    \$
    where $\kappa_{\min}$ is the lower bound on the sectional curvatures of $\cM$ used in Condition \ref{cond:4.1} and $\sn_{\kappa_{\min}}(\cdot)$ is given by \eqref{equ:sn}.
    These results imply that the integrand in \eqref{equ:dh2} has a dominating function $H(x)+e^{-\phi(d_g(x,\alpha))}$. 
    %In addition, since $e^{-\frac{1}{2}\phi(r)}$ is a Lipschitz continuous function with a Lipschitz constant $L_1$, we have 
    In addition, since $e^{-\phi(r)}$ is a Lipschitz continuous function with a Lipschitz constant $L$, we have 
    \$
    |e^{-\phi(d_g(x,\alpha_n))}-e^{-\phi(d_g(x,\alpha))}|\leq L|d_g(x,\alpha_n)-d_g(x,\alpha)|\leq L\cdot d_g(\alpha_n,\alpha),
    \$
    where the second inequality uses the triangle inequality. It implies that the integrand in \eqref{equ:dh2} converges to zero pointwise, and by the dominated convergence theorem, the integral in \eqref{equ:dh2} converges to zero, which concludes the proof. 
\end{proof}

Lemma \ref{lma:c4} shows that if the $L^1$ distance between $f(x;\alpha,\phi)$ and $f(x;\alpha',\phi)$ is zero, then $\alpha=\alpha'$ under mild conditions. 

\begin{lemma}\label{lma:c4}
    Assume $(\cM,g^{\cM})$  is a Riemannian homogeneous space with $r_{\cM}=\sup_{x,y}d_g(x,y)$. Suppose Condition \ref{cond:3.1}  holds and $\phi$ is a strictly increasing function over $[0,r_\cM]$. Let $f(x;\alpha,\phi)$ be the density in \eqref{equ:f}. Then the following condition
    \#\label{equ:dh=0}
    d_1(f(x;\alpha,\phi),f(x;\alpha',\phi))=0,
    \#
    where $d_1$ is the $L^1$ distance, implies that $\alpha=\alpha'$. 
\end{lemma}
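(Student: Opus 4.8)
The plan is to prove Lemma~\ref{lma:c4} by contradiction, exploiting the homogeneity and the strict monotonicity of $\phi$ in essentially the same spirit as the proof of Proposition~\ref{prop:3.3}, but now without any symmetry assumption, so the symmetrization trick has to be replaced by a direct argument. Suppose $d_1(f(x;\alpha,\phi),f(x;\alpha',\phi))=0$ but $\alpha\neq\alpha'$. Since both densities are continuous in $x$ (because $\phi$ is continuous and $d_g(\cdot,\cdot)$ is continuous) and $Z(\phi)$ is a finite positive constant independent of the location parameter by Proposition~\ref{prop:3.2}, the $L^1$ distance being zero forces
\$
e^{-\phi(d_g(x,\alpha))}=e^{-\phi(d_g(x,\alpha'))}\quad\text{for all }x\in\cM,
\$
and hence, applying the (strictly increasing, hence injective) function $-\log(\cdot)$ and then $\phi^{-1}$, we get $d_g(x,\alpha)=d_g(x,\alpha')$ for every $x\in\cM$.

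The second step is to derive a contradiction from the identity $d_g(x,\alpha)=d_g(x,\alpha')$ for all $x$. The quickest route is to plug in $x=\alpha$: this gives $0=d_g(\alpha,\alpha)=d_g(\alpha,\alpha')$, so $\alpha=\alpha'$, directly contradicting the assumption $\alpha\neq\alpha'$. This is clean enough that the only thing needing care is the justification of the pointwise equality of the two densities from $d_1=0$; I would phrase this using the fact that if a nonnegative continuous function has zero integral against the volume density on a manifold whose support is all of $\cM$, then it vanishes identically. Concretely, $|e^{-\phi(d_g(x,\alpha))}-e^{-\phi(d_g(x,\alpha'))}|$ is a nonnegative continuous function on $\cM$ with $\int_\cM|\cdot|\dvol=Z(\phi)\,d_1(f(x;\alpha,\phi),f(x;\alpha',\phi))=0$, and a continuous nonnegative integrand with zero integral on a connected manifold equipped with the Riemannian volume density must be identically zero (if it were positive at some point, it would be positive on an open neighborhood of positive volume).

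I would present this in roughly four short paragraphs: (i) set up the contradiction hypothesis and rewrite $d_1=0$ using the formula for $f$ and the constancy of $Z(\phi)$; (ii) conclude the pointwise identity $e^{-\phi(d_g(x,\alpha))}=e^{-\phi(d_g(x,\alpha'))}$ from the continuity/positive-volume argument; (iii) use injectivity of $\phi$ (strict monotonicity) to pass to $d_g(x,\alpha)=d_g(x,\alpha')$ for all $x$; (iv) specialize to $x=\alpha$ to get $\alpha=\alpha'$, contradiction. The main (and really only) obstacle is making sure the passage from "$L^1$ distance zero" to "equality everywhere" is rigorous — this is where continuity of the integrand and the fact that $\dvol$ assigns positive measure to every nonempty open set are used; everything after that is immediate. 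No curvature bounds, no symmetry, and no appeal to Condition~\ref{cond:4.1} are needed here, only Condition~\ref{cond:3.1} (ensuring $\tilde f$ is integrable so that $f$ and $Z(\phi)$ make sense) and strict monotonicity of $\phi$.
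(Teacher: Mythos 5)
Your proposal is correct and follows essentially the same route as the paper's proof: pass from $d_1=0$ to pointwise equality of the densities via continuity of the integrand, use strict monotonicity of $\phi$ to get $d_g(x,\alpha)=d_g(x,\alpha')$ for all $x$, and conclude $\alpha=\alpha'$ (your explicit specialization to $x=\alpha$ is just a spelled-out version of the paper's final step). The only difference is cosmetic: you frame it as a contradiction and spend a bit more care justifying the zero-integral-implies-vanishing step, which the paper states more briefly.
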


\begin{proof}
    By definition of the density $f$ and the $L^1$ distance $d_1$, the condition \eqref{equ:dh=0} is reduced to
    \$
    \int_{\cM}\left|e^{-\phi(d_g(x,\alpha))}-e^{-\phi(d_g(x,\alpha'))}\right|\dvol(x)=0.
    \$
    As the integrand is nonnegative, the above condition implies that
    \$
    e^{-\phi(d_g(x,\alpha))}-e^{-\phi(d_g(x,\alpha'))}=0,\quad\forall x\in\cM,
    \$
    where we use the continuity of the function $\phi$. It follows that
    \$
    \phi(d_g(x,\alpha))=\phi(d_g(x,\alpha')),\quad\forall x\in\cM,
    \$
    and since $\phi$ is strictly increasing, we have
    \$
    d_g(x,\alpha)=d_g(x,\alpha'),\quad\forall x\in\cM.
    \$
    This shows that $\alpha=\alpha'$, which concludes the proof. 
\end{proof}

\subsection{Proof of the claim \eqref{equ:4.13}}\label{sec:c4}

Lemma \ref{lma:c6} proves the claim \eqref{equ:4.13} used in the proof of Theorem \ref{thm:4.3}. 

\begin{lemma}\label{lma:c6}
    Assume $(\cM,g^{\cM})$ is a Riemannian homogeneous space and $\phi$ satisfies Condition \ref{cond:4.5}. Let $f(x;\alpha,\phi)$ be the density in \eqref{equ:f}. Then we have
    \$
    \lim_{\epsilon\to 0}\inf\left\{{D(\alpha,\alpha_0)}/{d_g(\alpha,\alpha_0)}\mid d_g(\alpha,\alpha_0)<\epsilon\right\}>0,
    \$
    where $D(\alpha,\alpha_0)=d_1(f(x;\alpha,\phi),f(x;\alpha_0,\phi)).$
\end{lemma}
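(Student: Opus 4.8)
The plan is to bound $D(\alpha,\alpha_0)$ below by a fixed multiple of $d_g(\alpha,\alpha_0)$ for $\alpha$ near $\alpha_0$, via a first-order expansion made uniform by restricting the $L^1$ integral to a fixed annulus around $\alpha_0$. Write $t=d_g(\alpha,\alpha_0)$ and, for $\alpha\neq\alpha_0$ with $t<\inj(\cM)$, set $v=\Log_{\alpha_0}(\alpha)/t\in T_{\alpha_0}\cM$, a unit vector, so $\alpha=\Exp_{\alpha_0}(tv)$. Since
\$
D(\alpha,\alpha_0)=\frac{1}{Z(\phi)}\int_{\cM}\bigl|e^{-\phi(d_g(x,\alpha))}-e^{-\phi(d_g(x,\alpha_0))}\bigr|\dvol(x)
\$
and the integrand is nonnegative, I would first discard all of $\cM$ except a fixed closed annulus $A=\{x\in\cM:\delta_2\le d_g(x,\alpha_0)\le\delta_1\}$, choosing $0<\delta_2<\delta_1$ with $\delta_1<\inj(\cM)$ (and $\delta_1<\pi/\sqrt{\kappa_{\max}}$ if $\kappa_{\max}>0$), and $\epsilon_0\in(0,\delta_2)$ with $\delta_1+\epsilon_0<\inj(\cM)$. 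On $A$ both sources of nonsmoothness vanish: for $x\in A$ and any $y$ with $d_g(y,\alpha_0)\le\epsilon_0$ one has $\delta_2-\epsilon_0\le d_g(x,y)\le\delta_1+\epsilon_0$, so $y\notin\Cut(x)\cup\{x\}$ (using $\inj(x)\ge\inj(\cM)$ and the smoothness criterion of Section~\ref{sec:2}) and this distance stays inside $(0,r_\cM)$ where $\phi$ is $C^1$. Thus $D(\alpha,\alpha_0)\ge\frac{t}{Z(\phi)}\int_A|q_{t,v}(x)|\dvol(x)$, where $q_{t,v}(x):=t^{-1}\bigl(e^{-\phi(d_g(x,\Exp_{\alpha_0}(tv)))}-e^{-\phi(d_g(x,\alpha_0))}\bigr)$.

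Next I would take the limit $t\to0$ inside this integral, uniformly in $v$. Pointwise on $A$, $q_{t,v}(x)\to g_v(x):=\phi'(r)e^{-\phi(r)}\langle\Theta,v\rangle$, where $(r,\Theta)$ are the polar coordinates of $x$ at $\alpha_0$ (i.e. $\Log_{\alpha_0}(x)=r\Theta$), using the identity $\nabla_{\alpha_0}d_g(x,\cdot)=-\Theta$. By the mean value theorem together with $e^{-\phi}\le1$ and $|\nabla d_g|=1$, one gets $|q_{t,v}(x)|\le M:=\sup_{[\delta_2-\epsilon_0,\,\delta_1+\epsilon_0]}|\phi'|<\infty$ for all $x\in A$, all unit $v$, all $t\le\epsilon_0$. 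Moreover the map $(x,v,s)\mapsto\partial_s\bigl[e^{-\phi(d_g(x,\Exp_{\alpha_0}(sv)))}\bigr]$ is continuous on the compact set $A\times\{v:|v|=1\}\times[0,\epsilon_0]$ — the distance, $\phi'$, and the unit gradient field $\nabla d_g$ are all continuous there because we stay off the cut locus — hence uniformly continuous, so $q_{t,v}\to g_v$ uniformly in $(x,v)$ as $t\to0$. Consequently $\bigl|\int_A|q_{t,v}|\dvol-\int_A|g_v|\dvol\bigr|\le\vol(A)\,\|q_{t,v}-g_v\|_{\infty,A}\to0$ uniformly in $v$.

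It then remains to bound $\int_A|g_v|\dvol$ below by a positive constant independent of the unit vector $v$. Passing to polar coordinates at $\alpha_0$ and using $\lambda(r,\Theta)\ge\sn_{\kappa_{\max}}^{m-1}(r)$ from Theorem~\ref{thm:2.3},
\$
\int_A|g_v|\dvol=\int_{\SSS^{m-1}}\!\!\int_{\delta_2}^{\delta_1}\phi'(r)e^{-\phi(r)}|\langle\Theta,v\rangle|\,\lambda(r,\Theta)\,dr\,d\Theta\ \ge\ c_0\int_{\SSS^{m-1}}|\langle\Theta,v\rangle|\,d\Theta\ =:\ c_*,
\$
where $c_0=\int_{\delta_2}^{\delta_1}\phi'(r)e^{-\phi(r)}\sn_{\kappa_{\max}}^{m-1}(r)\,dr$. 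The angular factor $\int_{\SSS^{m-1}}|\langle\Theta,v\rangle|\,d\Theta$ is a positive constant depending only on $m$ (rotational symmetry of the surface measure), and $c_0>0$ because $\phi'\ge0$ on $(0,r_\cM)$ with $\int_{\delta_2}^{\delta_1}\phi'=\phi(\delta_1)-\phi(\delta_2)>0$ (strict monotonicity of $\phi$) while $e^{-\phi}\sn_{\kappa_{\max}}^{m-1}>0$ on $(\delta_2,\delta_1)$. Combining the three displays, there is $\epsilon>0$ such that for every $\alpha$ with $0<d_g(\alpha,\alpha_0)<\epsilon$ one has $\int_A|q_{t,v}|\dvol\ge c_*/2$, whence $D(\alpha,\alpha_0)\ge\frac{c_*}{2Z(\phi)}\,d_g(\alpha,\alpha_0)$, so $\inf\{D(\alpha,\alpha_0)/d_g(\alpha,\alpha_0):d_g(\alpha,\alpha_0)<\epsilon\}\ge c_*/(2Z(\phi))>0$; since this infimum is nondecreasing as $\epsilon\downarrow0$, the limit exists and is positive.

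The main obstacle is the uniformity in the direction $v$: a per-$v$ dominated convergence argument would only give $\int_A|q_{t,v}|\to\int_A|g_v|$ for each fixed $v$, whereas the lemma needs the lower bound simultaneously for all $\alpha$ in a punctured ball, i.e. for all $v$ at once. This is exactly what restricting to the fixed annulus $A$ buys: it confines the analysis to a compact domain where the $s$-derivative of the integrand is uniformly continuous (no cut-locus and no origin singularity), and strict monotonicity of $\phi$ guarantees that the mass retained on $A$ is still of exact order $t$. Everything else — the formula for $\nabla d_g$, the comparison $\lambda\ge\sn_{\kappa_{\max}}^{m-1}$, positivity of $\inj(\cM)$, finiteness of $Z(\phi)$ — is already available from Section~\ref{sec:2} and Condition~\ref{cond:4.5}.
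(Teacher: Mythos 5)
Your argument is correct, but it takes a genuinely different route from the paper's proof of Lemma \ref{lma:c6}. The paper first reduces, via $|1-e^{w}|\geq |w|/2$ on a small ball, to lower-bounding $\int|\phi(d_g(x,\alpha))-\phi(d_g(x,\alpha_0))|/d_g(\alpha,\alpha_0)$, and then argues by contradiction: it extracts a sequence $y_s\to\alpha_0$ with directions accumulating near a fixed unit vector $V$, builds a cone-shaped region $\cR$ "opposite" to $V$ on which $\phi'\geq c_0$, and uses the mean value theorem together with the Rauch-based triangle comparison (Lemma \ref{lma:c8}) to get $d_g(x,y_s)-d_g(x,\alpha_0)\geq C_0\,d_g(y_s,\alpha_0)$ on $\cR$ — thereby never differentiating the distance in the moving parameter. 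You instead linearize $s\mapsto e^{-\phi(d_g(x,\Exp_{\alpha_0}(sv)))}$ directly, obtain uniformity in the direction $v$ from compactness of $A\times\SSS^{m-1}\times[0,\epsilon_0]$ together with joint smoothness of $d_g$ off the diagonal and cut locus, and get a $v$-independent lower bound from the rotational invariance of $\int_{\SSS^{m-1}}|\langle\Theta,v\rangle|\,d\Theta$ plus the comparison $\lambda\geq\sn_{\kappa_{\max}}^{m-1}$. Your approach is more direct (no contradiction, explicit constant $c_*/(2Z(\phi))$) and avoids Lemma \ref{lma:c8} entirely; its price is that it invokes the first-variation identity $\nabla_{\alpha_0}d_g(x,\cdot)=-\Theta$ and the \emph{joint} continuity of $d_g$ and its gradient on the open set $\{(x,y): y\notin\Cut(x)\cup\{x\}\}$, which is standard but goes slightly beyond the pointwise smoothness statement quoted in Section \ref{sec:2}, whereas the paper's argument needs only the triangle inequality and the comparison lemma. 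Two small points you should make explicit: the constants must satisfy $\delta_1+\epsilon_0<\inj(\cM)\leq r_{\cM}$ so that all distances encountered lie in a compact subinterval of $(0,r_{\cM})$ where $\phi'$ is continuous and bounded (this is automatic with your choices, but it is what justifies $M<\infty$ and the $C^1$ regularity in $s$), and the annulus $A$ must lie inside $\Exp_{\alpha_0}(\seg^0(\alpha_0))$ (guaranteed by $\delta_1<\inj(\cM)$) for the polar-coordinate computation of $\int_A|g_v|\dvol$ to be valid. With these noted, the proof is complete.
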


\begin{proof}
    Without loss of generality, we assume that $\alpha_0\in\cM$ is fixed and let $\alpha$ vary in the geodesic ball $\cB_{\cM}(\alpha_0,\epsilon)$. By using the definition of $f(x;\alpha,\phi)$ and the $L^1$ distance $d_1$, we can write $D(\alpha,\alpha_0)$ as follows:
    \$
    D(\alpha,\alpha_0)=\frac{1}{Z(\phi)}\int_{\cM}\left|e^{-\phi(d_g(x,\alpha))}-e^{-\phi(d_g(x,\alpha_0))}\right|\dvol(x).
    \$
    For any $\delta>0$, it holds that
    \$
    Z(\phi)\cdot D(\alpha,\alpha_0)\geq \int_{\cB_{\cM}(\alpha_0,\delta)}\left|e^{-\phi(d_g(x,\alpha))}-e^{-\phi(d_g(x,\alpha_0))}\right|\dvol(x),
    \$
    where $\cB_{\cM}(\alpha_0,\delta)$ is the geodesic ball $\{x\in\cM\mid d_g(x,\alpha_0)<\delta\}$. Let $\inj(\cM)$ denote the injectivity radius of $\cM$, which is positive since $\cM$ is a Riemannian homogeneous space. We can show that for sufficiently small $\delta$, 
    %Since $\phi$ is increasing and continuous over $[0,r_\cM]$, we know for sufficiently small $\delta$, we can ensure that 
    $\delta<\frac{\inj(\cM)}{2}$, $\phi(\delta)<\infty$, and
    \$
    |1-e^{w}|\geq \frac{1}{2}\cdot |w|,\quad\forall w\in[\phi(0)-\phi(2\delta),0].
    \$ 
    where we use the fact that $\phi$ is an increasing and continuous function. Choosing any one of such $\delta$, we can show that for all $\alpha,x\in\cB_{\cM}(\alpha_0,\delta)$, the following inequality holds:
    \$
    \left|e^{-\phi(d_g(x,\alpha))}-e^{-\phi(d_g(x,\alpha_0))}\right|&=e^{-\phi(d_g(x,\alpha_0))}\cdot |e^{-\phi(d_g(x,\alpha))+\phi(d_g(x,\alpha_0))}-1|\\
    &\geq e^{-\phi(\delta)}\cdot\frac{1}{2}\cdot |\phi(d_g(x,\alpha))-\phi(d_g(x,\alpha_0))|.
    \$
    In particular, for all $\alpha\in\cB_{\cM}(\alpha_0,\delta)$, we have
    \$
    2e^{\phi(\delta)}\cdot Z(\phi)\cdot D(\alpha,\alpha_0)\geq \int_{\cB_{\cM}(\alpha_0,\delta)}|\phi(d_g(x,\alpha))-\phi(d_g(x,\alpha_0))|\dvol(x).
    \$
    %If $d_g(\alpha,\alpha_0)\leq \epsilon_0$ and $x\in\cB_{\cM}(\alpha_0,\delta)$, then $d_g(x,\alpha)$
    Consequently, to prove the lemma, it suffices to show that
    \#\label{equ:c.8}
    \lim_{\epsilon\to0}\inf\{T(\alpha,\alpha_0)\mid d_g(\alpha,\alpha_0)< \epsilon\}>0,
    \#
    where
    \$
    T(\alpha,\alpha_0)\coloneqq\int_{\cB_{\cM}(\alpha_0,\delta)}\frac{|\phi(d_g(x,\alpha))-\phi(d_g(x,\alpha_0))|}{d_g(\alpha,\alpha_0)}\dvol(x).
    \$
    We will prove this condition by contradiction. Suppose \eqref{equ:c.8} does not hold, then there is a sequence  $\{y_s\}_{s=1}^{\infty}$ converging to $\alpha_0$ such that $T(y_s,\alpha_0)$ tends to $0$. Without loss of generality, we can assume that $d_g(y_s,\alpha_0)<\epsilon_0$ for some $\epsilon_0\leq\delta$ and all $s$. In addition, let $\theta_0=\frac{\pi}{100}$, and we assume there is a unit tangent vector $V\in T_{\alpha_0}\cM$ such that the angle between $V$ and $\Log_{\alpha_0}y_s$ is smaller than $\theta_0$ for an infinite number of $s$, where $\Log_{\alpha_0}$ is the logarithmic map at $\alpha_0$. Without loss of generality, we can assume that the angle between such $V$ and $\Log_{\alpha}y_s$ is smaller than $\theta_0$ for all $s$\footnote{Otherwise, we may take a subsequence such that this condition holds.}. The rest of the proof is to provide a positive lower bound for $T({y_s,\alpha_0})$ when $y_s$ is sufficiently close to $\alpha_0$, which immediately leads to contradiction. 
    
    Since $\phi$ is strictly increasing over $[0,r_{\cM}]$ and continuously differentiable over $(0,r_{\cM})$, we can find an interval $[r_1,r_2]$ such that $0<r_1<r_2<\delta$ and $\phi'(r)\geq c_0$ for all $r\in[r_1,r_2]$ and some positive constant $c_0$. Let $0<\Delta<{(r_2-r_1)}/{2}$ and consider the following region
    \#
    \cR=\{x\in\cM\mid \ &d_g(x,\alpha_0)\in[r_1+\Delta,r_2-\Delta], \notag\\
    &\textnormal{the angle between }\Log_{\alpha_0}x\textnormal{ and }V\textnormal{ is larger than }\pi-\theta_0.\},\label{equ:cR}
    \#
    where $V\in T_{\alpha_0}\cM$ is the aforementioned unit tangent vector. 
    It is obvious that $\cR\subseteq\cB_{\cM}(\alpha_0,\delta)$ and $\cR$ is a region of positive volume. In addition, for any $x\in\cR$ and any $s$, the angle between $\Log_{\alpha_0}x$ and $\Log_{\alpha_0}y_s$ is larger than $\pi-2\theta_0$ by the triangle inequality. 

    \begin{figure}
        \centering
        \includegraphics[height=0.18\textheight]{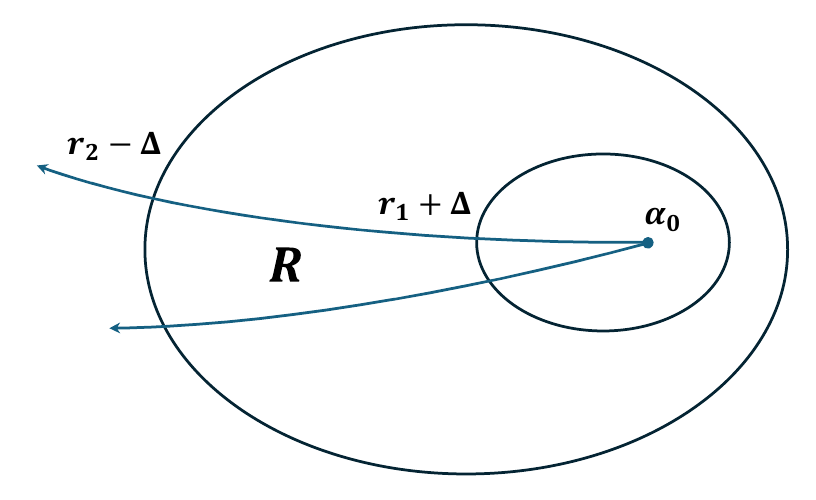}
        \caption{Visualization of the region $\cR$ defined in \eqref{equ:cR}.}
        \label{fig:cR}
    \end{figure}
    
    Now we assume without loss of generality that $\epsilon_0\leq\Delta$. It implies that for any $x\in\cR$ and any $y_s$, the condition $d_g(x,y_s)\in[r_1,r_2]$ holds, due to the triangle inequality. Therefore, by the mean value theorem, for any $x\in\cR$ and any $y_s$, we have
    \$
    \phi(d_g(x,y_s))-\phi(d_g(x,\alpha_0)))=\phi'(\xi)\cdot (d_g(x,y_s)-d_g(x,\alpha_0)),
    \$
    where $\xi$ is some number between $d_g(x,y_s)$ and $d_g(x,\alpha_0)$ and thus $\xi\in[r_1,r_2]$. Since $\phi'(r)\geq c_0>0$ for all $r\in[r_1,r_2]$, it follows that 
    \$
    |\phi(d_g(x,y_s))-\phi(d_g(x,\alpha_0)))|\geq c_0\cdot |d_g(x,y_s)-d_g(x,\alpha_0)|,
    \$
    and thus
    \#\label{equ:c10}
    T(y_s,\alpha_0)\geq c_0\cdot \int_{\cR}\frac{|d_g(x,y_s)-d_g(x,\alpha_0)|}{d_g(y_s,\alpha_0)}\dvol(x). 
    \#
    The rest of the proof aims to lower bound the right hand side of \eqref{equ:4.10} by examining the geodesic triangle with vertices $x,$ $\alpha_0,$ and $y_s$. 
    
    Our idea is to employ Lemma \ref{lma:c8}. Since $\cM$ is a Riemannian homogeneous space, its sectional curvatures are upper bounded by some $\kappa>0$. Then we can assume without loss of generality that the constant $\delta$ in the previous arguments is smaller than the constant $r_0$ specified in Lemma \ref{lma:c8}. Then by Lemma \ref{lma:c8}, for any $x\in\cR$ and $y_s$ specified above, we have  
    \#\label{equ:c11}
    d_g(x,y_s)\geq d_{\kappa}(a,b),
    \#
    where $a=d_g(x,\alpha_0),b=d_g(y_s,\alpha_0),$ and
    \$
    d_{\kappa}(a,b)=\kappa^{-1/2}\cdot \arccos\left(\cos(\sqrt{\kappa}a)\cos(\sqrt{\kappa}b)-\sin(\sqrt{\kappa}a)\sin(\sqrt{\kappa}b)\cos(2\theta_0)\right).
    \$
    Here we use the fact that the angle between $\Log_{\alpha_0}x$ and $\Log_{\alpha_0}y_s$ is larger than $\pi-2\theta_0$. 
    
    Notice that for all $x\in\cR$, we have $a=d_g(x,\alpha_0)\geq r_1>0$. Then there is a sufficiently small $b_0>0$, which is independent of $x$, such that the function $\cos(\sqrt{\kappa}d_{\kappa}(a,b))<1$ for all $b\in[0,b_0]$. Consequently, the function $d_{\kappa}(a,b)$ is differentiable with respect to $b$ in $[0,b_0]$, and for $b\in[0,b_0]$, we have 
    \$
    \frac{\partial}{\partial {b}}d_{\kappa}(a,b)=\frac{1}{\sqrt{1-\cos^2(\sqrt{\kappa}d_{\kappa}(a,b))}}\cdot \left(\cos(\sqrt{\kappa}a)\sin(\sqrt{\kappa}b)+\sin(\sqrt{\kappa}a)\cos(\sqrt{\kappa}b)\cos(2\theta_0)\right).
    \$
    By selecting a sufficiently small $b_0>0$, we can ensure that $\frac{\partial}{\partial b}d_{\kappa}(a,b)\geq C_0>0$ for all $b\in[0,b_0]$ and $a=d_g(x,\alpha_0)$ with $x\in\cR$. Here $C_0$ is a constant independent of $b$ and $x$. Therefore, by using  mean value theorem, we can show that for all $a=d_g(x,\alpha_0)$ with $x\in\cR$, 
    \$
    d_\kappa(a,b)-a\geq C_0b,\quad\forall b\in[0,b_0],
    \$
    where we use the fact that $d_{\kappa}(a,0)=a$. Combining this with \eqref{equ:c11}, we can conclude that for all $x\in\cR$, 
    \$
    d_g(x,y_s)-d_g(x,\alpha_0)\geq C_0\cdot d_g(y_s,\alpha_0),\quad\textnormal{if } d_g(y_s,\alpha_0)\leq b_0.
    \$
    Combining this with \eqref{equ:c10}, we have that
    \$
    T(y_s,\alpha_0)\geq c_0\cdot C_0\cdot \vol(\cR)>0,\quad \textnormal{if } d_g(y_s,\alpha_0)\leq b_0.
    \$
    This contradicts the assumption that $T(y_s,\alpha_0)$ tends to zero, as $s$ tends to infinity.
\end{proof}

%\begin{figure}
%    \centering
%    \includegraphics{figure/vis-1.pdf}
%    \caption{Visualization for Lemma \ref{lma:c7}.}
%    \label{fig:2}
%\end{figure}

%Lemma \ref{lma:c7} provides a geometric result needed in the proof of Lemma \ref{lma:c6}.  

%\begin{lemma}\label{lma:c7}
%    Assume $(\cM,g^{\cM})$ is a Riemannian homogeneous space with $r_{\cM}=\sup_{x,y}d_g(x,y)$ and sectional curvatures bounded within $[\kappa_{\min},\kappa_{\max}]$. Assume $\phi$ is a strictly increasing and continuous function on $[0,r_\cM]$. Suppose $\phi$ is continuously differentiable on $(0,r_{\cM})$. 
%\end{lemma}

%\begin{proof}
    
%\end{proof}

\subsection{Geometric lemma}\label{sec:c5}

\begin{figure}
    \centering
    \includegraphics[width = 0.95\textwidth]{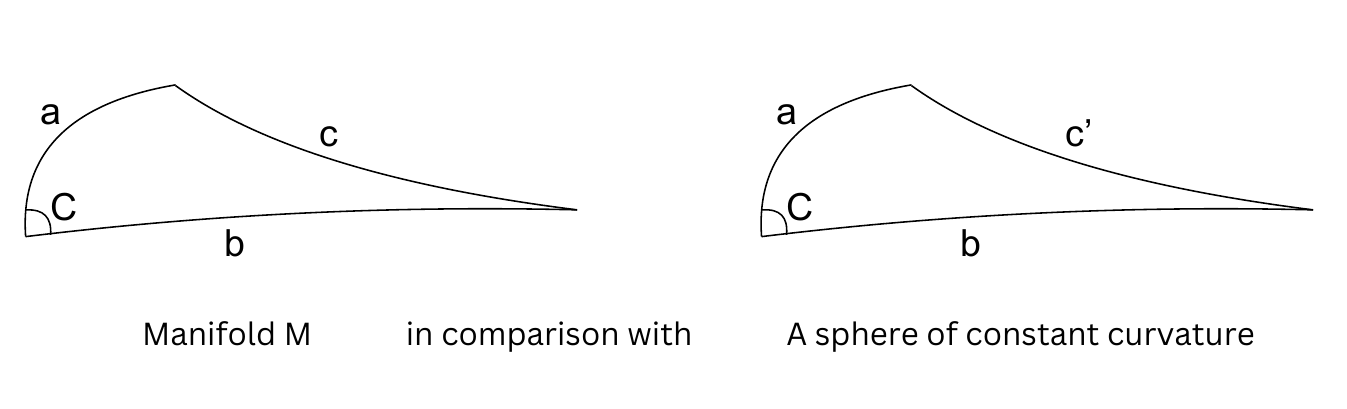}
    \caption{Illustration of Lemma \ref{lma:c8}. Left: a geodesic triangle in $\cM$ with edges of lengths $a,b,$ $c$, and the angle opposite side $c$ is $C$. Right: a geodesic triangle in a sphere of curvature $\kappa$ with edges of lengths $a,b,c'$, and the angle opposite $c'$ is still $C$. Under the conditions of Lemma \ref{lma:c8}, $c\geq c'$.}
    \label{fig:1}
\end{figure}
Lemma \ref{lma:c8} delves into the geodesic triangle on a Riemannian manifold with an upper  curvature bound. It is built on the Rauch comparison theorem, Section 11, Chapter 1 in \cite{cheeger1975comparison}, as well as the spherical law of cosines.  

\begin{lemma}\label{lma:c8}
    Consider  a complete Riemannian manifold $(\cM,g^{\cM})$ with a positive injectivity radius $\inj(\cM)$. Assume its sectional curvatures are upper bounded by $\kappa>0$. Consider a geodesic triangle in $\cM$ with edges of lengths $a,b$, and $c$, and suppose the angle opposite side $c$ is $C$. If $a,b\leq r_0$ for a sufficiently small constant $r_0$, then we have $c\geq c'$, where 
    \$
    c'=\kappa^{-1/2}\cdot\arccos\left( \cos(\sqrt{\kappa}a)\cos(\sqrt{\kappa}b)+\sin(\sqrt{\kappa}a)\sin(\sqrt{\kappa}b)\cos(C)\right).
    \$
\end{lemma}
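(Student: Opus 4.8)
The plan is to reduce the statement to a one–dimensional Sturm comparison along a single edge of the triangle. Label the vertices $p,q,r$ so that the interior angle at $p$ equals $C$, the edges $pr$ and $pq$ have lengths $a$ and $b$, and the edge $qr$ has length $c$; we may assume the triangle is nondegenerate, since the cases $a=0$, $b=0$, $C=0$ or $C=\pi$ yield $c=c'$ directly (or by continuity). Let $\gamma:[0,b]\to\cM$ be the minimizing geodesic from $p$ to $q$, parametrized by arclength, and set $\rho(u)=d(r,\gamma(u))$, so $\rho(0)=a$ and $\rho(b)=c$. Choosing $r_0$ small enough (depending on $\inj(\cM)$ and $\kappa$) so that $a+b<\min\{\inj(\cM),\pi/\sqrt{\kappa}\}$ forces $\gamma(u)\notin\Cut(r)\cup\{r\}$ for every $u\in[0,b]$, because $d(r,\gamma(u))\le a+u\le a+b<\inj(r)$; hence $\rho$ is smooth on a neighborhood of $[0,b]$ with $0<\rho(u)<\pi/\sqrt{\kappa}$.

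Next I would extract two facts about $\rho$. By the first variation of arclength, $\rho'(0)=\langle\grad r_r(p),\dot\gamma(0)\rangle=-\cos C$, where $r_r=d(r,\cdot)$, because $\grad r_r(p)$ is the unit vector at $p$ pointing away from $r$, $\dot\gamma(0)$ is the unit vector at $p$ pointing toward $q$, and the angle between them is, by definition, the angle $C$ of the triangle. By the Hessian comparison theorem for the distance function — precisely the consequence of the Rauch comparison theorem (Section 11, Chapter 1 of \cite{cheeger1975comparison}) valid under the upper bound $\mathrm{sec}\le\kappa$ inside the cut locus and for $r_r<\pi/\sqrt{\kappa}$ — one has $\Hess r_r\ge\sqrt{\kappa}\cot(\sqrt{\kappa}\,r_r)\,(g^{\cM}-dr_r\otimes dr_r)$ as bilinear forms. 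Since $\gamma$ is a geodesic, $\rho''(u)=\Hess r_r(\dot\gamma,\dot\gamma)\ge\sqrt{\kappa}\cot(\sqrt{\kappa}\rho)\,(1-\rho'(u)^2)$ on $[0,b]$.

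The key step is the substitution $h(u)=\cos(\sqrt{\kappa}\,\rho(u))$. A direct computation, using the Hessian inequality together with $\sin(\sqrt{\kappa}\rho)>0$, yields the differential inequality $h''+\kappa h\le 0$ on $[0,b]$, with $h(0)=\cos(\sqrt{\kappa}a)$ and $h'(0)=\sqrt{\kappa}\sin(\sqrt{\kappa}a)\cos C$. On the other hand, $\bar h(u)=\cos(\sqrt{\kappa}a)\cos(\sqrt{\kappa}u)+\sin(\sqrt{\kappa}a)\cos C\,\sin(\sqrt{\kappa}u)$ solves $\bar h''+\kappa\bar h=0$ with the same initial data, and by the spherical law of cosines $\bar h(b)=\cos(\sqrt{\kappa}a)\cos(\sqrt{\kappa}b)+\sin(\sqrt{\kappa}a)\sin(\sqrt{\kappa}b)\cos C=\cos(\sqrt{\kappa}c')$. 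Comparing $g=h-\bar h$ with the positive solution $\psi(u)=\sin(\sqrt{\kappa}u)$ of $\psi''+\kappa\psi=0$: one has $(g'\psi-g\psi')'=(h''+\kappa h)\psi\le 0$ on $(0,b]$ and $(g'\psi-g\psi')(0)=0$, so $g'\psi-g\psi'\le 0$, hence $(g/\psi)'\le 0$ on $(0,b]$ while $g/\psi\to 0$ as $u\to 0^+$; therefore $g\le 0$, i.e. $h\le\bar h$, on $[0,b]$. Evaluating at $u=b$ gives $\cos(\sqrt{\kappa}c)\le\cos(\sqrt{\kappa}c')$, and since $\sqrt{\kappa}c\in[0,\pi)$ (as $c\le a+b<\pi/\sqrt{\kappa}$) and $\sqrt{\kappa}c'\in[0,\pi]$, monotonicity of $\cos$ on $[0,\pi]$ gives $c\ge c'$.

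The main obstacle is essentially bookkeeping: one must verify that the whole edge $\gamma$ stays off $\Cut(r)$ and strictly within the injectivity radius (and within distance $\pi/\sqrt{\kappa}$ of $r$), which is exactly what the smallness hypothesis on $r_0$ buys, so that $\rho$ is genuinely $C^2$ and the Hessian comparison applies; and one must be careful with signs when passing from the Hessian inequality to $h''+\kappa h\le 0$, since that inequality is multiplied through by the negative quantity $-\sqrt{\kappa}\sin(\sqrt{\kappa}\rho)$ and the direction flips. The few degenerate configurations should be dispatched at the very start.
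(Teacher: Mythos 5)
Your proof is correct, and it takes a genuinely different route from the paper. The paper disposes of this lemma by citation: it invokes Corollary 1.35 of Cheeger--Ebin (the Rauch-based hinge/Toponogov-type comparison, valid once $a,b$ are small enough that everything happens inside the injectivity radius) and then simply evaluates $c'$ via the spherical law of cosines. You instead reprove that local hinge comparison from scratch: you restrict the distance function $r_r=d(r,\cdot)$ to the edge $\gamma$ from $p$ to $q$, identify $\rho'(0)=-\cos C$ by first variation, invoke the Hessian comparison under the upper bound $\mathrm{sec}\le\kappa$ (the companion of the Petersen comparison the paper uses for lower curvature bounds, and equally standard) to get $\rho''\ge\sqrt{\kappa}\cot(\sqrt{\kappa}\rho)(1-(\rho')^2)$, and then run a Sturm/Wronskian comparison on $h=\cos(\sqrt{\kappa}\rho)$ against the explicit solution $\bar h$ with the same initial data, which by construction encodes the spherical law of cosines. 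I checked the computations: the substitution does give $h''+\kappa h\le 0$ (the sign flip from multiplying by $-\sqrt{\kappa}\sin(\sqrt{\kappa}\rho)<0$ is handled correctly, and the argument does not even need the sign of $1-(\rho')^2$), the Wronskian argument with $\psi=\sin(\sqrt{\kappa}u)$ is valid since $b<\pi/\sqrt{\kappa}$, and the final monotonicity step uses $c\le a+b<\pi/\sqrt{\kappa}$, which your smallness of $r_0$ guarantees. The cut-locus bookkeeping is also right: $d(r,\gamma(u))\le a+b<\inj(\cM)\le\inj(r)$ keeps $\gamma$ off $\Cut(r)$, and $\gamma(u)=r$ can only occur in the degenerate case $C=0$ (inside the injectivity radius the minimizing geodesic $pr$ is unique, so it would coincide with an initial piece of $\gamma$), which you have excluded; for completeness, the excluded cases are indeed immediate ($C=0$ via the triangle inequality $c\ge|a-b|=c'$, and $C=\pi$ because the two edges concatenate into a geodesic of length $a+b<\inj(\cM)$, hence minimizing, giving $c=a+b=c'$). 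What the paper's route buys is brevity; what yours buys is a self-contained, quantitative argument that makes explicit exactly where the hypotheses (positive injectivity radius, $a,b\le r_0$, $r_0$ small relative to $\inj(\cM)$ and $\pi/\sqrt{\kappa}$) enter, and it would adapt directly if one wanted an analogous statement with the Hessian bound replaced by a weaker integral curvature condition.
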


\begin{proof}
    Let $m$ be the dimension of $\cM$. 
    Since the curvatures of $\cM$ are upper bounded by $\kappa$ and $\cM$ has a positive injectivity radius, by Corollary 1.35 in \cite{cheeger1975comparison}, we can show that when $a,b$ are sufficiently small, the length $c$ is larger than $c'$, where $c'$ is given by the following rule:
    \begin{center}
        $a,b,c'$ are the  edge lengths of a geodesic triangle in an $m$-dimensional sphere with sectional curvature $\kappa$, and the angle opposite side $c'$ is still equal to $C$.
    \end{center} 
    By the spherical law of cosines, we have
    \$
    \cos(\sqrt{\kappa}c')=\cos(\sqrt{\kappa}a)\cos(\sqrt{\kappa}b)+\sin(\sqrt{\kappa}a)\sin(\sqrt{\kappa}b)\cos(C),
    \$
    which proves the lemma.
\end{proof}

\section{Technical lemmas for Section \ref{sec:5}}

This section collects technical results used in Section \ref{sec:5}. Lemmas related to Section \ref{sec:5.1} will be given in Section \ref{sec:d1}, and lemmas related to Section \ref{sec:5.2} will be provided in Section \ref{sec:d2}.
%, we present technical lemmas used in Section \ref{sec:5.2}. 

\subsection{Simply connected noncompact Riemannian symmetric spaces}\label{sec:d1}

In this section, we shall focus on the case of a simply connected noncompact Riemannian symmetric space. We will provide several geometrical lemmas in Section \ref{sec:d1.1}, and conduct statistical analysis in Section \ref{sec:d1.2}.

\subsubsection{Geometric lemmas}\label{sec:d1.1}

In this section, we present several geometric lemmas on a simply connected noncompact Riemannian symmetric space $(\cM,g^{\cM})$. In Lemma \ref{lma:d11}, we consider a geodesic $\alpha(t)$ in $\cM$ with a constant compact component, and demonstrate the smoothness of the function $\varrho(t)=d_g^2(x,\alpha(t))$ 
 at $t\in\RR$ for any fixed $x\in\cM$. 
%We will also derive the expressions of its first two order derivatives. 

\begin{lemma}\label{lma:d11}
    Let $(\cM,g^{\cM})$ be a simply connected noncompact Riemannian symmetric space with the decomposition $\cM=\cM_H\times\cM_C$, where $\cM_H$ and $\cM_C$ are the Hadamard and compact components of $\cM$, respectively. Let 
    $\alpha(t)=(\alpha^H(t),\alpha^C)$ be a geodesic in $\cM$ with a constant compact component. Then given any $x\in\cM$, the function $\varrho(t)=d_g^2(x,\alpha(t))$ is smooth at any $t\in\RR$. 
    %Then given any 
    %\$
    %x=(x^H,x^C)\in(\cM_H-\Img(\alpha^H))\times\cM_C
    %\$ 
    %with $\Img(\alpha^H)=\{\alpha^H(t)\mid t\in\RR\}$, the function $d_g(x,\alpha(t))$ is positive and smooth at all $t\in\RR$. In addition, the first and second-order derivatives of $d_g(x,\alpha(t))$ can be derived as follows:
    %\#
    %\frac{d}{dt}d_g(x,\alpha(t))=\ &\frac{d_H(x^H,\alpha^H(t))}{d_g(x,\alpha(t))}\cdot\frac{d}{dt}d_H(x^H,\alpha^H(t)),\label{equ:grad-d}\\
    %\frac{d^2}{dt^2}d_g(x,\alpha(t))=\ &\frac{d_H(x^H,\alpha^H(t))}{d_g(x,\alpha(t))}\cdot\frac{d^2}{dt^2}d_H(x^H,\alpha^H(t))+\frac{d_C^2(x^C,\alpha^C)}{d_g^3(x,\alpha(t))} \left(\frac{d}{dt}d_H(x^H,\alpha^H(t))\right)^2
    %,\label{equ:hess-d}
    %\#
    %where $d_H$ and $d_C$ denote the geodesic distances on $\cM_H$ and $\cM_C$, respectively. 
\end{lemma}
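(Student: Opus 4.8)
The plan is to reduce the claim to the well-known smoothness of the squared distance function on a Hadamard manifold, using the product structure of $\cM$. First I would write $x=(x^H,x^C)$ according to the decomposition $\cM=\cM_H\times\cM_C$, and recall that for a product of Riemannian manifolds the squared geodesic distance splits additively:
\[
d_g^2\big((x^H,x^C),(y^H,y^C)\big)=d_{g_H}^2(x^H,y^H)+d_{g_C}^2(x^C,y^C).
\]
Applying this to $y=\alpha(t)=(\alpha^H(t),\alpha^C)$, whose compact component is the constant $\alpha^C$, gives
\[
\varrho(t)=d_g^2(x,\alpha(t))=d_{g_H}^2(x^H,\alpha^H(t))+d_{g_C}^2(x^C,\alpha^C),
\]
so the second term is a constant in $t$ and the whole $t$-dependence sits in the Hadamard factor. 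Hence it suffices to prove that $t\mapsto d_{g_H}^2(x^H,\alpha^H(t))$ is smooth on $\RR$.

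Next I would invoke the fact that a Hadamard manifold has empty cut locus (as noted in the excerpt, Hadamard Riemannian symmetric spaces are diffeomorphic to Euclidean spaces and have empty cut loci), so for the fixed point $x^H\in\cM_H$ the distance function $r_{x^H}(\cdot)=d_{g_H}(x^H,\cdot)$ is smooth everywhere on $\cM_H\setminus\{x^H\}$, and the \emph{squared} distance function $y^H\mapsto d_{g_H}^2(x^H,y^H)=\tfrac12\|\Log_{x^H}y^H\|^2$ is smooth on all of $\cM_H$, including at $x^H$ itself, because $\Log_{x^H}=\Exp_{x^H}^{-1}$ is a global diffeomorphism onto $T_{x^H}\cM_H$ and the Euclidean squared norm is smooth. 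Then, since $\alpha^H(t)$ is a geodesic, the map $t\mapsto\alpha^H(t)$ is smooth from $\RR$ into $\cM_H$ (geodesics extend indefinitely on a complete manifold by Hopf--Rinow, and $\cM_H$ is complete), and $\varrho(t)$ is the composition of this smooth curve with the smooth function $d_{g_H}^2(x^H,\cdot)$ plus a constant; composition of smooth maps is smooth, which yields the claim.

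The only genuinely delicate point—the place where the hypothesis that the compact component of $\alpha(t)$ is constant is actually used—is the behavior of the distance function across the cut locus. If $\alpha(t)$ had a non-trivial compact component, then $d_{g_C}^2(x^C,\alpha^C(t))$ would generically fail to be smooth at the $t$-values where $\alpha^C(t)$ crosses $\Cut(x^C)$, and the lemma would be false; freezing the compact component avoids this entirely, and on the Hadamard side there is no cut locus at all. So the "main obstacle" is really just to record carefully why squaring repairs the mild non-smoothness of the distance function at the pole $x^H$ in the Hadamard factor (this is the standard $\tfrac12\|\Log_{x^H}(\cdot)\|^2$ argument), and why the product distance splits as stated; neither step is hard, but both should be stated explicitly since the subsequent lemmas on $I(t)$ will differentiate $\varrho(t)$ twice and need this smoothness globally in $t$.
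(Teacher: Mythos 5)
Your proposal is correct and follows essentially the same route as the paper: split the squared distance additively over the product $\cM_H\times\cM_C$, observe that the compact term is constant since $\alpha^C$ is fixed, and conclude from the smoothness of the squared distance function on the Hadamard factor (empty cut locus) composed with the smooth curve $\alpha^H(t)$. The only slip is cosmetic: $d_{g_H}^2(x^H,y^H)=\|\Log_{x^H}y^H\|^2$, not $\tfrac12\|\Log_{x^H}y^H\|^2$, which does not affect the smoothness argument.
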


\begin{proof}
    We denote each point $x\in\cM$ by $(x^H,x^C)$, where $x^H$ and $x^C$ are the Hadamard and compact components of $x$, respectively. For all pairs $x,y\in\cM$, we have
    \#\label{equ:d1.1}
    d_g^2(x,y)=d_H^2(x^H,y^H)+d_C^2(x^C,y^C),
    \#
    where $d_H$ and $d_C$ denote the geodesic distances on $\cM_H$ and $\cM_C$, respectively. Consider a  geodesic $\alpha(t)=(\alpha^H(t),\alpha^C)$ in $\cM$ with a constant compact component, where $\alpha^H(t)$ is a geodesic in $\cM_H$ and $\alpha^C$ is a constant point in $\cM_C$. Then for any $x=(x^H,x^C)\in\cM$, we have
    \$
    %\varrho_0(t)&=d_g(x,\alpha(t))=\sqrt{d_H^2(x^H,\alpha^H(t))+d_C^2(x^C,\alpha^C)},\\
    \varrho(t)&=d_g^2(x,\alpha(t))=d_H^2(x^H,\alpha^H(t))+d_C^2(x^C,\alpha^C),
    \$
    where we use the equality \eqref{equ:d1.1}.
    Since $\cM^H$ is a Hadamard manifold, the squared distance function $ d_H^2(x^H,y^H)$ is smooth at all  $y^H\in\cM_H$. Thus, as a consequence of composition,  the function $\varrho(t)$ is smooth at any $t\in\RR$. 
    %In particular, if $x^H\in \cM_H-\Img(\alpha^H)$, then the function $d_H^2(x^H,\alpha^H(t))$ is positive and smooth at all $t\in\RR$. As a consequence of composition, if $x\in(\cM_H-\Img(\alpha^H))\times\cM_C$, then the function $d_g(x,\alpha(t))$ is smooth at all $t\in\RR$. In addition, we can compute the derivatives of the function $d_g(x,\alpha(t))$  by the chain rule. 
\end{proof}

In Lemma \ref{lma:d12}, we consider a unit-speed geodesic $\alpha(t)$ in $\cM$ with a constant compact component. We will investigate the upper bounds on the absolute values of the first two order derivatives of the function $\varrho(t)=d_g^2(x,\alpha(t))$ for any fixed $x\in\cM$.

\begin{lemma}\label{lma:d12}
    Suppose $\cM=\cM_H\times\cM_C$ is a simply connected noncompact Riemannian symmetric space, where $\cM_H$ and $\cM_C$ are the Hadamard and compact components of $\cM$, respectively. Let $\alpha(t)=(\alpha^H(t),\alpha^C)$ be a unit-speed geodesic in $\cM$ with a constant compact component. Consider the function $\varrho(t)=d_g^2(x,\alpha(t))$ for some $x\in\cM$. Then the following upper bounds hold:
    \#
    \left|\varrho'(t)\right|&\leq 2\cdot d_g(x,\alpha(t)),\label{equ:d1.3} \\
    \left|\varrho''(t)\right|&\leq 4\cdot d_g(x,\alpha(t))\cdot \frac{\sn_{\kappa_{\min}}'(d_g(x,\alpha(t)))}{\sn_{\kappa_{\min}}(d_g(x,\alpha(t)))}.\label{equ:d1.4}
    \#
    %Then given any $x\in\cM$, 
    %\$
    %x=(x^H,x^C)\in(\cM_H-\Img(\alpha^H))\times\cM_C
    %\$ 
    %with $\Img(\alpha^H)=\{\alpha^H(t)\mid t\in\RR\}$, we have the following bounds on the absolute values of the first and second-order derivatives of $d_g(x,\alpha(t))$:
    %\#
    %\left|\frac{d}{dt}d_g(x,\alpha(t))\right|&\leq 1,\label{equ:d1.3}\\
    %\left|\frac{d^2}{dt^2}d_g(x,\alpha(t))\right|&\leq 2\cdot\frac{\sn_{\kappa_{\min}}'(d_g(x,\alpha(t)))}{\sn_{\kappa_{\min}}(d_g(x,\alpha(t)))},\label{equ:d1.4}
    %\#
    where $\kappa_{\min}<0$ is a lower bound on the sectional curvatures of $\cM$ and $\sn_{\kappa_{\min}}(r)$ is given by \eqref{equ:sn}.
\end{lemma}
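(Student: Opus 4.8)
The plan is to reduce everything to the Hadamard component, where the squared distance function is smooth and globally convex, and then apply the Hessian comparison theorem. Writing $x=(x^H,x^C)$ and $\alpha(t)=(\alpha^H(t),\alpha^C)$, the product metric gives $\varrho(t)=d_H^2(x^H,\alpha^H(t))+d_C^2(x^C,\alpha^C)$, so the compact component contributes only a constant and $\varrho'(t)=\frac{d}{dt}d_H^2(x^H,\alpha^H(t))$, $\varrho''(t)=\frac{d^2}{dt^2}d_H^2(x^H,\alpha^H(t))$. Thus it suffices to bound the first two derivatives of $t\mapsto d_H^2(x^H,\alpha^H(t))$ along the unit-speed geodesic $\alpha^H(t)$ in the Hadamard manifold $\cM_H$; note $\alpha^H(t)$ has speed at most $1$ (in fact equal to $1$ unless the geodesic is entirely in the compact factor, but since $\alpha$ is unit-speed with constant compact component, $\alpha^H$ is unit-speed, which only helps).

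First I would recall the standard first-variation identity: if $r(y)=d_H(x^H,y)$ is the distance to $x^H$, then for $y\neq x^H$ the function $r$ is smooth (empty cut locus on a Hadamard manifold), $\|\grad r\|=1$, and $\frac12\grad(r^2)=r\,\grad r$. Hence $\varrho'(t)=2\,r(\alpha^H(t))\,\langle \grad r,\dot\alpha^H(t)\rangle$, and by Cauchy--Schwarz together with $\|\grad r\|=1=\|\dot\alpha^H(t)\|$ we get $|\varrho'(t)|\le 2\,r(\alpha^H(t))=2\,d_g(x,\alpha(t))$ (using $d_H(x^H,\alpha^H(t))\le d_g(x,\alpha(t))$), which is \eqref{equ:d1.3}. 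For the second derivative, differentiate once more: $\varrho''(t)=\Hess(r^2)(\dot\alpha^H,\dot\alpha^H)$ evaluated at $\alpha^H(t)$, since $\alpha^H$ is a geodesic so the acceleration term vanishes. Using $\Hess(r^2)=2\,dr\otimes dr+2r\,\Hess r$ and $\|\dot\alpha^H\|=1$, we obtain $|\varrho''(t)|\le 2+2\,r\,\|\Hess r\|_{\mathrm{op}}$ in the directions orthogonal to $\grad r$ (the radial direction contributes $\Hess r(\partial_r,\partial_r)=0$).

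The key step is then the Hessian comparison theorem: since the sectional curvatures of $\cM_H$ are bounded below by $\kappa_{\min}<0$, the Hessian of $r$ in directions orthogonal to the radial direction satisfies $\Hess r\le \frac{\sn_{\kappa_{\min}}'(r)}{\sn_{\kappa_{\min}}(r)}\,\big(g-dr\otimes dr\big)$ (this is the lower-curvature-bound side of the comparison, e.g.\ from the same chapter of \citet{petersen2006riemannian} that supplies Theorem \ref{thm:2.3}), while $\Hess r\ge 0$ on a Hadamard manifold. Plugging this in gives $\varrho''(t)\le 2 + 2r\cdot\frac{\sn_{\kappa_{\min}}'(r)}{\sn_{\kappa_{\min}}(r)}$; since $\kappa_{\min}<0$ one checks $\frac{\sn_{\kappa_{\min}}'(r)}{\sn_{\kappa_{\min}}(r)}=\sqrt{-\kappa_{\min}}\coth(\sqrt{-\kappa_{\min}}\,r)\ge \frac1r$, so $2\le 2r\cdot\frac{\sn_{\kappa_{\min}}'(r)}{\sn_{\kappa_{\min}}(r)}$ and therefore $|\varrho''(t)|\le 4r\cdot\frac{\sn_{\kappa_{\min}}'(r)}{\sn_{\kappa_{\min}}(r)}$ with $r=d_H(x^H,\alpha^H(t))\le d_g(x,\alpha(t))$; one last monotonicity check (that $r\mapsto r\,\sn_{\kappa_{\min}}'(r)/\sn_{\kappa_{\min}}(r)$ is nondecreasing) upgrades this to the claimed bound \eqref{equ:d1.4} in terms of $d_g(x,\alpha(t))$.

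The main obstacle is bookkeeping at $x^H=\alpha^H(t)$, where $r$ fails to be smooth: there $\varrho(t)$ still is smooth by Lemma \ref{lma:d11}, and one handles this exceptional parameter value by a limiting argument (the bounds \eqref{equ:d1.3}--\eqref{equ:d1.4} hold on the dense open set where $r>0$ and pass to the limit by continuity, noting $\varrho'(t)=0$ and $\varrho''(t)=2$ at such a point, which is consistent since $\lim_{r\to0^+} 4r\,\sn_{\kappa_{\min}}'(r)/\sn_{\kappa_{\min}}(r)=4\ge 2$). I should also double-check the orientation of the Hessian comparison inequality so that the lower curvature bound $\kappa_{\min}$ (not an upper bound) is what controls $\Hess r$ from above — this is the one place where getting the direction wrong would break the argument.
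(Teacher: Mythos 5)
Your proposal is correct and follows essentially the same route as the paper's proof: reduce to the Hadamard factor via the product metric, bound the first derivative by Cauchy--Schwarz, and bound the second derivative by the Hessian comparison theorem for the lower curvature bound $\kappa_{\min}$ together with the inequalities $1/r\leq \sn_{\kappa_{\min}}'(r)/\sn_{\kappa_{\min}}(r)$ and the monotonicity of $r\,\sn_{\kappa_{\min}}'(r)/\sn_{\kappa_{\min}}(r)$. The only cosmetic differences are that you differentiate $d_H^2$ via $\Hess(r^2)=2\,dr\otimes dr+2r\,\Hess r$ while the paper applies the chain rule to $d_H$ and its Hessian, and that you treat the degenerate case $x^H\in\Img(\alpha^H)$ by a continuity remark where the paper runs it as an explicit separate case with $\varrho(t)=(t-t_0)^2+d_C^2(x^C,\alpha^C)$; both handle it adequately.
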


\begin{proof}
    By Lemma \ref{lma:d11}, we know  $\varrho(t)$ is smooth at any $t\in\RR$. To derive the inequalities \eqref{equ:d1.3} and \eqref{equ:d1.4}, we will consider the following two cases separately.
    \begin{itemize}
        \item {\bf Case 1. $x=(\alpha^H(t_0),x^C)$ for some $t_0\in\RR$ and $x^C\in\cM_C$.} In this case, the function $\varrho(t)$ can be expressed as 
    \$
    \varrho(t)=d_H^2(\alpha^H(t_0),\alpha^H(t))+d_C^2(x^C,\alpha^C),
    \$
    where $d_H$ and $d_C$ denote the geodesic distances in $\cM_H$ and $\cM_C$, respectively. Since $\cM_H$ is a Hadamard manifold, there exists a unique geodesic connecting $\alpha^H(t_0)$ and $\alpha^H(t)$. Moreover, since $\alpha$ is a unit-speed geodesic in $\cM$ with a constant compact component, $\alpha^H(t)$ is a unit-speed geodesic in $\cM_H$ and $d_H(\alpha^H(t_0),\alpha^H(t))=|t-t_0|$. It implies that
    \$
    \varrho(t)=(t-t_0)^2+d_C^2(x^C,\alpha^C).
    \$
     By taking the derivatives, we can show that $|\varrho'(t)|=2|t-t_0|\leq 2\cdot d_g(x,\alpha(t))$, and $\varrho''(t)=2$ for any $t\in\RR$. This implies \eqref{equ:d1.4} since
     \$
     |\varrho''(t)|=2\leq 2\cdot d_g(x,\alpha(t))\cdot \frac{\sn_{\kappa_{\min}}'(d_g(x,\alpha(t)))}{\sn_{\kappa_{\min}}(d_g(x,\alpha(t)))},
     \$
     where $\kappa_{\min}<0$ is a lower bound on the sectional curvatures of $\cM$ and $\sn_{\kappa_{\min}}(\cdot)$ is given by \eqref{equ:sn}. 

        \item  {\bf Case 2. $x=(x^H,x^C)\in\cM_H\times\cM_C$ where $x^H\notin\Img(\alpha^H)\coloneqq\{\alpha^H(t)\mid t\in\RR\}$.} In this case, the function $\varrho(t)$ can be expressed as 
        \$
        \varrho(t)=d_H^2(x^H,\alpha^H(t))+d_C^2(x^C,\alpha^C),
        \$
        where $d_H$ and $d_C$ denote the geodesic distances in $\cM_H$ and $\cM_C$, respectively. Since $\cM_H$ is a Hadamard manifold and $x_H\notin\Img(\alpha^H)$, the function $d_H(x^H,\alpha^H(t))$ is positive and smooth at any $t\in\RR$. Additionally, by calculation, the first two order derivatives of $\varrho(t)$ are given by
        \$
        \varrho'(t)&=2\cdot d_H(x^H,\alpha^H(t))\cdot\frac{d}{dt}d_H(x^H,\alpha^H(t)),\\
        \varrho''(t)&=2\cdot\left|\frac{d}{dt} d_H(x^H,\alpha^H(t))\right|^2+2\cdot d_H(x^H,\alpha^H(t))\cdot\frac{d^2}{dt^2}d_H(x^H,\alpha^H(t)).
        \$
        Taking the absolute values of $\varrho'(t)$ and $\varrho''(t)$ and using the triangle inequality, we obtain that
        \#
        |\varrho'(t)|&\leq2\cdot d_H(x^H,\alpha^H(t))\cdot \left|\frac{d}{dt} d_H(x^H,\alpha^H(t))\right|,\label{equ:varrho-prime-1}\\ 
        \label{equ:varrho-1}
        |\varrho''(t)|&\leq 2\cdot\left|\frac{d}{dt} d_H(x^H,\alpha^H(t))\right|^2+2\cdot  d_H(x^H,\alpha^H(t)) \cdot\left|\frac{d^2}{dt^2}d_H(x^H,\alpha^H(t))\right|.
        \#
        Since $\alpha(t)$ is a unit-speed geodesic in $\cM$ with a constant compact component, its Hadamard component $\alpha^H(t)$ is also a unit-speed geodesic in $\cM_H$. It implies that
        \$
        \left|\frac{d}{dt} d_H(x^H,\alpha^H(t))\right|\leq 1.
        \$
        Substituting this into \eqref{equ:varrho-prime-1} and \eqref{equ:varrho-1}, we obtain
        \#
        |\varrho'(t)|&\leq 2\cdot d_H(x^H,\alpha^H(t))\overset{({\rm i})}{\leq} 2d_g(x,\alpha(t)),\notag\\ 
        |\varrho''(t)|&\leq 2+2\cdot d_H(x^H,\alpha^H(t))  \cdot \left|\frac{d^2}{dt^2}d_H(x^H,\alpha^H(t))\right|.\label{equ:varrho-2}
        \#
        where (i) uses the inequality $d_H(x^H,\alpha^H(t))\leq d_g(x,\alpha(t))$. 
        %It suffices to analyze the term $\left|\frac{d^2}{dt^2}d_H(x^H,\alpha^H(t))\right|$. 
        To further upper bound the right hand side of \eqref{equ:varrho-2}, we  calculate 
    \$
    \frac{d^2}{dt^2}d_H(x^H,\alpha^H(t))=\Hess_{\alpha^H(t)} \rho(\dot{\alpha}^{H}(t),\dot{\alpha}^{H}(t)),
    \$
    where 
    \$
    \Hess_{\alpha^H}\rho:T_{\alpha^H}\cM_H\times T_{\alpha^H}\cM_H\to\RR
    \$ 
    is the Hessian of the function $\rho(\alpha^H)=d_H(x^H,\alpha^H)$, and $\dot{\alpha}^H(t)\in T_{\alpha^H(t)}\cM_H$ is the differential of $\alpha^H(t)$. 
    By employing the Hessian comparison theorem, Theorem 27, Chapter 6 in \cite{petersen2006riemannian}, we have 
    \#
    &\left|\Hess_{\alpha^H(t)} \rho(\dot{\alpha}^{H}(t),\dot{\alpha}^{H}(t))\right|\notag\\
    \leq\ & \max\left\{\frac{1}{d_H(x^H,\alpha^H(t))},\frac{\sn_{\kappa_{\min}}'(d_H(x^H,\alpha^H(t)))}{\sn_{\kappa_{\min}}(d_H(x^H,\alpha^H(t)))}\right\}\cdot |\dot{\alpha}^H(t)|^2\notag\\
    {\leq} \ &\frac{\sn_{\kappa_{\min}}'(d_H(x^H,\alpha^H(t)))}{\sn_{\kappa_{\min}}(d_H(x^H,\alpha^H(t)))},\label{equ:d115}
    \#
    where $\kappa_{\min}$ is a lower bound on the sectional curvatures of $\cM$, $\sn_{\kappa_{\min}}(\cdot)$ is defined by \eqref{equ:sn}, and the second inequality follows from $|\dot{\alpha}^H(t)|=1$ and 
    \#\label{equ:d118}
    \frac{1}{r}\leq \frac{\sn_{\kappa_{\min}}'(r)}{\sn_{\kappa_{\min}}(r)}.
    \# 
    Consequently, by substituting \eqref{equ:d115} into \eqref{equ:varrho-2}, we obtain
    \$
    |\varrho''(t)|&\leq 2+2\cdot d_H(x^H,\alpha^H(t)) \cdot \frac{\sn_{\kappa_{\min}}'(d_H(x^H,\alpha^H(t)))}{\sn_{\kappa_{\min}}(d_H(x^H,\alpha^H(t)))}\\
    &\leq 4\cdot d_H(x^H,\alpha^H(t)) \cdot \frac{\sn_{\kappa_{\min}}'(d_H(x^H,\alpha^H(t)))}{\sn_{\kappa_{\min}}(d_H(x^H,\alpha^H(t)))}\\
    &\leq 4\cdot d_g(x,\alpha(t))\cdot \frac{\sn_{\kappa_{\min}}'(d_g(x,\alpha(t)))}{\sn_{\kappa_{\min}}(d_g(x,\alpha(t)))},
    \$
    where the second inequality uses \eqref{equ:d118} and the third inequality uses the inequality 
    \$
    d_g(x,\alpha(t))\geq d_H(x^H,\alpha^H(t))
    \$
    and the fact that
    \$
    r\cdot \sn_{\kappa_{\min}}'(r)/\sn_{\kappa_{\min}}(r)
    \$
    is an increasing function. 
    \end{itemize}
    The proof is complete by combining the above two cases. 
\end{proof}

In Lemma \ref{lma:d-varrho_phi}, we will consider the case when $\dim(\cM)>1$. Let $\alpha(t)$ be a unit-speed geodesic in $\cM$ with a constant compact component. We will examine the upper bound on the absolute values of the first two order derivatives of the function $\varrho_0(t)=d_g(x,\alpha(t))$ for any fixed $x\in\cM-\Img(\alpha)$, where
\#\label{equ:img-alpha}
\Img(\alpha)=\{\alpha(t)\mid t\in\RR\}
\#
is of measure zero in $\cM$.

\begin{lemma}\label{lma:d-varrho_phi}
    Suppose $\cM$ is a simply connected noncompact Riemannian symmetric space with $\dim(\cM)>1$. Let $\alpha(t)$ be a unit-speed geodesic in $\cM$ with a constant compact component. Let $\varrho_0(t)=d_g(x,\alpha(t))$ be the function  for some $x\in\cM-\Img(\alpha)$, where $\Img(\alpha)$ is given by \eqref{equ:img-alpha}. Then $\varrho_0(t)$ is smooth at any $t\in\RR$. In addition, the following upper bounds hold:
    \#
    |\varrho_0'(t)|&\leq 1,\\
    |\varrho_0''(t)|&\leq 3\cdot \frac{\sn_{\kappa_{\min}}'(d_g(x,\alpha(t)))}{\sn_{\kappa_{\min}}(d_g(x,\alpha(t)))},
    \#
    where $\kappa_{\min}<0$ is a lower bound on the sectional curvatures of $\cM$ and $\sn_{\kappa_{\min}}(\cdot)$ is given by \eqref{equ:sn}. 
\end{lemma}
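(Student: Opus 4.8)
\textbf{Proof plan for Lemma \ref{lma:d-varrho_phi}.}
The plan is to follow the same strategy as in Lemma \ref{lma:d12}, but now tracking the distance function $\varrho_0(t)=d_g(x,\alpha(t))$ directly rather than its square, and exploiting that $x\notin\Img(\alpha)$ and $\dim(\cM)>1$. First I would use the product decomposition $\cM=\cM_H\times\cM_C$ from Proposition \ref{thm:decomposition} and write $x=(x^H,x^C)$, $\alpha(t)=(\alpha^H(t),\alpha^C)$, so that $\varrho_0(t)^2=d_H^2(x^H,\alpha^H(t))+d_C^2(x^C,\alpha^C)$. The key point is that $x\notin\Img(\alpha)$ forces $\varrho_0(t)>0$ for all $t$; indeed if $x^H\notin\Img(\alpha^H)$ then $d_H(x^H,\alpha^H(t))>0$ and is smooth in $t$ (since $\cM_H$ is Hadamard, its squared distance is globally smooth and the distance is smooth away from the base point), while if $x^H\in\Img(\alpha^H)$ then necessarily $x^C\neq\alpha^C$, giving a strictly positive constant contribution $d_C^2(x^C,\alpha^C)$, and $\varrho_0(t)^2$ is a sum of a smooth function and a positive constant, hence $\varrho_0(t)=\sqrt{\varrho_0(t)^2}$ is smooth since its argument never vanishes. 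This establishes the smoothness claim.

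Next I would bound $|\varrho_0'(t)|$. Since $\alpha$ is unit-speed, the gradient-of-distance estimate $\lvert \tfrac{d}{dt} d_g(x,\alpha(t))\rvert = \lvert \langle \grad\, r_x, \dot\alpha(t)\rangle\rvert \le |\dot\alpha(t)| = 1$ holds wherever $r_x=d_g(x,\cdot)$ is smooth, which by the previous paragraph is everywhere on the $t$-line; this gives $|\varrho_0'(t)|\le 1$ immediately. For the second derivative, I would compute $\varrho_0''(t)=\Hess_{\alpha(t)} r_x(\dot\alpha(t),\dot\alpha(t))$ and invoke the Hessian comparison theorem (Theorem 27, Chapter 6 of \citet{petersen2006riemannian}, already used in the proof of Lemma \ref{lma:d12}) with the lower curvature bound $\kappa_{\min}<0$. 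This yields
\$
|\varrho_0''(t)| \le \max\left\{\frac{1}{d_g(x,\alpha(t))}, \frac{\sn_{\kappa_{\min}}'(d_g(x,\alpha(t)))}{\sn_{\kappa_{\min}}(d_g(x,\alpha(t)))}\right\}.
\$
Using the inequality $\frac{1}{r}\le\frac{\sn_{\kappa_{\min}}'(r)}{\sn_{\kappa_{\min}}(r)}$ (equation \eqref{equ:d118} in the proof of Lemma \ref{lma:d12}), the maximum is the second term, so $|\varrho_0''(t)|\le \frac{\sn_{\kappa_{\min}}'(d_g(x,\alpha(t)))}{\sn_{\kappa_{\min}}(d_g(x,\alpha(t)))}$; the factor $3$ in the statement is a safe over-estimate that presumably absorbs the contribution when $x^H\in\Img(\alpha^H)$, where $r_x$ on $\cM$ restricted to $\alpha(t)$ is a square-root of a parabola-plus-constant and one must differentiate $\sqrt{(t-t_0)^2+c}$ twice, giving $c/((t-t_0)^2+c)^{3/2}\le 1/\sqrt{c}\le 1/\varrho_0(t)$, again dominated by the same ratio. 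I would simply verify this degenerate case by direct computation and note it is bounded by $\tfrac{1}{\varrho_0(t)}\le\tfrac{\sn_{\kappa_{\min}}'(\varrho_0(t))}{\sn_{\kappa_{\min}}(\varrho_0(t))}$.

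The main obstacle is the case split at points where the Hadamard component $x^H$ lies on the geodesic $\alpha^H$: there $d_H(x^H,\alpha^H(t))$ is not smooth at the single parameter value where the two meet, and one must lean on the fact that the \emph{compact} component keeps $\varrho_0$ away from zero so that $\varrho_0=\sqrt{\varrho_0^2}$ stays smooth and the composition rule for the square-root keeps the derivative bounds controlled; handling this cleanly — rather than the generic Hessian-comparison argument which tacitly assumes $r_x$ is smooth — is where care is needed. The factor $3$ (as opposed to $1$) in the stated $\varrho_0''$ bound is exactly the slack left to make this degenerate case fit under the same clean bound, so the write-up should present the generic case via Hessian comparison and then dispatch the degenerate case by the explicit $\sqrt{(t-t_0)^2+c}$ computation.
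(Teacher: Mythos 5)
There is a genuine gap in your derivative bounds. Your plan computes $\varrho_0'(t)=\langle \grad\, r_x,\dot\alpha(t)\rangle$ and $\varrho_0''(t)=\Hess_{\alpha(t)}r_x(\dot\alpha,\dot\alpha)$ and then invokes the Hessian comparison theorem for the ambient distance function $r_x=d_g(x,\cdot)$ on $\cM$. But smoothness of the curve restriction $t\mapsto\varrho_0(t)$ (which you correctly establish via the product decomposition) does \emph{not} give smoothness of $r_x$ at the points $\alpha(t)$: since $\Cut(x)=\cM_H\times\Cut_{\cM_C}(x^C)$ (the Hadamard factor has empty cut locus), the point $\alpha(t)=(\alpha^H(t),\alpha^C)$ lies in $\Cut(x)$ whenever $\alpha^C\in\Cut_{\cM_C}(x^C)$ — and then this happens for \emph{every} $t$, e.g.\ $\cM=\HH^2\times\SSS^2$ with $\alpha^C$ antipodal to $x^C$. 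In that situation $\grad\, r_x$ and $\Hess\, r_x$ at $\alpha(t)$ are simply undefined, so your identification of the "degenerate case" with $x^H\in\Img(\alpha^H)$ misses the real obstruction. Moreover, even at points where $r_x$ is $C^2$, the comparison theorem with the lower bound $\kappa_{\min}$ only controls $\Hess\, r_x$ from above; the lower bound involves $\kappa_{\max}>0$ and degenerates near the compact factor's cut locus (on a sphere, the tangential Hessian of $r$ behaves like $\cot r\to-\infty$), so the absolute-value bound $|\Hess\, r_x(\dot\alpha,\dot\alpha)|\le\max\{1/r,\ \sn_{\kappa_{\min}}'(r)/\sn_{\kappa_{\min}}(r)\}$ you want does not follow from comparison on $\cM$. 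What saves the statement is precisely that $\dot\alpha$ has no compact component, and to exploit that you must work factor-wise — which is the paper's route: write $\varrho_0=\sqrt{\varrho}$ with $\varrho(t)=d_g^2(x,\alpha(t))=d_H^2(x^H,\alpha^H(t))+d_C^2(x^C,\alpha^C)$ smooth and positive, apply the chain rule, and plug in the bounds $|\varrho'(t)|\le 2d_g(x,\alpha(t))$ and $|\varrho''(t)|\le 4d_g(x,\alpha(t))\cdot\sn_{\kappa_{\min}}'/\sn_{\kappa_{\min}}$ from Lemma \ref{lma:d12} (whose Hessian comparison is applied only on the Hadamard factor, where the distance is smooth and its Hessian nonnegative). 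This yields $|\varrho_0'|\le1$ and $|\varrho_0''|\le 2\,\sn_{\kappa_{\min}}'/\sn_{\kappa_{\min}}+1/d_g\le 3\,\sn_{\kappa_{\min}}'/\sn_{\kappa_{\min}}$, which is where the factor $3$ actually comes from — it is not slack reserved for your $x^H\in\Img(\alpha^H)$ case.

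Two smaller points: the bound $|\varrho_0'(t)|\le1$ can be rescued independently of any gradient formula, since $\varrho_0$ is $1$-Lipschitz in $t$ by the triangle inequality and you have already shown it is differentiable; and in your explicit $\sqrt{(t-t_0)^2+c}$ computation the chain $c/((t-t_0)^2+c)^{3/2}\le 1/\sqrt{c}\le 1/\varrho_0(t)$ has its last inequality reversed (the correct direct estimate is $c/((t-t_0)^2+c)^{3/2}\le 1/\varrho_0(t)$). So salvage the proof by making the chain rule through $\varrho=d_g^2$ and Lemma \ref{lma:d12} the engine of the argument, rather than the ambient Hessian comparison on $\cM$.
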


\begin{proof}
    Suppose $q(r)=\sqrt{r}$ is the square root function. Then the relationship  $\varrho_0(t)=q\circ \varrho(t)$ holds, where $\varrho(t)=d_g^2(x,\alpha(t))$. Since $\alpha(t)$ is a geodesic in $\cM$ with a constant compact component, it follows from Lemma \ref{lma:d11} that $\varrho(t)$ is smooth over $t\in\RR$. Additionally, since $x\in\cM-\Img(\alpha)$, where $\Img(\alpha)$ is given by \eqref{equ:img-alpha}, the inequality $\varrho(t)>0$ holds for any $t\in\RR$. As a consequence of composition, the function $\varrho_0(t)$ is smooth at any $t\in\RR$, since the square root function $q(\cdot)$ is smooth over $(0,\infty)$. Furthermore, by using the chain rule, we can compute its first two order derivatives as follows
    \$
    \varrho_0'(t)&=\frac{\varrho'(t)}{2 \sqrt{\varrho(t)}},\\
    \varrho_0''(t)&=\frac{\varrho''(t)}{2\sqrt{\varrho(t)}}-\frac{(\varrho'(t))^2}{4\varrho(t)\sqrt{\varrho(t)}}.
    \$
    By Lemma \ref{lma:d12}, we can establish the following upper bounds on the absolute values of the first two order derivatives of $\varrho_0(t)$:
    \$
    |\varrho_0'(t)|&\leq 1.\\
    |\varrho_0''(t)|&\leq 2\cdot \frac{\sn_{\kappa_{\min}}'(d_g(x,\alpha(t)))}{\sn_{\kappa_{\min}}(d_g(x,\alpha(t)))}+\frac{1}{d_g(x,\alpha(t))}\\
    &\overset{({\rm i})}{\leq} 3\cdot \frac{\sn_{\kappa_{\min}}'(d_g(x,\alpha(t)))}{\sn_{\kappa_{\min}}(d_g(x,\alpha(t)))}, 
    \$
    where $\kappa_{\min}<0$ is a lower bound on the sectional curvatures of $\cM$, $\sn_{\kappa_{\min}}(\cdot)$ is given by \eqref{equ:sn}, and the inequality (i) uses the inequality $1\leq r\cdot \sn_{\kappa_{\min}}'(r)/\sn_{\kappa_{\min}}(r)$ for any $r>0$. This completes the proof of the lemma. 
\end{proof}

\subsubsection{KL divergence}\label{sec:d1.2}

This section will present lemmas related to the KL divergence $D_{\KL}(P_{\alpha}\|P_{\alpha'})$, where $P_{\alpha}$ denotes the distribution $f(x;\alpha,\phi)$. In Lemma \ref{lma:d1}, we will examine the finiteness of this KL divergence under certain conditions. 

\begin{lemma}\label{lma:d1}
    Let $(\cM,g^{\cM})$ be a simply connected noncompact Riemannian symmetric space. Let $\phi$ be a function satisfying Condition \ref{cond:3.1} and  property (2) in Condition \ref{cond:5.2}. Let
    $\alpha(t)$ be a unit-speed geodesic in $\cM$. Then the KL divergence $D_{\KL}(P_{\alpha(t)}\|P_{(\alpha(s))})$ is finite for all $t,s\in\RR$,   where $P_{\alpha}$ denotes the distribution $f(x;\alpha,\phi)$. 
\end{lemma}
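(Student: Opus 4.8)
The plan is to write the KL divergence explicitly using the definition of the Riemannian radial distribution $f(x;\alpha,\phi) = e^{-\phi(d_g(x,\alpha))}/Z(\phi)$, and then bound the only potentially divergent term by using the volume comparison theorem together with property (2) of Condition \ref{cond:5.2}. Since the normalizing constant $Z(\phi)$ is the same for $P_{\alpha(t)}$ and $P_{\alpha(s)}$ by Proposition \ref{prop:3.2}, a direct computation gives
\#\label{equ:dkl-lma-d1}
D_{\KL}(P_{\alpha(t)}\|P_{\alpha(s)}) = \frac{1}{Z(\phi)}\int_{\cM}\Big(\phi(d_g(x,\alpha(s))) - \phi(d_g(x,\alpha(t)))\Big)e^{-\phi(d_g(x,\alpha(t)))}\dvol(x).
\#
The term involving $\phi(d_g(x,\alpha(t)))e^{-\phi(d_g(x,\alpha(t)))}$ is harmless: it is bounded near $\alpha(t)$ (since $\phi$ is continuous and $se^{-s}$ is bounded) and integrates to a finite value because $\phi(r)e^{-\phi(r)}\leq e^{-\phi(r)/2}\cdot\sup_{u\geq 0}ue^{-u/2}$ is dominated by an integrable function using Condition \ref{cond:3.1} and Theorem \ref{thm:2.3}. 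So the real work is showing that $\int_{\cM}\phi(d_g(x,\alpha(s)))e^{-\phi(d_g(x,\alpha(t)))}\dvol(x) < \infty$.

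\textbf{Key steps.} First I would fix $t,s$ and set $\rho := d_g(\alpha(t),\alpha(s)) \leq |t-s|$, which is finite. By the triangle inequality, $d_g(x,\alpha(s)) \leq d_g(x,\alpha(t)) + \rho$, so using that $\phi$ is increasing,
\$
\phi(d_g(x,\alpha(s))) \leq \phi(d_g(x,\alpha(t)) + \rho).
\$
Next I would pass to the polar (normal) coordinate chart centred at $\alpha(t)$, as in \eqref{equ:integral}, writing $r = d_g(x,\alpha(t))$ and using $\dvol = \lambda(r,\Theta)\,dr\,d\Theta$ with $\lambda(r,\Theta)\leq \sn_{\kappa_{\min}}^{m-1}(r)$ by Theorem \ref{thm:2.3}, where $\kappa_{\min}<0$ and $m = \dim\cM$. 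This reduces the integral to
\$
\int_{\cM}\phi(d_g(x,\alpha(s)))e^{-\phi(d_g(x,\alpha(t)))}\dvol(x) \leq \vol(\SSS^{m-1})\int_0^{\infty}\phi(r+\rho)\,e^{-\phi(r)}\sn_{\kappa_{\min}}^{m-1}(r)\,dr.
\$
Finally, for $r \geq \rho$ we have $\phi(r+\rho)\leq \phi(2r)$ by monotonicity, and for $r<\rho$ the integrand is bounded on the compact interval $[0,\rho]$; hence the integral is bounded above by a constant plus $\vol(\SSS^{m-1})\int_0^{\infty}\phi(2r)e^{-\phi(r)}\sn_{\kappa_{\min}}^{m-1}(r)\,dr$, which is finite precisely by property (2) of Condition \ref{cond:5.2}. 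Combining this with the bound on the $\phi(d_g(x,\alpha(t)))e^{-\phi(d_g(x,\alpha(t)))}$ term shows the right-hand side of \eqref{equ:dkl-lma-d1} is finite, and also nonnegative since it is a KL divergence (or one may note $\phi(d_g(x,\alpha(s)))\geq 0$ to see it directly).

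\textbf{Main obstacle.} The only slightly delicate point is the bookkeeping that splits the radial integral at $r = \rho$: one needs monotonicity of $\phi$ to replace $\phi(r+\rho)$ by $\phi(2r)$ on $\{r\geq\rho\}$ and to handle the near-origin part separately, being careful that $\phi$ may not be differentiable or even finite-valued issues do not arise here since $\phi:[0,r_\cM]\to[0,\infty)$ is continuous and hence bounded on $[0,\rho]$. I do not expect any genuine difficulty; this lemma is essentially a domination argument, and the substantive hypothesis is exactly condition (2) of Condition \ref{cond:5.2}, which was designed for this purpose. Note also that since $\cM$ is a simply connected noncompact Riemannian symmetric space, it is noncompact, so $\kappa_{\min}<0$ and $\sn_{\kappa_{\min}}$ is well-defined on all of $[0,\infty)$, making the invocation of Theorem \ref{thm:2.3} legitimate over the whole radial range.
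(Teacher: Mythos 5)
Your proposal follows essentially the same route as the paper's proof: write $D_{\KL}(P_{\alpha(t)}\|P_{\alpha(s)})=\frac{1}{Z(\phi)}\int_{\cM}\big(\phi(d_g(x,\alpha(s)))-\phi(d_g(x,\alpha(t)))\big)e^{-\phi(d_g(x,\alpha(t)))}\dvol(x)$, bound $\phi(d_g(x,\alpha(s)))\le\phi(d_g(x,\alpha(t))+|t-s|)$ via the triangle inequality and monotonicity of $\phi$, pass to polar coordinates at $\alpha(t)$ with $\lambda(r,\Theta)\le \sn_{\kappa_{\min}}^{m-1}(r)$ from Theorem \ref{thm:2.3}, split the radial integral at $|t-s|$ so that $\phi(r+|t-s|)\le\phi(2r)$ on the tail, and invoke property (2) of Condition \ref{cond:5.2}; this is exactly the chain of estimates culminating in the paper's display \eqref{equ:d1}.

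The one step you justify incorrectly is the "harmless" term $\int_{\cM}\phi(d_g(x,\alpha(t)))e^{-\phi(d_g(x,\alpha(t)))}\dvol(x)$. You dominate $\phi(r)e^{-\phi(r)}\le\big(\sup_{u\ge0}ue^{-u/2}\big)e^{-\phi(r)/2}$ and then assert that $e^{-\phi(r)/2}\sn_{\kappa_{\min}}^{m-1}(r)$ is integrable "using Condition \ref{cond:3.1} and Theorem \ref{thm:2.3}". That does not follow from the lemma's hypotheses: for $\phi(r)=\beta r$ with $\sqrt{-\kappa_{\min}}(m-1)<\beta<2\sqrt{-\kappa_{\min}}(m-1)$, both Condition \ref{cond:3.1} and property (2) of Condition \ref{cond:5.2} hold, yet $e^{-\phi(r)/2}\sn_{\kappa_{\min}}^{m-1}(r)$ grows exponentially. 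Integrability of functions of the type $e^{-\phi(\cdot/2)}\sn_{\kappa_{\min}}^{m-1}$ is a separate assumption (it appears as Condition \ref{cond:4.1}(2)) and is not in force here. The fix is immediate and is what the paper does: since $\phi$ is nonnegative and increasing, $\phi(r)\le\phi(2r)$ for all $r\ge0$ (or $\phi(r)\le\phi(r+|t-s|)$), so this term is bounded by $\vol(\SSS^{m-1})\int_0^\infty\phi(2r)e^{-\phi(r)}\sn_{\kappa_{\min}}^{m-1}(r)\,dr$, which is finite precisely by property (2) of Condition \ref{cond:5.2}. With that substitution your argument coincides with the paper's.
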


\begin{proof}
    By definition of the KL divergence and $P_{\alpha}$, we have
    \$
    D_{\KL}(P_{\alpha(t)}\|P_{\alpha(s)})=\frac{1}{Z(\phi)}\cdot\int_{\cM}(\phi(d_g(x,\alpha(s)))-\phi(d_g(x,\alpha(t))))e^{-\phi(d_g(x,\alpha(t)))}\dvol(x).
    \$
    Since $\phi$ is an increasing function, we have
    \$
    \int_{\cM}\phi(d_g(x,\alpha(t)))e^{-\phi(d_g(x,\alpha(t)))}\dvol(x)\leq \int_{\cM}\phi(d_g(x,\alpha(t))+|t-s|)e^{-\phi(d_g(x,\alpha(t)))}\dvol(x),\\
    \int_{\cM}\phi(d_g(x,\alpha(s)))e^{-\phi(d_g(x,\alpha(t)))}\dvol(x)\overset{(i)}{\leq} \int_{\cM}\phi(d_g(x,\alpha(t))+|t-s|)e^{-\phi(d_g(x,\alpha(t)))}\dvol(x),
    \$
    where (i) uses the triangle inequality $d_g(x,\alpha(s))\leq d_g(x,\alpha(t))+d_g(\alpha(s),\alpha(t))\leq d_g(x,\alpha(t))+|t-s|$. 
    Using the polar coordinate expression \eqref{equ:integral} of the integral and Theorem \ref{thm:2.3}, we can show that
    \#
    &\int_{\cM}\phi\left(d_g(x,\alpha(t))+|t-s|\right)\cdot e^{-\phi(d_g(x,\alpha(t)))}\dvol(x)\notag\\
    \leq & \left(\int_{0}^\infty \phi(r+|t-s|)e^{-\phi(r)}\sn_{\kappa_{\min}}^{m-1}(r)dr\right)\cdot \vol(\SSS^{m-1}),\notag\\
    \leq &\left( \phi(2|t-s|) \cdot\sn_{\kappa_{\min}}^{m-1}(|t-s|)\cdot{|t-s|}+\int_{|t-s|}^\infty \phi(2r)e^{-\phi(r)}\sn_{\kappa_{\min}}^{m-1}(r)dr\right)\cdot \vol(\SSS^{m-1}),\label{equ:d1}
    \#
    where $m$ is the dimension of $\cM$ and $\kappa_{\min}<0$ is the lower sectional curvature bound of $\cM$ used in Condition \ref{cond:5.2}. By property (2) in Condition \ref{cond:5.2}, we can show that the integral in  \eqref{equ:d1} is finite. As a result, the KL divergence $D_{\KL}(P_{\alpha(t)}\|P_{\alpha(s)})$ is finite.  
\end{proof}

In Lemma \ref{lma:d2}, we will consider a unit-speed geodesic $\alpha(t)$ with a constant compact component, and investigate the first-order asymptotic behavior of the KL divergence $D_{\KL}(P_{\alpha(0)}\|P_{\alpha(t)})$ as $t\to 0$. Our analysis is based on the differentiability and symmetry of a function $I(t)$, which is defined by
\$
I(t)=\int_{\cM}\phi(d_g(x,\alpha(t)))e^{-\phi(d_g(x,\alpha(0)))}\dvol(x).
\$
This $I(t)$ relates to the KL divergence in the sense that 
\$
D_{\KL}(P_{\alpha(0)}\|P_{\alpha(t)})=\frac{1}{Z(\phi)}(I(t)-I(0)),
\$
where $Z(\phi)$ is the normalizing constant related to the density $f(x;\alpha,\phi)$.

\begin{lemma}\label{lma:d2}
    Let $(\cM,g^{\cM})$ be a simply connected noncompact Riemannian symmetric space  and $\phi$ a function satisfying Condition  \ref{cond:5.2}. Let $\alpha(t)$ be a unit-speed geodesic in $\cM$ with $\alpha(0)=\alpha$ and a constant compact component. Define
    \$%#\label{equ:d10}
    I(t)=\int_{\cM}\phi(d_g(x,\alpha(t)))e^{-\phi(d_g(x,\alpha))}\dvol(x).
    \$%#
    Then $I(t)$ is differentiable at any $t\in\RR$ and 
    \#\label{equ:d2}
    I'(t)=\int_{\cM}\frac{d}{dt}\phi(d_g(x,\alpha(t)))e^{-\phi(d_g(x,\alpha))}\dvol(x).
    \#
    Moreover, $I'(0)=0$. 
\end{lemma}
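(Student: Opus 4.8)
The plan is to establish differentiability of $I(t)$ by a dominated-convergence / differentiation-under-the-integral-sign argument, and then deduce $I'(0)=0$ from the symmetry $I(t)=I(-t)$ (Proposition \ref{prop:5.1}) together with the differentiability just proved. First I would fix $x \in \cM$ and analyze the integrand $t \mapsto \phi(d_g(x,\alpha(t)))$. Writing $\varrho_0(t) = d_g(x,\alpha(t))$, the chain rule gives $\frac{d}{dt}\phi(\varrho_0(t)) = \phi'(\varrho_0(t))\,\varrho_0'(t)$ wherever both factors make sense; here I use that $\phi$ is $C^1$ on $(0,\infty)$ (Condition \ref{cond:5.2}(3)) and, crucially, that $\varrho_0(t)$ is smooth in $t$ for $x \notin \Img(\alpha)$ by Lemma \ref{lma:d-varrho_phi} (when $\dim \cM > 1$; when $\dim \cM = 1$ the geodesic $\alpha(t)$ fills a line in $\cM_H$, and one handles $d_g(x,\alpha(t))$ directly as in the $m=1$ case, using that the one-dimensional Hadamard factor is isometric to $\RR$). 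Since $\Img(\alpha)$ has measure zero in $\cM$, the pointwise derivative exists for almost every $x$, which suffices for the integral identity.

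Next I would produce an integrable dominating function for the difference quotients $\big(\phi(d_g(x,\alpha(t+h))) - \phi(d_g(x,\alpha(t)))\big)/h$ uniformly for $h$ in a bounded neighborhood of $0$ and $t$ in a bounded interval. By the mean value theorem this difference quotient equals $\frac{d}{ds}\phi(d_g(x,\alpha(s)))\big|_{s=\xi}$ for some $\xi$ near $t$, which is $\phi'(\varrho_0(\xi))\,\varrho_0'(\xi)$; using $|\varrho_0'| \le 1$ (Lemma \ref{lma:d-varrho_phi}) and the fact that $\phi$ is increasing and convex, one bounds this by $\phi'(d_g(x,\alpha(t)) + c)$ for a fixed constant $c$ depending only on the neighborhood size. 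The integrability of $x \mapsto \phi'(d_g(x,\alpha(t))+c)\,e^{-\phi(d_g(x,\alpha))}$ follows by passing to polar coordinates at $\alpha$, applying the volume comparison Theorem \ref{thm:2.3} to bound $\lambda(r,\Theta) \le \sn_{\kappa_{\min}}^{m-1}(r)$, using $d_g(x,\alpha) \le d_g(x,\alpha(t)) + |t|$ and monotonicity of $\phi$, and then invoking Condition \ref{cond:5.2}(4), which states precisely that $\int_0^\infty \phi'(2r) e^{-\phi(r)} \sn_{\kappa_{\min}}^{m-1}(r)\,dr < \infty$. With such a dominating function in hand, the dominated convergence theorem justifies passing $d/dt$ inside the integral, giving \eqref{equ:d2}; continuity of $I'$ follows similarly.

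Finally, for $I'(0)=0$: by Proposition \ref{prop:5.1} (applicable since $\alpha(t)$ is a geodesic in the symmetric space $\cM$), $I(t) = I(-t)$ for all $t \in \RR$. Differentiating this identity at $t=0$ — legitimate now that $I$ is differentiable everywhere — yields $I'(0) = -I'(0)$, hence $I'(0) = 0$. The main obstacle I anticipate is the construction of the uniform integrable dominating function for the difference quotients, in particular controlling $\phi'$ composed with the perturbed distance near the cut locus issues — but the Hadamard factor having empty cut locus (the reason the geodesic was chosen with constant compact component) is exactly what makes $\varrho_0$ and its derivatives globally well-behaved, so the estimate reduces to the integrability condition \ref{cond:5.2}(4) already assumed. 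The $m=1$ edge case requires a separate but easier argument since Lemma \ref{lma:d-varrho_phi} is stated for $\dim \cM > 1$.
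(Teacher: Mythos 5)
Your proposal is correct and takes essentially the same route as the paper's proof: differentiation under the integral via dominated convergence, with the dominating function $\phi'(d_g(x,\alpha)+t+1)\,e^{-\phi(d_g(x,\alpha))}$ shown integrable by polar coordinates, the volume comparison Theorem \ref{thm:2.3}, convexity of $\phi$, and Condition \ref{cond:5.2}(4), and then $I'(0)=0$ from the symmetry $I(t)=I(-t)$ of Proposition \ref{prop:5.1}. The only cosmetic differences are that the paper dominates the raw difference quotients directly through the local Lipschitz bound $|\phi(a)-\phi(b)|\le \phi'(l)|a-b|$ for $a,b\le l$ plus the triangle inequality (so it never needs Lemma \ref{lma:d-varrho_phi} nor a separate $m=1$ case), and in your domination step the inequality you actually use is $d_g(x,\alpha(t))\le d_g(x,\alpha)+|t|$ together with monotonicity of $\phi'$ (from convexity), not of $\phi$.
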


\begin{remark}
    Since $\phi$ is differentiable on $(0,\infty)$ and $\alpha(\cdot)$ is a geodesic with a constant compact component, the function $\phi(d_g(x,\alpha(t)))$ is differentiable at $t$ for any $x\neq \alpha(t)$. Hence, the integrand in \eqref{equ:d2} is well-defined in $\cM$ except for a null set.  
\end{remark}

%\begin{remark}
%    Since $\phi$ is differentiable on $(0,\infty)$ and $\alpha(t)$ has a constant compact component, it follows by Lemma \ref{lma:d11}  that the function $\phi(d_g(x,\alpha(t)))$ is differentiable at any $t\in\RR$ for almost all $x\in\cM$. Consequently, the integrand in \eqref{equ:d2} is well-defined except for a null set.   
%\end{remark}

\begin{proof}
    By the proof of Lemma \ref{lma:d1}, we know $I(t)$ is integrable for all $t\in\RR$. To further demonstrate its differentiability, we fix any $t\in\RR$ and vary $\Delta\in[-1,1]$. Then consider the following first-order difference
    \#\label{equ:d12}
    \frac{I(t+\Delta)-I(t)}{\Delta}=\int_{\cM}\frac{\phi(d_g(x,\alpha(t+\Delta)))-\phi(d_g(x,\alpha(t)))}{\Delta}e^{-\phi(d_g(x,\alpha))}\dvol(x).
    \#
    Since $\phi$ is differentiable on $(0,\infty)$ and $\alpha(\cdot)$ is a geodesic in $\cM$ with a constant compact component,  it holds for any $x\neq\alpha(t)$ that
    \$
    \lim_{\Delta\to 0}\frac{\phi(d_g(x,\alpha(t+\Delta)))-\phi(d_g(x,\alpha(t)))}{\Delta}=\frac{d}{dt}\phi(d_g(x,\alpha(t))),
    \$
    where the right hand side of the above equation is well-defined for any $x\neq \alpha(t)$. Consequently, the integrand in \eqref{equ:d12} has the following convergence
    \$
    \lim_{\Delta\to 0}\frac{\phi(d_g(x,\alpha(t+\Delta)))-\phi(d_g(x,\alpha(t)))}{\Delta}e^{-\phi(d_g(x,\alpha))}=\frac{d}{dt}\phi(d_g(x,\alpha(t)))e^{-\phi(d_g(x,\alpha))}
    \$
    for all $x\neq \alpha(t)$.  %Additionally, since $\phi$ is convex and differentiable on $(0,\infty)$,  
    Now we proceed to find a dominating function for the integrand in \eqref{equ:d12}. Since $\alpha(\cdot)$ is a unit-speed geodesic in $\cM$, it holds that 
    \$
    d_g(x,\alpha(t+\Delta))&\leq d_g(x,\alpha)+t+|\Delta|\leq d_g(x,\alpha)+t+1,\\
    d_g(x,\alpha(t))&\leq d_g(x,\alpha)+t,
    \$
    where we use the triangle inequality, the constraint $|\Delta|\leq 1$, and the condition that $\alpha(0)=\alpha$. Since $\phi$ is an increasing and convex function and is differentiable on $(0,\infty)$, $\phi$ is a Lipschitz function over $[0,l]$ with a Lipschitz constant $\phi'(l)$ for any $l>0$. Then it follows that
    \$
    |\phi(d_g(x,\alpha(t+\Delta)))-\phi(d_g(x,\alpha(t)))|&\leq \phi'(d_g(x,\alpha)+t+1)\cdot |d_g(x,\alpha(t+\Delta))-d_g(x,\alpha(t))|\\
    &\leq  \phi'(d_g(x,\alpha)+t+1)\cdot |\Delta|,
    \$
    where the second inequality uses the triangle inequality.
    Using this estimate, we can show that the integrand in \eqref{equ:d12} is dominated by the following function
    \#\label{equ:dominating}
    \left|\frac{\phi(d_g(x,\alpha(t+\Delta)))-\phi(d_g(x,\alpha(t)))}{\Delta}e^{-\phi(d_g(x,\alpha))}\right|\leq \phi'(d_g(x,\alpha)+t+1)e^{-\phi(d_g(x,\alpha))}.
    \#
    Using the polar coordinate expression \eqref{equ:integral} of the integral and Theorem \ref{thm:2.3}, we can show that the dominating function in \eqref{equ:dominating} is integrable. Specifically,
    \$
    &\int_{\cM}\phi'(d_g(x,\alpha)+t+1)\cdot e^{-\phi(d_g(x,\alpha))}\dvol(x)\\
    \leq 
    & \left(\int_0^{\infty}\phi'(r+t+1)e^{-\phi(r)}\sn_{\kappa_{\min}}^{m-1}(r)dr\right)\cdot \vol(\SSS^{m-1})\\
    \overset{({\rm i})}{\leq}
    & \left(\int_0^{t+1}\phi'(2(t+1))e^{-\phi(r)}\sn_{\kappa_{\min}}^{m-1}(r)dr +\int_{t+1}^\infty \phi'(2r)e^{-\phi(r)}\sn_{\kappa_{\min}}^{m-1}(r)dr\right)\cdot \vol(\SSS^{m-1})\\
    \overset{({\rm ii})}{<}  
    & \infty,
    \$
    where (i) uses the condition that $\phi$ is convex and (ii) uses property (4) in Condition \ref{cond:5.2}. Thus, by applying the dominated convergence theorem, we can show that $I(t)$ is differentiable at $t$ and 
    \$
    I'(t)=\lim_{\Delta\to0}\frac{I(t+\Delta)-I(t)}{\Delta}=\int_{\cM}\frac{d}{dt}\phi(d_g(x,\alpha(t)))e^{-\phi(d_g(x,\alpha))}\dvol(x).
    \$
    Finally, by Proposition \ref{prop:5.1}, we can show that the symmetry $I(t)=I(-t)$ holds for all $t\in\RR$, which implies that $I'(0)=0$. 
    %This concludes the proof.
\end{proof}
    %\newpage
    %then by Lemma~\ref{lma:d11}, there exists a null set $\cM_s\subseteq\cM$ such that for any $x\in\cM-\cM_s$, the function $\phi(d_g(x,\alpha(t)))$ is differentiable at all $t\in\RR$. 
    %By neglecting the set $\cM_s$ and employing the mean value theorem for all $x\in\cM-\cM_s$, we can rewrite \eqref{equ:d12} as
    %\#\label{equ:d13}
    %\frac{I(t+\Delta)-I(t)}{\Delta}=\int_{\cM}\frac{d}{dt}\phi(d_g(x,\alpha(t+\xi\Delta)))e^{-\phi(d_g(x,\alpha))}\dvol(x),
    %\#
    %where $\xi\in[0,1]$ is some number dependent on $\Delta,x$, and $t$. Observe that for any $x\in\cM-\cM_s$, the integrand in \eqref{equ:d13} converges to the integrand in \eqref{equ:d2}:
    %\$%\label{equ:as-convergence}
    %\frac{d}{dt}\phi(d_g(x,\alpha(t+\xi\Delta)))e^{-\phi(d_g(x,\alpha))}\to \frac{d}{dt}\phi(d_g(x),\alpha(t))e^{-\phi(d_g(x,\alpha))}
    %\$
    %as $\Delta\to 0$. In addition, the integrand in \eqref{equ:d13} has the following dominating function:
    %\#
    %&\left|\frac{d}{dt}\phi(d_g(x,\alpha(t+\xi\Delta)))e^{-\phi(d_g(x,\alpha))}\right|\notag\\
    %\overset{({\rm i})}{=}\ &\left|\phi'(d_g(x,\alpha(t+\xi\Delta))\cdot\frac{d}{dt}d_g(x,\alpha(t+\xi\Delta))\right|\cdot e^{-\phi(d_g(x,\alpha))}\notag\\
    %\overset{({\rm ii})}{\leq}\ & \phi'(d_g(x,\alpha)+t+1)\cdot e^{-\phi(d_g(x,\alpha))},\label{equ:dominate}
    %\#
    %where (i) uses the chain rule, and (ii) uses the condition that $\phi$ is convex, the triangle inequality 
    %\$
    %d_g(x,\alpha(t+\xi\Delta))\leq d_g(x,\alpha)+t+1,
    %\$
    %and the estimate \eqref{equ:d1.3} of $|\frac{d}{dt}d_g(x,\alpha(t+\xi\Delta))|$. 
    %The dominating function in \eqref{equ:dominate} is independent of $\Delta$, and by u

In Lemma \ref{lma:d3}, we again consider a unit-speed geodesic $\alpha(t)$ in $\cM$ with a constant compact component. The objective is to study the second-order  behavior of the function $I(t)$ near $t=0$.  

\begin{lemma}\label{lma:d3}
    Let $(\cM,g^{\cM})$ be a simply connected noncompact Riemannian symmetric space  and $\phi$ a function satisfying Condition   \ref{cond:5.2}. Let $\alpha(t)$ be a unit-speed geodesic in $\cM$ with $\alpha(0)=\alpha$ and a constant compact component. Define
    \$
    I(t)=\int_{\cM}\phi(d_g(x,\alpha(t)))e^{-\phi(d_g(x,\alpha))}\dvol(x).
    \$
    Then for sufficient small $t$, we have $I(t)-I(0)\leq Ct^2$ for some constant $C$ independent of $t$.
\end{lemma}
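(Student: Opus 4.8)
The plan is to establish that $I(t)$ is twice differentiable near $t=0$ with a locally bounded second derivative, and then combine this with the symmetry $I(t)=I(-t)$ from Proposition \ref{prop:5.1} and the vanishing first derivative $I'(0)=0$ from Lemma \ref{lma:d2}. Since $I(t)=I(-t)$, the odd-order Taylor coefficients all vanish; in particular, once we have $|I''(t)|\leq C_1$ for $|t|\leq\delta_0$, Taylor's theorem with remainder gives $I(t)=I(0)+I'(0)t+\tfrac12 I''(\xi)t^2 = I(0)+\tfrac12 I''(\xi)t^2$ for some $\xi$ between $0$ and $t$, hence $I(t)-I(0)\leq \tfrac{C_1}{2}t^2$, which is the claim with $C=C_1/2$.

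The main work is therefore to differentiate $I(t)$ a second time under the integral sign and bound the result. First I would write, using Lemma \ref{lma:d2},
\$
I'(t)=\int_{\cM}\frac{d}{dt}\phi(d_g(x,\alpha(t)))\,e^{-\phi(d_g(x,\alpha))}\dvol(x).
\$
To differentiate again I would split into the cases $m=\dim(\cM)=1$ and $m>1$, mirroring the case split in Condition \ref{cond:5.2}(5). When $m=1$, the Hadamard component is a line, so $\phi(d_g(x,\alpha(t)))$ is essentially $\phi(|s-t|)$ for the scalar coordinate $s$ of $x^H$, and $\frac{d^2}{dt^2}\phi(d_g(x,\alpha(t)))=\phi''(d_g(x,\alpha(t)))$ away from the single bad point. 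When $m>1$, I would use Lemma \ref{lma:d-varrho_phi}: writing $\varrho_0(t)=d_g(x,\alpha(t))$, the chain rule gives $\frac{d^2}{dt^2}\phi(\varrho_0(t))=\phi''(\varrho_0(t))(\varrho_0'(t))^2+\phi'(\varrho_0(t))\varrho_0''(t)$, and Lemma \ref{lma:d-varrho_phi} supplies $|\varrho_0'(t)|\leq 1$ and $|\varrho_0''(t)|\leq 3\,\mathrm{sn}_{\kappa_{\min}}'(\varrho_0(t))/\mathrm{sn}_{\kappa_{\min}}(\varrho_0(t))$. In both cases, for $|t|\leq\delta_0\leq 1$ and using the triangle inequality $\varrho_0(t)\geq\max\{d_g(x,\alpha)-1,0\}$ together with monotonicity of $\phi$ (so $e^{-\phi(\varrho_0(t))}\leq e^{-\phi(\max\{d_g(x,\alpha)-1,0\})}$), the absolute value of the twice-differentiated integrand is dominated by
\$
\left(\phi''(\varrho_0(t))+3\phi'(\varrho_0(t))\frac{\mathrm{sn}_{\kappa_{\min}}'(\varrho_0(t))}{\mathrm{sn}_{\kappa_{\min}}(\varrho_0(t))}\right)e^{-\phi(\max\{d_g(x,\alpha)-1,0\})},
\$
up to a constant. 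Passing to polar coordinates at $\alpha$ via \eqref{equ:integral} and applying the volume comparison Theorem \ref{thm:2.3} ($\lambda(r,\Theta)\leq\mathrm{sn}_{\kappa_{\min}}^{m-1}(r)$), this integrates to a finite quantity precisely because of Condition \ref{cond:5.2}(5). Hence the dominated convergence theorem justifies differentiating $I'(t)$ under the integral, and the same envelope gives a uniform bound $|I''(t)|\leq C_1$ for $|t|\leq\delta_0$.

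The main obstacle I expect is the care needed near the singularity of the distance function. The integrand $\phi(d_g(x,\alpha(t)))$ fails to be smooth in $t$ when $x$ lies on the image of the geodesic $\alpha(\cdot)$ (where $\varrho_0$ is not differentiable) and, a priori, potentially where $\phi$ is only defined up to $\phi'$ or $\phi''$ blowing up near $0$; one must check these exceptional sets have measure zero in $\cM$ — which holds since $\mathrm{Img}(\alpha)$ is a one-dimensional submanifold, of measure zero when $m\geq 1$ (for $m=1$ it is a single point for each fixed $t$) — and then verify that the integrability hypotheses in Condition \ref{cond:5.2}(5) are exactly calibrated to absorb the $r\to 0$ behavior of $\mathrm{sn}_{\kappa_{\min}}'(r)/\mathrm{sn}_{\kappa_{\min}}(r)\sim 1/r$ against the weight $\mathrm{sn}_{\kappa_{\min}}^{m-1}(r)\sim r^{m-1}$. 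Once the domination is in place, the rest is the elementary Taylor argument above using $I(t)=I(-t)$ and $I'(0)=0$.
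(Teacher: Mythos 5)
Your overall scaffolding (symmetry $I(t)=I(-t)$ from Proposition \ref{prop:5.1}, $I'(0)=0$ from Lemma \ref{lma:d2}, a second-order bound, and the chain-rule estimate via Lemma \ref{lma:d-varrho_phi}) is in the right spirit, but the central step — establishing $I''$ by differentiating under the integral sign with the envelope you describe — has a genuine gap. To apply dominated convergence to the difference quotients of $I'(t)$ you must dominate, uniformly in $\Delta$, quantities which by the mean value theorem involve $\frac{d^2}{dt^2}\phi(d_g(x,\alpha(\xi)))$ at \emph{intermediate} parameters $\xi$ between $t$ and $t+\Delta$, not at $t$ itself. Your proposed envelope is the second-derivative bound evaluated at $\varrho_0(t)=d_g(x,\alpha(t))$; it does not dominate the values at nearby $\xi$, because the singular factor $\phi''(\varrho_0(\xi))+3\phi'(\varrho_0(\xi))\,\sn_{\kappa_{\min}}'(\varrho_0(\xi))/\sn_{\kappa_{\min}}(\varrho_0(\xi))$ blows up as $x$ approaches $\alpha(\xi)$ even when $d_g(x,\alpha(t))$ is bounded away from zero. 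A $\xi$-uniform envelope would have to control a singularity along a whole geodesic segment rather than at a single point, and Condition \ref{cond:5.2}(5) is calibrated only to the point singularity (it integrates against $\sn_{\kappa_{\min}}^{m-1}(r)\,dr$, the radial measure centered at one point). The failure is not merely technical: for $m=1$ take $\phi(r)=\beta r^{3/2}$, which satisfies Condition \ref{cond:5.2}; then $\sup_{\xi\in[t-\delta,t+\delta]}\phi''(|s-\xi|)=+\infty$ for every $s$ in the segment $[t-\delta,t+\delta]$, a set of positive measure, so no integrable $\xi$-uniform dominating function exists and the DCT route as stated cannot be patched by simply enlarging the envelope.

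The paper sidesteps this by never differentiating $I$ a second time. It writes $I(t)-I(0)=I(t)-I(0)-tI'(0)$ as a single integral of the \emph{pointwise} second-order Taylor remainder of the integrand (in integral form, $\int_0^t|\varrho_\phi''(\xi)|(t-\xi)\,d\xi$), applies Tonelli to exchange the $\xi$- and $x$-integrations, and then — this is the step your plan is missing — uses the homogeneity of $\cM$ (an isometry moving $\alpha(\xi)$ to $\alpha(0)$, together with \eqref{equ:homo}) to show that the inner integral $J(\xi)=\int_\cM\bigl(\phi''(\varrho_0(\xi))+3\phi'(\varrho_0(\xi))\sn_{\kappa_{\min}}'(\varrho_0(\xi))/\sn_{\kappa_{\min}}(\varrho_0(\xi))\bigr)e^{-\phi(\max\{\varrho_0(\xi)-1,0\})}\dvol(x)$ is independent of $\xi$ and equals $J(0)$, which is finite by polar coordinates, Theorem \ref{thm:2.3}, and Condition \ref{cond:5.2}(5). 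This yields $|I(t)-I(0)|\le J(0)\int_0^t(t-\xi)\,d\xi\le Ct^2$ with no need for a $\xi$-uniform envelope or for $I''$ to exist. If you want to keep your Taylor-on-$I$ formulation, you would have to replace the DCT argument by something equivalent to this Tonelli-plus-invariance device (or impose strictly stronger integrability hypotheses than Condition \ref{cond:5.2}), since as written the claimed domination does not hold.
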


\begin{proof}
    By the proof of Lemma \ref{lma:d1}, we can show that $I(t)$ is integrable for all $t\in\RR$. By Proposition \ref{prop:5.1},  we know that the symmetry $I(t)=I(-t)$ holds, and thus it suffices to consider the case $t\geq 0$. Since we only examine the behavior of $I(t)$ near $0$, we  assume without loss of generality that $t\leq 1$. 
    %Since $\phi$ is second-order continuously differentiable on $(0,\infty)$ and $\alpha(t)$ is a geodesic in $\cM$ with a constant compact component, by Lemma \ref{lma:d11}, 
    %we know there exists a null set $\cM_s\subseteq\cM$ such that for all $x\in\cM-\cM_s$, the function $\phi(d_g(x,\alpha(t)))$ is second-order continuously differentiable at any $t\in\RR$.  
    By Lemma \ref{lma:d2}, we know that $I'(0)=0$, where $I'(t)$ is given by \eqref{equ:d2}. Therefore, we can rewrite $I(t)-I(0)$ as follows:
    \#
    &I(t)-I(0) = I(t)-I(0)-tI'(0)\notag\\
    =\ &\int_{\cM}\left(\phi(d_g(x,\alpha(t)))-\phi(d_g(x,\alpha))-t\cdot \left.\frac{d}{dt}\right|_{t=0}\phi(d_g(x,\alpha(t)))\right)e^{-\phi(d_g(x,\alpha))}\dvol(x).\notag
    \#
    Then by taking the absolute value, we shall obtain the following upper bound
    \#
    &|I(t)-I(0)|\notag\\
    \leq\ & \int_{\cM}\left|\phi(d_g(x,\alpha(t)))-\phi(d_g(x,\alpha))-t\cdot \left.\frac{d}{dt}\right|_{t=0}\phi(d_g(x,\alpha(t)))\right|\cdot e^{-\phi(d_g(x,\alpha))}\dvol(x)\label{equ:integral-upper bound}
    \#
    To give an upper bound on the above term, we consider the following two cases separately.
    \begin{itemize}
        \item {\bf Case 1. The dimension of $\cM$ is 1.} In this case, $\cM=\RR$, and we can assume without loss of generality that $\alpha(t)=\alpha+t$. Under this assumption, the term in \eqref{equ:integral-upper bound} can be rewritten as follows
        \#
        &\int_{-\infty}^{\infty}\left|\phi(|x-\alpha-t|)-\phi(|x-\alpha|)-t\cdot\left.\frac{d}{dt}\right|_{t=0}\phi(|x-\alpha-t|)\right|\cdot e^{-\phi(|x-\alpha|)}dx\notag\\
        =\ &\int_{-\infty}^{\infty}\left|\phi(|x-t|)-\phi(|x|)-t\cdot\left.\frac{d}{dt}\right|_{t=0}\phi(|x-t|)\right|\cdot e^{-\phi(|x|)}dx,\label{equ:integral-L} \\
        =\ &\int_{-\infty}^{\infty}L(x)dx,\label{equ:integral-L2}
        \#
        where the first equality uses the change of variables ($x-\alpha\to x$) and $L(x)$ is the integrand in \eqref{equ:integral-L}. Notably, the integrand $L(x)$ and the integral $\int L(x)dx$ are independent of the choice of $\alpha$. To evaluate the integral \eqref{equ:integral-L2}, we decompose it into the following three terms:
        \$
        \int_{-\infty}^{\infty}L(x)dx=\int_{-\infty}^0L(x)dx+\int_{0}^tL(x)dx+\int_{t}^{\infty}L(x)dx.
        \$
        We shall upper bound these three terms separately. 
        \begin{itemize}[leftmargin=*]
            \item First, for any $x\in(-\infty,0)$, 
        \$
        L(x)&=\left|\phi(t-x)-\phi(-x)-t\cdot\phi'(-x)\right|\cdot e^{-\phi(-x)},\\
        &=\left(\phi(t-x)-\phi(-x)-t\cdot\phi'(-x)\right)\cdot e^{-\phi(-x)},
        \$
        where the second equality uses the fact that $\phi$ is a convex function. Since $\phi$ is second-order continuously differentiable on $(0,\infty)$, by using the Taylor theorem, we have
        \$
        L(x)= \int_{0}^{t}\phi''(\xi-x)(t-\xi)d\xi\cdot e^{-\phi(-x)},
        \$
        where $\phi''(\xi-t)$ is nonnegative due to the convexity of $\phi$. By applying the Tonelli theorem, we can show that
        \#
        \int_{-\infty}^0L(x)dx&=\int_{-\infty}^0\int_0^{t}\phi''(\xi-x)(t-\xi)d\xi\cdot e^{-\phi(-x)}dx\notag\\
        &=\int_{0}^t(t-\xi)d\xi\cdot\int_{-\infty}^0\phi''(\xi-x)e^{-\phi(-x)}dx.\label{equ:L_integral_1}
        \#
        Since $\phi$ is an increasing function and $0\leq \xi\leq t\leq 1$, it holds that
        \$
        \phi(-x)\geq \phi(\max\{\xi-x-1,0\}). 
        \$
        Therefore,
        \#
        \int_{-\infty}^0\phi''(\xi-x)e^{-\phi(-x)}dx&\leq \int_{-\infty}^{0}\phi''(\xi-x)e^{-\phi(\max\{\xi-x-1,0\})}dx\notag\\
        &\overset{({\rm i})}{\leq} \int_0^{\infty}\phi''(x)e^{-\phi(\max\{x-1,0\})}dx\label{equ:upper-bound-1}\\
        &\overset{({\rm ii})}{<}\infty,\notag
        \#
        where (i) uses the change of variables $(\xi-x\to x)$ and the non-negativity of the integrand, and (ii) uses Condition \ref{cond:5.2}. Notably, the integral in \eqref{equ:upper-bound-1} is a finite constant independent of the choice of $\xi\in[0,t]$ and $t\in[0,1]$. By substituting this upper bound into \eqref{equ:L_integral_1}, we obtain that 
        \#
        \int_{-\infty}^0L(x)dx&=\int_0^t(t-\xi)d\xi\cdot \int_0^{\infty}\phi''(x)e^{-\phi(\max\{x-1,0\})}dx \leq C_1t^2,\notag%\label{equ:int_L_0_infty}
        \#
        where $C_1>0$ is a finite constant independent of $t$.  
        \item Second, for any $x\in(0,t)$, 
        \$
        L(x)&=|\phi(t-x)-\phi(x)+t\cdot\phi'(x)|\cdot e^{-\phi(x)}\\
        &\leq \left(\phi'(t)\cdot |t-2x|+t\cdot \phi'(t)\right)\cdot e^{-\phi(0)}\\
        &\leq \phi'(1)\cdot \left(|t-2x|+t\right)\cdot e^{-\phi(0)},
        \$
        where the first inequality uses the fact that $\phi$ is an increasing and convex function, and the second inequality uses the condition that $t\leq 1$ and $\phi'(t)\leq \phi'(1)$. Substituting this into the integral $\int_0^tL(x)dx$, we obtain that
        \#
        \int_0^tL(x)dx&\leq \phi'(1)\cdot e^{-\phi(0)}\cdot \int_0^t(|t-2x|+t)dx\notag\\
        &\leq C_2 t^2,\notag
        \#
        where $C_2>0$ is a finite constant independent of $t$.
        \item Third, for any $x\in(t,\infty)$, 
        \$
        L(x)&=|\phi(x-t)-\phi(x)+t\cdot \phi'(x)|\cdot e^{-\phi(x)}\\
        &=(\phi(x-t)-\phi(x)+t\cdot\phi'(x))\cdot e^{-\phi(x)}, 
        \$
        where the second equality uses the fact that $\phi$ is a convex function. Since $\phi$ is second-order continuously differentiable on $(0,\infty)$, by using the Taylor theorem, we have
        \$
        L(x)=\int_{0}^t\phi''(x-\xi)(t-\xi)d\xi\cdot e^{-\phi(x)},
        \$
        where $\phi''(x-\xi)$ is nonnegative due to the convexity of $\phi$. By applying the Tonelli theorem, we can show that 
        \$
        \int_t^{\infty}L(x)dx&=\int_t^{\infty}\int_0^t\phi''(x-\xi)(t-\xi)d\xi\cdot e^{-\phi(x)}dx\\
        &=\int_0^{t}(t-\xi)d\xi\cdot \int_t^{\infty}\phi''(x-\xi)\cdot e^{-\phi(x)}dx\\
        &\leq \int_0^t(t-\xi)d\xi\cdot \int_t^{\infty}\phi''(x-\xi)\cdot e^{-\phi(x-\xi)}dx,
        \$
        where the inequality uses the fact that $\phi$ is an increasing function. By using the change of variables $(x-\xi\to x)$, we can show that 
        \$
        \int_t^{\infty}L(x)dx&\leq \int_0^t(t-\xi)d\xi\cdot \int_{t-\xi}^{\infty}\phi''(x)\cdot e^{-\phi(x)}dx\\
        &\leq \int_0^t(t-\xi)d\xi\cdot \int_0^{\infty}\phi''(x)\cdot e^{-\phi(x)}dx\\
        &\leq C_3t^2,
        \$
        where $C_3>0$ is a finite constant independent of $t$ and the third inequality uses Condition \ref{cond:5.2}. 
        \end{itemize}
        By combining the above three cases, we find that
        \$
        \int_{-\infty}^{\infty}L(x)dx\leq (C_1+C_2+C_3)t^2,
        \$
        where $C_1,C_2,C_3>0$ are finite constants independent of $t$. This implies that $|I(t)-I(0)|\leq Ct^2$ for some finite constant $C>0$ independent of $t$. 
        \item {\bf Case 2. The dimension of $\cM$ is larger than 1.} In this case, the set 
        \$
        \Img({\alpha})\coloneqq \{\alpha(s)\mid s\in\RR\}
        \$ 
        is of measure zero in $\cM$. Therefore, the integral in \eqref{equ:integral-upper bound} can be evaluated on $\cM-\Img(\alpha)$. In particular, by Lemma \ref{lma:d-varrho_phi}, for any $x\in\cM-\Img(\alpha)$, the function $\varrho_0(s)=d_g(x,\alpha(s))$ is positive and smooth at any $s\in\RR$. Since $\phi$ is second-order continuously differentiable over $(0,\infty)$, the function $\varrho_{\phi}(s)=\phi(\varrho_0(s))$ is also second-order continuously differentiable at any $s\in\RR$. By the chain rule, we can compute the first two order derivatives of $\varrho_{\phi}(s)$ as follows:
        \#
        \varrho_{\phi}'(s)&=\phi'(\varrho_0(s))\cdot \varrho'_0(s),\notag\\
        \varrho_{\phi}''(s)&=\phi''(\varrho_0(s))\cdot \left( \varrho_0'(s)\right)^2 +\phi'(\varrho_0(s))\cdot  \varrho_0''(s),\label{equ:varrho_phi_''}
        \#
        where we use the fact that $\varrho(s)$ is smooth over $s\in\RR$ when $x\in\cM-\Img(\alpha)$. By the Taylor theorem, we have
        \$
        \varrho_{\phi}(t)-\varrho_{\phi}(0)-t\varrho_{\phi}'(0)=\int_0^t\varrho_{\phi}''(\xi)(t-\xi)d\xi.
        \$
        Taking the absolute value, we can obtain the following bound 
        \$
        |\varrho_{\phi}(t)-\varrho_{\phi}(0)-t\varrho_{\phi}'(0)|\leq \int_0^t|\varrho_{\phi}''(\xi)|(t-\xi)d\xi. 
        \$
        Substituting this into \eqref{equ:integral-upper bound}, we obtain 
        \$
        |I(t)-I(0)|\leq \int_{\cM}\int_0^t|\varrho_{\phi}''(\xi)|(t-\xi)d\xi\cdot e^{-\varrho_{\phi}(0)}\dvol(x).
        \$
        Since the integrand in the right hand side of the above inequality is nonnegative and measurable, we can employ the Tonelli's theorem to obtain that
        \#\label{equ:d.22}
        |I(t)-I(0)|\leq \int_{0}^t(t-\xi)d\xi\int_{\cM}|\varrho_{\phi}''(\xi)|e^{-\varrho_{\phi}(0)}\dvol(x).
        \#
        To evaluate the above the above upper bound, we first upper bound $|\varrho_{\phi}''(\xi)|$ using Lemma \ref{lma:d-varrho_phi} and the expression \eqref{equ:varrho_phi_''}. Specifically, for any $x\in\cM-\Img(\alpha)$, the following upper bound holds 
        \$
        |\varrho_{\phi}''(\xi)|&\leq \phi''(\varrho_0(\xi))\cdot (\varrho_0'(\xi))^2+\phi'(\varrho_0(\xi))\cdot |\varrho_0''(\xi)|\\
        &\leq \phi''(\varrho_0(\xi))+3\cdot \phi'(\varrho_0(\xi))\cdot \frac{\sn_{\kappa_{\min}}'(\varrho_{0}(\xi))}{\sn_{\kappa_{\min}}(\varrho_0(\xi))}
        \$
        where the first inequality uses the triangle inequality and the fact that $\phi$ is an increasing and convex function, and the second inequality uses Lemma \ref{lma:d-varrho_phi}. Here $\kappa_{\min}<0$ is a lower bound on the sectional curvatures of $\cM$ used in Condition \ref{cond:5.2} and $\sn_{\kappa_{\min}}(\cdot)$ is given by \eqref{equ:sn}. Using this upper bound and the fact that $\Img(\alpha)$ is of measure zero, we can show that 
        \$
        &\int_{\cM}|\varrho_{\phi}''(\xi)|e^{-\varrho_{\phi}(0)}\dvol(x)\\
        \leq\ &\int_{\cM} \left(\phi''(\varrho_0(\xi))+3\cdot \phi'(\varrho_0(\xi))\cdot \frac{\sn_{\kappa_{\min}}'(\varrho_{0}(\xi))}{\sn_{\kappa_{\min}}(\varrho_0(\xi))}\right)\cdot e^{-\phi(\varrho_0(0))}\dvol(x)
        \$
        Since $\alpha$ is a unit-speed geodesic in $\cM$ and $\xi\in[0,1]$, we can show by the triangle inequality that
        \$
        \varrho_0(0)\geq \max\{\varrho_0(\xi)-1,0\}.
        \$
        Therefore, since $\phi$ is an increasing function, we have
        \#
        &\int_{\cM}|\varrho_{\phi}''(\xi)|e^{-\varrho_{\phi}(0)}\dvol(x)\notag\\
        \leq\ &\int_{\cM} \left(\phi''(\varrho_0(\xi))+3\cdot \phi'(\varrho_0(\xi))\cdot \frac{\sn_{\kappa_{\min}}'(\varrho_{0}(\xi))}{\sn_{\kappa_{\min}}(\varrho_0(\xi))}\right)\cdot e^{-\phi(\max\{\varrho_0(\xi)-1,0\})}\dvol(x)\notag\\
        \eqqcolon\ &J(\xi).\label{equ:J-xi}
        \#
        The integrand in $J(\xi)$ is a function of $\varrho_0(\xi)=d_g(x,\alpha(\xi))$, so we can use the homogeneity of $\cM$ to demonstrate that 
        \#\label{equ:d.23}
        J(\xi)=J(0),\quad\forall \xi\in[0,1].
        \#
        This proof is omitted, since it is similar to those for Proposition \ref{prop:3.2} and \ref{prop:5.1}, where we apply a suitable isometry of $\cM$ to the integral and then employ property \eqref{equ:homo}. The property \eqref{equ:d.23} states that the integral $J(\xi)$ is independent of $\xi$, which means that one  only needs to focus on the quantity $J(0)$. In fact, by using the polar coordinate expression \eqref{equ:integral} of the integral and Theorem \ref{thm:2.3}, we can show that the quantity $J(0)$ is finite:
        \$
        J(0)&\leq \vol(\SSS^{m-1})\cdot \int_{0}^{\infty}\left(\phi''(r)+3\phi'(r)\frac{\sn_{\kappa_{\min}}'(r)}{\sn_{\kappa_{\min}}(r)}\right)e^{-\phi(\max\{r-1,0\})}\sn_{\kappa_{\min}}^{m-1}(r)dr\\
        &<\infty,
        \$
        where $m$ is the dimension of $\cM$ and the second inequality uses Condition \ref{cond:5.2}. By combining this with \eqref{equ:d.22}, \eqref{equ:J-xi}, and \eqref{equ:d.23}, we can show that
        \$
        |I(t)-I(0)|\leq J(0)\cdot \int_0^t(t-\xi)d\xi\leq Ct^2,\quad\forall t\in[0,1],
        \$
        where $C>0$ is a finite constant independent of $t$.  
    \end{itemize}
    The proof of the lemma is complete by combining the above two cases.
    \end{proof}
    
    %\newpage
    %For all $x\in\cM-\cM_s$, since $\phi(d_g(x,\alpha(t)))$ is second-order continuously differentiable on $\RR$, by the Taylor's theorem, we have

%\subsection{Basic packing number}

%\begin{lemma}\label{lma:pack}
%    Suppose $(\cM,g^{\cM})$ is a Riemannian homogeneous space. 
%\end{lemma}

%\begin{proof}
    
%\end{proof}

\subsection{Simply connected compact Riemannian symmetric spaces}\label{sec:d2}

In this section, we shall delve into the case of a simply connected compact Riemannian symmetric space. We will first provide several geometric lemmas  in Section \ref{sec:d21}, and then present detailed statistical analysis in Section \ref{sec:d22}.

\subsubsection{Geometric lemmas}\label{sec:d21}

In this section, we will present several geometric lemmas on a simply connected compact Riemannian symmetric space $(\cM,g^{\cM})$. In Lemma \ref{lma:d.21}, we consider a geodesic $\alpha(t)$ in $\cM$ and investigate the differentiability of the function $\varrho_0(t)=d_g(x,\alpha(t))$ for any fixed $x\in\cM-\Img(\alpha)\cup\Ucut(\alpha)$, where
\#
\Img(\alpha)&=\{\alpha(t)\mid t\in\RR\},\label{equ:Img-alpha-compact}\\
\Ucut(\alpha)&=\cup_{t\in\RR}\Cut(\alpha(t)),\label{equ:Ucut-alpha}
\#
and $\Cut(\alpha(t))$ denotes the cut locus of $\alpha(t)$ in $\cM$. Since $\cM$ is a simply connected compact Riemannian symmetric space, the dimension $\dim(\cM)>1$ and thus $\Img(\alpha)$ is of measure zero in $\cM$. Additionally, by Lemma \ref{lma:a2}, $\Ucut(\alpha)$ is also of measure zero in $\cM$. Therefore, Lemma \ref{lma:d.21} proves the smoothness of $\varrho_0(t)$ in $\RR$ for almost all $x$ in $\cM$. 

\begin{lemma}\label{lma:d.21}
    Let $(\cM,g^{\cM})$ be a simply connected compact Riemannian symmetric space, and $\alpha(t)$ a geodesic in $\cM$. Then for any $x\in\cM-\Img(\alpha)\cup\Ucut(\alpha)$, the function $\varrho_0(t)=d_g(x,\alpha(t))$ is smooth at all $t\in\RR$, where $\Img(\alpha)$ and $\Ucut(\alpha)$ are defined by \eqref{equ:Img-alpha-compact} and \eqref{equ:Ucut-alpha} respectively.
\end{lemma}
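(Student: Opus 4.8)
\textbf{Proof plan for Lemma \ref{lma:d.21}.}

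The plan is to reduce the smoothness of $\varrho_0(t) = d_g(x,\alpha(t))$ to the classical fact that the distance function $r_y(z) = d_g(y,z)$ is smooth at points $z$ away from $\{y\}\cup\Cut(y)$, together with the symmetry $d_g(x,\alpha(t)) = d_g(\alpha(t),x)$ and the smoothness of the geodesic $t\mapsto\alpha(t)$. Concretely, fix $x\in\cM - (\Img(\alpha)\cup\Ucut(\alpha))$ and an arbitrary $t_0\in\RR$; I want to show $\varrho_0$ is smooth in a neighborhood of $t_0$. Write $y = \alpha(t_0)$. Since $x\notin\Img(\alpha)$ we have $x\neq y$, and since $x\notin\Ucut(\alpha)$ we have $x\notin\Cut(\alpha(t))$ for every $t$, in particular $x\notin\Cut(y)$. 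Recalled from Section \ref{sec:211}, the distance function $r_x(\cdot) = d_g(x,\cdot)$ is smooth at every point $z$ with $z\notin\Cut(x)\cup\{x\}$; I will need the symmetric statement that $z\mapsto d_g(z,x) = d_g(x,z)$ is smooth near $y$ whenever $y\notin\Cut(x)\cup\{x\}$ — but here it is cleaner to instead use that $w\mapsto d_g(x,w)$ is smooth near $y$ precisely when $y\notin\Cut(x)\cup\{x\}$, which by the symmetry of the cut locus on any complete manifold ($y\in\Cut(x)\iff x\in\Cut(y)$) is equivalent to $x\notin\Cut(y)\cup\{y\}$, a condition we have verified. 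Thus there is an open neighborhood $U$ of $y$ in $\cM$ on which $w\mapsto d_g(x,w)$ is a smooth function.

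Next I would invoke continuity of the geodesic map: $t\mapsto\alpha(t)$ is smooth (geodesics are smooth curves), hence there is an open interval $I\ni t_0$ with $\alpha(I)\subseteq U$. On $I$ we then have $\varrho_0(t) = d_g(x,\alpha(t)) = (r_x\circ\alpha)(t)$, a composition of the smooth function $r_x|_U$ with the smooth curve $\alpha|_I$, hence smooth on $I$, and in particular smooth at $t_0$. Since $t_0$ was arbitrary, $\varrho_0$ is smooth on all of $\RR$. I would also remark, as the statement's preamble already notes, that the exceptional set $\Img(\alpha)\cup\Ucut(\alpha)$ is genuinely null: $\dim\cM > 1$ forces $\Img(\alpha)$ (a one-dimensional curve) to have measure zero, and Lemma \ref{lma:a2} gives that $\Ucut(\alpha)$ has measure zero, so the conclusion holds for almost every $x\in\cM$; this is exactly the form needed in the KL-divergence computations of Section \ref{sec:5.2}.

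I do not expect any serious obstacle here — this lemma is essentially bookkeeping that isolates, once and for all, the geometric input for the compact case. The only point requiring a moment of care is the symmetry $x\in\Cut(y)\iff y\in\Cut(x)$, which I would either cite from a standard reference (e.g. \citet{do1992riemannian} or \citet{petersen2006riemannian}) or note follows immediately from the symmetric definition of the cut locus via minimizing geodesics. Everything else is the chain rule for compositions of smooth maps plus openness of preimages of open sets under continuous maps. The substantive geometric difficulty — controlling the first and second derivatives of $\varrho_0$ uniformly, and handling the behavior of $D_{\KL}$ — is deferred to the subsequent lemmas (the analogues of Lemmas \ref{lma:d12}, \ref{lma:d-varrho_phi} in the compact setting, using the Hessian formula \eqref{equ:a9}), not to this one.
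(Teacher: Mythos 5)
Your proposal is correct and follows essentially the same route as the paper's proof: translate the hypothesis $x\notin\Ucut(\alpha)$ into $\alpha(t)\notin\Cut(x)$ for all $t$, use smoothness of $d_g(x,\cdot)$ away from $\{x\}\cup\Cut(x)$, and conclude by the chain rule applied to the smooth geodesic $\alpha$. The only difference is that you make explicit the cut-locus symmetry $x\in\Cut(y)\iff y\in\Cut(x)$, which the paper uses implicitly — a worthwhile clarification but not a different argument.
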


\begin{proof}
    Since $x\in\cM-\Img(\alpha)\cup\Ucut(\alpha)$, it holds that $\alpha(t)\neq x$ and $\alpha(t)\notin\Cut(x)$ for all $t\in\RR$, where $\Cut(x)$ is the cut locus of $x$ in $\cM$. Because the distance function $d_g(x,y)$ is smooth at any $y\in\cM-\{x\}\cup\Cut(x)$, it follows from the chain rule that the function $\varrho_0(t)=d_g(x,\alpha(t))$ is also smooth at all $t\in\RR$. 
\end{proof}
%\subsection{Technical lemmas used in Section \ref{sec:5}}

Lemma \ref{lma:d.22} considers a unit-speed geodesic $\alpha(t)$ and a function $\phi$ satisfying Condition \ref{cond:5.6}. We aim to examine the first two order derivatives of the function $\varrho_{\phi}(t)=\phi(d_g(x,\alpha(t)))$ for any  $x\in\cM-\Img(\alpha)\cup\Ucut(\alpha)$, where $\Img(\alpha)$ and $\Ucut(\alpha)$ are given by \eqref{equ:Img-alpha-compact} and \eqref{equ:Ucut-alpha}, respectively. 
%By Lemma \ref{lma:d.21}, we know that $\varrho_0(t)=d_g(x,\alpha(t))$ is smooth at all $t\in\RR$ for any $x\in\cM-\Img(\alpha)\cup\Ucut(\alpha)$, where 

\begin{lemma}\label{lma:d.22}
    Let $(\cM,g^{\cM})$ is a simply connected compact Riemannian symmetric space, $\alpha(t)$ be a unit-speed geodesic in $\cM$, and $\phi$ be a function satisfying Condition  \ref{cond:5.6}. Then for any $x\in\cM-\Img(\alpha)\cup\Ucut(\alpha)$, the function $\varrho_\phi(t)=\phi(d_g(x,\alpha(t)))$ is second-order continuously differentiable at any $t\in\RR$, where $\Img(\alpha)$ and $\Ucut(\alpha)$ are given by \eqref{equ:Img-alpha-compact} and \eqref{equ:Ucut-alpha} respectively. In addition,  the absolute values of its first two order derivatives have the following upper bound
    \$
    |\varrho_\phi'(t)|&\leq \phi'(d_g(x,\alpha(t))),\\
    |\varrho_\phi''(t)|&\leq |\phi''(d_g(x,\alpha(t)))|+\frac{\phi'(d_g(x,\alpha(t)))}{d_g(x,\alpha(t))}\cdot(1+\frac{|\varrho''(t)|}{2}), 
    \$
    where $\varrho(t)=d_g^2(x,\alpha(t))$. 
\end{lemma}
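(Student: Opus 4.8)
The plan is to reduce everything to the second-order differentiability of the \emph{squared} distance function $\varrho(t)=d_g^2(x,\alpha(t))$ along the geodesic $\alpha$, and then transfer the bounds to $\varrho_\phi(t)=\phi(d_g(x,\alpha(t)))=\phi(\sqrt{\varrho(t)})$ via the chain rule. First I would observe that for $x\in\cM-\Img(\alpha)\cup\Ucut(\alpha)$ we have $\alpha(t)\neq x$ and $\alpha(t)\notin\Cut(x)$ for every $t\in\RR$; hence, as in Lemma \ref{lma:d.21}, the distance function $y\mapsto d_g(x,y)$ is smooth in a neighborhood of each $\alpha(t)$, so $\varrho_0(t)=d_g(x,\alpha(t))$ and $\varrho(t)=\varrho_0(t)^2$ are smooth on all of $\RR$, and $\varrho_0(t)>0$ throughout. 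Since $\phi$ is second-order continuously differentiable on $(0,r_\cM)$ by Condition \ref{cond:5.6}(2) and $\varrho_0(t)\in(0,r_\cM)$ (strictly less than $r_\cM$ because $\alpha(t)\notin\Cut(x)$), composition gives that $\varrho_\phi=\phi\circ\varrho_0$ is $C^2$ on $\RR$.

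Next I would write out the derivatives. From $\varrho_0=\sqrt{\varrho}$ we get $\varrho_0'=\varrho'/(2\sqrt\varrho)=\varrho'/(2\varrho_0)$ and $\varrho_0''=\varrho''/(2\varrho_0)-(\varrho')^2/(4\varrho_0^3)$. For the first derivative of $\varrho_\phi$, the chain rule gives $\varrho_\phi'(t)=\phi'(\varrho_0(t))\varrho_0'(t)$, and since $\alpha$ is unit-speed, $|\varrho_0'(t)|\leq 1$ (the derivative of a distance-to-a-point function along a unit-speed curve is bounded by $1$ by the triangle inequality / first variation formula), which yields $|\varrho_\phi'(t)|\leq\phi'(d_g(x,\alpha(t)))$. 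For the second derivative, $\varrho_\phi''(t)=\phi''(\varrho_0)(\varrho_0')^2+\phi'(\varrho_0)\varrho_0''$; bounding $(\varrho_0')^2\leq 1$ and substituting the expression for $\varrho_0''$ gives
\$
|\varrho_\phi''(t)|\leq |\phi''(\varrho_0)|+\phi'(\varrho_0)\left(\frac{|\varrho''|}{2\varrho_0}+\frac{(\varrho_0')^2}{4\varrho_0^3}\cdot\varrho_0^2\cdot\frac{1}{\varrho_0}\right),
\$
but more cleanly, using $(\varrho')^2=4\varrho_0^2(\varrho_0')^2\leq 4\varrho_0^2$ one gets $|\varrho_0''|\leq |\varrho''|/(2\varrho_0)+1/\varrho_0 = \frac{1}{\varrho_0}(1+|\varrho''|/2)$, hence $|\varrho_\phi''(t)|\leq|\phi''(\varrho_0)|+\frac{\phi'(\varrho_0)}{\varrho_0}(1+|\varrho''|/2)$, which is exactly the claimed bound with $\varrho_0=d_g(x,\alpha(t))$.

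The one remaining point needing care — and I expect it to be the main obstacle — is justifying $|\varrho_0'(t)|\leq 1$ rigorously at the level of smoothness being used: one should invoke the Gauss lemma / first variation formula, noting that $\grad_y d_g(x,\cdot)$ is a unit vector at $y=\alpha(t)$ (valid precisely because $\alpha(t)\notin\{x\}\cup\Cut(x)$), so $\varrho_0'(t)=g^\cM(\grad d_g(x,\cdot),\dot\alpha(t))$ has absolute value at most $|\dot\alpha(t)|=1$ by Cauchy–Schwarz. A secondary subtlety is ensuring $\varrho_0(t)<r_\cM$ so that $\phi'$ and $\phi''$ are genuinely available there; this follows since any $y\in\cM$ with $d_g(x,y)=r_\cM$ must lie in $\Cut(x)$ (a point realizing the diameter from $x$ is necessarily a cut point), and $\alpha(t)\notin\Ucut(\alpha)$ excludes this. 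Everything else is the routine chain-rule computation above, so I would state the differentiability via Lemma \ref{lma:d.21} and Condition \ref{cond:5.6}, record the two derivative formulas, and conclude with the two displayed inequalities.
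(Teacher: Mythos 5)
Your proposal is correct and follows essentially the same route as the paper's proof: invoke Lemma \ref{lma:d.21} for smoothness of $\varrho_0(t)=d_g(x,\alpha(t))$, compose with $\phi\in C^2(0,r_\cM)$, and then apply the chain rule together with $|\varrho_0'(t)|\leq 1$ and the identity $\varrho_0''=\varrho''/(2\varrho_0)-(\varrho')^2/(4\varrho_0^3)$ to obtain the two stated bounds. Your additional justifications (Gauss lemma/Cauchy--Schwarz for $|\varrho_0'|\leq 1$, and the observation that a point at maximal distance from $x$ lies in $\Cut(x)$ so that $\varrho_0(t)<r_\cM$) only flesh out steps the paper asserts without detail, so there is no substantive difference.
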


\begin{proof}
    Lemma \ref{lma:d.21} has shown that the function $\varrho_0(t)=d_g(x,\alpha(t))$ is smooth at any $t\in\RR$ for any fixed $x\in\cM-\Img(\alpha)\cup\Ucut(\alpha)$. In addition, one can verify that $\varrho_0(t)\in(0,r_{\cM})$ for any $t\in\RR$ and any $x\in\cM-\Img(\alpha)\cup\Ucut(\alpha)$, where $r_{\cM}=\sup_{x,y}d_g(x,y)$ is the maximum radius of $\cM$. Hence, the composite function $\varrho_\phi(t)=\phi(\varrho_0(t))$ is second-order continuously differentiable at any $t\in\RR$ for any $x\in\cM-\Img(\alpha)\cup\Ucut(\alpha)$, since $\phi$ is second-order continuously differentiable on $(0,r_{\cM})$. Additionally, by the chain rule, we can compute the first two order derivatives of $\varrho_\phi(t)$ as follows
    \$
    \varrho_\phi'(t)&=\phi'(\varrho_0(t))\cdot \varrho_0'(t),\\
    \varrho_\phi''(t)&=\phi''(\varrho_0(t))\cdot(\varrho_0'(t))^2+\phi'(\varrho_0(t))\cdot \varrho_0''(t). 
    \$
    By taking the absolute values and using the triangle inequality, we obtain that
    \#
    |\varrho_\phi'(t)|&\leq \phi'(\varrho_0(t))\cdot|\varrho_0'(t)|\notag\\
    &\leq \phi'(\varrho_0(t)),\notag\\
    |\varrho_\phi''(t)|&\leq |\phi''(\varrho_0(t))|\cdot|\varrho_0'(t)|^2+\phi'(\varrho_0(t))\cdot |\varrho_0''(t)|\notag\\
    &\leq |\phi''(\varrho_0(t))|+\phi'(\varrho_0(t))\cdot |\varrho_0''(t)|,\label{equ:|varrho_phi|-27}
    \#
    where we use the fact that $|\varrho_0'(t)|\leq 1$ and $\phi'(\cdot)$ is nonnegative. 
    Let $\varrho(t)=d_g^2(x,\alpha(t))$. Then 
    \$
    \varrho_0(t)&=\sqrt{\varrho(t)},\\
    \varrho_0'(t)&=\frac{1}{2\sqrt{\varrho(t)}}\cdot \varrho'(t),
    \$
    and 
    \$
    \varrho_0''(t)=\frac{1}{2\sqrt{\varrho(t)}}\cdot \varrho''(t)-\frac{1}{4\varrho(t)\sqrt{\varrho(t)}}\cdot (\varrho'(t))^2.
    \$
    By taking the absolute value and using the triangle inequality, we can show that
    \$
    |\varrho_0''(t)|\leq \frac{|\varrho''(t)|}{2\sqrt{\varrho(t)}}+\frac{1}{\sqrt{\varrho(t)}},
    \$
    where we use the fact that $|\varrho'(t)|\leq 2\sqrt{\varrho(t)}$. Substituting this into \eqref{equ:|varrho_phi|-27}, we obtain that
    \$
    |\varrho_\phi''(t)|\leq |\phi''(\varrho_0(t))|+\phi'(\varrho_0(t))\cdot \frac{2+|\varrho''(t)|}{2\varrho_0(t)},
    \$
    which concludes the proof. 
\end{proof}

\subsubsection{KL divergence}\label{sec:d22}

This section will present lemmas related to the KL divergence $D_{\KL}(P_{\alpha}\|P_{\alpha'})$, where $P_{\alpha}$ denotes the distribution $f(x;\alpha,\phi)$. First, in Lemma \ref{lma:d.2.2.1}, we will establish the finiteness of this KL divergence under mild conditions. 

\begin{lemma}\label{lma:d.2.2.1}
    Let $(\cM,g^{\cM})$ be a simply connected compact Riemannian symmetric space, and $\phi$ a function satisfying Condition \ref{cond:3.1}. Then the KL divergence $D_{\KL}(P_{\alpha}\|P_{\alpha'})$ is finite for all $\alpha,\alpha'\in\cM$, where $P_{\alpha}$ denotes the distribution $f(x;\alpha,\phi)$. 
\end{lemma}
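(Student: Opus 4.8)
The plan is to reduce the claim to elementary boundedness estimates that are immediate once we exploit compactness of $\cM$. First I would unfold the definitions: writing $P_\alpha$ for the density $f(x;\alpha,\phi)$ in \eqref{equ:f}, a direct computation gives
\$
D_{\KL}(P_\alpha\|P_{\alpha'})=\frac{1}{Z(\phi)}\int_{\cM}\bigl(\phi(d_g(x,\alpha'))-\phi(d_g(x,\alpha))\bigr)e^{-\phi(d_g(x,\alpha))}\dvol(x),
\$
where $Z(\phi)$ is the normalizing constant in \eqref{equ:Z}. So it suffices to show this integral is absolutely convergent and that $Z(\phi)\in(0,\infty)$.

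Next I would invoke compactness of $\cM$. By Condition \ref{cond:3.1}(1) the maximal radius $r_{\cM}=\sup_{x,y\in\cM}d_g(x,y)$ is finite when $\cM$ is compact, and by Condition \ref{cond:3.1}(2) the function $\phi$ is continuous on the compact interval $[0,r_{\cM}]$, hence bounded: there is a constant $M\geq 0$ with $0\leq\phi(r)\leq M$ for all $r\in[0,r_{\cM}]$. Consequently $e^{-\phi(d_g(x,\alpha))}\leq 1$ and $|\phi(d_g(x,\alpha'))-\phi(d_g(x,\alpha))|\leq M$ for every $x\in\cM$, so the integrand above is bounded in absolute value by $M$. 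Since $\cM$ is compact, $\vol(\cM)<\infty$, and therefore the integral is bounded in absolute value by $M\,\vol(\cM)<\infty$.

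It remains to check $Z(\phi)\in(0,\infty)$. Finiteness is exactly Proposition \ref{prop:3.1}(1) (or Proposition \ref{prop:3.2}(2)) in the compact case. Positivity follows because $e^{-\phi(d_g(x,\alpha))}\geq e^{-M}>0$ pointwise, whence $Z(\phi)\geq e^{-M}\vol(\cM)>0$. Combining the three bounds, $|D_{\KL}(P_\alpha\|P_{\alpha'})|\leq e^{M}M\,\vol(\cM)<\infty$ for all $\alpha,\alpha'\in\cM$, which proves the lemma.

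\textbf{Main obstacle.} There is essentially no obstacle here: compactness makes every quantity automatically bounded, so this lemma is the trivial compact counterpart of Lemma \ref{lma:d1}, whose noncompact analogue genuinely required the integrability hypotheses of Condition \ref{cond:5.2}. The only point deserving a word of care is recording that $\phi$ is bounded on $[0,r_{\cM}]$ and that $Z(\phi)$ is both finite (cited from Proposition \ref{prop:3.1}) and strictly positive, so that the prefactor $1/Z(\phi)$ is harmless.
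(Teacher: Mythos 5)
Your proposal is correct and follows essentially the same route as the paper: rewrite the KL divergence via the definition of $f(x;\alpha,\phi)$ and observe that on a compact $\cM$ the integrand is bounded (the paper states this in one line, while you additionally spell out the boundedness of $\phi$ on $[0,r_{\cM}]$ and the positivity and finiteness of $Z(\phi)$, which is a harmless elaboration). No gap; nothing further is needed.
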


\begin{proof}
    By definition of the KL divergence and $P_{\alpha}$, we have
    \$
    D_{\KL}(P_{\alpha}\|P_{\alpha'})=\frac{1}{Z(\phi)}\cdot\int_{\cM}(\phi(d_g(x,\alpha'))-\phi(d_g(x,\alpha )))e^{-\phi(d_g(x,\alpha ))}\dvol(x).
    \$
    Since $\cM$ is a compact manifold and the integrand in the above integral is finite, the KL divergence $D_{\KL}(P_{\alpha}\|P_{\alpha'})$ is finite for all pairs $\alpha,\alpha'\in\cM$. 
\end{proof}

In Lemma \ref{lma:d.2.2.2}, we will consider a unit-speed geodesic $\alpha(t)$ in $\cM$. The objective is to investigate the first-order asymptotic behavior of the KL divergence $D_{\KL}(P_{\alpha(0)}\|P_{\alpha(t)})$ as $t\to 0$, where $P_{\alpha}$ denotes the distribution $f(x;\alpha,\phi)$. Similar to the noncompact case in Section \ref{sec:d1.2}, the analysis here is again based on the differentiability and symmetry of the following function
\$
I(t)=\int_{\cM}\phi(d_g(x,\alpha(t)))e^{-\phi(d_g(x,\alpha(0)))}\dvol(x).
\$
This $I(t)$ relates to the KL divergence in the sense that 
\$
D_{\KL}(P_{\alpha(0)}\|P_{\alpha(t)})=\frac{1}{Z(\phi)}(I(t)-I(0)),
\$
where $Z(\phi)$ is the normalizing constant related to the density $f(x;\alpha,\phi)$.

\begin{lemma}\label{lma:d.2.2.2}
    Let $(\cM,g^{\cM})$ be a simply connected compact Riemannian symmetric space, and $\phi$ a function satisfying Condition \ref{cond:5.6}. Let $\alpha(t)$ is a unit-speed geodesic in $\cM$ with $\alpha(0)=\alpha$. Define
    \$
    I(t)=\int_{\cM}\phi(d_g(x,\alpha(t)))e^{-\phi(d_g(x,\alpha))}\dvol(x).
    \$
    Then $I(t)$ is differentiable at any $t\in\RR$ and
    \#\label{equ:compact-I'}
    I'(t)=\int_{\cM}\frac{d}{dt}\phi(d_g(x,\alpha(t)))\cdot e^{-\phi(d_g(x,\alpha))}\dvol(x).
    \#
    Moreover, $I'(0)=0$.
\end{lemma}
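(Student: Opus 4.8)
The plan is to differentiate under the integral sign via the dominated convergence theorem, and then to deduce $I'(0)=0$ from the symmetry established in Proposition \ref{prop:5.1}. First I would fix $t\in\RR$ and, for $\Delta\ne 0$, write the difference quotient
\$
\frac{I(t+\Delta)-I(t)}{\Delta}=\int_{\cM}\frac{\phi(d_g(x,\alpha(t+\Delta)))-\phi(d_g(x,\alpha(t)))}{\Delta}\,e^{-\phi(d_g(x,\alpha))}\dvol(x).
\$
The integrand converges pointwise almost everywhere on $\cM$: by Lemma \ref{lma:a2} the union of cut loci $\Ucut(\alpha)=\cup_{s\in\RR}\Cut(\alpha(s))$ has measure zero, and since $\dim\cM>1$ the image $\Img(\alpha)=\{\alpha(s):s\in\RR\}$ is also of measure zero; for every $x$ outside $\Img(\alpha)\cup\Ucut(\alpha)$, Lemma \ref{lma:d.22} shows that $s\mapsto\phi(d_g(x,\alpha(s)))$ is (second-order continuously) differentiable, so the difference quotient tends to $\frac{d}{dt}\phi(d_g(x,\alpha(t)))$. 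For domination I would use that $\phi$ is Lipschitz on $[0,r_{\cM}]$ (Condition \ref{cond:5.6}(1)) with some constant $L_0$, together with the fact that $s\mapsto d_g(x,\alpha(s))$ is $1$-Lipschitz (triangle inequality plus unit speed); hence the integrand is bounded in absolute value by $L_0\,e^{-\phi(d_g(x,\alpha))}\le L_0$, which is integrable over the compact manifold $\cM$. The dominated convergence theorem then gives the differentiability of $I$ at $t$ together with the formula \eqref{equ:compact-I'}.

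To finish, I would invoke Proposition \ref{prop:5.1} applied to the geodesic $\alpha(\cdot)$ with $\alpha(0)=\alpha$, which yields $I(t)=I(-t)$ for all $t\in\RR$. Differentiating this identity at $t=0$, which is legitimate by the first part, gives $I'(0)=-I'(0)$, hence $I'(0)=0$.

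The main obstacle is the almost-everywhere pointwise convergence step, since this is where the geometry of a simply connected compact Riemannian symmetric space enters: one needs the ``bad'' set — the points $x$ for which $s\mapsto d_g(x,\alpha(s))$ fails to be smooth — to be negligible, and this is precisely Lemma \ref{lma:a2} (the union of cut loci along a geodesic has measure zero), a statement that genuinely relies on simple connectedness. The domination step is by contrast routine, using only Lipschitz continuity of $\phi$ and compactness of $\cM$; one could alternatively dominate by $\phi'(d_g(x,\alpha(t)))$ via Lemma \ref{lma:d.22} and the boundedness of $\phi'$ from Condition \ref{cond:5.6}(2), but the Lipschitz bound is cleaner. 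Note that Condition \ref{cond:5.6}(3) plays no role here; it will be needed only for the companion second-order estimate in Lemma \ref{lma:d.2.2.3}.
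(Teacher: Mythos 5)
Your proposal is correct and follows essentially the same route as the paper: difference quotient, almost-everywhere pointwise convergence off the measure-zero set $\Img(\alpha)\cup\Ucut(\alpha)$ (via Lemma \ref{lma:a2} and the smoothness results of Section D.2.1), domination by the Lipschitz constant of $\phi$ on the compact manifold, dominated convergence, and then $I'(0)=0$ from the symmetry $I(t)=I(-t)$ of Proposition \ref{prop:5.1}. The only cosmetic difference is that you cite Lemma \ref{lma:d.22} where the paper invokes Lemma \ref{lma:d.21} together with the differentiability of $\phi$, and your observation that Condition \ref{cond:5.6}(3) is not needed here matches the paper.
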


\begin{remark}
    %Since $\phi$ is differentiable over $(0,\infty)$, the function $\phi(d_g(x,\alpha(t)))$ is differentiable at any $t\in\RR$ when $x\in\cM-\Img(\alpha)\cup\Ucut(\alpha)$.
    Lemma \ref{lma:d.21} shows that the function $\varrho_0(t)=d_g(x,\alpha(t))$ is smooth at all $t\in\RR$ for any $x\in\cM-\Img(\alpha)\cup\Ucut(\alpha)$, where $\Img(\alpha)$ and $\Ucut(\alpha)$ are defined by \eqref{equ:Img-alpha-compact} and \eqref{equ:Ucut-alpha} respectively, and the set $\Img(\alpha)\cup\Ucut(\alpha)$ is of measure zero in $\cM$. Thus, for almost all $x\in\cM$, the function $\phi(\varrho_0(t))$ is differentiable at all $t\in\RR$, since $\phi$ is differentiable over $(0,r_{\cM})$, where $r_{\cM}=\sup_{x,y}d_g(x,y)$ is the maximum radius of $\cM$. This implies that the derivative $\frac{d}{dt}\phi(\varrho_0(t))$ and thus the integrand in \eqref{equ:compact-I'} are well-defined in $\cM$ except for a null set.
    %Both $\Img(\alpha)$ and $\Ucut(\alpha)$ are of measure zero in $\cM$. 
\end{remark}

\begin{proof}
    By the proof of Lemma \ref{lma:d.2.2.1}, we can show that $I(t)$ is finite for all $t\in\RR$. To demonstrate its differentiability, we fix any $t\in\RR$ and vary $\Delta\in[-1,1]$. Then we consider the following first-order difference
    \#\label{equ:d.25}
    \frac{I(t+\Delta)-I(t)}{\Delta}=\int_{\cM}\frac{\phi(d_g(x,\alpha(t+\Delta)))-\phi(d_g(x,\alpha(t)))}{\Delta}e^{-\phi(d_g(x,\alpha))}\dvol(x)
    \#
    By Lemma \ref{lma:d.21}, the function $\varrho_0(t)=d_g(x,\alpha(t))$ is smooth at all $t\in\RR$ for any $x\in\cM-\Img(\alpha)\cup\Ucut(\alpha)$, where  $\Img(\alpha)$ and $\Ucut(\alpha)$ are defined by \eqref{equ:Img-alpha-compact} and \eqref{equ:Ucut-alpha} respectively. Additionally, since $\phi$ is differentiable on $(0,r_{\cM})$, where $r_{\cM}=\sup_{x,y}d_g(x,y)$ is the maximum radius of $\cM$, the function $\phi(\varrho_0(t))$ is differentiable at all $t\in\RR$ for any $x\in\cM-\Img(\alpha)\cup\Ucut(\alpha)$. Therefore, for any $x\in\cM-\Img(\alpha)\cup\Ucut(\alpha)$, the following convergence holds
    \$
    \lim_{\Delta\to 0}\frac{\phi(d_g(x,\alpha(t+\Delta)))-\phi(d_g(x,\alpha(t)))}{\Delta}=\frac{d}{dt}\phi(d_g(x,\alpha(t))).
    \$
    Moreover, for any $x\in\cM-\Img(\alpha)\cup\Ucut(\alpha)$, the integrand in \eqref{equ:d.25} has the following convergence
    \$
    \lim_{\Delta\to 0}\frac{\phi(d_g(x,\alpha(t+\Delta)))-\phi(d_g(x,\alpha(t)))}{\Delta}e^{-\phi(d_g(x,\alpha))}=\frac{d}{dt}\phi(d_g(x,\alpha(t)))e^{-\phi(d_g(x,\alpha))}.
    \$
    This proves the almost surely convergence of the integrand in \eqref{equ:d.25} to the integrand in \eqref{equ:compact-I'} as $\Delta\to 0$, where we use the fact that $\Img(\alpha)\cup\Ucut(\alpha)$ is of measure zero in $\cM$. Now we proceed to find a dominating function on the integrand in \eqref{equ:d.25}. Specifically, since $\phi$ is a Lipschitz function over $[0,r_{\cM}]$ with some Lipschitz constant $L$, we have
    \$
    |\phi(d_g(x,\alpha(t+\Delta)))-\phi(d_g(x,\alpha(t)))|&\leq L\cdot |d_g(x,\alpha(t+\Delta))-d_g(x,\alpha(t))|\\
    &\leq L\cdot |\Delta|,
    \$
    where the second inequality uses the triangle inequality and the condition that $\alpha(\cdot)$ is a unit-speed geodesic. Using this upper bound and the fact that $\phi$ is nonnegative, we can show that
    \$
    \left|\frac{\phi(d_g(x,\alpha(t+\Delta)))-\phi(d_g(x,\alpha(t)))}{\Delta}\right|e^{-\phi(d_g(x,\alpha))}\leq L.
    \$
    Since the manifold $\cM$ is compact, the constant function $L$ is integrable over $\cM$. By the dominated convergence theorem, we conclude that $I(t)$ is differentiable at $t$ and 
    \$
    I'(t)=\lim_{\Delta\to0}\frac{I(t+\Delta)-I(t)}{\Delta}=\int_{\cM}\frac{d}{dt}\phi(d_g(x,\alpha(t)))e^{-\phi(d_g(x,\alpha))}\dvol(x).
    \$
    Finally, by Proposition \ref{prop:5.1}, we can show that the symmetry $I(t)=I(-t)$ holds for all $t\in\RR$, which implies that $I'(0)=0$. This concludes the proof.
\end{proof}

In Lemma \ref{lma:d.2.2.3}, we again consider a unit-speed geodesic $\alpha(t)$ in $\cM$. The objective is to study the second-order behavior of the function $I(t)$ near $t=0$. 

\begin{lemma}\label{lma:d.2.2.3}
    Let $(\cM,g^{\cM})$ be a simply connected compact Riemannian symmetric space, and $\phi$ a function satisfying Condition \ref{cond:5.6}. Let $\alpha(t)$ be a unit-speed geodesic in $\cM$ with $\alpha(0)=\alpha$. Define 
    \$
    I(t)=\int_{\cM}\phi(d_g(x,\alpha(t)))e^{-\phi(d_g(x,\alpha))}\dvol(x).
    \$
    Then for sufficiently small $t$, we have $I(t)-I(0)\leq Ct^2$ for some constant $C$ independent of $t$. 
\end{lemma}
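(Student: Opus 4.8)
The plan is to transcribe the proof of the noncompact counterpart, Lemma~\ref{lma:d3}, replacing the Hessian comparison estimates by the closed-form Lie-theoretic formulae of Appendix~\ref{sec:apd-a2}. First, $I(t)$ is finite for every $t$ by Lemma~\ref{lma:d.2.2.1}, and Proposition~\ref{prop:5.1} gives the symmetry $I(t)=I(-t)$, so it suffices to bound $I(t)-I(0)$ for $t\in[0,1]$. By Lemma~\ref{lma:d.2.2.2}, $I$ is differentiable with $I'(0)=0$. Write $\varrho_\phi(s;x)=\phi(d_g(x,\alpha(s)))$ and $\varrho(s;x)=d_g^2(x,\alpha(s))$. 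By Lemmas~\ref{lma:a2}, \ref{lma:d.21} and \ref{lma:d.22}, for every $x$ outside the null set $\Img(\alpha)\cup\Ucut(\alpha)$ the map $s\mapsto\varrho_\phi(s;x)$ is $C^2$ on $\RR$, so Taylor's formula with integral remainder yields $\varrho_\phi(t;x)-\varrho_\phi(0;x)-t\,\partial_s\varrho_\phi(0;x)=\int_0^t\varrho_\phi''(\xi;x)(t-\xi)\,d\xi$. Multiplying by $e^{-\phi(d_g(x,\alpha))}$, integrating over $x\in\cM$, and using $I'(0)=0$ together with $\varrho_\phi(0;x)=\phi(d_g(x,\alpha))$, I obtain
\[
I(t)-I(0)=\int_\cM\Big(\int_0^t\varrho_\phi''(\xi;x)(t-\xi)\,d\xi\Big)e^{-\phi(d_g(x,\alpha))}\dvol(x).
\]
Taking absolute values and applying Tonelli's theorem reduces the claim to the uniform estimate $\sup_{\xi\in[0,1]}\int_\cM|\varrho_\phi''(\xi;x)|\,e^{-\phi(d_g(x,\alpha))}\dvol(x)\le C_0<\infty$, from which $|I(t)-I(0)|\le C_0\int_0^t(t-\xi)\,d\xi=\tfrac12 C_0t^2$.

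To prove the uniform estimate, bound $e^{-\phi(d_g(x,\alpha))}\le1$ (since $\phi\ge0$ on the bounded interval $[0,r_\cM]$) and invoke the pointwise bound of Lemma~\ref{lma:d.22},
\[
|\varrho_\phi''(\xi;x)|\le|\phi''(\varrho_0)|+\frac{\phi'(\varrho_0)}{\varrho_0}\Big(1+\frac{|\varrho''(\xi;x)|}{2}\Big),\qquad \varrho_0=d_g(x,\alpha(\xi)).
\]
Using the polar coordinate expression \eqref{equ:integral} centered at $\alpha(\xi)$ and Theorem~\ref{thm:2.3} (so $\lambda(r,\Theta)\le\sn_{\kappa_{\min}}^{m-1}(r)\le r^{m-1}$, as the compact symmetric space $\cM$ has nonnegative curvature), the first term contributes at most $\vol(\SSS^{m-1})\int_0^{r_\cM}|\phi''(r)|r^{m-1}\,dr$, finite by Condition~\ref{cond:5.6}(3) and manifestly independent of $\xi$. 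Since $\phi'$ is bounded on $(0,r_\cM)$ by Condition~\ref{cond:5.6}(2) and $m\ge2$, the term $\phi'(\varrho_0)/\varrho_0$ contributes at most $\|\phi'\|_\infty\vol(\SSS^{m-1})\int_0^{r_\cM}r^{m-2}\,dr<\infty$, again uniformly in $\xi$.

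The remaining term, involving $\tfrac12|\varrho''(\xi;x)|=|\Hess_{\alpha(\xi)}h_x(\dot\alpha(\xi),\dot\alpha(\xi))|$ with $h_x=\tfrac12 d_g^2(x,\cdot)$, is the main obstacle, because the Hessian of the squared distance degenerates as $x$ approaches the cut locus of $\alpha(\xi)$. Here I would exploit the structure of simply connected compact symmetric spaces: by Lemma~\ref{lma:a2} the set $\Ucut(\alpha)$ is null, so it suffices to integrate over $x\notin\Img(\alpha)\cup\Ucut(\alpha)$; for such $x$, take the base point of $\cM\simeq G/K$ to be $\alpha(\xi)$, write $x=\varphi(s,a)$ as in \eqref{equ:a6} with $a\in C_+$, and apply the Hessian formula \eqref{equ:a9}, whose eigenvalues are $1$ and $\{\lambda(a)\cot\lambda(a)\}_{\lambda\in\Delta_+}$, giving $|\Hess_{\alpha(\xi)}h_x(v,v)|\le1+\sum_{\lambda\in\Delta_+}\lambda(a)\,|\cot\lambda(a)|$ for every unit vector $v$, in particular for $\dot\alpha(\xi)$. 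Inserting this into $\int_\cM\frac{\phi'(d_g(x,\alpha(\xi)))}{d_g(x,\alpha(\xi))}\,|\Hess_{\alpha(\xi)}h_x(\dot\alpha(\xi),\dot\alpha(\xi))|\dvol(x)$ and evaluating by the integral formula \eqref{equ:a7}, each singular factor $\lambda(a)|\cot\lambda(a)|=\lambda(a)|\cos\lambda(a)|/\sin\lambda(a)$ is absorbed into the Jacobian $\prod_{\mu\in\Delta_+}(\sin\mu(a))^{m_\mu}$, leaving $(\sin\lambda(a))^{m_\lambda-1}$, which is bounded on the closure of $C_+$ because every multiplicity $m_\lambda\ge1$; and since $m\ge2$ forces $\prod_\mu(\sin\mu(a))^{m_\mu}$ to vanish to order $m-\dim\mathfrak{a}$ at $a=0$, in polar coordinates on $\mathfrak{a}$ one is left with $\int_0^{r_\cM}\phi'(r)r^{m-2}\,dr<\infty$. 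The resulting bound depends only on the root data of $\cM$ and on $\|\phi'\|_\infty$, hence is independent of $\xi$ and of the direction $\dot\alpha(\xi)$; combining it with the previous two contributions furnishes $C_0$ and completes the proof. This control of the near-cut-locus singularity, via Lemma~\ref{lma:a2} and the explicit Hessian \eqref{equ:a9}, is precisely what confines the argument to simply connected compact symmetric spaces.
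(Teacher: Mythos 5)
Your proposal is correct and follows essentially the same route as the paper's proof: symmetry plus $I'(0)=0$, Taylor with integral remainder, Tonelli, the pointwise bound of Lemma \ref{lma:d.22}, the $|\phi''|$ term via polar coordinates and volume comparison, and the Hessian term via Lemma \ref{lma:a2} together with the integral formula \eqref{equ:a7} and the Hessian formula \eqref{equ:a9}, absorbing the $\cot$ singularity into the Jacobian and the $1/d_g$ singularity into its vanishing at $a=0$. The only (cosmetic) difference is that the paper makes the $\xi$-independence explicit through an isometry argument ($J(\xi)=J(0)$), whereas you achieve the same by recentering the coordinates at $\alpha(\xi)$ and noting the bound depends only on the root data and $\|\phi'\|_\infty$.
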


\begin{proof}
    By the proof of Lemma \ref{lma:d.2.2.1}, we can show that $I(t)$ is finite for all $t\in\RR$. By Proposition \ref{prop:5.1}, we know that the symmetry $I(t)=I(-t)$ holds, and thus it suffices to consider the case $t\geq 0$. Since we only examine the behavior of $I(t)$ near $0$, we assume without loss of generality that $t\leq 1$. By Lemma \ref{lma:d.2.2.2}, we know that $I'(0)=0$, where $I'(t)$ is given by \eqref{equ:compact-I'}. Therefore, we can rewrite $I(t)-I(0)$ as follows:
    \#
    &I(t)-I(0) = I(t)-I(0)-tI'(0)\notag\\
    =\ &\int_{\cM}\left(\phi(d_g(x,\alpha(t)))-\phi(d_g(x,\alpha))-t\cdot \left.\frac{d}{dt}\right|_{t=0}\phi(d_g(x,\alpha(t)))\right)e^{-\phi(d_g(x,\alpha))}\dvol(x).\notag
    \#
    By taking the absolute value, we can obtain the following upper bound
    \#
    &|I(t)-I(0)|\notag\\
    \leq\ & \int_{\cM}\left|\phi(d_g(x,\alpha(t)))-\phi(d_g(x,\alpha))-t\cdot \left.\frac{d}{dt}\right|_{t=0}\phi(d_g(x,\alpha(t)))\right|\cdot e^{-\phi(d_g(x,\alpha))}\dvol(x)\notag\\
    \leq\ & \int_{\cM}\left|\phi(d_g(x,\alpha(t)))-\phi(d_g(x,\alpha))-t\cdot \left.\frac{d}{dt}\right|_{t=0}\phi(d_g(x,\alpha(t)))\right|\dvol(x),\label{equ:compact-integral-upper bound}
    \#
    where the second inequality uses the fact that $\phi$ is nonnegative. Our current objective is to establish an upper bound on the integral in \eqref{equ:compact-integral-upper bound}. Since $\phi$ is second-order continuously differentiable over $(0,r_{\cM})$, where $r_{\cM}=\sup_{x,y}d_g(x,y)$ is the maximum radius of $\cM$, the function $\phi(d_g(x,\alpha(t)))$ is second-order continously differentiable at $t\in\RR$ for any fixed $x\in\cM-\Img(\alpha)\cup\Ucut(\alpha)$. Here we use the chain rule and Lemma \ref{lma:d.21}, and $\Img(\alpha)$ and $\Ucut(\alpha)$ are given by \eqref{equ:Img-alpha-compact} and \eqref{equ:Ucut-alpha} respectively. Therefore, by using the Taylor theorem, we have 
    \$
    &\phi(d_g(x,\alpha(t)))-\phi(d_g(x,\alpha))-t\cdot\left.\frac{d}{dt}\right|_{t=0}\phi(d_g(x,\alpha(t)))\\
    =\ &\int_0^t\left(\left.\frac{d^2}{dt^2}\right|_{t=\xi}\phi(d_g(x,\alpha(t)))\right)(t-\xi)d\xi
    \$
    for any $x\in\cM-\Img(\alpha)\cup\Ucut(\alpha)$. By taking the absolute value, we obtain the following upper bound
    \$
    &\left|\phi(d_g(x,\alpha(t)))-\phi(d_g(x,\alpha))-t\cdot\left.\frac{d}{dt}\right|_{t=0}\phi(d_g(x,\alpha(t)))\right|\\
    \leq\ &\int_0^t\left|\left.\frac{d^2}{dt^2}\right|_{t=\xi}\phi(d_g(x,\alpha(t)))\right|(t-\xi)d\xi
    \$
    for any $x\in\cM-\Img(\alpha)\cup\Ucut(\alpha)$. Substituting this into \eqref{equ:compact-integral-upper bound} and using the fact that $\Img(\alpha)\cup\Ucut(\alpha)$ is of measure zero in $\cM$, we obtain the following inequality
    \$
    |I(t)-I(0)|\leq \int_\cM\int_0^t\left|\left.\frac{d^2}{dt^2}\right|_{t=\xi}\phi(d_g(x,\alpha(t)))\right|(t-\xi)d\xi\dvol(x).
    \$
    By using the Tonelli's theorem, we obtain that
    \#\label{equ:d.30}
    |I(t)-I(0)|\leq \int_0^t(t-\xi)d\xi\cdot \int_\cM\left|\left.\frac{d^2}{dt^2}\right|_{t=\xi}\phi(d_g(x,\alpha(t)))\right|\dvol(x).
    \#
    It suffices to  establish a suitable upper bound on the above integral. By Lemma \ref{lma:d.22},  it holds for any $x\in\cM-\Img(\alpha)\cup\Ucut(\alpha)$ that
    \$
    \left|\left.\frac{d^2}{dt^2}\right|_{t=\xi}\phi(d_g(x,\alpha(t)))\right|\leq |\phi''(d_g(x,\alpha(\xi)))|+\frac{\phi'(d_g(x,\alpha(\xi)))}{d_g(x,\alpha(\xi))}\cdot (1+\frac{|\varrho''(\xi)|}{2}),
    \$
    where $\varrho(t)=d_g^2(x,\alpha(t))$. Therefore,
    \#
    &\int_{\cM}\left|\left.\frac{d^2}{dt^2}\right|_{t=\xi}\phi(d_g(x,\alpha(t)))\right|\dvol(x)\notag\\
    \leq \ &\int_{\cM}\left(|\phi''(d_g(x,\alpha(\xi)))|+\frac{\phi'(d_g(x,\alpha(\xi)))}{d_g(x,\alpha(\xi))}\cdot (1+\frac{|\varrho''(\xi)|}{2})\right)\dvol(x)\eqqcolon J(\xi).\label{equ:def-J-xi-32}
    \#
    Given any $\xi\in\RR$, there exists an isometry $F\in\Iso(\cM)$ such that $F^{-1}(\alpha(t))=\alpha(t-\xi)$, since $\cM$ is a Riemannian symmetric space and $\alpha(\cdot)$ is a geodesic in $\cM$. Then 
    \#
    &J(\xi)\notag\\
    = &\int_{\cM}\left(|\phi''(d_g(x,\alpha(\xi)))|+\frac{\phi'(d_g(x,\alpha(\xi)))}{d_g(x,\alpha(\xi))}\cdot (1+\left|\left.\frac{d^2}{dt^2}\right|_{t=\xi}\frac{d_g^2(x,\alpha(t))}{2}\right|)\right)\dvol(x)\notag\\
    \overset{({\rm a})}{=} &\int_{\cM}\left(|\phi''(d_g(F (x),\alpha(\xi)))|+\frac{\phi'(d_g(F (x),\alpha(\xi)))}{d_g(F(x),\alpha(\xi))}\cdot (1+\left|\left.\frac{d^2}{dt^2}\right|_{t=\xi}\frac{d_g^2(F(x),\alpha(t))}{2}\right|)\right)\dvol(x)\notag\\
    \overset{({\rm b})}{=}  &\int_{\cM}\left(|\phi''(d_g(x,\alpha(0)))|+\frac{\phi'(d_g(x,\alpha(0)))}{d_g(x,\alpha(0))}\cdot (1+\left|\left.\frac{d^2}{dt^2}\right|_{t=\xi}\frac{d_g^2(x,\alpha(t-\xi))}{2}\right|)\right)\dvol(x)\notag\\
    \overset{({\rm c})}{=}  &\int_{\cM}\left(|\phi''(d_g(x,\alpha(0)))|+\frac{\phi'(d_g(x,\alpha(0)))}{d_g(x,\alpha(0))}\cdot (1+\left|\left.\frac{d^2}{dt^2}\right|_{t=0}\frac{d_g^2(x,\alpha(t))}{2}\right|)\right)\dvol(x)\notag\\
    \overset{({\rm d})}{=}  &J(0),\label{equ:J-xi-32}
    \#
    where (a) uses the property \eqref{equ:homo} of the integral, (b) uses the fact that 
    \$
    d_g(F(x),\alpha(t))=d_g(x,F^{-1}(\alpha(t)))=d_g(x,\alpha(t-\xi)),\quad\forall t\in\RR,
    \$
    (c) uses the change of variables and the chain rule, and (d) uses the definition of $J(0)$. The relationship \eqref{equ:J-xi-32} demonstrates that  $J(\xi)$ is a quantity independent of $\xi$, and thus one only needs to focus on the quantity $J(0)$. By definition, $J(0)=J_1+J_2$, where
    \$
    J_1&=\int_{\cM}|\phi''(d_g(x,\alpha))|\dvol(x),\\
    J_2&=\int_{\cM}\frac{\phi'(d_g(x,\alpha))}{d_g(x,\alpha)}\cdot \left(1+\left|\left.\frac{d^2}{dt^2}\right|_{t=0}\frac{d_g^2(x,\alpha(t))}{2}\right|\right)\dvol(x).
    \$
    In the following, we will analyze $J_1$ and $J_2$ separately. 
    \begin{itemize}
        \item {\bf Analyzing $J_1$.} By using the polar coordinate expression \eqref{equ:integral} of the integral and the volume comparison theorem, Theorem \ref{thm:2.3}, we obtain that
        \$
        J_1&\leq \vol(\SSS^{m-1})\cdot \int_0^{r_{\cM}}|\phi''(r)|r^{m-1}dr\\
        &<\infty,
        \$
        where the first inequality uses the observation that the sectional curvatures of $\cM$ are nonnegative and the second inequality uses Condition \ref{cond:5.6}.  
        \item {\bf Establishing that $J_2$ is finite using Lie theory.} The second-order derivative 
        \$
        \left|\left.\frac{d^2}{dt^2}\right|_{t=0}\frac{d_g^2(x,\alpha(t))}{2}\right|
        \$ 
        may diverge when $x$ approaches the cut locus $\Cut(\alpha)$ of $\alpha$ in $\cM$, posing a significant challenge to the analysis of $J_2$. To address this challenge, we need to use the Lie theory introduced in Section \ref{sec:apd-a2}. Specifically, we will treat $\alpha$ as the reference point $o$ used in Section \ref{sec:apd-a2}, and we will adopt the same set of notations in Section \ref{sec:apd-a2}. For each $x\in\cM$, we shall use the parametrization $x=\varphi(s,a)$ as in \eqref{equ:a6}, where $s\in S$, $a\in \bar C_+$, and $S$ and $\bar C_+$ are bounded sets defined in Section \ref{sec:a22}. Then we can establish the finiteness of $J_2$ by combining the following steps. 
        \begin{enumerate}[leftmargin=*]
            \item[a)] By the integral formula \eqref{equ:a7}, we can rewrite the integral of any function $\ell(\cdot)$ on $\cM$ in the following way: 
        \$
        \int_{\cM}\ell(x)\dvol(x)=\int_{C_+}\int_S\ell (s,a)\prod_{\lambda\in\Delta_+}(\sin \lambda(a))^{m_\lambda}dad\omega(s),
        \$
        where $\ell(s,a)=\ell\circ\varphi(s,a)$, $\Delta_+$ is the set of certain positive roots introduced in Section \ref{sec:a21}, and $m_\lambda\geq 1$ is the multiplicity of the root $\lambda\in\Delta_+$. This integral formula is essential in proving the integrability of $J_2$.  

            \item[b)] Since $\alpha(\cdot)$ is a geodesic in $\cM$, the second-order derivative 
            \$
            \left.\frac{d^2}{dt^2}\right|_{t=0}\frac{d_g^2(x,\alpha(t))}{2}=H_\alpha(x)(\dot\alpha(0),\dot\alpha(0)),
            \$
            where $H_\alpha(x)$ is the Hessian of the function $h_x(y)=\frac{d_g^2(x,y)}{2}$ at $\alpha$, and $\dot\alpha(0)$ is the differential of $\alpha(\cdot)$ at $t=0$. Then by using the Hessian formula \eqref{equ:a9}  and the fact that $\alpha(\cdot)$ is a unit-speed geodesic, we can show that
            \$
            \left|\left.\frac{d^2}{dt^2}\right|_{t=0}\frac{d_g^2(x,\alpha(t))}{2}\right|\leq 1+\sum_{\lambda\in\Delta_+}\lambda(a)\cot \lambda(a),
            \$
            where $x=\varphi(s,a)$, $\Delta_+$ is the set of certain positive roots introduced in Section \ref{sec:a21}, and $\cot$ is the cotangent function. When $x$ approaches the cut locus $\Cut(\alpha)$ of $\alpha$,  the quantity $\lambda(a)$ for some $\lambda\in\Delta_+$ will converge to $\pi$ and thus $\lambda(a)\cot\lambda(a)$ will diverge. This poses the aforementioned challenge to the analysis of $J_2$. Fortunately, we can address this issue by using the integral formula in part a), which we will discuss in part d).   

            \item[c)] By using the formula of the Riemannian metric \eqref{equ:a4}, we can obtain that 
            \$
            d_g^2(x,\alpha)=\sum_{\lambda\in\Delta_+}m_\lambda(\lambda(a))^2,
            \$
            where $x=\varphi(s,a)$. It implies that $\lambda(a)\leq d_g(x,\alpha)$ for any $\lambda\in\Delta_+$, where $x=\varphi(s,a)$.  

            \item[d)] Finally, by combining the above properties a) and b) and the fact that $\phi'(\cdot)$ is bounded by some constant $L$ within $(0,r_{\cM})$, we obtain that
            \#
            J_2\leq \int_{C_+}\int_S\frac{L}{d_g(x,\alpha)}\left(2+\sum_{\lambda\in\Delta_+}\lambda(a)\cot\lambda(a)\right)\prod_{\lambda\in\Delta_+}(\sin\lambda(a))^{m_\lambda}dad\omega(s).\label{equ:J_2-int}
            \#
            By the above property c), we find that the integrand in \eqref{equ:J_2-int} is bounded. Therefore, since $C_+$ and $S$ are bounded regions, we conclude that $J_2$ is integrable. 
        \end{enumerate}
        
    \end{itemize}
    By combining the above two cases, we obtain that $J(0)=J_1+J_2<\infty$. Then by combining this property with \eqref{equ:d.30}, \eqref{equ:def-J-xi-32}, \eqref{equ:J-xi-32}, we obtain that
    \$
    |I(t)-I(0)|\leq J(0)\cdot\int_0^t(t-\xi)d\xi\leq Ct^2,
    \$
    where $C>0$ is a finite constant independent of $t$. This proves the lemma. 
\end{proof}

\section{Technical proofs for Section \ref{sec:6-temperature}}

This section provides proofs for the results in Section \ref{sec:6-temperature}.

\subsection{Proof of Proposition \ref{prop:6.1}}

\begin{proof}
    Given $n$ independent samples $\{x_i\}_{i=1}^n$ drawn from the distribution $f(x;\alpha,\beta,\phi)$, the MLEs for $\alpha$ and $\beta$ are given by 
    \$
    (\hat\alpha^{\MLE},\hat\beta^{\MLE})=\argmax_{\alpha,\beta}\frac{1}{Z(\beta,\phi)^n}e^{-\beta \sum_{i=1}^n\phi(d_g(x_i,\alpha))},
    \$
    where the maximum of $\alpha$ is taken over $\cX$ since the true parameter is in $\cX$.
    For any $\beta>0$, the MLE of $\alpha$ is given by
    \#\label{equ:e1}
    \hat\alpha^{\MLE}=\argmin_{\alpha\in\cX}\sum_{i=1}^n\phi(d_g(x_i,\alpha)),
    \#
    which is independent of $\beta$. Therefore, \eqref{equ:e1} gives the MLE of $\alpha$ when $\beta$ is unknown. Moreover, the MLE of $\beta$ is given by
    \$
    \hat\beta=\argmin_{\beta}\beta\sum_{i=1}^n\phi(d_g(x_i,\hat\alpha^{\MLE}))+n\log Z(\beta,\phi),
    \$
    where $\hat \alpha$ is given by \eqref{equ:e1}. 
\end{proof}

\subsection{Proof of Theorem \ref{thm:6.2}}

%\begin{theorem}
    
%\end{theorem}

\begin{proof}
    We will prove this theorem in three steps. First, we construct an $\epsilon$-net $\cS_{\alpha}=\{\alpha_i\}_{i=1}^{N_{\alpha}}$ of the set $\cX=\cB_{\cM}(\alpha^*,D)$ and an $\epsilon$-net $\cS_{\beta}=\{\beta_i\}_{i=1}^{N_{\beta}}$ of the bounded interval $[\beta_{\min},\beta_{\max}]$. By Lemma \ref{lma:c1}, the $\epsilon$-net $\cS_{\alpha}$ can be chosen such that $N_{\alpha}\lesssim\epsilon^{-m}$, and by conventional knowledge, the $\epsilon$-net $\cS_{\beta}$ can be chosen such that $N_{\beta}\lesssim\epsilon^{-1}$, where $\lesssim$ omits constants independent of $\epsilon$.

    Next, we use $\cS_{\alpha}$ and $\cS_{\beta}$ to construct the following set
    \#\label{equ:S_F-sec-6}
    \cS_{\cF}=\{f(x;\alpha_i,\beta_j,\phi)\mid\alpha_i\in\cS_{\alpha},\beta_j\in\cS_{\beta}\}.
    \#
    The cardinality of the set $\cS_{\cF}$ is controlled by 
    \$
    |\cS_{\cF}|\leq|\cS_{\alpha}|\cdot|\cS_{\beta}|\lesssim \epsilon^{-(m+1)},
    \$
    where $\lesssim$ omits constants independent of $\epsilon$. For any $\alpha\in\cX$ and $\beta\in[\beta_{\min},\beta_{\max}]$, there are $\alpha_i\in\cS_{\alpha}$ and $\beta_{j}\in\cS_{\beta}$ such that $d_g(\alpha_i,\alpha)\leq\epsilon$ and $|\beta-\beta_j|\leq \epsilon$. 
    By Lemma \ref{lma:e1} and \ref{lma:e2}, we have
    \$
    d_{\infty}(f(x;\alpha,\beta,\phi),f(x;\alpha_i,\beta,\phi))&\leq C_1d_g(\alpha,\alpha_i)\leq C_1\epsilon,\\
    d_{\infty}(f(x;\alpha_i,\beta,\phi),f(x;\alpha_i,\beta_j,\phi))&\leq C_2|\beta-\beta_j|\leq C_2\epsilon, 
    \$
    where $C_1,C_2$ are finite constants independent of $\alpha,\alpha_i,\beta,\beta_i$, and $\epsilon$. Then by the triangle inequality,  we have
    \$
    &d_{\infty}(f(x;\alpha,\beta,\phi),f(x;\alpha_i,\beta_j,\phi))\\
    \leq\ &d_{\infty}(f(x;\alpha,\beta,\phi),f(x;\alpha_i,\beta,\phi))+d_{\infty}(f(x;\alpha_i,\beta,\phi),f(x;\alpha_i,\beta_j,\phi))\\
    \leq\ &C\epsilon.
    \$
    where $C$ is a constant independent of $\alpha,\alpha_i,\beta,\beta_i$, and $\epsilon$. This demonstrates that $\cS_{\cF}$ is a $C\epsilon$-net of $\cF$ and thus
    \$
    \cN(C\epsilon,\cF,d_{\infty})\leq|\cS_{\cF}|\lesssim \epsilon^{-(m+1)}.
    \$
    where $\lesssim$ omits constants independent of $\epsilon$. By rescaling $\epsilon$, we obtain the following entropy estimate:
    \$
    \log\cN(\epsilon,\cF,d_{\infty})\lesssim \log(\frac{1}{\epsilon}), 
    \$
    where $\lesssim$ omits constants independent of $\epsilon$. This proves the first entropy inequality in the theorem. 

    To proceed, we analyze the bracketing entropy $\log\cN_B(\epsilon,\cF,d_1)$ and $\log\cN_B(\epsilon,\cF,d_h)$.
    %Let $\{f_i\}_{i=1}^N$ be an $\eta$-net of $\cF$ under the distance $d_{\infty}$.
    Observe that for any $f(x;\alpha,\beta,\phi)\in\cF$, it holds that
    \#\label{equ:upper-bound-1-e}
    0\leq f(x;\alpha,\beta,\phi)\leq \frac{1}{Z(\beta_{\max},\phi)},\quad\forall x\in\cM,
    \#
    where we use the facts that $\phi$ is nonnegative and
    \$
    Z(\beta,\phi)\geq Z(\beta_{\max},\phi),\quad\forall \beta\in[\beta_{\min},\beta_{\max}].
    \$
    Additionally, if $d_g(x,\alpha^*)\geq 2D$, then for all $\alpha\in\cB_{\cM}(\alpha^*,D)$, it holds that
    \$
    d_g(x,\alpha)\geq d_g(x,\alpha^*)-d_g(\alpha,\alpha^*)\geq \frac{d_g(x,\alpha^*)}{2}.
    \$
    This implies that for all $x$ with $d_g(x,\alpha^*)\geq 2D$ and $\beta\in[\beta_{\min},\beta_{\max}]$, 
    \#\label{equ:upper-bound-2}
    f(x;\alpha,\beta,\phi)\leq \frac{1}{Z(\beta_{\max},\phi)}e^{-\beta_{\min}\phi(d_g(x,\alpha^*)/2)},
    \#
    where we use the fact that $\phi$ is increasing and nonnegative. Combining \eqref{equ:upper-bound-1-e} with \eqref{equ:upper-bound-2}, we conclude that
    \$
    H(x)=\left\{
    \begin{array}{ll}
        \frac{1}{Z(\beta_{\max},\phi)}e^{-\beta_{\min}\phi(d_g(x,\alpha^*)/2)}, & \textnormal{if }d_g(x,\alpha^*)>2D, \\
        \frac{1}{Z(\beta_{\max},\phi)}, & \textnormal{otherwise},
    \end{array}\right.
    \$
    is an envelop for $\cF$, that is, $f(x;\alpha,\beta,\phi)\leq H(x)$ for all $x\in\cM$, $\alpha\in\cX$, and $\beta\in[\beta_{\min},\beta_{\max}]$. Then we construct brackets $[l_i,u_i]$ as follows:
    \$
    l_i=\max\{f_i-\eta,0\},\quad u_i=\min\{f_i+\eta,H\},
    \$
    where $\{f_i\}_{i=1}^N$ is an $\eta$-net of $\cF$ under $d_\infty$. It is obvious that $\cF\subseteq\cup_{i=1}^N[l_i,u_i]$ and 
    \$
    u_i-l_i\leq \min\{2\eta,H\}. 
    \$
    Therefore, for any $B>0$, 
    \#\label{equ:upperbound-3}
    d_1(u_i,l_i)\coloneqq \int_{\cM}(u_i-l_i)\dvol\leq 2\eta\cdot\vol(\cB_{\cM}(\alpha^*,B))+\int_{d_g(x,\alpha^*)>B}H(x)\dvol(x).
    \#
    To provide an upper bound on this quantity, we consider the following two cases separately.
    \begin{itemize}
        \item {\bf Case 1: $\cM$ is compact.} In this case, by taking $B>\sup_{x\in\cM}d_g(x,\alpha^*)$, we obtain that
        \$
        d_1(u_i,l_i)\leq 2\eta\cdot\vol(\cM).
        \$
        This implies that  for sufficiently small $\eta$,
        \$
        \cN_{B}(2\eta\cdot\vol(\cM),\cF,d_1)\leq \cN(\eta,\cF,d_{\infty})\lesssim \eta^{-(m+1)},
        \$
        where we use the entropy estimate of $\cF$ under $d_{\infty}$. By taking $\epsilon=2\eta\cdot\vol(\cM)$, we obtain that for sufficiently small $\epsilon$,
        \$
        \log\cN_{B}(\epsilon,\cF,d_1)\lesssim\log(\frac{1}{\epsilon}),
        \$
        where $\lesssim$ omits constants independent of $\epsilon$. Since  $\cN_B(\sqrt{\epsilon},\cF,d_h)\leq\cN_B(\epsilon,\cF,d_1)$, we have 
        \$
        \log\cN_B(\epsilon,\cF,d_h)\lesssim \log(\frac{1}{\epsilon})
        \$ 
        for sufficiently small $\epsilon$, where $\lesssim$ omits constants independent of $\epsilon$. 
        %This proves the theorem in the case of a compact $\cM$. 
        
        \item {\bf Case 2: $\cM$ is noncompact.} In this case, we take $B>2D$. The upper bound in \eqref{equ:upperbound-3} can be rewritten as
        \#\label{equ:e6}
         d_1(u_i,l_i) 
        \leq 2\eta\cdot \vol(\cB_{\cM}(\alpha^*,B))+\frac{1}{Z(\beta_{\max},\phi)}\int_{d_g(x,\alpha^*)>B}e^{-\beta_{\min}\phi(d_g(x,\alpha^*)/2)}\dvol(x).
        \#
        Using the polar coordinate expression \eqref{equ:integral} of the integral, the volume comparison theorem, Theorem \ref{thm:2.3}, and the same argument in \eqref{equ:4.9}, we obtain that
        \#\label{equ:e7}
        \vol(\cB_{\cM}(\alpha^*,B))&\leq C_1e^{\sqrt{-\kappa_{\min}}(m-1)B},
        \#
        where $C_1$ is a constant independent of $B$, $\kappa_{\min}<0$ is a lower bound on the sectional curvatures of $\cM$, and $m$ is the dimension of $\cM$. Similarly, by using the polar coordinate expression \eqref{equ:integral} and Theorem \ref{thm:2.3}, we have 
        \$
        \int_{d_g(x,\alpha^*)>B}e^{-\beta_{\min}\phi(d_g(x,\alpha^*)/2)}\dvol(x)  
        \leq \int_B^\infty e^{-\beta_{\min}\phi(r/2)}\sn_{\kappa_{\min}}^{m-1}(r)dr\cdot\vol(\SSS^{m-1}),
        \$
        where  $\sn_{\kappa_{\min}}(\cdot)$ is given by \eqref{equ:sn}. Combining this with \eqref{equ:e6} and \eqref{equ:e7}, we obtain that
        \$
        d_1(u_i,l_i)\leq 2C_1\eta e^{\sqrt{-\kappa_{\min}}(m-1)B}+\frac{\vol(\SSS^{m-1})}{Z(\beta_{\max},\phi)}\int_B^{\infty}e^{-\beta_{\min}\phi(r/2)}\sn_{\kappa_{\min}}^{m-1}(r)dr.
        \$
        Taking $B=B_{\eta}=\frac{\log(1/\eta)}{2(m-1)\sqrt{-\kappa_{\min}}}$, and using Condition \ref{cond:6.3}, we conclude that for all sufficiently small $\eta$, 
        \$
        d_1(u_i,l_i)\leq 2C_1\eta^{1/2}+\frac{c_1\vol(\SSS^{m-1})}{Z(\beta_{\max},\phi)}\eta^{c_2}\leq C\eta^c,
        \$
        where $c_1,c_2,C,c>0$ are constants independent of $\eta$. This implies that for sufficiently small $\eta$, 
        \$
        \cN_{B}(C\eta^c,\cF,d_1)\leq \cN(\eta,\cF,d_\infty)\lesssim\eta^{-(m+1)},
        \$
        where $\lesssim$ omits constants independent of $\eta$. By taking $\epsilon=C\eta^c$, we obtain that for sufficiently small $\epsilon$,
            \$
        \log\cN_B(\epsilon,\cF,d_1)\lesssim \log(\frac{1}{\epsilon}),
        \$
        where $\lesssim$ omits constants independent of $\epsilon$.
        Again, since $\cN_B(\sqrt{\epsilon},\cF,d_h)\leq\cN_B(\epsilon,\cF,d_1)$, we have 
        \$
        \log\cN_B(\epsilon,\cF,d_h)\lesssim \log(\frac{1}{\epsilon})
        \$ 
        for sufficiently small $\epsilon$, where $\lesssim$ omits constants independent of $\epsilon$. 
    \end{itemize}
    The proof of the theorem is complete by combining the above two cases. 
\end{proof}

\subsection{Proof of Corollary \ref{corollary:6.4}}

\begin{proof}
    This follows from empirical process theory and the bracketing entropy estimates in Theorem \ref{thm:6.2}. Specifically, by the bracketing entropy estimates in Theorem \ref{thm:6.2}, the bracketing entropy integral satisfies that
    \$
    J_B(\delta,\cF,d_h)\coloneqq\int_0^\delta \sqrt{\log\cN_{B}(u,\cF,d_h)}du\lesssim\int_0^\delta \log^{1/2}(\frac{1}{u})du,
    \$
    for sufficiently small $\delta$, where $\lesssim$ omits constants independent of $\delta$. By Theorem 2,  \citet{wong1995probability}, we obtain that with probability at least $1-ce^{-c\log^2n}$, 
    \$
    d_h(f(x;\alpha^{\tr},\beta^{\tr},\phi),f(x;\hat\alpha^{\MLE},\hat\beta^{\MLE},\phi))\lesssim\frac{\log n}{\sqrt{n}},
    \$
    where $\lesssim$ omits constants independent of $n$ and $c$ is a universal constant. This proves \eqref{equ:6.3}.  Since the $L^1$ distance is upper bounded by twice the hellinger distance, the inequality \eqref{equ:6.4}  follows from the inequality \eqref{equ:6.3}. This proves the corollary.  
\end{proof}

%\subsection{Distribution estimation}

%\begin{corollary}
    
%\end{corollary}

%\begin{proof}
    
%\end{proof}

\subsection{Proof of Theorem \ref{thm:6.4}}

\begin{proof}
    By Lemma \ref{lma:e6}, we have
    \#\label{equ:6-lemma}
    \lim_{\epsilon\to0}\inf\left\{\left.\frac{D(\alpha,\beta;\alpha_0,\beta_0)}{d_g(\alpha,\alpha_0)+|\beta-\beta_0|}\ \right|\  d_g(\alpha,\alpha_0)<\epsilon,|\beta-\beta_0|<\epsilon\right\}>0,
    \#
    where 
    \$
    D(\alpha,\beta;\alpha_0,\beta_0)=d_1(f(x;\alpha,\beta,\phi),f(x;\alpha_0,\beta_0,\phi)).
    \$
    Therefore, there exists a positive constant $\epsilon_0$ such that for all $\alpha,\beta$ with $d_g(\alpha,\alpha_0)<\epsilon_0$ and $|\beta-\beta_0|<\epsilon_0$, the following inequality holds:
    \#\label{equ:6-property}
    d_g(\alpha,\alpha_0)+|\beta-\beta_0|\leq C_1\cdot D(\alpha,\beta;\alpha_0,\beta_0),
    \#
    where $C_1>0$ is a constant independent of $\alpha$ and $\beta$. Then to prove the theorem, we show that
    \#\label{equ:6-assumption}
    \inf_{\alpha,\beta\in\cA}D(\alpha,\alpha_0)>0,
    \#
    where $\cA$ is a compact set given by
    \$
    \cA=\left\{(\alpha,\beta)\in\cX\times [\beta_{\min},\beta_{\max}]\mid d_g(\alpha,\alpha_0)\geq \epsilon_0\textnormal{ or }|\beta-\beta_0|\geq \epsilon_0\right\}.
    \$
    Suppose on the contrary that 
    \$
    \inf_{\alpha,\beta\in\cA}D(\alpha,\alpha_0)=0.
    \$
    Then there exists a sequence $\{\alpha_n,\beta_n\}_{n=1}^{\infty}\subseteq\cA$ such that
    \$
    D(\alpha_n,\beta_n;\alpha_0,\beta_0)\to 0.
    \$
    Since $\cA$ is a compact set, we can apply Lemma \ref{lma:e1} to obtain that $d_g(\alpha_n,\alpha_0)\to0$ and $|\beta_n-\beta_0|\to0$. This contradicts with the choices of $\alpha_n$ and $\beta_n$, and thus we prove the result \eqref{equ:6-assumption}. Combining this with \eqref{equ:6-property}, we immediately obtain the theorem.  
\end{proof}

%\begin{theorem}
    
%\end{theorem}

%\begin{proof}
    
%\end{proof}

%\begin{corollary}
    
%\end{corollary}
%\begin{proof}
    
%\end{proof}

\subsection{Technical lemmas}

This section collects technical lemmas for this section. Section \ref{sec:e51} derives the Lipschitz properties of the distribution $f(x;\alpha,\beta,\phi)$ with respect to its parameters. Section \ref{sec:e52} presents the parameter identifiability of $\alpha,\beta$ in the distribution $f(x;\alpha,\beta,\phi)$. Section \ref{sec:e53} proves a claim used in the proof of Theorem \ref{thm:6.4}. Section \ref{sec:e54} present technical lemmas used in Section \ref{sec:e53}.

\subsubsection{Lipschitz properties}\label{sec:e51}

Lemma \ref{lma:e1} establishes the Lipschitz property of the distribution $f(x;\alpha,\beta,\phi)$ with respect to $\alpha$. 

\begin{lemma}\label{lma:e1}
    Assume $(\cM,g^{\cM})$ is a Riemannian homogeneous space, and $\phi$ is a function satisfying Conditions  \ref{cond:6.3} for some  $\beta_{\min}>0$. Let $f(x;\alpha,\beta,\phi)$ be the distribution defined in \eqref{equ:function-6.2} and $\beta_{\min}\leq\beta_{\max}<\infty$. Then for any $\beta\in[\beta_{\min},\beta_{\max}]$, the following inequality holds for any pair $\alpha_1,\alpha_2\in\cM$:
    \$
    d_{\infty}(f(x;\alpha_1,\beta,\phi),f(x;\alpha_2,\beta,\phi))\leq C\cdot d_g(\alpha_1,\alpha_2),
    \$
    where $C>0$ is a finite constant independent of $\alpha_1,\alpha_2$ and $\beta$.
\end{lemma}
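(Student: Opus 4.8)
\textbf{Proof proposal for Lemma \ref{lma:e1}.}

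The plan is to reduce this to the Lipschitz continuity of $e^{-\beta\phi(r)}$ as a function of $r$, uniformly over $\beta\in[\beta_{\min},\beta_{\max}]$, and then combine with the reverse triangle inequality for the distance function, exactly as in the proof of Lemma \ref{lma:c2}. First I would write
\$
f(x;\alpha_1,\beta,\phi)-f(x;\alpha_2,\beta,\phi)=\frac{1}{Z(\beta,\phi)}\left(e^{-\beta\phi(d_g(x,\alpha_1))}-e^{-\beta\phi(d_g(x,\alpha_2))}\right),
\$
so the whole estimate splits into (i) a lower bound on $Z(\beta,\phi)$ uniform in $\beta\in[\beta_{\min},\beta_{\max}]$, and (ii) a Lipschitz bound on $r\mapsto e^{-\beta\phi(r)}$ uniform in $\beta$.

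For (i), since $\phi$ is nonnegative and increasing (Condition \ref{cond:3.1}, inherited via Condition \ref{cond:6.3} and \ref{cond:6.1.phi}), we have $e^{-\beta\phi(r)}\ge e^{-\beta_{\max}\phi(r)}$ for all $\beta\le\beta_{\max}$, hence $Z(\beta,\phi)\ge Z(\beta_{\max},\phi)>0$, a positive constant independent of $\alpha_1,\alpha_2,\beta$ (finiteness/positivity of $Z$ following from Proposition \ref{prop:3.2} applied to $\phi_{\beta_{\max}}$, which satisfies Condition \ref{cond:3.1}). For (ii), I would differentiate: $\frac{d}{dr}e^{-\beta\phi(r)}=-\beta\phi'(r)e^{-\beta\phi(r)}$, and bound its absolute value by $\beta\,|\phi'(r)e^{-\beta\phi(r)}|$. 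Since $\phi$ is increasing, $e^{-\beta\phi(r)}\le e^{-\beta_{\min}\phi(r)}$ for $\beta\ge\beta_{\min}$, so $\beta\,|\phi'(r)e^{-\beta\phi(r)}|\le \beta_{\max}\,|\phi'(r)e^{-\beta_{\min}\phi(r)}|\le \beta_{\max}L$ by part (1) of Condition \ref{cond:6.3}. Thus $r\mapsto e^{-\beta\phi(r)}$ is $\beta_{\max}L$-Lipschitz on $[0,r_\cM]$ uniformly in $\beta\in[\beta_{\min},\beta_{\max}]$. (A small care point: $\phi'$ may blow up at $r=0$ only if $\phi$ is not differentiable there, but Condition \ref{cond:6.3}(1) explicitly asks $|\phi'(r)e^{-\beta_{\min}\phi(r)}|$ to be bounded on the closed interval $[0,r_\cM]$, so this is not an issue; alternatively one integrates the derivative bound over $[r\wedge r', r\vee r']$ to get the Lipschitz estimate without needing differentiability at the endpoints.)

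Putting these together, for each fixed $x$,
\$
\left|f(x;\alpha_1,\beta,\phi)-f(x;\alpha_2,\beta,\phi)\right|\le\frac{\beta_{\max}L}{Z(\beta_{\max},\phi)}\left|d_g(x,\alpha_1)-d_g(x,\alpha_2)\right|\le\frac{\beta_{\max}L}{Z(\beta_{\max},\phi)}\,d_g(\alpha_1,\alpha_2),
\$
where the last step is the reverse triangle inequality $|d_g(x,\alpha_1)-d_g(x,\alpha_2)|\le d_g(\alpha_1,\alpha_2)$. Taking the supremum over $x\in\cM$ gives $d_\infty(f(x;\alpha_1,\beta,\phi),f(x;\alpha_2,\beta,\phi))\le C\,d_g(\alpha_1,\alpha_2)$ with $C=\beta_{\max}L/Z(\beta_{\max},\phi)$, which is independent of $\alpha_1,\alpha_2,\beta$. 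I do not expect any real obstacle here; the only thing to be careful about is stating the uniform-in-$\beta$ bounds cleanly (the lower bound on $Z$ and the upper bound on the derivative), both of which are immediate from monotonicity of $\phi$ and Condition \ref{cond:6.3}.
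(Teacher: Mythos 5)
Your proposal is correct and follows essentially the same route as the paper's proof: a uniform-in-$\beta$ Lipschitz bound $\beta\,|\phi'(r)e^{-\beta\phi(r)}|\leq\beta_{\max}\,|\phi'(r)e^{-\beta_{\min}\phi(r)}|\leq\beta_{\max}L$ from Condition \ref{cond:6.3}(1), the lower bound $Z(\beta,\phi)\geq Z(\beta_{\max},\phi)$ from monotonicity and nonnegativity of $\phi$, the reverse triangle inequality, and a supremum over $x$, yielding the same constant $C=\beta_{\max}L/Z(\beta_{\max},\phi)$. No gaps.
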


\begin{proof}
    Condition \ref{cond:6.3} states that the function $|\phi'(r)e^{-\beta_{\min}\phi(r)}|$ is bounded on $[0,r_{\cM}]$ by a constant $L$, where $r_{\cM}=\sup_{x,y\in\cM}d_g(x,y)$ is the maximum radius of $\cM$. Then for any $\beta\in[\beta_{\min},\beta_{\max}]$, the first-order derivative of $e^{-\beta\phi(r)}$ satisfies the following bound:
    \$
    \left|\frac{d}{dr}e^{-\beta\phi(r)}\right|=|\beta\phi'(r)e^{-\beta\phi(r)}|&\leq |\beta_{\max}\phi'(r)e^{-\beta_{\min}\phi(r)}|\\
    &\leq \beta_{\max}L.
    \$
    Thus, the function $e^{-\beta\phi(r)}$ is a Lipschitz continuous function with a Lipschitz constant $L_0=\beta_{\max}L$ for any $\beta\in[\beta_{\min},\beta_{\max}]$. As a result, we have
    \$
    |f(x;\alpha_1,\beta,\phi)-f(x;\alpha_2,\beta,\phi)|&\leq \frac{L_0}{Z(\beta,\phi)}\cdot|d_g(x,\alpha_1)-d_g(x,\alpha_2)|\\
    &\leq \frac{L_0}{Z(\beta_{\max},\phi)}\cdot|d_g(x,\alpha_1)-d_g(x,\alpha_2)|,
    \$
    where $Z(\beta,\phi)$ is the normalizing constant in \eqref{equ:function-6.2} and the second inequality follows from the following observation 
    \$
    Z(\beta,\phi)\geq Z(\beta_{\max},\phi),\quad\forall \beta\leq\beta_{\max}.
    \$ 
    By the triangular inequality, we have 
    \$
    |d_g(x,\alpha_1)-d_g(x,\alpha_2)|\leq d_g(\alpha_1,\alpha_2).
    \$
    This implies that for any $\beta\in[\beta_{\min},\beta_{\max}]$,
    \$
    |f(x;\alpha_1,\beta,\phi)-f(x;\alpha_2,\beta,\phi)|\leq \frac{L_0}{Z(\beta_{\max},\phi)}\cdot d_g(\alpha_1,\alpha_2).
    \$
    By taking the supremum over $x$, we conclude the proof of this lemma.
\end{proof}

Lemma \ref{lma:e2} establishes the Lipschitz property of the distribution $f(x;\alpha,\beta,\phi)$ with respect to $\beta$. 

\begin{lemma}\label{lma:e2}
    Assume $(\cM,g^{\cM})$ is a Riemannian homogeneous space, and $\phi$ is a function satisfying Conditions  \ref{cond:6.3} for some  $\beta_{\min}>0$. Let $f(x;\alpha,\beta,\phi)$ be the distribution defined in \eqref{equ:function-6.2} and $\beta_{\min}\leq\beta_{\max}<\infty$. Then for any $\alpha\in\cM$, the following inequality holds for any pair $\beta_1,\beta_2\in[\beta_{\min},\beta_{\max}]$:
    \$
    d_{\infty}(f(x;\alpha,\beta_1,\phi),f(x;\alpha,\beta_2,\phi))\leq C\cdot |\beta_1-\beta_2|,
    \$
    where $C>0$ is a finite constant independent of $\alpha,\beta_1,\beta_2$.
\end{lemma}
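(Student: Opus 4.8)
The plan is to fix $\alpha\in\cM$, write $r=d_g(x,\alpha)$, and decompose
\[
f(x;\alpha,\beta_1,\phi)-f(x;\alpha,\beta_2,\phi)=\frac{e^{-\beta_1\phi(r)}-e^{-\beta_2\phi(r)}}{Z(\beta_1,\phi)}+e^{-\beta_2\phi(r)}\cdot\frac{Z(\beta_2,\phi)-Z(\beta_1,\phi)}{Z(\beta_1,\phi)\,Z(\beta_2,\phi)} ,
\]
so that it suffices to control, uniformly in $x,\alpha,\beta_1,\beta_2$, three ingredients: a pointwise bound on $|e^{-\beta_1\phi(r)}-e^{-\beta_2\phi(r)}|$, a bound on $|Z(\beta_1,\phi)-Z(\beta_2,\phi)|$, and a positive lower bound on $Z(\beta,\phi)$ for $\beta\in[\beta_{\min},\beta_{\max}]$. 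The last is the easiest: since $\phi\geq 0$, we have $e^{-\beta\phi(\cdot)}\geq e^{-\beta_{\max}\phi(\cdot)}$ whenever $\beta\leq\beta_{\max}$, hence $Z(\beta,\phi)\geq Z(\beta_{\max},\phi)>0$, and by Proposition \ref{prop:3.2} this constant does not depend on $\alpha$.

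For the pointwise bound I would apply the mean value theorem to the scalar map $t\mapsto e^{-t\phi(r)}$, obtaining, for some $\xi$ between $\beta_1$ and $\beta_2$ (so $\xi\geq\beta_{\min}$),
\[
\left|e^{-\beta_1\phi(r)}-e^{-\beta_2\phi(r)}\right|=\phi(r)e^{-\xi\phi(r)}\,|\beta_1-\beta_2|\leq \phi(r)e^{-\beta_{\min}\phi(r)}\,|\beta_1-\beta_2| .
\]
The crucial elementary estimate is that $u\mapsto ue^{-\beta_{\min}u}$ is bounded by $\tfrac{1}{e\beta_{\min}}$ on $[0,\infty)$, which gives $\phi(r)e^{-\beta_{\min}\phi(r)}\leq\tfrac{1}{e\beta_{\min}}$ for every $r$; this plays the role that the bound $|\phi'(r)e^{-\beta_{\min}\phi(r)}|\leq L$ played in the proof of Lemma \ref{lma:e1}. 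Integrating the same inequality over $\cM$ bounds $|Z(\beta_1,\phi)-Z(\beta_2,\phi)|$ by $M_1|\beta_1-\beta_2|$, where $M_1=\int_{\cM}\phi(d_g(x',\alpha))e^{-\beta_{\min}\phi(d_g(x',\alpha))}\dvol(x')$.

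It remains to check that $M_1$ is finite and independent of $\alpha$. Independence of $\alpha$ follows from the same homogeneity argument as in Proposition \ref{prop:3.2}, applied to the integrand $\psi(d_g(x',\alpha))$ with $\psi(r)=\phi(r)e^{-\beta_{\min}\phi(r)}$. Finiteness is immediate when $\cM$ is compact, since the integrand is continuous and $\vol(\cM)<\infty$; when $\cM$ is noncompact I would pass to polar coordinates at $\alpha$ via \eqref{equ:integral} and apply the volume comparison Theorem \ref{thm:2.3} to obtain $M_1\leq \vol(\SSS^{m-1})\int_0^\infty\phi(r)e^{-\beta_{\min}\phi(r)}\sn_{\kappa_{\min}}^{m-1}(r)\,dr=\vol(\SSS^{m-1})\int_0^\infty h_1(r)\,dr<\infty$, finite by part (2) of Condition \ref{cond:6.3}. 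Combining the three ingredients with $e^{-\beta_2\phi(r)}\leq 1$ yields
\[
\bigl|f(x;\alpha,\beta_1,\phi)-f(x;\alpha,\beta_2,\phi)\bigr|\leq\left(\frac{1}{e\beta_{\min}\,Z(\beta_{\max},\phi)}+\frac{M_1}{Z(\beta_{\max},\phi)^2}\right)|\beta_1-\beta_2| ,
\]
and taking the supremum over $x\in\cM$ gives the claim with $C=\frac{1}{e\beta_{\min}Z(\beta_{\max},\phi)}+\frac{M_1}{Z(\beta_{\max},\phi)^2}$, depending only on $\beta_{\min}$, $\beta_{\max}$, $\phi$ and the geometry of $\cM$. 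I do not expect any serious obstacle; the only thing requiring care is tracking that every constant is independent of $\alpha$, which is exactly where the homogeneity of $\cM$ enters, just as in Lemma \ref{lma:e1}.
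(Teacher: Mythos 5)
Your proposal is correct and follows essentially the same route as the paper's proof: the same two-term decomposition into a pointwise difference plus a normalizing-constant difference, the same elementary bound $\sup_{u\ge0}ue^{-\beta_{\min}u}<\infty$ (the paper's constant $L_0$), the same estimate $|Z(\beta_1,\phi)-Z(\beta_2,\phi)|\le M_1|\beta_1-\beta_2|$, and the same compact/noncompact case split using polar coordinates, Theorem \ref{thm:2.3}, and the integrability of $h_1$ from Condition \ref{cond:6.3}. The only cosmetic difference is that you invoke the mean value theorem where the paper directly cites the Lipschitz property of $\beta\mapsto e^{-\beta r}$, which is the same estimate.
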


\begin{proof}
    By the definition of $f(x;\alpha,\beta,\phi)$, we can show that for all $\beta_1,\beta_2\in[\beta_{\min},\beta_{\max}]$,
    \#
    &|f(x;\alpha,\beta_1,\phi)-f(x;\alpha,\beta_2,\phi)|\notag\\
    =\ &\left|\frac{1}{Z(\beta_1,\phi)}e^{-\beta_1\phi(d_g(x,\alpha))}-\frac{1}{Z(\beta_2,\phi)}e^{-\beta_2\phi(d_g(x,\alpha))}\right|\notag\\
    \leq \ &\left|\frac{1}{Z(\beta_1,\phi)}-\frac{1}{Z(\beta_2,\phi)}\right|\cdot e^{-\beta_2\phi(d_g(x,\alpha))}+\frac{1}{Z(\beta_1,\phi)}\cdot \left|e^{-\beta_1\phi(d_g(x,\alpha))}- e^{-\beta_2\phi(d_g(x,\alpha))}\right|\notag\\
    \leq \ &\frac{1}{Z(\beta_{\max},\phi)^2}\cdot|Z(\beta_2,\phi)-Z(\beta_1,\phi)|+\frac{1}{Z(\beta_{\max},\phi)}\cdot \left|e^{-\beta_1\phi(d_g(x,\alpha))}- e^{-\beta_2\phi(d_g(x,\alpha))}\right|,\label{equ:lip-f-beta}
    \#
    where the first inequality uses the triangle inequality and the second inequality uses the facts that $\phi$ is a nonnegative function and
    \$
    Z(\beta,\phi)\geq Z(\beta_{\max},\phi),\quad\forall \beta\leq\beta_{\max}.
    \$
    Observe that the function $\ell(\beta)=e^{-\beta r}$ for any $r\geq 0$ is a Lipschitz function over $[\beta_{\min},\beta_{\max}]$ with a Lipschitz constant 
    \#\label{equ:L0-e3}
    L_0=\sup_{r\geq 0}re^{-\beta_{\min}r}<\infty,
    \#
    which is independent of $r$. Then the following inequality holds
    \#\label{equ:lipschitz-e4}
    \left|e^{-\beta_1\phi(d_g(x,\alpha))}- e^{-\beta_2\phi(d_g(x,\alpha))}\right|\leq L_0|\beta_1-\beta_2|,
    \#
    where $L_0$ is defined by \eqref{equ:L0-e3}. To proceed, we analyze the following term
    \#\label{equ:e12}
    |Z(\beta_1,\phi)-Z(\beta_2,\phi)|\leq \int_{\cM}|e^{-\beta_1 \phi(d_g(x,\alpha))}-e^{-\beta_2\phi(d_g(x,\alpha))}|\dvol(x).
    \#
    Using the Lipschitz property of the function $\ell(\beta)=e^{-\beta r}$ over $[\beta_{\min},\beta_{\max}]$, whose Lipschitz constant is $re^{-\beta_{\min}r}$, we obtain that
    \#
     &\int_{\cM}|e^{-\beta_1 \phi(d_g(x,\alpha))}-e^{-\beta_2\phi(d_g(x,\alpha))}|\dvol(x)\notag\\
     \leq \ & \int_{\cM}\phi(d_g(x,\alpha))\cdot e^{-\beta_{\min} \phi(d_g(x,\alpha))}\cdot\dvol(x)\cdot |\beta_1-\beta_2|.\label{equ:e13}
    \#
    Then we consider two cases. 
    \begin{itemize} 
        \item {\bf Case 1: $\cM$ is compact.} In this case, the integral $\int_{\cM}\phi(d_g(x,\alpha))\cdot e^{-\beta_{\min} \phi(d_g(x,\alpha))}\dvol(x)$ is finite and independent of $\alpha$. Thus, by combining this with \eqref{equ:e12} and \eqref{equ:e13}, we obtain that 
        \$
        |Z(\beta_1,\phi)-Z(\beta_2,\phi)|\leq L_1|\beta_1-\beta_2|,
        \$
        where $L_1$ is a finite constant independent of $\alpha,\beta_1,\beta_2$.  
        
        \item {\bf Case 2: $\cM$ is noncompact.} Using the polar coordinate expression \eqref{equ:integral} of the integral and the volume comparison theorem, Theorem \ref{thm:2.3}, we obtain that
    \$
    &\int_{\cM}\phi(d_g(x,\alpha))e^{-\beta_{\min}\phi(d_g(x,\alpha))}\dvol(x)\\
    \leq  &\int_{0}^{\infty}\phi(r)e^{-\beta_{\min}\phi(r)}\sn_{\kappa_{\min}}^{m-1}(r)dr\cdot\vol(\SSS^{m-1})\\
    <\ &\infty, 
    \$
    where $\kappa_{\min}$ is a lower bound on the sectional curvatures of $\cM$, $\sn_{\kappa_{\min}}(\cdot)$ is given by \eqref{equ:sn}, and the second inequality uses Condition \ref{cond:6.3}. It follows that
    \$
        |Z(\beta_1,\phi)-Z(\beta_2,\phi)|\leq L_1|\beta_1-\beta_2|,
    \$
    where $L_1$ is a finite constant independent of $\alpha,\beta_1,\beta_2$. 
    \end{itemize}
    Combining these two cases with \eqref{equ:lip-f-beta} and \eqref{equ:lipschitz-e4}, we obtain that
    \$
    |f(x;\alpha,\beta_1,\phi)-f(x;\alpha,\beta_2,\phi)|\leq C\cdot|\beta_1-\beta_2|,
    \$
    where $C>0$ is a finite constant independent of $\alpha,\beta_1,\beta_2$. Then we conclude the proof by taking the supremum of $x$. 
\end{proof}

\subsubsection{Identifiability}\label{sec:e52}

\begin{lemma}
    Suppose $(\cM,g^{\cM})$ is a Riemannian homogeneous space with $r_{\cM}=\sup_{x,y}d_g(x,y)$. Assume $\phi$ satisfies Condition \ref{cond:6.3} for  $\beta_{\min}>0$ and is strictly increasing on $[0,r_{\cM}]$. Let $f(x;\alpha,\beta,\phi)$ be the density in \eqref{equ:6.1-f} and $\cX\subseteq\cM$ a compact set. Then if $\alpha_0,\{\alpha_n\}_{n=1}^\infty\subseteq\cX$ and $\beta_0,\{\beta_n\}_{n=1}^\infty\subseteq[\beta_{\min},\beta_{\max}]$ for some $\beta_{\min}\leq \beta_{\max}<\infty$ satisfy that
    \#\label{equ:e.13}
    d_1(f(x;\alpha_n,\beta_n,\phi),f(x;\alpha_0,\beta_0,\phi))\to 0,
    \#
    where $d_1$ is the $L^1$ distance, then $d_g(\alpha_n,\alpha_0)\to0$ and $|\beta_n-\beta_0|\to0$ as $n\to\infty$.
\end{lemma}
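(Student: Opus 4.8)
The plan is to argue by contradiction, mirroring the structure of the proof of Lemma \ref{lma:c3}. Suppose \eqref{equ:e.13} holds but the conclusion fails, so that at least one of $d_g(\alpha_n,\alpha_0)$ and $|\beta_n-\beta_0|$ does not converge to $0$. Since $\{\alpha_n\}\subseteq\cX$ and $\{\beta_n\}\subseteq[\beta_{\min},\beta_{\max}]$ live in compact sets, we may pass to a subsequence $\{(\alpha_{n_i},\beta_{n_i})\}$ with $\alpha_{n_i}\to\alpha_0'$ and $\beta_{n_i}\to\beta_0'$, where $(\alpha_0',\beta_0')\neq(\alpha_0,\beta_0)$. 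The goal is then to show two things: first, that $d_1(f(x;\alpha_{n_i},\beta_{n_i},\phi),f(x;\alpha_0',\beta_0',\phi))\to 0$ as $i\to\infty$ (a continuity statement), and second, that combining this with \eqref{equ:e.13} and the triangle inequality forces $d_1(f(x;\alpha_0,\beta_0,\phi),f(x;\alpha_0',\beta_0',\phi))=0$, which by an identifiability statement implies $(\alpha_0,\beta_0)=(\alpha_0',\beta_0')$ — a contradiction.

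First I would establish the continuity step. This follows from the Lipschitz properties in Lemma \ref{lma:e1} and Lemma \ref{lma:e2}: by the triangle inequality in the space of densities,
\[
d_\infty\bigl(f(x;\alpha_{n_i},\beta_{n_i},\phi),f(x;\alpha_0',\beta_0',\phi)\bigr)\leq C\,d_g(\alpha_{n_i},\alpha_0')+C\,|\beta_{n_i}-\beta_0'|\to 0.
\]
However, $d_\infty$ convergence alone does not give $d_1$ convergence on a noncompact $\cM$, so I would instead run a dominated-convergence argument analogous to the proof of Lemma \ref{lma:c5}: write $d_1$ as $\frac{1}{Z}\int_\cM |e^{-\beta_{n_i}\phi(d_g(x,\alpha_{n_i}))}-e^{-\beta_0'\phi(d_g(x,\alpha_0'))}|\,\dvol(x)$ (up to the normalizing-constant comparison, controlled as in Lemma \ref{lma:e2}), note pointwise convergence of the integrand, and dominate it by an integrable envelope of the form $H(x)+e^{-\beta_{\min}\phi(d_g(x,\alpha_0')/2)}$, whose integrability is exactly what Condition \ref{cond:6.3}(2) guarantees (it is the same envelope used in the proof of Theorem \ref{thm:6.2}). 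When $\cM$ is compact the envelope is bounded and integrability is automatic.

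Second I would prove the identifiability fact: if $d_1(f(x;\alpha,\beta,\phi),f(x;\alpha',\beta',\phi))=0$ then $\alpha=\alpha'$ and $\beta=\beta'$. Since the integrand is nonnegative and continuous, $d_1=0$ forces $\beta\phi(d_g(x,\alpha))=\beta'\phi(d_g(x,\alpha'))$ for all $x\in\cM$ (after matching the normalizing constants, which are then equal). Evaluating at $x=\alpha$ gives $\beta\phi(0)=\beta'\phi(d_g(\alpha,\alpha'))$ and at $x=\alpha'$ gives $\beta\phi(d_g(\alpha,\alpha'))=\beta'\phi(0)$; combining these with $\phi$ strictly increasing and nonnegative yields $d_g(\alpha,\alpha')=0$, hence $\alpha=\alpha'$, and then $\beta\phi(r)=\beta'\phi(r)$ for all $r$ in the range of $d_g(\cdot,\alpha)$, which contains points where $\phi>0$ (since $\phi$ is strictly increasing and nonnegative, $\phi(r)>\phi(0)\geq 0$ for $r>0$), forcing $\beta=\beta'$. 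I expect the main obstacle to be the first step — carefully justifying the $L^1$ (not just $L^\infty$) convergence on a noncompact manifold and verifying that the dominating envelope is legitimate uniformly over the tail of the sequence — but since the required integrability is built into Condition \ref{cond:6.3} and the argument parallels Lemma \ref{lma:c5} and the proof of Theorem \ref{thm:6.2}, this should go through without new ideas. I would likely package the continuity claim and the identifiability claim as two short auxiliary lemmas (the analogues of Lemma \ref{lma:c5} and Lemma \ref{lma:c4}) before assembling the contradiction.
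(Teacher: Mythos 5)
Your proposal is correct and follows essentially the same route as the paper: a compactness/subsequence contradiction, a parameter-continuity lemma proved by dominated convergence with an envelope supplied by Condition \ref{cond:6.3} (this is the paper's Lemma \ref{lma:e4}, which handles the $\alpha$-part via Lemma \ref{lma:c5} and reduces the $\beta$-part to a fixed $\alpha$ by an isometry), and an identifiability lemma (the paper's Lemma \ref{lma:e5}). The only caveat is your parenthetical that the normalizing constants are ``then equal,'' which is asserted rather than justified; the paper sidesteps this with a chain of inequalities at $x=\alpha$ and $x=\alpha'$, and in your version it suffices to divide the two point-evaluations so that $Z(\beta,\phi)$ and $Z(\beta',\phi)$ cancel before invoking strict monotonicity of $\phi$ — a cosmetic fix, not a genuine gap.
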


\begin{proof}
    We prove this lemma by contradiction. Suppose Condition \ref{equ:e.13} holds but $(\alpha_n,\beta_n)$ does not converge to $(\alpha_0,\beta_0)$. Then there exists a subsequence $\{(\alpha_{n_i},\beta_{n_i})\}_{i=1}^\infty$ that converges to $(\alpha_0',\beta_0')\neq (\alpha_0,\beta_0)$, where we use the fact that $\{\alpha_n\}\subseteq\cX$, $\{\beta_n\}\subseteq[\beta_{\min},\beta_{\max}]$, and $\cX$ is a compact set. By the continuity of the function $f(x;\alpha,\beta,\phi)$ with respect to its parameters, i.e., Lemma \ref{lma:e4}, we have that
    \$
    d_1(f(x;\alpha_{n_i},\beta_{n_i},\phi),f(x;\alpha'_0,\beta'_0,\phi))\to 0.
    \$
    Combining this with \eqref{equ:e.13}, we obtain that $d_1(f(x;\alpha_0,\beta_0,\phi),f(x;\alpha_0',\beta_0',\phi))=0$. It then follows by Lemma \ref{lma:e5} that $\alpha_0=\alpha'_0$ and $\beta_0=\beta_0'$, which leads to contradiction. 
\end{proof}

Lemma \ref{lma:e4} proves the continuity of $f(x;\alpha,\beta,\phi)$ with respect to its parameters $\alpha$ and $\beta$ under certain conditions, where we use the $L^1$ distance to measure the distance between distributions. 

\begin{lemma}\label{lma:e4}
    Let $(\cM,g^{\cM})$ be a Riemannian homogeneous space and $\phi$ a function satisfying Condition \ref{cond:6.3} for  $\beta_{\min}>0$. Let $f(x;\alpha,\beta,\phi)$ be the density in \eqref{equ:6.1-f}. If $d_g(\alpha_n,\alpha)\to0$ and $|\beta_n-\beta|\to 0$ as $n\to\infty$, and $\{\beta_n\},\beta\geq \beta_{\min}$, then the following convergence holds:
    \$
    d_1(f(x;\alpha_n,\beta_n,\phi),f(x;\alpha,\beta,\phi))\to 0,
    \$
    where $d_1$ is the $L^1$ distance. 
\end{lemma}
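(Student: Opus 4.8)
The plan is to follow the dominated-convergence strategy already used in the proof of Lemma~\ref{lma:c5}, now letting the temperature vary too. Writing out the $L^1$ distance,
\[
d_1(f(x;\alpha_n,\beta_n,\phi),f(x;\alpha,\beta,\phi))=\int_{\cM}\left|f(x;\alpha_n,\beta_n,\phi)-f(x;\alpha,\beta,\phi)\right|\dvol(x),
\]
I would first record two elementary facts. Since $\phi\geq 0$, the normalizing constant $Z(\gamma,\phi)$ is nonincreasing in $\gamma$; and since $\beta_n\to\beta$ the sequence $\{\beta_n\}$ is bounded, say by a finite $\beta_{\max}$ with $\beta_{\max}\geq\beta$, so that $Z(\beta_n,\phi),Z(\beta,\phi)\geq Z(\beta_{\max},\phi)>0$. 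Also, $d_g(\alpha_n,\alpha)\to 0$ gives a radius $D$ with $d_g(\alpha_n,\alpha)<D$ for all $n$, and for $x$ with $d_g(x,\alpha)>2D$ the triangle inequality yields $d_g(x,\alpha_n)\geq d_g(x,\alpha)/2$.

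Next I would build a single integrable envelope valid for all $n$. Using $\beta_n\geq\beta_{\min}$, the monotonicity of $\phi$, and the above, both $f(x;\alpha_n,\beta_n,\phi)$ and $f(x;\alpha,\beta,\phi)$ are bounded above by $\frac{1}{Z(\beta_{\max},\phi)}G(x)$, where $G(x)=e^{-\beta_{\min}\phi(d_g(x,\alpha)/2)}$ when $d_g(x,\alpha)>2D$ and $G(x)=1$ otherwise; hence the integrand is dominated by $\frac{2}{Z(\beta_{\max},\phi)}G(x)$. When $\cM$ is compact, $G$ is bounded, hence integrable. When $\cM$ is noncompact, I would pass to the polar-coordinate representation \eqref{equ:integral} at $\alpha$ and apply the volume comparison theorem, Theorem~\ref{thm:2.3}, to obtain
\[
\int_{\cM}G(x)\dvol(x)\leq \vol(\cB_{\cM}(\alpha,2D))+\vol(\SSS^{m-1})\int_{2D}^{\infty}e^{-\beta_{\min}\phi(r/2)}\sn_{\kappa_{\min}}^{m-1}(r)\,dr<\infty,
\]
which is finite because the function $h(r)=e^{-\beta_{\min}\phi(r/2)}\sn_{\kappa_{\min}}^{m-1}(r)$ is integrable by Condition~\ref{cond:6.3}.

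It then remains to check pointwise convergence of the integrand. Fixing $x\in\cM$, the map $(\gamma,a)\mapsto e^{-\gamma\phi(d_g(x,a))}$ is continuous (by continuity of $\phi$, of the distance function, and of $\Exp$), so it suffices to know that $\gamma\mapsto Z(\gamma,\phi)$ is continuous on $[\beta_{\min},\infty)$; this in turn follows from the dominated convergence theorem applied to $\gamma\mapsto\int_{\cM}e^{-\gamma\phi(d_g(x,\alpha))}\dvol(x)$ with dominating function $e^{-\beta_{\min}\phi(d_g(x,\alpha))}$, integrable by Condition~\ref{cond:6.1.phi} together with Proposition~\ref{prop:3.1} (alternatively one may quote the Lipschitz estimate of Lemma~\ref{lma:e2}). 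Consequently $f(x;\alpha_n,\beta_n,\phi)\to f(x;\alpha,\beta,\phi)$ for every $x$, so the integrand tends to $0$ pointwise, and a final application of the dominated convergence theorem yields $d_1(f(x;\alpha_n,\beta_n,\phi),f(x;\alpha,\beta,\phi))\to 0$. The only mildly delicate point is securing one integrable dominating function good for all $n$ simultaneously — which is precisely why the two-sided control $\beta_{\min}\leq\beta_n\leq\beta_{\max}$ and the tail integrability built into Condition~\ref{cond:6.3} are needed; everything else is routine.
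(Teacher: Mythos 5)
Your proof is correct, and it is organized differently from the paper's. The paper first splits the $L^1$ distance by the triangle inequality into a location part and a temperature part: the location part $d_1(f(x;\alpha_n,\beta,\phi),f(x;\alpha,\beta,\phi))\to 0$ is disposed of by citing Lemma \ref{lma:c5}, while the temperature part is first reduced, via a Riemannian isometry $F$ with $F(\alpha)=\alpha_n$ and the invariance property \eqref{equ:homo}, to $d_1(f(x;\alpha,\beta_n,\phi),f(x;\alpha,\beta,\phi))$, which is then handled by dominated convergence with the simple envelope $\frac{2}{Z(\beta_{\max},\phi)}e^{-\beta_{\min}\phi(d_g(x,\alpha))}$ — no tail trick is needed there because the center is fixed at $\alpha$. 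You instead run a single dominated-convergence argument for the joint perturbation $(\alpha_n,\beta_n)\to(\alpha,\beta)$, which forces you to build the uniform-in-$n$ envelope with the $d_g(x,\alpha_n)\geq d_g(x,\alpha)/2$ tail bound (as in Lemma \ref{lma:c5}) and to verify explicitly that $Z(\beta_n,\phi)\to Z(\beta,\phi)$ so the pointwise limit of the densities is the right one. What your route buys is self-containedness and a slightly more careful treatment of the pointwise step (the paper's phrase that the integrand "converges to zero pointwise" silently uses continuity of $Z(\cdot,\phi)$, which you actually prove); what the paper's route buys is economy — by reusing Lemma \ref{lma:c5} and the homogeneity of $\cM$, it avoids re-deriving the tail envelope for a moving center. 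Two small cosmetic points: your reference to Lemma \ref{lma:e2} for continuity of $Z$ is to the statement about $d_\infty$ of densities, whereas the bound $|Z(\beta_1,\phi)-Z(\beta_2,\phi)|\leq L_1|\beta_1-\beta_2|$ appears inside its proof, so your primary DCT justification is the cleaner citation; and the appeal to continuity of $\Exp$ is unnecessary, since continuity of $d_g$, $\phi$, and the exponential function already gives the pointwise convergence.
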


\begin{proof}
    Using Lemma \ref{lma:c5}, we have
    \$
    d_1(f(x;\alpha_n,\beta,\phi),f(x;\alpha,\beta,\phi))\to 0.
    \$
    Then to prove the lemma, it remains to show that
    \#\label{equ:ed1_convergence-1}
    d_1(f(x;\alpha_n,\beta_n,\phi),f(x;\alpha_n,\beta,\phi))\to 0.
    \#
    By definition of $f$, we have
    \$
    &d_1(f(x;\alpha_n,\beta_n,\phi),f(x;\alpha_n,\beta,\phi))\\
    =&\int_{\cM}\left|\frac{1}{Z(\beta_{n},\phi)}e^{-\beta_n\phi(d_g(x,\alpha_n))}-\frac{1}{Z(\beta,\phi)}e^{-\beta\phi(d_g(x,\alpha_n))}\right|\dvol(x).
    \$
    Since $\cM$ is a homogeneous space, there exists an isometry $F$ of $\cM$ such that $F(\alpha)=\alpha_n$. By using this isometry $F$ and the property \eqref{equ:homo} of the integral, we can show that 
    \$
    d_1(f(x;\alpha_n,\beta_n,\phi),f(x;\alpha_n,\beta,\phi))=d_1(f(x;\alpha,\beta_n,\phi),f(x;\alpha,\beta,\phi)).
    \$
    Thus, \eqref{equ:ed1_convergence-1} is reduced to
    \#\label{equ:ed1_convergence-2}
    d_1(f(x;\alpha,\beta_n,\phi),f(x;\alpha,\beta,\phi))\to 0. 
    \#
    To prove this, we observe that $\{\beta_n\}\subseteq[\beta_{\min}, \beta_{\max}]$ for some $\beta_{\max}>\beta$, and 
    \$
    \left|\frac{1}{Z(\beta_{n},\phi)}e^{-\beta_n\phi(d_g(x,\alpha))}-\frac{1}{Z(\beta,\phi)}e^{-\beta\phi(d_g(x,\alpha))}\right|\leq H(x),
    \$
    where 
    \$
    H(x)=\frac{2}{Z(\beta_{\max},\phi)}e^{-\beta_{\min}\phi(d_g(x,\alpha))}.
    \$
    By Condition \ref{cond:6.3}, we know that $H(x)$ is integrable. Thus, by applying the dominated convergence theorem, we have
    \#\label{equ:e-16}
    \lim_{n\to\infty}\int_{\cM}\left|\frac{1}{Z(\beta_{n},\phi)}e^{-\beta_n\phi(d_g(x,\alpha))}-\frac{1}{Z(\beta,\phi)}e^{-\beta\phi(d_g(x,\alpha))}\right|\dvol(x)= 0,
    \#
    where we use the observation that the integrand in \eqref{equ:e-16} converges to zero pointwise as $n$ tends to infinity. It implies that the convergence \eqref{equ:ed1_convergence-2} holds, which concludes the proof.
\end{proof}

Lemma \ref{lma:e5} shows that if the $L^1$ distance between $f(x;\alpha,\beta,\phi)$ and $f(x;\alpha',\beta',\phi)$ is zero, then $\alpha=\alpha'$ and $\beta=\beta'$ under mild conditions.

\begin{lemma}\label{lma:e5}
        Suppose $(\cM,g^{\cM})$ is a Riemannian homogeneous space with $r_{\cM}=\sup_{x,y}d_g(x,y)$. Assume $\phi$ satisfies Condition \ref{cond:6.1.phi} for some $\beta_{\min}>0$ and is strictly increasing on $[0,r_{\cM}]$. Then for any $\beta,\beta'\geq\beta_{\min}$ and $\alpha,\alpha'\in\cM$, the following equality 
        \#\label{equ:equal-e14}
        d_1(f(x;\alpha,\beta,\phi),f(x;\alpha',\beta',\phi))=0,
        \#
        implies that $\alpha=\alpha'$ and $\beta=\beta'$.
\end{lemma}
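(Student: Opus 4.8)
The plan is to upgrade the hypothesis $\eqref{equ:equal-e14}$ to a pointwise identity between the two densities, and then to read off $\alpha=\alpha'$ and $\beta=\beta'$ from a handful of well-chosen evaluations of that identity. First I would note that, for $\beta,\beta'\geq\beta_{\min}$, the normalizing constants $Z(\beta,\phi),Z(\beta',\phi)$ are finite and strictly positive: finiteness follows from $e^{-\beta\phi(r)}\leq e^{-\beta_{\min}\phi(r)}$ (as $\phi\geq 0$) together with Condition \ref{cond:6.1.phi}, and positivity is clear. Since $\phi$ is continuous on $[0,r_{\cM}]$ (Condition \ref{cond:3.1}) and $d_g$ is continuous, the map $x\mapsto f(x;\alpha,\beta,\phi)-f(x;\alpha',\beta',\phi)$ is continuous on $\cM$; an integral of a continuous nonnegative function against $\dvol$ vanishes only if the function is identically zero, so $\eqref{equ:equal-e14}$ forces
\[
\frac{1}{Z(\beta,\phi)}e^{-\beta\phi(d_g(x,\alpha))}=\frac{1}{Z(\beta',\phi)}e^{-\beta'\phi(d_g(x,\alpha'))},\qquad\forall x\in\cM .
\]
Taking logarithms of both (positive) sides rearranges this to the affine relation
\[
\beta'\phi(d_g(x,\alpha'))-\beta\phi(d_g(x,\alpha))=c,\qquad\forall x\in\cM ,
\]
where $c:=\log Z(\beta',\phi)-\log Z(\beta,\phi)$ does not depend on $x$.

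Next I would extract $\alpha=\alpha'$. Write $D_0:=d_g(\alpha,\alpha')\leq r_{\cM}$, so that $\phi(D_0)$ is defined. Evaluating the displayed relation at $x=\alpha$ gives $\beta'\phi(D_0)-\beta\phi(0)=c$, and evaluating it at $x=\alpha'$ gives $\beta'\phi(0)-\beta\phi(D_0)=c$. Subtracting the second from the first yields $(\beta+\beta')(\phi(D_0)-\phi(0))=0$; since $\beta+\beta'\geq 2\beta_{\min}>0$, this forces $\phi(D_0)=\phi(0)$, and strict monotonicity of $\phi$ gives $D_0=0$, i.e. $\alpha=\alpha'$.

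With $\alpha=\alpha'$ in hand, the affine relation collapses to $(\beta'-\beta)\phi(d_g(x,\alpha))=c$ for all $x$. Choosing any point $x_0\neq\alpha$ — which exists because $\cM$ is a (positive-dimensional) manifold, so $r_{\cM}>0$ — and comparing the value at $x_0$ with the value at $x=\alpha$ gives $(\beta'-\beta)\bigl(\phi(d_g(x_0,\alpha))-\phi(0)\bigr)=0$; since $d_g(x_0,\alpha)>0$, strict monotonicity makes the second factor nonzero, so $\beta=\beta'$, which completes the argument.

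There is no substantive obstacle here: the core of the proof is elementary once the pointwise identity is available. The only points that require (minor) care are the reduction from $d_1=0$ to the pointwise identity — where one uses continuity of the integrand and finiteness and positivity of the $Z$'s, the latter via Condition \ref{cond:6.1.phi} — and the harmless exclusion of the degenerate one-point manifold so that a point $x_0\neq\alpha$ can be chosen.
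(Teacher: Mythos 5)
Your proposal is correct and follows essentially the same route as the paper: pass from $d_1=0$ to the pointwise identity of densities via continuity, evaluate at $x=\alpha$ and $x=\alpha'$ and use strict monotonicity of $\phi$ to force $\alpha=\alpha'$, then compare values at two points to get $\beta=\beta'$. Your log/subtraction bookkeeping replaces the paper's chain of inequalities, and you spell out the final $\beta=\beta'$ step more explicitly, but these are cosmetic differences, not a different argument.
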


\begin{proof}
    By the continuity of the functions $f(x;\alpha,\beta,\phi)$ and $f(x;\alpha',\beta',\phi)$, we know  that Condition \eqref{equ:equal-e14} implies that
    \$
    f(x;\alpha,\beta,\phi)=f(x;\alpha',\beta',\phi),\quad\forall x\in\cM,
    \$
    By the definition of $f$, we obtain that
    \#\label{equ:function-equal-e15}
    \frac{1}{Z(\beta,\phi)}e^{-\beta\phi(d_g(x,\alpha))}=\frac{1}{Z(\beta',\phi)}e^{-\beta'\phi(d_g(x,\alpha'))},\quad \forall x\in\cM,
    \#
    where $Z(\beta,\phi)$ is the normalizing constant of $f(x;\alpha,\beta,\phi)$. 
    %Suppose without loss of generality that $\beta\geq \beta'$. Then we have 
    %\$
    %Z(\beta,\phi)\leq Z(\beta',\phi),
    %\$
    By taking $x=\alpha$ and $x=\alpha'$, we have
    \$
    \frac{1}{Z(\beta,\phi)}e^{-\beta\phi(0)}
    &=\frac{1}{Z(\beta',\phi)}e^{-\beta'\phi(d_g(\alpha,\alpha'))}\\
    &\overset{({\rm i})}{\leq} \frac{1}{Z(\beta',\phi)}e^{-\beta'\phi(0)}\\
    &=\frac{1}{Z(\beta,\phi)}e^{-\beta\phi(d_g(\alpha',\alpha))}\\
    &\overset{({\rm ii})}{\leq} \frac{1}{Z(\beta,\phi)}e^{-\beta\phi(0)},
    \$
    where the inequalities (i) and (ii) use the fact that $\phi$ is an increasing function. The above relationships imply that (i) and (ii) are in fact equalities. Since $\phi$ is a strictly increasing function, it follows that $d_g(\alpha,\alpha')=0$ and thus $\alpha=\alpha'$. Substituting this into \eqref{equ:function-equal-e15}, we obtain that
    \$
    \frac{1}{Z(\beta,\phi)}e^{-\beta\phi(d_g(x,\alpha))}=\frac{1}{Z(\beta',\phi)}e^{-\beta'\phi(d_g(x,\alpha))},\quad \forall x\in\cM,
    \$
    which implies  $\beta=\beta'$. 
\end{proof}

\subsubsection{Proof of the claim \eqref{equ:6-lemma}}\label{sec:e53}

Lemma \ref{lma:e6} proves the claim \eqref{equ:6-lemma} used in the proof of Theorem \ref{thm:6.4}. 

\begin{lemma}\label{lma:e6}
    Assume $(\cM,g^{\cM})$ is a Riemannian homogeneous space  and $\phi$ satisfies Condition \ref{cond:6.7} for  $\beta_{\min}>0$. Let $f(x;\alpha,\beta,\phi)$ be the density in \eqref{equ:6.1-f} and $(\alpha_0,\beta_0)\in\cX\times[\beta_{\min},\beta_{\max}]$ a fixed pair of parameters. Then
    \$
    \lim_{\epsilon\to0}\inf\left\{\left.\frac{D(\alpha,\beta;\alpha_0,\beta_0)}{d_g(\alpha,\alpha_0)+|\beta-\beta_0|}\ \right|\  d_g(\alpha,\alpha_0)<\epsilon,|\beta-\beta_0|<\epsilon,(\alpha,\beta)\in\cX\times[\beta_{\min},\beta_{\max}]\right\}>0,
    \$
    where $D(\alpha,\beta;\alpha_0,\beta_0)=d_1(f(x;\alpha,\beta,\phi),f(x;\alpha_0,\beta_0,\phi))$.
\end{lemma}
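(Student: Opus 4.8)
The plan is to prove the sharper quantitative claim that there are constants $c_0,\epsilon_0>0$ (depending only on $\alpha_0$, $\beta_0$, $\phi$) with
$D(\alpha,\beta;\alpha_0,\beta_0)\ge c_0\bigl(d_g(\alpha,\alpha_0)+|\beta-\beta_0|\bigr)$ whenever $(\alpha,\beta)\in\cX\times[\beta_{\min},\beta_{\max}]$ satisfies $d_g(\alpha,\alpha_0)<\epsilon_0$ and $|\beta-\beta_0|<\epsilon_0$; the stated limit follows at once. Write $r_0(x)=d_g(x,\alpha_0)$, $r(x)=d_g(x,\alpha)$, $\epsilon_\alpha=d_g(\alpha,\alpha_0)$, $\epsilon_\beta=\beta-\beta_0$. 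First I would fix $R<\inj(\cM)$ (and, if $\kappa_{\max}>0$, also $R<\pi/\sqrt{\kappa_{\max}}$), set $\cB:=\cB_{\cM}(\alpha_0,R)$, and note that on $\cB$ polar coordinates $(r,\Theta)$ around $\alpha_0$ are valid and, by Theorem \ref{thm:2.3}, the volume density obeys $\lambda(r,\Theta)\ge\sn_{\kappa_{\max}}^{m-1}(r)>0$ for $0<r\le R$. Restricting the integral defining $D$ to $\cB$ and pulling $e^{-\beta_0\phi(r_0(x))}$ out of the integrand reduces the problem to a lower bound on
\[
\int_{\cB}\bigl|e^{w(x)}-1\bigr|\,\dvol(x),\qquad w(x)=c(\beta)+\beta_0\phi(r_0(x))-\beta\phi(r(x)),\quad c(\beta)=\log\tfrac{Z(\beta_0,\phi)}{Z(\beta,\phi)} .
\]
Since $\phi\ge 0$ gives $Z(\beta,\phi)\in[Z(\beta_{\max},\phi),Z(\beta_{\min},\phi)]\subset(0,\infty)$ and $\phi$ is bounded on $[0,R+1]$, $w$ is uniformly bounded on $\cB$ for $(\alpha,\beta)$ near $(\alpha_0,\beta_0)$, so $|e^{w}-1|\ge c_W|w|$ there; hence it suffices to show $\int_{\cB}|w|\,\dvol\gtrsim\epsilon_\alpha+\epsilon_\beta$.

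Next I would expand $\phi\circ r$ to first order in the location parameter. Because $R<\inj(\cM)$, for every $x\in\cB$ and every $\alpha$ near $\alpha_0$ the geodesic from $\alpha_0$ to $\alpha$ avoids $\Cut(x)$, so $y\mapsto d_g(x,y)$ is smooth there and $\phi\in C^1$ applies; a second‑order Taylor expansion gives, in the polar coordinates of $\cB$,
\[
w(x)=\underbrace{\bigl[c(\beta)-\epsilon_\beta\,\phi(r_0(x))\bigr]+\beta\,\phi'(r_0(x))\,\epsilon_\alpha\,\langle\Theta_x,v\rangle}_{=:w_0(x)}+R_{\mathrm{tot}}(x),\qquad v=\Log_{\alpha_0}\alpha/\epsilon_\alpha,
\]
where $\langle\Theta_x,v\rangle$ is the cosine of the angle between the direction of $x$ and of $\alpha$ as seen from $\alpha_0$, and $\int_{\cB}|R_{\mathrm{tot}}|\,\dvol=O(\epsilon_\alpha^2)=o(\epsilon_\alpha+\epsilon_\beta)$ — the integrability over $\cB$ of the Taylor remainder uses only that the singularities of $\phi''$ and of $\Hess d_g$ at the centre $\alpha_0$ are integrable against $\sn_{\kappa_{\max}}^{m-1}(r)\,dr$ (and in dimension one $\Hess d_g\equiv 0$). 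The decisive structural point is that $w_0$ is the sum of a function of $r_0$ alone (carrying the $\epsilon_\beta$‑dependence) and a function odd in the angular variable (carrying the $\epsilon_\alpha$‑dependence).

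The last step exploits this radial/angular split. By the triangle inequality, $\int_{\cB}|w|\ge\int_{\cB}|w_0|-\int_{\cB}|R_{\mathrm{tot}}|$. Passing to polar coordinates, using $\lambda\ge\sn_{\kappa_{\max}}^{m-1}$, and for each fixed $r$ symmetrising over $\Theta\mapsto-\Theta$ (which preserves the surface measure on $\SSS^{m-1}$) together with the elementary bound $\tfrac12(|c_1+c_2 s|+|c_1-c_2 s|)\ge\max(|c_1|,|c_2 s|)$, one gets
\[
\int_{\SSS^{m-1}}\!\!\bigl|\,c(\beta)-\epsilon_\beta\phi(r)+\beta\phi'(r)\epsilon_\alpha\langle\Theta,v\rangle\,\bigr|\,d\Theta\ \ge\ \max\!\Bigl(\bigl|c(\beta)-\epsilon_\beta\phi(r)\bigr|\,\vol(\SSS^{m-1}),\ \ \beta_{\min}\,\phi'(r)\,\epsilon_\alpha\!\int_{\SSS^{m-1}}\!\!|\langle\Theta,v\rangle|\,d\Theta\Bigr).
\]
Integrating in $r\in(0,R]$ against $\sn_{\kappa_{\max}}^{m-1}$ and using $\max(x,y)\ge\tfrac12(x+y)$, together with $\int_0^R|c(\beta)-\epsilon_\beta\phi(r)|\sn_{\kappa_{\max}}^{m-1}(r)\,dr=|\epsilon_\beta|\int_0^R|c(\beta)/\epsilon_\beta-\phi(r)|\sn_{\kappa_{\max}}^{m-1}(r)\,dr\ge c_1|\epsilon_\beta|$ with $c_1:=\inf_{t\in\RR}\int_0^R|\phi(r)-t|\sn_{\kappa_{\max}}^{m-1}(r)\,dr>0$ (here $\phi$ being non‑constant is used), and $\int_0^R\phi'(r)\sn_{\kappa_{\max}}^{m-1}(r)\,dr=:c_2/(\beta_{\min}\vol(\SSS^{m-1}))>0$ (here $\phi$ strictly increasing is used), I obtain $\int_{\cB}|w_0|\,\dvol\gtrsim c_1|\epsilon_\beta|+c_2\epsilon_\alpha$. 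Absorbing the $o(\epsilon_\alpha+\epsilon_\beta)$ remainder for $(\alpha,\beta)$ close enough to $(\alpha_0,\beta_0)$ yields $\int_{\cB}|w|\,\dvol\ge\tfrac14\min(c_1,c_2)(\epsilon_\alpha+\epsilon_\beta)$, and unwinding Step 1 gives the claimed linear lower bound, hence the lemma.

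I expect the only technical work to be the first‑order expansion with an $L^1(\cB)$ remainder that is genuinely $o(\epsilon_\alpha+\epsilon_\beta)$ — i.e. controlling the Taylor remainder of $x\mapsto\phi(d_g(x,\alpha))$ uniformly over small $\alpha$. This is, however, markedly easier here than in the compact minimax analysis of Section \ref{sec:5.2}: choosing $R<\inj(\cM)$ keeps $\cB$ clear of every cut locus, so no Lie‑theoretic control of $\Ucut$ is needed, and the only singularities to tame are the harmless integrable ones of $\phi''$ and $\Hess d_g$ at the centre; the argument runs parallel to, and can borrow estimates from, the proof of Lemma \ref{lma:c6}. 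Note also that, in contrast to the single‑parameter case, the conceptual crux is the radial‑versus‑angular orthogonality displayed above: it is precisely what turns the two separate sensitivities (of $f$ in $\beta$ and in $\alpha$) into the $\max$ appearing in the angular integral, and thereby forces the lower bound to scale with the full $d_g(\alpha,\alpha_0)+|\beta-\beta_0|$ rather than with a single coordinate.
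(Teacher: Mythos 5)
Your route is genuinely different from the paper's. The paper proves this lemma modularly: it combines a lower bound in the temperature alone that is uniform over nearby locations (Lemma \ref{lma:e7}, $D(\alpha,\beta;\alpha_0,\beta_0)\ge C_1|\beta-\beta_0|$), the single-parameter location identifiability at fixed temperature (Lemma \ref{lma:c6}, $D(\alpha,\beta_0;\alpha_0,\beta_0)\ge C_2 d_g(\alpha,\alpha_0)$), and a Lipschitz upper bound in $\beta$ (Lemma \ref{lma:e-9}, $D(\alpha,\beta;\alpha,\beta_0)\le C_3|\beta-\beta_0|$), then finishes with a two-case triangle-inequality argument depending on whether $|\beta-\beta_0|$ is small or large compared with $d_g(\alpha,\alpha_0)$. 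That scheme never expands anything to second order: all three ingredients use only the mean value theorem, compactness, and the comparison-geometry Lemma \ref{lma:c8}. Your proposal instead attacks the joint perturbation in one shot, via a linearization $|e^w-1|\gtrsim|w|$ on a small ball, a radial/angular decomposition of the first-order term, and a $\Theta\mapsto-\Theta$ symmetrization turning the two sensitivities into a max. The decoupling idea is attractive and, if completed, would give a more self-contained quantitative bound; the paper's case analysis buys exactly what your expansion must earn by hand, namely that the $\alpha$- and $\beta$-perturbations cannot cancel each other in $L^1$.

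The gap is in the step you yourself flag as "the only technical work": the claim that the remainder satisfies $\int_{\cB}|R_{\mathrm{tot}}|\,\dvol=O(\epsilon_\alpha^2)$ via a second-order Taylor expansion using integrability of $\phi''$ and of $\Hess\, d_g$. Condition \ref{cond:6.7} only assumes $\phi$ is continuously differentiable on $(0,r_{\cM})$ (and differentiable at $0$ through Condition \ref{cond:6.3}); no second derivative of $\phi$ exists in general, and $\phi'$ need not even be bounded near $0$ under these hypotheses, so the Taylor-with-$\phi''$ remainder bound is not available as stated. One can try to repair this with a mean-value argument, writing $\phi(r_0(x))-\phi(r(x))=\phi'(\xi_x)\,(r_0(x)-r(x))$ and controlling $\phi'(\xi_x)-\phi'(r_0(x))$ by uniform continuity of $\phi'$ on compact subsets of $(0,R]$, but then you must separately handle a neighborhood of the geodesic segment from $\alpha_0$ to $\alpha$: the singular set of the expansion is not only the centre $\alpha_0$ (the Hessian of $y\mapsto d_g(x,y)$ blows up like $1/d_g(x,y)$ as $x$ approaches the segment, and the distance function is not differentiable at $x=y$ at all), and near $r=0$ the possible unboundedness of $\phi'$ must also be excised. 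None of this is fatal in spirit, but it is precisely the uniform $o(\epsilon_\alpha+|\epsilon_\beta|)$ $L^1$ remainder estimate that carries the whole proof, and it is not established in the proposal; the paper's triangle-inequality route (and, inside Lemma \ref{lma:c6}, the region $\cR$ kept away from $\alpha_0$ together with Lemma \ref{lma:c8}) is designed to avoid exactly this second-order control under $C^1$ hypotheses. The purely algebraic parts of your argument (the bound $Z(\beta_{\max},\phi)\le Z(\beta,\phi)\le Z(\beta_{\min},\phi)$, the identity $\tfrac12(|a+b|+|a-b|)=\max(|a|,|b|)$, the positivity of $\inf_t\int_0^R|\phi(r)-t|\sn_{\kappa_{\max}}^{m-1}(r)\,dr$ from strict monotonicity, and of $\int_0^R\phi'(r)\sn_{\kappa_{\max}}^{m-1}(r)\,dr$) are fine.
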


\begin{proof}
    Without loss of generality, we assume that $d_g(\alpha,\alpha_0)<\epsilon$ and $|\beta-\beta_0|<\epsilon$ for a sufficiently small $\epsilon$. They by Lemma \ref{lma:e7}, we have
    \#\label{equ:ineq-1-e-6}
    D(\alpha,\beta;\alpha_0,\beta_0)\geq C_1|\beta-\beta_0|,
    \#
    where $C_1>0$ is a constant independent of $\epsilon,\beta,\beta_0,\alpha,$ and $\alpha_0$. Furthermore, by Lemma \ref{lma:c6}, for all 
    %$\beta\in[\beta_{\min},\beta_{\max}]$ and 
    $d_g(\alpha,\alpha_0)<\epsilon$ for a sufficiently small $\epsilon$, we have that
    \#\label{equ:ineq-2-e-6}
    D(\alpha,\beta_0;\alpha_0,\beta_0)\geq C_2d_g(\alpha,\alpha_0),
    \#
    where $C_2>0$ is a constant independent of $\epsilon$,  $\alpha_0,\alpha$ and $\beta$. In addition, by Lemma \ref{lma:e-9}, we have for all $\beta,\beta_0\in[\beta_{\min},\beta_{\max}]$ and  $\alpha\in\cM$ that
    \#\label{equ:ineq-3-e-6}
    D(\alpha,\beta;\alpha,\beta_0)\leq C_3|\beta-\beta_0|,
    \#
    where $C_3>0$ is a finite constant independent of the choice of $\beta,\beta_0,$ and $\alpha$. We shall prove Lemma \ref{lma:e6} by combining the above three conclusions. Specifically, we  consider the following two cases separately.
    \begin{itemize}
        \item {\bf Case 1: $|\beta-\beta_0|\leq \frac{C_2}{2C_3}\cdot d_g(\alpha,\alpha_0)$.} In this case, we consider $d_g(\alpha,\alpha_0)<\epsilon$ for a sufficiently small $\epsilon$. By using the triangle inequality, we have
        \$
        D(\alpha,\beta;\alpha_0,\beta_0)&\geq D(\alpha,\beta_0;\alpha_0,\beta_0)-D(\alpha,\beta;\alpha,\beta_0)\\
        &\overset{({\rm i})}{\geq} C_2d_g(\alpha,\alpha_0)-C_3|\beta-\beta_0|\\
        &\overset{({\rm ii})}{\geq} \frac{C_2}{2}d_g(\alpha,\alpha_0)\\
        &\overset{({\rm iii})}{\geq}  \frac{C_2C_3}{2C_3+C_2}(d_g(\alpha,\alpha_0)+|\beta-\beta_0|)
        \$
        where (i) uses \eqref{equ:ineq-2-e-6} and \eqref{equ:ineq-3-e-6} and (ii) and (iii) use the assumption  $|\beta-\beta_0|\leq \frac{C_2}{2C_3}\cdot d_g(\alpha,\alpha_0)$. This inequality proves the first case. 

        \item {\bf Case 2: $|\beta-\beta_0|> \frac{C_2}{2C_3}\cdot d_g(\alpha,\alpha_0)$.} In this case, we consider $|\beta-\beta_0|<\epsilon$ for a sufficiently small $\epsilon$. We have
        \$
        D(\alpha,\beta;\alpha_0,\beta_0)&\geq C_1|\beta-\beta_0|\\
        &\geq \frac{2C_1C_3}{C_2+2C_3}(|\beta-\beta_0|+d_g(\alpha,\alpha_0)),
        \$
        where the first inequality uses \eqref{equ:ineq-1-e-6} and the second inequality uses the condition $|\beta-\beta_0|> \frac{C_2}{2C_3}\cdot d_g(\alpha,\alpha_0)$. 
    \end{itemize}
    By combining the above two cases, we prove the lemma. 
\end{proof}

\subsubsection{Technical lemmas used in Section \ref{sec:e53}}\label{sec:e54}

%\newpage

In the proof of Lemma \ref{lma:e6}, we use the following lemmas. First, in Lemma \ref{lma:e-9}, we provide an upper bound on the quantity $D(\alpha,\beta;\alpha,\beta_0)/|\beta-\beta_0|$, where 
\$
D(\alpha,\beta;\alpha,\beta_0)=d_1(f(x;\alpha,\beta,\phi),f(x;\alpha,\beta_0,\phi)),
\$
and $f(x;\alpha,\beta,\phi)$ is the density in \eqref{equ:6.1-f}. 

\begin{lemma}\label{lma:e-9}
    Suppose $(\cM,g^{\cM})$ is a Riemannian homogeneous space with $r_{\cM}=\sup_{x,y}d_g(x,y)$. Assume $\phi$ satisfies Condition \ref{cond:6.3} for $\beta_{\min}>0$ and is strictly increasing on $[0,r_{\cM}]$. Let $f(x;\alpha,\beta,\phi)$ be the density in \eqref{equ:6.1-f} and $\beta_{\max}>\beta_{\min}$ be a finite constant. Define  
    \$
    D(\alpha,\beta;\alpha_0,\beta_0)=d_1(f(x;\alpha,\beta,\phi),f(x;\alpha_0,\beta_0,\phi)).
    \$ 
    Then for any $\beta,\beta_0\in[\beta_{\min},\beta_{\max}]$, we have
    \$
    D(\alpha,\beta;\alpha,\beta_0)\leq C|\beta-\beta_0|,
    \$
    where $C$ is a finite constant independent of the choice of $\beta$, $\beta_0$, and $\alpha$. 
\end{lemma}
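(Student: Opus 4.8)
\textbf{Proof plan for Lemma \ref{lma:e-9}.}
The plan is to bound the $L^1$ distance $D(\alpha,\beta;\alpha,\beta_0)=\int_{\cM}|f(x;\alpha,\beta,\phi)-f(x;\alpha,\beta_0,\phi)|\dvol(x)$ by reusing the pointwise estimate already obtained in the proof of Lemma \ref{lma:e2}, but integrated against $\dvol$ rather than supremized. Concretely, I would start from the triangle-inequality decomposition
\$
|f(x;\alpha,\beta,\phi)-f(x;\alpha,\beta_0,\phi)|\leq \left|\frac{1}{Z(\beta,\phi)}-\frac{1}{Z(\beta_0,\phi)}\right| e^{-\beta_0\phi(d_g(x,\alpha))}+\frac{1}{Z(\beta,\phi)}\left|e^{-\beta\phi(d_g(x,\alpha))}-e^{-\beta_0\phi(d_g(x,\alpha))}\right|,
\$
exactly as in \eqref{equ:lip-f-beta}, and then integrate both terms over $\cM$.

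For the second term, I would use that $\beta\mapsto e^{-\beta r}$ is Lipschitz on $[\beta_{\min},\beta_{\max}]$ with local Lipschitz constant $re^{-\beta_{\min}r}$, applied at $r=\phi(d_g(x,\alpha))$, to get the bound $\phi(d_g(x,\alpha))e^{-\beta_{\min}\phi(d_g(x,\alpha))}\cdot|\beta-\beta_0|$; its integral over $\cM$ is finite by the same compact-versus-noncompact case split used in Lemma \ref{lma:e2}, invoking the polar-coordinate formula \eqref{equ:integral}, the volume comparison Theorem \ref{thm:2.3}, and the integrability of $h_1(r)=\phi(r)e^{-\beta_{\min}\phi(r)}\sn_{\kappa_{\min}}^{m-1}(r)$ guaranteed by Condition \ref{cond:6.3}. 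For the first term, I would bound $e^{-\beta_0\phi(d_g(x,\alpha))}\le e^{-\beta_{\min}\phi(d_g(x,\alpha))}$, whose integral over $\cM$ is again finite (it is at most $Z(\beta_{\min},\phi)<\infty$), and control $|Z(\beta,\phi)^{-1}-Z(\beta_0,\phi)^{-1}|\le Z(\beta_{\max},\phi)^{-2}|Z(\beta,\phi)-Z(\beta_0,\phi)|$ using $Z(\beta,\phi)\ge Z(\beta_{\max},\phi)>0$. The difference of normalizing constants is handled by $|Z(\beta,\phi)-Z(\beta_0,\phi)|\le\int_{\cM}|e^{-\beta\phi(d_g(x,\alpha))}-e^{-\beta_0\phi(d_g(x,\alpha))}|\dvol(x)\le L_1|\beta-\beta_0|$ with the same $L_1$ from Lemma \ref{lma:e2}. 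Collecting the constants from both terms yields $D(\alpha,\beta;\alpha,\beta_0)\le C|\beta-\beta_0|$ with $C$ depending only on $\beta_{\min},\beta_{\max},\phi$ and the geometry of $\cM$, hence independent of $\alpha,\beta,\beta_0$.

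I do not expect a genuine obstacle here: the estimate is essentially the $L^1$-integrated version of Lemma \ref{lma:e2}, and every integrability fact required has already been established in that lemma's proof under Condition \ref{cond:6.3}. The only point requiring a little care is the noncompact case, where one must confirm that $\phi(d_g(x,\alpha))e^{-\beta_{\min}\phi(d_g(x,\alpha))}$ is $\dvol$-integrable uniformly in $\alpha$; this follows from homogeneity of $\cM$ (the integral is independent of $\alpha$ by property \eqref{equ:homo}) together with the bound $h_1\in L^1([0,\infty))$ from Condition \ref{cond:6.3}. Thus the proof reduces to a bookkeeping of constants already present in the proofs of Lemmas \ref{lma:e2} and is routine.
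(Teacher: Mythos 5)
Your plan is correct and follows essentially the same route as the paper's proof: the same triangle-inequality decomposition as in \eqref{equ:lip-f-beta}, the lower bound $Z(\beta,\phi)\ge Z(\beta_{\max},\phi)$, the Lipschitz-in-$\beta$ bound $|e^{-\beta r}-e^{-\beta_0 r}|\le re^{-\beta_{\min}r}|\beta-\beta_0|$, and the estimate $|Z(\beta,\phi)-Z(\beta_0,\phi)|\lesssim|\beta-\beta_0|$ (which the paper packages as Lemma \ref{lma:e8}), with $\alpha$-independence of the constants coming from homogeneity and Condition \ref{cond:6.3} exactly as you describe. The only cosmetic difference is that the paper fixes $\beta<\beta_0$ and identifies $\int_{\cM}|e^{-\beta\phi}-e^{-\beta_0\phi}|\dvol$ with $|Z(\beta,\phi)-Z(\beta_0,\phi)|$, whereas you integrate the pointwise Lipschitz bound directly; both yield the same constant.
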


\begin{proof}
    Without loss of generality, we assume that $\beta<\beta_0$. By definition of $f(x;\alpha,\beta,\phi)$, we have that
    \#
    D(\alpha,\beta;\alpha,\beta_0)&=\int_{\cM}\left|\frac{1}{Z(\beta,\phi)}e^{-\beta\phi(d_g(x,\alpha))}-\frac{1}{Z(\beta_0,\phi)}e^{-\beta_0\phi(d_g(x,\alpha))}\right|\dvol(x)\notag\\
    &\leq \int_{\cM}\frac{1}{Z(\beta,\phi)}\cdot \left|e^{-\beta\phi(d_g(x,\alpha))}-e^{-\beta_0\phi(d_g(x,\alpha))}\right|\dvol(x)\notag\\
    &+\int_{\cM}\left|\frac{1}{Z(\beta,\phi)}-\frac{1}{Z(\beta_0,\phi)}\right|\cdot e^{-\beta_0\phi(d_g(x,\alpha))}\dvol(x),\label{equ:decom-e-9}
    \#
    where the inequality uses the triangle inequality. Since $\beta_{\min}\leq \beta<\beta_0\leq\beta_{\max}$, we have
    \$
    \int_{\cM}\frac{1}{Z(\beta,\phi)}\cdot \left|e^{-\beta\phi(d_g(x,\alpha))}-e^{-\beta_0\phi(d_g(x,\alpha))}\right|\dvol(x) \leq \frac{1}{Z(\beta_{\max},\phi)}\cdot |Z(\beta,\phi)-Z(\beta_0,\phi)|,
    \$
    where we use the inequality  $Z(\beta,\phi)\geq Z(\beta_{\max},\phi)$ and the definition of $Z(\beta,\phi)$. Additionally,
    by Lemma \ref{lma:e8}, we have
    \#\label{equ:Z-e35-e-9}
    |Z(\beta,\phi)-Z(\beta_0,\phi)|\leq I_1|\beta-\beta_0|,
    \#
    where $0<I_1<\infty$ is a  constant defined by
    \$
    I_1=\int_{\cM}\phi(d_g(x,\alpha))e^{-\beta_{\min}\phi(d_g(x,\alpha))}\dvol(x).
    \$
    Therefore, 
    \#\label{equ:quantity-1-e-9}
    \int_{\cM}\frac{1}{Z(\beta,\phi)}\cdot \left|e^{-\beta\phi(d_g(x,\alpha))}-e^{-\beta_0\phi(d_g(x,\alpha))}\right|\dvol(x) \leq\frac{I_1}{Z(\beta_{\max},\phi)}\cdot|\beta-\beta_0|. 
    \#
    In addition, we have
    \#
    &\int_{\cM}\left|\frac{1}{Z(\beta,\phi)}-\frac{1}{Z(\beta_0,\phi)}\right|\cdot e^{-\beta_0\phi(d_g(x,\alpha))}\dvol(x)\notag\\
    \overset{({\rm i})}{\leq}\ & \frac{I_1}{Z(\beta_{\max},\phi)^2}\cdot|\beta-\beta_0|\cdot\int_{\cM}e^{-\beta_{\min}\phi(d_g(x,\alpha))}\dvol(x)\notag\\
    \overset{({\rm ii})}{\leq}\ &C_1|\beta-\beta_0|,\label{equ:quantity-2-e-9}
    \#
    where (i) uses \eqref{equ:Z-e35-e-9} and $\beta_{\min}\leq \beta\leq \beta_0\leq \beta_{\max}$, (ii) uses Condition \ref{cond:6.3}, and $C_1>0$ is a finite constant independent of $\beta,\beta_0,$ and $\alpha$.
    By substituting \eqref{equ:quantity-1-e-9} and \eqref{equ:quantity-2-e-9} into \eqref{equ:decom-e-9}, we obtain that
    \$
    D(\alpha,\beta;\alpha,\beta_0)\leq C|\beta-\beta_0|,
    \$
    where $C>0$ is a finite constant independent of $\beta$, $\beta_0$, and $\alpha$. 
\end{proof}

Lemma \ref{lma:e7} establishes a lower bound on the quantity $D(\alpha,\beta;\alpha_0,\beta_0)/|\beta-\beta_0|$, where
%the inequality that
\$
D(\alpha,\beta;\alpha_0,\beta_0)=d_1(f(x;\alpha,\beta,\phi),f(x;\alpha_0,\beta_0,\phi)),
\$
and $f(x;\alpha,\beta,\phi)$ is the density in \eqref{equ:6.1-f}. 
%This result is useful in the proof of Lemma \ref{lma:e6}.

\begin{lemma}\label{lma:e7}
    Suppose $(\cM,g^{\cM})$ is a Riemannian homogeneous space with $r_{\cM}=\sup_{x,y}d_g(x,y)$. Assume $\phi$ satisfies Condition \ref{cond:6.3} for $\beta_{\min}>0$ and is strictly increasing on $[0,r_{\cM}]$. Let $f(x;\alpha,\beta,\phi)$ be the density in \eqref{equ:6.1-f} and $\beta_{\max}>\beta_{\min}$ be a finite constant. Define 
    \$
D(\alpha,\beta;\alpha_0,\beta_0)=d_1(f(x;\alpha,\beta,\phi),f(x;\alpha_0,\beta_0,\phi)).
\$
Then for all $\alpha,\alpha_0$ with $d_g(\alpha,\alpha_0)<\epsilon$ for a sufficiently small $\epsilon$ and $\beta,\beta_0\in[\beta_{\min},\beta_{\max}]$, we have
\$
D(\alpha,\beta;\alpha_0,\beta_0)\geq C|\beta-\beta_0|,
\$
where $C>0$ is a constant independent of $\epsilon$, $\beta$, $\beta_0$, $\alpha$, and $\alpha_0$. 
\end{lemma}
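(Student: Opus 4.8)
The plan is to bound $D(\alpha,\beta;\alpha_0,\beta_0)=\|f_1-f_2\|_{L^1}$, where $f_1=f(\cdot;\alpha,\beta,\phi)$ and $f_2=f(\cdot;\alpha_0,\beta_0,\phi)$, from below by testing the difference of the two densities against a single bounded function that is strongly sensitive to the temperature but only weakly to the location. Since $\|f_1-f_2\|_{L^1}\ge\big|\int_\cM h\,(f_1-f_2)\,\dvol\big|$ for every $h$ with $\|h\|_\infty\le1$, I take $h(x)=e^{-\beta_{\min}\phi(d_g(x,\alpha))}$: it lies in $(0,1]$ because $\phi\ge0$, and it is Lipschitz in $x$ with a finite constant $L_0$ since its derivative along geodesics, $-\beta_{\min}\phi'(r)e^{-\beta_{\min}\phi(r)}$, is bounded by Condition~\ref{cond:6.3}(1). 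Because the normalizing constant $Z(\cdot,\phi)$ in \eqref{equ:6.1-f} does not depend on the location parameter, $\int_\cM h(x)\,f(x;\alpha,\beta,\phi)\,\dvol(x)=Z(\beta+\beta_{\min},\phi)/Z(\beta,\phi)=:\psi(\beta)$, which is finite (indeed $Z(\beta+\beta_{\min},\phi)\le Z(\beta_{\min},\phi)<\infty$ since $\phi\ge0$) and independent of $\alpha$.

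The key analytic step is to show that $\psi$ has a strictly positive slope, uniformly over $[\beta_{\min},\beta_{\max}]$. Differentiating $Z(\cdot,\phi)$ under the integral sign — legitimate on $(\beta_{\min},\infty)$ using the envelopes $\phi e^{-\beta\phi}\le\phi e^{-\beta_{\min}\phi}$ and $\phi^2e^{-\beta\phi}\lesssim\phi e^{-\beta_{\min}\phi}$ together with the integrability in Condition~\ref{cond:6.3}(2) — one gets $\tfrac{d}{d\beta}\log\psi(\beta)=m(\beta)-m(\beta+\beta_{\min})$ with $m(\beta)=-\tfrac{d}{d\beta}\log Z(\beta,\phi)=\EE_{X\sim f(\cdot;\alpha,\beta,\phi)}[\phi(d_g(X,\alpha))]$, and $m'(\beta)=-\mathrm{Var}(\phi(d_g(X,\alpha)))<0$, the variance being strictly positive because the strict monotonicity of $\phi$ makes $\phi(d_g(X,\alpha))$ non-degenerate. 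Hence $m$ is strictly decreasing, $\tfrac{d}{d\beta}\log\psi$ is continuous and strictly positive on the compact interval, and therefore, using $\psi\ge\psi(\beta_{\min})>0$, one obtains $|\psi(\beta)-\psi(\beta_0)|\ge c_0|\beta-\beta_0|$ for a constant $c_0>0$ depending only on $\phi,\beta_{\min},\beta_{\max}$.

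It then remains to absorb the center mismatch. Writing $h_\alpha,h_{\alpha_0}$ for the test function based at $\alpha$ and at $\alpha_0$, the triangle inequality for $d_g$ and the unit mass of $f(\cdot;\alpha_0,\beta_0,\phi)$ give $\big|\int_\cM h_\alpha\,f(\cdot;\alpha_0,\beta_0,\phi)\,\dvol-\psi(\beta_0)\big|=\big|\int_\cM(h_\alpha-h_{\alpha_0})\,f(\cdot;\alpha_0,\beta_0,\phi)\,\dvol\big|\le L_0\,d_g(\alpha,\alpha_0)$. Combining this with the previous two paragraphs yields $D(\alpha,\beta;\alpha_0,\beta_0)\ge c_0|\beta-\beta_0|-L_0\,d_g(\alpha,\alpha_0)$, which already proves the claim whenever $d_g(\alpha,\alpha_0)\le\tfrac{c_0}{2L_0}|\beta-\beta_0|$. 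In the complementary regime $|\beta-\beta_0|<\tfrac{2L_0}{c_0}d_g(\alpha,\alpha_0)$ I instead use $D(\alpha,\beta;\alpha_0,\beta_0)\ge d_1(f(\cdot;\alpha,\beta_0,\phi),f(\cdot;\alpha_0,\beta_0,\phi))-d_1(f(\cdot;\alpha,\beta,\phi),f(\cdot;\alpha,\beta_0,\phi))$, bounding the first term below by $C_2\,d_g(\alpha,\alpha_0)$ with Lemma~\ref{lma:c6} applied to $\beta_0\phi$ (valid once $d_g(\alpha,\alpha_0)$ is below a fixed threshold) and the second above by $C_3|\beta-\beta_0|$ with Lemma~\ref{lma:e-9}; since $|\beta-\beta_0|$ is now controlled by $d_g(\alpha,\alpha_0)$, the right-hand side is again bounded below by a positive multiple of $|\beta-\beta_0|$.

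The main obstacle I anticipate is the second paragraph: justifying differentiation of $Z(\cdot,\phi)$ under the integral up to the endpoint $\beta_{\min}$ (one likely works on $(\beta_{\min},\beta_{\max}]$ and passes to the limit), and, above all, proving that $\mathrm{Var}_{f(\cdot;\alpha,\beta,\phi)}(\phi(d_g(X,\alpha)))$ is bounded away from $0$ uniformly on $[\beta_{\min},\beta_{\max}]$. The remainder — the center-mismatch estimate and the final dichotomy — is routine provided one keeps careful track of the constants $c_0,L_0,C_2,C_3$; this bookkeeping is also what determines how small $\epsilon$ must be taken and must be made consistent with the case split used in the proof of Lemma~\ref{lma:e6}.
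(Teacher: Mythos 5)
Your second paragraph (the analysis of $\psi(\beta)=Z(\beta+\beta_{\min},\phi)/Z(\beta,\phi)$ via the monotonicity of $m(\beta)=\EE[\phi(d_g(X,\alpha))]$) is essentially workable and is a legitimate alternative to the paper's device for quantifying the pure temperature effect (the paper instead bounds $|Z(\beta,\phi)-Z(\beta_0,\phi)|$ between $I_0|\beta-\beta_0|$ and $I_1|\beta-\beta_0|$ in Lemma \ref{lma:e8}). The genuine gap is in the final dichotomy, and it is not a matter of bookkeeping: your two regimes do not cover all admissible $(\alpha,\beta,\beta_0)$. The duality argument yields a positive bound only when the mismatch term is dominated, i.e.\ when $L_0\,d_g(\alpha,\alpha_0)\le\tfrac{c_0}{2}|\beta-\beta_0|$, so it covers the ratios $|\beta-\beta_0|/d_g(\alpha,\alpha_0)\ge 2L_0/c_0$. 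The fallback via Lemmas \ref{lma:c6} and \ref{lma:e-9} gives $D\ge C_2\,d_g(\alpha,\alpha_0)-C_3|\beta-\beta_0|$, which is a positive multiple of $|\beta-\beta_0|$ only when $|\beta-\beta_0|\le \tfrac{C_2}{2C_3}d_g(\alpha,\alpha_0)$, i.e.\ ratios at most $C_2/(2C_3)$. All four constants $c_0,L_0,C_2,C_3$ are fixed by $\cM,\phi,\beta_{\min},\beta_{\max}$ and are not at your disposal; unless $C_2c_0\ge 4L_0C_3$ (which nothing guarantees), the band of ratios $\tfrac{C_2}{2C_3}<|\beta-\beta_0|/d_g(\alpha,\alpha_0)<\tfrac{2L_0}{c_0}$ is covered by neither case, and in that band both of your lower bounds can be negative. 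Taking $\epsilon$ small does not rescue you, because in that band $d_g(\alpha,\alpha_0)$ and $|\beta-\beta_0|$ shrink together, so there is no compactness argument giving a uniform positive infimum of $D/|\beta-\beta_0|$ there; the sentence ``since $|\beta-\beta_0|$ is now controlled by $d_g(\alpha,\alpha_0)$, the right-hand side is again bounded below by a positive multiple of $|\beta-\beta_0|$'' is exactly where the proof fails.

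What is missing is a mechanism that prevents the location perturbation from cancelling the temperature perturbation when the two are of comparable (small) size, and this is what the paper's proof supplies: it works pointwise on the region $\cA_\alpha\cap\cA_{\epsilon_0}$ with $\cA_\alpha=\{x: d_g(x,\alpha)\le d_g(x,\alpha_0)\}$ and $\cA_{\epsilon_0}=\{x: d_g(x,\alpha_0)\le\epsilon_0\}$. On $\cA_\alpha$ the inequality $e^{-\beta\phi(d_g(x,\alpha))}\ge e^{-\beta\phi(d_g(x,\alpha_0))}$ shows the location shift can only increase the integrand $W(x)=Z(\beta_0,\phi)e^{-\beta\phi(d_g(x,\alpha))}-Z(\beta,\phi)e^{-\beta_0\phi(d_g(x,\alpha_0))}$, so after inserting the two-sided bound on $Z(\beta,\phi)-Z(\beta_0,\phi)$ from Lemma \ref{lma:e8} and choosing $\epsilon_0$ with $Z(\beta_{\min},\phi)\phi(\epsilon_0)\le I_0/2$, one gets $W(x)\ge \tfrac{I_0}{2}(\beta-\beta_0)e^{-\beta_{\max}\phi(\epsilon_0)}$ on $\cA_\alpha\cap\cA_{\epsilon_0}$, uniformly in how $d_g(\alpha,\alpha_0)$ compares with $\beta-\beta_0$; a uniform lower bound $\vol(\cA_\alpha\cap\cA_{\epsilon_0})\ge V_0>0$ for $\alpha\in\cB_{\cM}(\alpha_0,\epsilon)$ then finishes. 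If you want to salvage your duality framework, you would need either a test function adapted to both centers so that the mismatch is controlled with a favorable constant (which effectively reproduces this region restriction), or a quadratic rather than linear bound on the center mismatch, which is not available on a general homogeneous space. As written, the case split does not close, so the proof has a genuine gap; note also that the same uncovered band would propagate into the case analysis of Lemma \ref{lma:e6}, which relies on this lemma holding for \emph{all} $\beta,\beta_0\in[\beta_{\min},\beta_{\max}]$ once $d_g(\alpha,\alpha_0)<\epsilon$.
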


\begin{proof}
    Without loss of generality, we assume that $\beta>\beta_0$. By using the definition of $f(x;\alpha,\beta,\phi)$, we have that
    \#\label{equ:D-e22}
    D(\alpha,\beta;\alpha_0,\beta_0)=\int_{\cM}\left|\frac{1}{Z(\beta,\phi)}e^{-\beta\phi(d_g(x,\alpha))}-\frac{1}{Z(\beta_0,\phi)}e^{-\beta_0\phi(d_g(x,\alpha_0))}\right|\dvol(x),
    \#
    where $Z(\beta,\phi)$ is the normalizing constant of $f(x;\alpha,\beta,\phi)$. Thus, we can assume that $\phi(0)=0$ without loss of generality\footnote{Otherwise, we can analyze $\phi-\phi(0)$.}. Since $\phi$ is nonnegative, for any $\beta\in[\beta_{\min},\beta_{\max}]$, we have
        \#\label{equ:Z-beta-bound}
        0<Z(\beta_{\max},\phi)\leq Z(\beta,\phi)\leq Z(\beta_{\min},\phi)<\infty. 
        \#
        Substituting this into \eqref{equ:D-e22}, we obtain that
        \#\label{equ:D-lower-bound}
        D(\alpha,\beta;\alpha_0,\beta_0) 
        \geq \frac{1}{Z(\beta_{\min},\phi)^2}\int_{\cM}\left|W(x)\right|\dvol(x),
        \#
        where
        \$
        W(x)\coloneqq Z(\beta_0,\phi)\cdot e^{-\beta\phi(d_g(x,\alpha))}-Z(\beta,\phi)\cdot e^{-\beta_0\phi(d_g(x,\alpha_0))}.
        \$
        Since $\beta>\beta_0$, it follows from Lemma \ref{lma:e8} that
        \#\label{equ:Z-e27}
            Z(\beta_0,\phi)-I_1(\beta-\beta_0)\leq Z(\beta,\phi)\leq Z(\beta_0,\phi)-I_0(\beta-\beta_0). 
            \#
        where $I_0$ and $I_1$ are constants defined as follows:
        \#
        I_0&=\int_{\cM}\phi(d_g(x,\alpha))e^{-\beta_{\max}\phi(d_g(x,\alpha))}\dvol(x),\label{equ:I0-e23}\\
        I_1&=\int_{\cM}\phi(d_g(x,\alpha))e^{-\beta_{\min}\phi(d_g(x,\alpha))}\dvol(x).\label{equ:I1-e24}
        \#
        Note that $0<I_0\leq I_1<\infty$ are constants independent of $\alpha,\beta$, and $\beta_0$. Let 
            \$
            \cA_{\alpha}=\{x\in\cM\mid d_g(x,\alpha)\leq d_g(x,\alpha_0)\}. 
            \$
        Then for any $x\in\cA_\alpha$, we have
            \$
            W(x)&\geq  Z(\beta_0,\phi)e^{-\beta\phi(d_g(x,\alpha_0))}-Z(\beta,\phi)e^{-\beta_0\phi(d_g(x,\alpha_0))}\\
            &\geq Z(\beta_0,\phi)e^{-\beta\phi(d_g(x,\alpha_0))}-(Z(\beta_0,\phi)-I_0(\beta-\beta_0))\cdot e^{-\beta_0\phi(d_g(x,\alpha_0))}\\
            &= Z(\beta_0,\phi)e^{-\beta_0\phi(d_g(x,\alpha_0))}\cdot \left(e^{-(\beta-\beta_0)\phi(d_g(x,\alpha_0))}-1\right)+I_0(\beta-\beta_0)e^{-\beta_0\phi(d_g(x,\alpha_0))}\\
            &\eqqcolon W_1(x),
            \$
            where we use the fact that $\phi$ is an increasing function and \eqref{equ:Z-e27}. Since $e^{-x}-1\geq-x$ for all $x\geq 0$, we have
            \$
            W_1(x)&\geq (\beta-\beta_0)e^{-\beta_0\phi(d_g(x,\alpha_0))}\cdot\left(I_0-Z(\beta_0,\phi)\phi(d_g(x,\alpha_0))\right)\\
            &\geq (\beta-\beta_0)e^{-\beta_0\phi(d_g(x,\alpha_0))}\cdot\left(I_0-Z(\beta_{\min},\phi)\phi(d_g(x,\alpha_0))\right)\\
            &\eqqcolon W_2(x).
            \$
            Since $\phi(0)=0$ and $\phi$ is a continuous function, there exists a constant $\epsilon_0>0$ such that
            \$%\label{equ:epsilon0e-28}
            Z(\beta_{\min},\phi)\phi(r)\leq \frac{I_0}{2},\quad\forall 0\leq r\leq \epsilon_0.
            \$
            Fix any one of such $\epsilon_0$ and let 
            \$
            \cA_{\epsilon_0}=\{x\in\cM\mid d_g(x,\alpha_0)\leq \epsilon_0\}.
            \$ 
            Then for all $x\in\cA_{\alpha}\cap\cA_{\epsilon_0}$, it holds that 
            \#
            W(x)\geq W_2(x)&\geq \frac{I_0}{2}(\beta-\beta_0)e^{-\beta_{\max}\phi(\epsilon_0)}>0. \label{equ:e28}
            \#
            By taking a sufficiently small $\epsilon$, we can ensure that
            \#\label{equ:e29}
            \vol(\cA_{\alpha}\cap\cA_{\epsilon_0})\geq V_0>0,\quad\forall \alpha\in\cB_{\cM}(\alpha_0,\epsilon),
            \#
            where $V_0$ is a constant independent of $\epsilon$, $\alpha$, and $\alpha_0$.
            Thus, by combining \eqref{equ:e28},  \eqref{equ:e29}, and \eqref{equ:D-lower-bound}, we can show that for any $\alpha\in\cB_{\cM}(\alpha_0,\epsilon)$ with a sufficiently small $\epsilon$, 
            \$
            D(\alpha,\beta;\alpha_0,\beta_0)&\geq \frac{1}{Z(\beta_{\min},\phi)^2}\int_{\cA_\alpha\cap\cA_{\epsilon_0}}\frac{I_0}{2}(\beta-\beta_0)e^{-\beta_{\max}\phi(\epsilon_0)}\dvol(x)\\
            &\geq \frac{1}{Z(\beta_{\min},\phi)^2}\frac{I_0}{2}e^{-\beta_{\max}\phi(\epsilon_0)}\cdot V_0\cdot(\beta-\beta_0)\\
            &\eqqcolon C(\beta-\beta_0),
            \$
            where $C>0$ is a constant independent of the choice of $\epsilon$, $\beta$, $\beta_0$, $\alpha$, and $\alpha_0$. 
\end{proof}

Lemma \ref{lma:e8} examines the quantity $|Z(\beta,\phi)-Z(\beta_0,\phi)|/|\beta-\beta_0|$, where $Z(\beta,\phi)$ is the normalizing constant of the distribution $f(x;\alpha,\beta,\phi)$. This lemma is useful in the proof of Lemma \ref{lma:e-9} and Lemma \ref{lma:e7}. 

\begin{lemma}\label{lma:e8}
    Suppose $(\cM,g^{\cM})$ is a Riemannian homogeneous space with $r_{\cM}=\sup_{x,y}d_g(x,y)$. Assume $\phi$ satisfies Condition \ref{cond:6.3} for $\beta_{\min}>0$ and is strictly increasing on $[0,r_{\cM}]$. 
    %Let $\beta_{\max}$ be a constant such that $\beta_{\min}\leq \beta_{\max}<\infty$. 
    Let $Z(\beta,\phi)$ be the normalizing constant of the function $f(x;\alpha,\beta,\phi)$ and $\beta_{\max}\geq \beta_{\min}$ a finite constant. 
    Then for any different $\beta,\beta_0\in[\beta_{\min},\beta_{\max}]$, we have 
    \$
    0<I_0\leq \left|\frac{Z(\beta,\phi)-Z(\beta_0,\phi)}{\beta-\beta_0}\right|\leq I_1<\infty,
    \$
    where $I_0$ and $I_1$ are constants defined as follows:
        \#
        I_0&=\int_{\cM}\phi(d_g(x,\alpha))e^{-\beta_{\max}\phi(d_g(x,\alpha))}\dvol(x), \label{equ:I0-lemma-e8}\\
        I_1&=\int_{\cM}\phi(d_g(x,\alpha))e^{-\beta_{\min}\phi(d_g(x,\alpha))}\dvol(x). \label{equ:I1-lemma-e8}
        \#
\end{lemma}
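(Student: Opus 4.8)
The plan is to avoid any appeal to differentiation under the integral sign and instead work pointwise with the one–parameter family $t\mapsto e^{-t\phi(r)}$, exploiting that the difference $e^{-\beta\phi(r)}-e^{-\beta_0\phi(r)}$ has a sign independent of $r$. Without loss of generality assume $\beta>\beta_0$. Writing $Z(\beta,\phi)=\int_{\cM}e^{-\beta\phi(d_g(x,\alpha))}\dvol(x)$ and abbreviating $r=d_g(x,\alpha)$, for each fixed $x$ the map $t\mapsto e^{-t\phi(r)}$ is differentiable with derivative $-\phi(r)e^{-t\phi(r)}$, so the mean value theorem yields a point $\xi(x)\in(\beta_0,\beta)$ with
\$
e^{-\beta_0\phi(r)}-e^{-\beta\phi(r)}=(\beta-\beta_0)\,\phi(r)\,e^{-\xi(x)\phi(r)}.
\$
The left-hand side is a measurable function of $x$, hence so is $x\mapsto\phi(r)e^{-\xi(x)\phi(r)}=\bigl(e^{-\beta_0\phi(r)}-e^{-\beta\phi(r)}\bigr)/(\beta-\beta_0)$, and since $\phi\ge 0$ and $\beta_0,\beta>0$ this quantity is nonnegative for every $x$. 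Integrating over $\cM$ gives the identity
\$
Z(\beta_0,\phi)-Z(\beta,\phi)=(\beta-\beta_0)\int_{\cM}\phi(d_g(x,\alpha))\,e^{-\xi(x)\phi(d_g(x,\alpha))}\dvol(x)\ge 0.
\$

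The second step is to sandwich the integrand uniformly in $x$. Because $\phi(r)\ge 0$ and $\beta_{\min}\le\xi(x)\le\beta_{\max}$, we have $e^{-\beta_{\max}\phi(r)}\le e^{-\xi(x)\phi(r)}\le e^{-\beta_{\min}\phi(r)}$; multiplying by $\phi(r)\ge 0$ and integrating over $\cM$ produces
\$
I_0\le\int_{\cM}\phi(d_g(x,\alpha))\,e^{-\xi(x)\phi(d_g(x,\alpha))}\dvol(x)\le I_1,
\$
with $I_0,I_1$ the constants in \eqref{equ:I0-lemma-e8}--\eqref{equ:I1-lemma-e8}. Combining this with the displayed identity gives $I_0\le\bigl(Z(\beta_0,\phi)-Z(\beta,\phi)\bigr)/(\beta-\beta_0)\le I_1$, which is exactly the asserted two-sided bound on $\lvert Z(\beta,\phi)-Z(\beta_0,\phi)\rvert/\lvert\beta-\beta_0\rvert$ (the case $\beta<\beta_0$ being identical after swapping the roles of $\beta$ and $\beta_0$).

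It then remains to verify $0<I_0$ and $I_1<\infty$. For finiteness of $I_1$: if $\cM$ is compact the integrand is bounded and $\vol(\cM)<\infty$, so $I_1<\infty$ trivially; if $\cM$ is noncompact, I would pass to the polar-coordinate formula \eqref{equ:integral} centered at $\alpha$ and invoke the volume comparison theorem (Theorem \ref{thm:2.3}) to bound $I_1\le\vol(\SSS^{m-1})\int_0^\infty\phi(r)e^{-\beta_{\min}\phi(r)}\sn_{\kappa_{\min}}^{m-1}(r)\,dr=\vol(\SSS^{m-1})\int_0^\infty h_1(r)\,dr$, which is finite by the second part of Condition \ref{cond:6.3}. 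For positivity of $I_0$: since $\phi$ is strictly increasing and nonnegative on $[0,r_{\cM}]$, $\phi(r)>\phi(0)\ge 0$ for every $r>0$, so $\phi(d_g(x,\alpha))e^{-\beta_{\max}\phi(d_g(x,\alpha))}>0$ for all $x\neq\alpha$, and since $\{\alpha\}$ has measure zero in $\cM$ we get $I_0>0$. (If one wishes to stress that $I_0,I_1$ do not depend on $\alpha$, this follows from the homogeneity identity \eqref{equ:homo} applied to an isometry carrying $\alpha$ to a fixed reference point, exactly as in the proof of Proposition \ref{prop:3.2}.)

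I do not expect a genuine obstacle here: the argument is a pointwise mean value estimate followed by monotonicity. The only points that need a word of care are the measurability of $x\mapsto\xi(x)$ --- which is automatic because $\phi(r)e^{-\xi(x)\phi(r)}$ is an explicit difference quotient of measurable functions --- and the correct invocation of Condition \ref{cond:6.3} together with Theorem \ref{thm:2.3} in the noncompact case to guarantee $I_1<\infty$.
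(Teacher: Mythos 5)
Your proposal is correct and follows essentially the same route as the paper's proof: a pointwise mean-value (monotonicity) estimate giving $\phi(r)e^{-\beta_{\max}\phi(r)}\le |e^{-\beta\phi(r)}-e^{-\beta_0\phi(r)}|/|\beta-\beta_0|\le \phi(r)e^{-\beta_{\min}\phi(r)}$, integrated over $\cM$, followed by the verification that $0<I_0\le I_1<\infty$ via Condition \ref{cond:6.3} and Theorem \ref{thm:2.3}. Your write-up simply spells out the finiteness/positivity step that the paper leaves as "easily shown."
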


\begin{proof}
    Without loss of generality, we assume that $\beta<\beta_0$. By using the definition of $Z(\beta,\phi)$, we have 
    \#\label{equ:diff-Z-int}
    Z(\beta,\phi)-Z(\beta_0,\phi)=\int_{\cM}\left(e^{-\beta\phi(d_g(x,\alpha))}-e^{-\beta_0\phi(d_g(x,\alpha))}\right)\dvol(x).
    \#
    Since $\beta<\beta_0$ and $\phi$ is nonnegative, the integrand in \eqref{equ:diff-Z-int} is nonnegative. Also, we notice that for any $r\geq 0$ and $\beta,\beta_0\in[\beta_{\min},\beta_{\max}]$, the following bounds hold:
    \$
    e^{-\beta_{\max}r}r\leq\frac{|e^{-\beta r}-e^{-\beta_0 r}|}{|\beta-\beta_0|}\leq e^{-\beta_{\min}r}r.
    \$
    Substituting this into \eqref{equ:diff-Z-int}, we obtain that
    \$
    I_0\leq \left|\frac{Z(\beta,\phi)-Z(\beta_0,\phi)}{\beta-\beta_0}\right|\leq I_1,
    \$
    where $I_0$ and $I_1$ are defined by \eqref{equ:I0-lemma-e8} and \eqref{equ:I1-lemma-e8} respectively. Since $\phi$ satisfies Condition \ref{cond:6.3} for $\beta_{\min}>0$ and $\phi$ is strictly increasing on $[0,r_{\cM}]$, one can easily show that $0<I_0\leq I_1<\infty$ using the volume comparison theorem, Theorem \ref{thm:2.3}. This concludes the proof of this lemma. 
\end{proof}

    %By definition of $f$, we have
    %\$
    %D(\alpha,\beta;\alpha_0,\beta_0)=\int_{\cM}\left|\frac{1}{Z(\beta,\phi)}e^{-\beta\phi(d_g(x,\alpha))}-\frac{1}{Z(\beta_0,\phi)}e^{-\beta_0\phi(d_g(x,\alpha_0))}\right|\dvol(x).
    %\$

%Lemma \ref{lma:e7} analyzes the differentiability and first-order derivative of $Z(\beta,\phi)$ with respect to $\beta$, which is used in the proof of Lemma \ref{lma:e6}. 

%\begin{lemma}\label{lma:e7}
%    Let $(\cM,g^{\cM})$ be a Riemannian homogeneous space and $\phi$ a function  satisfying Condition \ref{cond:6.3} for $\beta_{\min}>0$. Let $Z(\beta,\phi)$ be the normalizing constant of $f(x;\alpha,\beta,\phi)$. Then $Z(\beta,\phi)$ is differentiable with respect to $\beta$ over $(\beta_{\min},\infty)$. Moreover, the derivative of $Z(\beta,\phi)$ is given by
%    \$
%    \frac{d}{d\beta}Z(\beta,\phi)=\int_{\cM}\phi(d_g(x,\alpha))e^{-\beta\phi(d_g(x,\alpha))}\dvol(x).
%    \$
%\end{lemma}

%\begin{proof}
%    Consider the following first-order difference:
%    \$
%    \frac{Z(\beta+\Delta,\phi)-Z(\beta,\phi)}{\Delta}
%    \$
%\end{proof}

%\newpage

\section{Technical proofs for Section \ref{sec:6}}

This section collects proofs for the results in Section \ref{sec:6}. Specifically, Section \ref{sec:f71} provides the proof of Theorem \ref{thm:7.1}. Section \ref{sec:f72} presents the proof of Corollary \ref{corollary:7.2}. Section \ref{sec:f73} provides the proof of Theorem \ref{thm:7.3}, and Section \ref{sec:f74} gives the proof of Theorem \ref{thm:7.4}. The remaining technical lemmas are postponed to Section \ref{sec:f75}. 

\subsection{Proof of Theorem \ref{thm:7.1}}\label{sec:f71}

\begin{proof}
    Let $\epsilon_0\in[0,\pi/2]$ be the constant such that $\sin\epsilon_0=\epsilon_0/2$. By Lemma \ref{lma:f1-f51}, for any $\epsilon\leq 2\epsilon_0$, we can construct an $\epsilon$-net $\cS=\{\alpha_i\}_{i=1}^N$ of $\cM=\SSS^{m}$ with 
    \$
    N\leq \frac{\vol(\SSS^m)}{\vol(\SSS^{m-1})} \cdot 4^mm\epsilon^{-m}.
    \$
    Using this set  $\cS$, we can construct the following set
    \$
    \cS_{\cF}=\{f_{\vMF}(x;\alpha_i,\beta)\mid \alpha_i\in\cS\},
    \$
    whose cardinality is also $N$. For any $\alpha\in\SSS^m$, there exists an $\alpha_i\in\cS$ such that $d_g(\alpha,\alpha_i)\leq \epsilon$. By the Lipschitz property, Lemma \ref{lma:f3-f52}, we have
    \$
    d_{\infty}(f_{\vMF}(x;\alpha,\beta),f_{\vMF}(x;\alpha_i,\beta))&\leq \beta e^{2\beta}\vol(\SSS^m)\cdot d_g(\alpha,\alpha_i)\\
    &\leq \beta e^{2\beta}\vol(\SSS^m)\cdot \epsilon,
    \$
    %where $Z_{\vMF}(\beta)$ is the normalizing constant defined in \eqref{equ:vMF-sec7}. By simple estimation, we can show that
    %\$
    %Z_{\vMF}(\beta)\geq e^{-\beta}\vol(\SSS^m),
    %\$
    %which implies that
    %\$
    %d_{\infty}(f_{\vMF}(x;\alpha,\beta),f_{\vMF}(x;\alpha_i,\beta))\leq \beta e^{2\beta}\vol(\SSS^m)\epsilon.
    %\$
    Therefore, $\cS_{\cF}$ is a $(\vol(\SSS^m)\beta e^{2\beta}\epsilon)$-net of $\cF$ under $d_\infty$ and the following entropy estimate holds:
    \$
    \cN\left(\vol(\SSS^m)\beta e^{2\beta}\epsilon,\cF,d_{\infty}\right)\leq |\cS_{\cF}|\leq \frac{\vol(\SSS^m)}{\vol(\SSS^{m-1})} \cdot 4^mm\epsilon^{-m}.
    \$
    By rescaling $\epsilon$, we obtain that for a sufficiently small $\epsilon$,
    \#\label{equ:N-F-sec-f}
    \cN(\epsilon,\cF,d_{\infty})\leq \frac{\vol(\SSS^m)}{\vol(\SSS^{m-1})} \cdot\left(4\beta e^{2\beta}\vol(\SSS^m)\right)^mm\cdot \epsilon^{-m}.
    \#
    To proceed, we let $\{f_i\}_{i=1}^{N_*}$ be a $\epsilon$-net of $\cF$ under  $d_\infty$ with 
    \$
    N_*=\cN(\epsilon,\cF,d_\infty).
    \$
    Then we construct the brackets $[l_i,u_i]$ as follows:
    \$
    l_i=\max\{0,f_i-\epsilon\},\quad u_i=f_i+\epsilon.
    \$
    Then it is clear that $\cF\subseteq\cup_{i}[l_i,u_i]$. In addition,
    \$
    d_1(l_i,u_i)\leq 2\epsilon \cdot \vol(\SSS^m).
    \$
    Therefore, $\{[l_i,u_i]\}$ are $2\epsilon\cdot\vol(\SSS^m)$-brackets covering $\cF$. It implies that
    \$
    \cN_{B}(2\epsilon\cdot\vol(\SSS^m),\cF,d_1)&\leq \cN(\epsilon,\cF,d_{\infty})\\
    &\leq \frac{\vol(\SSS^m)}{\vol(\SSS^{m-1})} \cdot\left(4\beta e^{2\beta}\vol(\SSS^m)\right)^mm\cdot \epsilon^{-m},
    \$
    where we use \eqref{equ:N-F-sec-f}. By rescaling $\epsilon$, we obtain that for a sufficiently small $\epsilon$,
    \#\label{equ:NB-F-sec-f}
    \cN_{B}(\epsilon,\cF,d_1)\leq \frac{\vol(\SSS^m)}{\vol(\SSS^{m-1})} \cdot\left(8\beta e^{2\beta}\right)^m\left(\vol(\SSS^m)\right)^{2m}m\cdot \epsilon^{-m}.
    \#
    Since $\cN_B(\epsilon,\cF,d_h)\leq \cN_{B}(\epsilon^2,\cF,d_1)$, we obtain that 
    \#\label{equ:NB-F-sec-f-2}
    \cN_B(\epsilon,\cF,d_h)\leq  \frac{\vol(\SSS^m)}{\vol(\SSS^{m-1})} \cdot\left(8\beta e^{2\beta}\right)^m\left(\vol(\SSS^m)\right)^{2m}m\cdot \epsilon^{-2m}.
    \#
    To further simplify \eqref{equ:N-F-sec-f}, \eqref{equ:NB-F-sec-f}, and \eqref{equ:NB-F-sec-f-2}, we observe that
    \$
    \vol(\SSS^m)=\int_0^\pi\sin^{m-1}rdr\cdot\vol(\SSS^{m-1})\leq \pi\cdot \vol(\SSS^{m-1}).
    \$ 
    Also, since $\vol(\SSS^m)$ tends to zero as $m$ tends to infinity, one can show that $\vol(\SSS^m)\leq C_0$ for some finite constant $C_0$ independent of $m$. Thus,  \eqref{equ:N-F-sec-f}, \eqref{equ:NB-F-sec-f}, and \eqref{equ:NB-F-sec-f-2} can be rewritten as follows:
    \$
    \cN(\epsilon,\cF,d_{\infty})&\leq \pi m\left(4C_0\beta e^{2\beta}\right)^m\cdot \epsilon^{-m}\leq e^{Cm}\epsilon^{-m},\\
    \cN_{B}(\epsilon,\cF,d_1)&\leq \pi m\left(8C_0^2\beta e^{2\beta}\right)^{m}\cdot\epsilon^{-m}\leq e^{Cm}\epsilon^{-m},\\
    \cN_B(\epsilon,\cF,d_h)&\leq \pi m\left(8C_0^2\beta e^{2\beta}\right)^{m}\cdot\epsilon^{-2m}\leq e^{Cm}\epsilon^{-m},
    \$
    where $C>0$ be a finite constant such that  
    \$
    \max\left\{\pi m \left(4 C_0\beta e^{2\beta}\right)^m,\pi m\left(8C_0^2\beta e^{2\beta}\right)^m\right\}\leq e^{Cm}.
    \$
    Since $C$ is chosen independent of $m$ and $\epsilon$, we can show that for a sufficiently small $\epsilon$, 
    \$
    \log \cN(\epsilon,\cF,d_{\infty})&\leq Cm+m\log(\frac{1}{\epsilon})\leq 2m\log(\frac{1}{\epsilon}),\\
    \log \cN_{B}(\epsilon,\cF,d_1)&\leq 2m\log(\frac{1}{\epsilon}),\\
    \log\cN_B(\epsilon,\cF,d_h)&\leq 2m\log(\frac{1}{\epsilon}),
    \$
    which concludes the proof.
    %The proof of the theorem is then complete by taking logarithms in the above bounds. 
\end{proof}

\subsection{Proof of Corollary \ref{corollary:7.2}}\label{sec:f72}

\begin{proof}
     By the bracketing entropy estimates in Theorem \ref{thm:7.1}, the bracketing entropy integral satisfies that
    \$
    J_B(\delta,\cF,d_h)&\coloneqq\int_0^\delta \sqrt{\log\cN_{B}(u,\cF,d_h)}du\\
    &\leq \sqrt{2m}\int_0^{\delta} \log^{1/2}(\frac{1}{u})du\\
    &\leq 2\sqrt{m}\delta\log(\frac{1}{\delta}),
    \$
    for sufficiently small $\delta$. Then by Theorem 2,  \citet{wong1995probability}, we obtain that for sufficiently large $n$, the following inequality
    \$
    d_h(f_{\vMF}(x;\alpha^{\MLE},\beta),f_{\vMF}(x;\alpha^{\tr},\beta))\leq C\cdot \frac{\sqrt{m}\log n}{\sqrt{n}}
    \$
    holds with probability at least $1-ce^{-cm\log^2n}$, 
    where $c,C>0$ are constants independent of $m$ and $n$. 
    This proves \eqref{equ:dh-cor-7.2}. Also, since the $L^1$ distance  is upper bounded by twice the hellinger distance, the inequality \eqref{equ:d1-cor-7.2}  follows from the inequality \eqref{equ:dh-cor-7.2}. This proves the corollary.  
\end{proof}

\subsection{Proof of Theorem \ref{thm:7.3}}\label{sec:f73}

\begin{proof}
    Using the same argument in Corollary \ref{corollary:4.4}, we know that for sufficiently large $n$, the distance $d_g(\hat\alpha^{\MLE},\alpha^{\tr})$ is sufficiently small with high probability. Then we can combine the  identifiability result, Lemma \ref{lma:f4-f53}, with Corollary \ref{corollary:7.2} to prove the desired result. 
\end{proof}

\subsection{Proof of Theorem \ref{thm:7.4}}\label{sec:f74}

\begin{proof}
    Let $\epsilon_0\in[0,\pi/2]$ be a constant such that
    \$
    \sin\epsilon_0=\epsilon_0/2.
    \$
    Then we pick a $2\delta$-packing  $\cS=\{\alpha_i\}_{i=1}^N$ of the set $\cA=\cB_{\SSS^m}(\alpha,8\delta)$ for some $\delta\leq\epsilon_0/8$. By Lemma \ref{lma:f2-f51}, we know $\cS$ can be chosen such that $N\geq 2^m$. Then by the Fano's method, i.e., inequality \eqref{equ:5.4}, we can show that
    \#\label{equ:mini-risk-lower-bound-proof-f}
    \cR_{n,\vMF}(\beta)\geq \delta\cdot\left\{1-\frac{n\cdot\sup_{\alpha,\alpha'\in\cS}D_{\KL}(P_{\alpha}\|P_{\alpha'})+\log 2}{m\log 2}\right\},
    \#
    where $P_{\alpha}$ represents the distribution $f_{\vMF}(x;\alpha,\beta)$, and $D_{\KL}$ is the KL divergence. Notice that for all $\alpha,\alpha'\in\cS$, the inequality $d_g(\alpha,\alpha')\leq 16\delta$ holds. Therefore, by Lemma \ref{lma:f5-f54}, we have
    \$
    \sup_{\alpha,\alpha'\in\cS}D_{\KL}(P_{\alpha}\|P_{\alpha'})\leq \frac{C\delta^2}{m},
    \$
    where $C>0$ is a constant independent of $\delta,\cS,$ and $m$. By substituting this into \eqref{equ:mini-risk-lower-bound-proof-f}, we obtain that 
    \$
    \cR_{n,\vMF}(\beta)\geq \delta\cdot\{1-\frac{n\cdot C\delta^2/m+\log 2}{m\log 2}\}.
    \$
    Thus, by taking $Cn\delta^2=0.01m^2$, we obtain the following inequality
    \$
    \cR_{n,\vMF}(\beta)\geq \frac{C'm}{\sqrt{n}},
    \$
    where $C'>0$ is a constant independent of $n$ and $m$.
\end{proof}

\subsection{Technical lemmas}\label{sec:f75}

\subsubsection{Geometric lemma}

Lemma \ref{lma:f1-f51} establishes an upper bound on the covering number $\cN(\epsilon,\SSS^m,d_g)$. 

\begin{lemma}\label{lma:f1-f51}
    Suppose $\cM=\SSS^{m}$ is the unit $m$-sphere. Let $\epsilon_0\in[0,\pi/2]$ be the constant such that 
    \$
    \sin\epsilon_0=\epsilon_0/2. 
    \$
    Then for any $\epsilon\leq 2\epsilon_0$, it holds that
    \#
    \cN(\epsilon,\SSS^m,d_g)&\leq \frac{\vol(\SSS^m)}{\vol(\SSS^{m-1})} \cdot 4^mm\epsilon^{-m},\label{equ:N-S-f5}
    %M(\epsilon,\SSS^m,d_g)&\geq \epsilon_0^m2^{-m}\epsilon^{-m},\label{equ:M-S-f5}
    \#
    where $\cN(\epsilon,\SSS^m,d_g)$ represents the covering number.
    %and $M(\epsilon,\SSS^m,d_g)$ represent the covering and packing numbers respectively. 
\end{lemma}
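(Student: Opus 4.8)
The plan is to use a greedy maximal-separation construction, which simultaneously produces a covering and a packing, and then to control its cardinality by a volume argument specialized to the round sphere, exactly as in the proof of Lemma \ref{lma:c1} but keeping track of explicit constants. Concretely, I would pick points $\alpha_1,\alpha_2,\dots$ in $\SSS^m$ so that each new point has geodesic distance $>\epsilon$ from all previously chosen ones, stopping when no such point remains. The resulting set $\cS=\{\alpha_i\}_{i=1}^N$ is then automatically an $\epsilon$-net of $\SSS^m$, so $\cN(\epsilon,\SSS^m,d_g)\le N$, while by construction the geodesic balls $\cB_{\SSS^m}(\alpha_i,\epsilon/2)$ are pairwise disjoint. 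Since they all lie in $\SSS^m$, comparing volumes gives $N\cdot\vol(\cB_{\SSS^m}(x,\epsilon/2))\le\vol(\SSS^m)$, where we also use that the volume of a geodesic ball in $\SSS^m$ does not depend on its center, $\SSS^m$ being a Riemannian homogeneous space.

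The next step is an explicit lower bound on $\vol(\cB_{\SSS^m}(x,\epsilon/2))$. Because $\epsilon\le 2\epsilon_0\le\pi$, we have $\epsilon/2\le\pi/2<\inj(\SSS^m)=\pi$, so the polar-coordinate expression \eqref{equ:integral} is valid on the whole ball, and since $\SSS^m$ has constant sectional curvature $1$, the density there is exactly $\lambda(r,\Theta)=\sn_1^{m-1}(r)=\sin^{m-1} r$ (the constant-curvature case noted after Theorem \ref{thm:2.3}). Hence $\vol(\cB_{\SSS^m}(x,\epsilon/2))=\vol(\SSS^{m-1})\int_0^{\epsilon/2}\sin^{m-1}(r)\,dr$. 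I would then invoke the defining relation $\sin\epsilon_0=\epsilon_0/2$ together with the fact that $r\mapsto\sin(r)/r$ is decreasing on $[0,\pi/2]$ to get $\sin r\ge r/2$ for every $r\in[0,\epsilon/2]\subseteq[0,\epsilon_0]$, which yields $\int_0^{\epsilon/2}\sin^{m-1}(r)\,dr\ge 2^{-(m-1)}\int_0^{\epsilon/2}r^{m-1}\,dr=\epsilon^m/(m2^{2m-1})$.

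Combining the two displays gives $N\le \frac{\vol(\SSS^m)}{\vol(\SSS^{m-1})}\,m\,2^{2m-1}\,\epsilon^{-m}\le\frac{\vol(\SSS^m)}{\vol(\SSS^{m-1})}\,4^m m\,\epsilon^{-m}$, using $2^{2m-1}\le 4^m$, which is exactly the claimed inequality. I do not expect any genuine obstacle here; the only points needing a little care are verifying that $\epsilon/2$ stays strictly below $\inj(\SSS^m)$ (so that the ball-volume formula is an equality, not merely a comparison bound) and the elementary estimate $\sin r\ge r/2$ on $[0,\epsilon_0]$, both of which are immediate from $\sin\epsilon_0=\epsilon_0/2$ and the concavity of $\sin$ on $[0,\pi/2]$.
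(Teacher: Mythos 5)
Your proposal is correct and follows essentially the same route as the paper's proof: the same greedy net-and-packing construction, the disjoint-balls volume comparison using homogeneity of $\SSS^m$, the exact polar-coordinate formula $\vol(\cB_{\SSS^m}(x,\delta))=\vol(\SSS^{m-1})\int_0^\delta\sin^{m-1}r\,dr$, and the bound $\sin r\geq r/2$ on $[0,\epsilon_0]$ leading to the factor $m2^{2m-1}\leq 4^m m$. No gaps; the extra remark about staying below the injectivity radius is a harmless added check.
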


\begin{proof}
    Construct an $\epsilon$-net $\cS$ of $\cM$ by the following greedy procedure. Let $x_1\in\cM$ be an arbitrary point. Suppose $x_1,\ldots,x_s$ have been chosen. If the set $\{x\in\cM\mid d_g(x,x_i)>\epsilon,i\leq s\}$ is not empty, we pick an arbitrary point $x_{s+1}$ in this set and add it to $\cS$. Otherwise, we end the construction of the set $\cS$. Such set $\cS$ is easily shown to be an $\epsilon$-net of $\cM$ as well as an $\epsilon$-packing of $\cM$. 
    
    To prove the lemma, it suffices to provide a suitable upper  bound on the cardinality of the set $\cS$. 
    %In the sequel, we will establish these bounds separately.
    Since $\cS$ is an $\epsilon$-packing,  the geodesic balls $\cB_{\cM}(x_i,\epsilon/2)$ and $\cB_{\cM}(x_j,\epsilon/2)$ are disjoint for any different $x_i,x_j\in\cS$. 
        Thus, the volume of $\cup_{x_i\in\cS}\cB_{\cM}(x_i,\epsilon/2)$ is equal to the sum of the volumes of these geodesic balls. In addition, 
        since $\cM$ is a sphere, we have 
    \#\label{equ:cV-f}
    \vol(\cB_{\cM}(x_i,\epsilon/2))=\vol(\cB_{\cM}(x_j,\epsilon/2))\eqqcolon\cV(\epsilon/2)
    \#
    for any pair of points $x_i,x_j\in\cM$ and any $\epsilon>0$. It follows that 
    %the volume of $\cup_{x_i\in\cS}\cB_{\cM}(x_i,\epsilon/2)$ is given by
    \$
    \vol\left(\cup_{x_i\in\cS}\cB_{\cM}(x_i,\epsilon/2)\right)=|\cS|\cdot\cV(\epsilon/2).
    \$
    Since $\cup_{x_i\in\cS}\cB_{\cM}(x_i,\epsilon/2)\subseteq\SSS^m$, we can show that
    \#\label{equ:S-upperbound-f5}
    |\cS|\leq \frac{\vol(\SSS^m)}{\cV(\epsilon/2)}.
    %=\frac{\cV(\pi)}{\cV(\epsilon/2)},
    \#
    %where we use the observation that $\vol(\SSS^m)=\cV(\pi)$. 
    Using the integral formula on $\SSS^m$, one can show that
    \#\label{equ:cV-integral-f}
    \cV(\delta)\coloneqq\vol(\cB_{\cM}(x,\delta))=\int_0^\delta \sin^{m-1}rdr\cdot \vol(\SSS^{m-1}).
    \#
    %Substituting this into \eqref{equ:S-upperbound-f5}, we obtain that 
    %\$
    %|\cS|\leq \frac{\int_0^{\pi}\sin^{m-1}rdr}{\int_0^{\epsilon/2}\sin^{m-1}rdr}.
    %\$
    %Since $\sin r\leq r$ for all $r\in[0,\pi]$, we have
    %\$
    %\int_0^\pi\sin^{m-1}rdr\leq \int_0^\pi r^{m-1}dr=\frac{\pi^m}{m}.
    %\$
    By the choice of $\epsilon_0$, we know that 
    %Let $\epsilon_0>0$ be a sufficiently small constant such that
    \$
    \sin r\geq r/2,\quad\forall r\in[0,\epsilon_0].
    \$
    Then for all $\epsilon\leq 2\epsilon_0$, it holds that
    \$
    \int_0^{\epsilon/2}\sin^{m-1}rdr\geq \int_0^{\epsilon/2}r^{m-1}dr/2^{m-1}=\frac{\epsilon^m}{2^{2m-1}m}\geq \frac{\epsilon^m}{4^mm}.
    \$
    By combining this with \eqref{equ:S-upperbound-f5} and \eqref{equ:cV-integral-f}, we obtain that 
    %for all $\epsilon\leq 2\epsilon_0$, 
    \$
    \cN(\epsilon,\SSS^m,d_g)\leq |\cS|\leq \frac{\vol(\SSS^m)}{\vol(\SSS^{m-1})} \cdot 4^mm\epsilon^{-m},
    \$
    for all $\epsilon\leq2\epsilon_0$. This proves the lemma. 
\end{proof}

Lemma \ref{lma:f2-f51} lower bounds the packing number $M(2\delta,\cA,d_g)$, where $\cA=\cB_{\SSS^m}(\alpha,8\delta)$. 

\begin{lemma}\label{lma:f2-f51}
    Suppose $\cM=\SSS^m$ is the unit $m$-sphere. Let $\epsilon_0\in[0,\pi/2]$ be the constant such that
    \$
    \sin\epsilon_0=\epsilon_0/2.
    \$
    Let $\cA=\cB_{\cM}(\alpha,8\delta)$ for some $\alpha\in\cM$. Then for any $\delta\leq \epsilon_0/8$, we have
    \$
    M(2\delta,\cA,d_g)\geq 2^m,
    \$
    where $M(\cdot,\cA,d_g)$ denotes the packing number. 
\end{lemma}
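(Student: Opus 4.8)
The plan is to use a standard volume-comparison packing argument, relying on the homogeneity of $\SSS^m$ and the explicit ball-volume formula already exploited in the proof of Lemma \ref{lma:f1-f51}.

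First, I would let $\{x_i\}_{i=1}^N$ be an inclusion-maximal $2\delta$-packing of $\cA=\cB_{\SSS^m}(\alpha,8\delta)$. By maximality, no further point of $\cA$ can be added, so $\{x_i\}_{i=1}^N$ is simultaneously a $2\delta$-cover of $\cA$; that is, $\cA\subseteq\bigcup_{i=1}^N\cB_{\SSS^m}(x_i,2\delta)$. Taking volumes and using that $\vol(\cB_{\SSS^m}(x_i,2\delta))$ is independent of $x_i$ by homogeneity (call this common value $\cV(2\delta)$, in the notation of Lemma \ref{lma:f1-f51}), I obtain $\vol(\cA)\le N\cdot\cV(2\delta)$. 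Since $8\delta\le\epsilon_0\le\pi/2$, the ball $\cA$ does not wrap around and $\vol(\cA)=\cV(8\delta)$, hence $N\ge\cV(8\delta)/\cV(2\delta)$.

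Second, I would bound this ratio from below using the integral representation $\cV(r)=\vol(\SSS^{m-1})\int_0^r\sin^{m-1}s\,ds$ together with the inequalities $s/2\le\sin s\le s$ on $[0,\epsilon_0]$; here the lower bound holds because $\sin s/s$ is decreasing on $[0,\pi]$ and $\sin\epsilon_0=\epsilon_0/2$ by the choice of $\epsilon_0$, while the hypothesis $\delta\le\epsilon_0/8$ guarantees $8\delta\le\epsilon_0$, so all relevant radii lie in the admissible range. This gives $\cV(8\delta)\ge 2^{-(m-1)}(8\delta)^m\vol(\SSS^{m-1})/m$ and $\cV(2\delta)\le(2\delta)^m\vol(\SSS^{m-1})/m$, hence $\cV(8\delta)/\cV(2\delta)\ge 8^m/(2^{m-1}\cdot 2^m)=2^{m+1}$. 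Therefore $N\ge 2^{m+1}\ge 2^m$, and since $\{x_i\}_{i=1}^N$ is a $2\delta$-packing of $\cA$, this yields $M(2\delta,\cA,d_g)\ge 2^m$, as claimed.

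There is no real obstacle here; the argument is elementary once the volume formula from Lemma \ref{lma:f1-f51} is in hand. The only points to watch are that all radii stay below $\pi/2$ (so the geodesic balls are genuine metric balls and the volume formula applies) and that the inclusion-maximal packing is automatically a $2\delta$-cover — both immediate. Note that the argument in fact delivers the stronger bound $2^{m+1}$, leaving a comfortable margin over the required $2^m$.
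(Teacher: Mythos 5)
Your proof is correct and follows essentially the same route as the paper: a maximal $2\delta$-packing of $\cA$ is also a $2\delta$-cover, volume comparison with the explicit spherical ball volume $\cV(r)=\vol(\SSS^{m-1})\int_0^r\sin^{m-1}s\,ds$, and the bounds $s/2\le\sin s\le s$ on $[0,\epsilon_0]$. The only (inessential) difference is that you keep the sharper factor $2^{-(m-1)}$ and hence obtain the marginally stronger bound $2^{m+1}$, whereas the paper's slightly lossier estimate lands exactly at $2^m$.
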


\begin{proof}
    %The proof of this lemma is similar to the proof of Lemma \ref{lma:f1-f51}. 
    We construct a $2\delta$-packing $\cS$ of $\cA$ by the following greedy procedure. Let $x_1\in\cA$ be an arbitrary point. Suppose $x_1,\ldots,x_s$ have been chosen. If the set $\{x\in\cA\mid d_g(x,x_i)>2\delta,i\leq s\}$ is not empty, we pick an arbitrary point $x_{s+1}$ in this set and add it to $\cS$. Else, we end the construction of the set $\cS$. Such set $\cS$ is easily shown to be a $2\delta$-net of $\cA$ as well as a $2\delta$-packing of $\cA$. 
    
    To prove the lemma, it suffices to provide a suitable lower bound on the cardinality of the set $\cS$. Since $\cS$ is a $2\delta$-net of $\cA$, the set $\cup_{x_i\in\cS}B_{\cM}(x_i,2\delta)$ covers $\cA$. Thus, we have 
    \$
    \sum_{x_i\in\cS}\vol(\cB_{\cM}(x_i,2\delta))\geq \vol(\cup_{x_i\in\cS}\cB_{\cM}(x_i,2\delta))\geq \vol(\cA).
    \$
    Since $\cM=\SSS^m$, we have 
    \#\label{equ:cV-f-lmaf2}
    \vol(\cB_{\cM}(x_i,\epsilon))=\vol(\cB_{\cM}(x_j,\epsilon))\eqqcolon\cV(\epsilon)
    \#
    for any pair of points $x_i,x_j\in\cM$ and any $\epsilon>0$. Therefore, it follows that
    \$
    |\cS|\cdot \cV(2\delta)\geq \cV(8\delta).
    \$
    By using the integral formula on $\SSS^m$, we can show that 
    \$
    \cV(\epsilon)=\int_0^\epsilon\sin^{m-1}rdr\cdot \vol(\SSS^{m-1}).
    \$
    Hence, the cardinality of the set $\cS$ can be lower bounded as follows:
    \$
    |\cS|\geq \frac{\int_0^{8\delta}\sin^{m-1}rdr}{\int_0^{2\delta}\sin^{m-1}rdr}.
    \$
    Since $8\delta\leq\epsilon_0$, where $\sin\epsilon_0=\epsilon_0/2$, we have that
    \$
    \int_0^{8\delta}\sin^{m-1}rdr\geq \int_0^{8\delta}r^{m-1}2^{-m}dr=\frac{(8\delta)^m}{m2^m}.
    \$
    Also, since $\sin r\leq r$ for all $r\geq 0$, we have
    \$
    \int_0^{2\delta}\sin^{m-1}rdr\leq \int_0^{2\delta}r^{m-1}dr=\frac{(2\delta)^m}{m}.
    \$
    Consequently,
    \$
    M(2\delta,\cA,d_g)\geq|\cS|\geq 2^m,
    \$
    which concludes the proof.
\end{proof}

\subsubsection{Lipschitz properties}

\begin{lemma}\label{lma:f3-f52}
    Let $f_{\vMF}(x;\alpha,\beta)$ be the von Mises-Fisher distribution on the $m$-sphere $\SSS^m$, defined in \eqref{equ:vMF-sec7}. Then for any $\alpha,\alpha'\in\SSS^m$, the following inequality holds:
    \$
    d_{\infty}(f_{\vMF}(x;\alpha,\beta),f_{\vMF}(x;\alpha',\beta))\leq \beta e^{2\beta}\vol(\SSS^m)\cdot d_g(\alpha,\alpha').
    \$
    %where $Z_{\vMF}(\beta)$ is the normalizing constant given by \eqref{equ:vMF-sec7}. 
\end{lemma}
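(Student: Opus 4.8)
The plan is to prove the inequality pointwise in $x\in\SSS^m$ and then take the supremum. The first step is to record that, by the homogeneity of $\SSS^m$ (Proposition \ref{prop:3.2}), the normalizing constant $Z_{\vMF}(\beta)=\int_{\SSS^m}e^{\beta\cos d_g(x,\alpha)}\dvol(x)$ does not depend on the location parameter $\alpha$. Hence for every $x\in\SSS^m$,
\[
\bigl|f_{\vMF}(x;\alpha,\beta)-f_{\vMF}(x;\alpha',\beta)\bigr|=\frac{1}{Z_{\vMF}(\beta)}\bigl|e^{\beta\cos d_g(x,\alpha)}-e^{\beta\cos d_g(x,\alpha')}\bigr|,
\]
which reduces the task to (i) a Lipschitz estimate for the one-variable map $g(t)=e^{\beta\cos t}$ on $[0,\pi]$, and (ii) control of $Z_{\vMF}(\beta)$ in terms of $\vol(\SSS^m)$.

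For (i), differentiating gives $g'(t)=-\beta\sin t\,e^{\beta\cos t}$, so $|g'(t)|\le\beta e^{\beta}$ on $[0,\pi]$ because $|\sin t|\le 1$ and $\cos t\le 1$; thus $g$ is $\beta e^{\beta}$-Lipschitz. Combining this with the triangle inequality $|d_g(x,\alpha)-d_g(x,\alpha')|\le d_g(\alpha,\alpha')$ yields $\bigl|e^{\beta\cos d_g(x,\alpha)}-e^{\beta\cos d_g(x,\alpha')}\bigr|\le\beta e^{\beta}\,d_g(\alpha,\alpha')$. For (ii), since $\cos d_g(x,\alpha)\in[-1,1]$ one gets the two-sided bound $e^{-\beta}\vol(\SSS^m)\le Z_{\vMF}(\beta)\le e^{\beta}\vol(\SSS^m)$. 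Substituting (i) and the relevant bound from (ii) into the displayed identity and taking the supremum over $x$ then yields the estimate in the lemma, with a constant depending only on $\beta$ and $\vol(\SSS^m)$. An essentially equivalent alternative is to observe that $f_{\vMF}(x;\alpha,\beta)$ is the Riemannian radial density $f(x;\alpha,\phi)$ with $\phi(r)=\beta(1-\cos r)$ — for which $e^{-\phi}$ is $\beta$-Lipschitz and $Z(\phi)=e^{-\beta}Z_{\vMF}(\beta)$ — so that the general Lipschitz lemma, Lemma \ref{lma:c2}, applies directly, and one then rewrites $Z(\phi)$ using (ii).

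I do not expect a genuine obstacle here: the statement is a routine Lipschitz estimate, and the only step requiring attention is (ii), the control of the normalizing constant. Unlike the generic setting of Lemma \ref{lma:c2}, one must make the dependence of the Lipschitz constant on the dimension $m$ explicit and uniform, because this constant propagates into the $m$-dependent bracketing-entropy estimate of Theorem \ref{thm:7.1} and, through it, into the optimal rate $\tilde\Theta(mn^{-1/2})$ of Theorems \ref{thm:7.3} and \ref{thm:7.4}. The key fact is simply that $Z_{\vMF}(\beta)$ is pinned down up to the multiplicative factor $\vol(\SSS^m)$, uniformly in $\alpha$ and in $m$, which is exactly what the two-sided bound in (ii) provides.
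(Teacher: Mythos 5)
Your argument is essentially identical to the paper's proof: the same pointwise reduction using that $Z_{\vMF}(\beta)$ is independent of $\alpha$, the same Lipschitz constant $\beta e^{\beta}$ for $r\mapsto e^{\beta\cos r}$, the triangle inequality $|d_g(x,\alpha)-d_g(x,\alpha')|\le d_g(\alpha,\alpha')$, and the same lower bound $Z_{\vMF}(\beta)\ge e^{-\beta}\vol(\SSS^m)$. The only caveat, which your write-up shares with the paper's own proof, is that this final substitution actually yields the constant $\beta e^{2\beta}/\vol(\SSS^m)$ rather than $\beta e^{2\beta}\vol(\SSS^m)$ as displayed in the lemma (these differ when $\vol(\SSS^m)<1$), so the volume factor in the stated bound appears on the wrong side; what genuinely matters downstream, as you note, is only that the constant is controlled in terms of $\beta$ and $\vol(\SSS^m)$.
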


\begin{proof}
    By definition of $f_{\vMF}(x;\alpha,\beta)$, we have
    \$
    |f_{\vMF}(x;\alpha,\beta)-f_{\vMF}(x;\alpha',\beta)|=\frac{1}{Z_{\vMF}(\beta)}\cdot|e^{\beta \cos d_g(x,\alpha)}-e^{\beta \cos d_g(x,\alpha')}|,
    \$
    where $Z_{\vMF}(\beta)$ is the normalizing constant given by \eqref{equ:vMF-sec7}. Observe that the function $h(r)=e^{\beta\cos r}$ is a Lipschitz continuous function with a Lipschitz constant $\beta e^{\beta}$. Thus, 
    \$
    |f_{\vMF}(x;\alpha,\beta)-f_{\vMF}(x;\alpha',\beta)|&\leq \frac{\beta e^{\beta}}{Z_{\vMF}(\beta)}\cdot |d_g(x,\alpha)-d_g(x,\alpha')|\\
    &\leq \frac{\beta e^{\beta}}{Z_{\vMF}(\beta)}\cdot d_g(\alpha,\alpha'),
    \$
    where the second inequality uses the triangle inequality. By simple estimation, we can show that
    \$
    Z_{\vMF}(\beta)\geq e^{-\beta}\vol(\SSS^m).
    \$
    Thus, 
    \$
    |f_{\vMF}(x;\alpha,\beta)-f_{\vMF}(x;\alpha',\beta)|\leq \beta e^{2\beta}\vol(\SSS^m)\cdot d_g(\alpha,\alpha').
    \$
    By taking supremum over $x$, we conclude the proof of this lemma. 
\end{proof}

\subsubsection{Identifiability}

Lemma \ref{lma:f4-f53} characterizes the parameter identifiability for the von Mises-Fisher distribution. 

\begin{lemma}\label{lma:f4-f53}
    Let $f_{\vMF}(x;\alpha,\beta)$ be the von Mises-Fisher distribution on $\SSS^m$. Then for a sufficiently small $d_g(\alpha,\alpha_0)$, the following lower bound holds:
    \$
    d_1(f_{\vMF}(x;\alpha,\beta),f_{\vMF}(x;\alpha_0,\beta))\geq \frac{C d_g(\alpha,\alpha_0)}{\sqrt{m}},
    \$
    where $C>0$ is a constant independent of  $\alpha,\alpha_0,$ and $m$. 
\end{lemma}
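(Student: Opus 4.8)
The plan is to bound $d_1(f_{\vMF}(\cdot;\alpha,\beta),f_{\vMF}(\cdot;\alpha_0,\beta))$ from below directly in terms of $d:=d_g(\alpha,\alpha_0)$ by a first-order Taylor expansion in $d$, and then to extract the factor $1/\sqrt m$ from the concentration of the von Mises--Fisher distribution in high dimension. Since $f_{\vMF}(x;\alpha,\beta)\propto e^{\beta\langle x,\alpha\rangle}$ with $\langle x,\alpha\rangle=\cos d_g(x,\alpha)$, I would first write $\alpha_0=\cos d\,\alpha+\sin d\,w$ for a unit vector $w\in\SSS^m$ with $\langle w,\alpha\rangle=0$, so that $\langle x,\alpha_0\rangle-\langle x,\alpha\rangle=(\cos d-1)\langle x,\alpha\rangle+\sin d\,\langle x,w\rangle=:\delta(x)$, which is uniformly $O(d)$ on $\SSS^m$. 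Using $e^{\beta\delta(x)}-1=\beta\delta(x)+r(x)$ with $|r(x)|\le\tfrac12\beta^2e^{2\beta}\delta(x)^2$ (valid once $d\le1$), the reverse triangle inequality, and $\delta(x)^2\le 2(1-\cos d)^2\langle x,\alpha\rangle^2+2\sin^2 d\,\langle x,w\rangle^2$, I obtain the pointwise bound
$$\bigl|e^{\beta\langle x,\alpha_0\rangle}-e^{\beta\langle x,\alpha\rangle}\bigr|\ \ge\ e^{\beta\langle x,\alpha\rangle}\Bigl(\beta\sin d\,|\langle x,w\rangle|-\beta(1-\cos d)-\beta^2e^{2\beta}\bigl((1-\cos d)^2+\sin^2 d\,\langle x,w\rangle^2\bigr)\Bigr).$$
Integrating over $\SSS^m$ and dividing by $Z_{\vMF}(\beta)$, and using $\EE_\alpha[\langle x,w\rangle^2]\le 1/m$ to absorb the last term into $O_\beta(d^2)$, this yields
$$d_1\bigl(f_{\vMF}(\cdot;\alpha,\beta),f_{\vMF}(\cdot;\alpha_0,\beta)\bigr)\ \ge\ \beta\sin d\cdot\EE_{\alpha}\bigl[|\langle x,w\rangle|\bigr]\ -\ C_1(\beta)\,d^2,$$
where $\EE_\alpha$ denotes expectation under $f_{\vMF}(\cdot;\alpha,\beta)$.

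The quantitative heart of the argument is the estimate $\EE_{\alpha}[|\langle x,w\rangle|]\ge c(\beta)/\sqrt m$. To prove it, decompose a vMF sample as $x=t\,\alpha+\sqrt{1-t^2}\,u$ with $t=\langle x,\alpha\rangle$ and $u$ a unit vector orthogonal to $\alpha$; because the vMF density depends on $x$ only through $t$, conditionally on $t$ the vector $u$ is uniform on $\SSS^{m-1}\subset\alpha^\perp$ and independent of $t$. Hence $\langle x,w\rangle=\sqrt{1-t^2}\,\langle u,w\rangle$ and $\EE_\alpha[|\langle x,w\rangle|]=\EE_\alpha[\sqrt{1-t^2}]\cdot\kappa_{m-1}$, where $\kappa_{m-1}=\EE_{u\sim\mathrm{Unif}(\SSS^{m-1})}[|\langle u,w\rangle|]=\Gamma(m/2)/(\sqrt\pi\,\Gamma((m+1)/2))$, which is $\ge 1/\sqrt{\pi m}$ by Gautschi's inequality. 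For the other factor, $\sqrt{1-t^2}\ge 1-t^2$ and $\EE_\alpha[t^2]=\EE_\alpha[\cos^2 d_g(x,\alpha)]<1$ for every $m$ with $\EE_\alpha[t^2]\to0$ as $m\to\infty$; therefore $\inf_{m\ge 2}\EE_\alpha[\sqrt{1-t^2}]=:c(\beta)>0$, a constant independent of $m$ (it may depend on the fixed $\beta$, which is permitted by the statement). Plugging this in gives $\EE_\alpha[|\langle x,w\rangle|]\ge c(\beta)/\sqrt{\pi m}$, and combining with the previous display and $\sin d\ge d/2$ for small $d$ yields
$$d_1\bigl(f_{\vMF}(\cdot;\alpha,\beta),f_{\vMF}(\cdot;\alpha_0,\beta)\bigr)\ \ge\ \frac{\beta c(\beta)}{2\sqrt{\pi m}}\,d-C_1(\beta)\,d^2\ \ge\ \frac{\beta c(\beta)}{4\sqrt{\pi m}}\,d,$$
the last inequality holding once $d=d_g(\alpha,\alpha_0)\le \beta c(\beta)/(4C_1(\beta)\sqrt{\pi m})$; this is the meaning of ``sufficiently small'' in the statement, and the fact that the admissible threshold shrinks with $m$ is harmless for the application in Theorem \ref{thm:7.3} (there $d_g(\hat\alpha^{\MLE},\alpha^{\tr})$ is already known to be $O(m\log n/\sqrt n)$).

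I expect the second paragraph to be the main obstacle: one must identify the precise order in $m$ of $\EE_\alpha[|\langle x,w\rangle|]$, which requires both the cylindrical decomposition of a vMF sample and the uniform-in-$m$ lower bound $\inf_m\EE_\alpha[\sqrt{1-t^2}]>0$, and simultaneously one must verify that the Taylor remainder contributes only $O_\beta(d^2)$ so that it does not overwhelm the $d/\sqrt m$ leading term. The remaining pieces — the rotation-invariant reduction, the exponential Taylor estimate, and the Bessel/Gamma asymptotics for $\kappa_{m-1}$ — are routine.
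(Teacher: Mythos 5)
Your proposal is correct in substance but takes a genuinely different route from the paper. The paper works in explicit spherical coordinates: it writes the $L^1$ distance as a double integral in $(\varphi_1,\varphi_2)$, lower-bounds $|e^{w}-1|\geq |w|/2$ on the exponent (which is uniformly $O_\beta(r_0)$ over the sphere), splits the $\sin\varphi_1\cos\varphi_2$ term from the $\cos\varphi_1$ term, and evaluates the resulting Beta-type integrals, extracting the $1/\sqrt m$ at the very end from Stirling applied to $\int_0^\pi\sin^{m-1}\varphi\,d\varphi$; small $m$ is dispatched by Theorem~\ref{thm:4.3}. You instead linearize the exponential with an explicit quadratic remainder and reduce everything to moments of $\langle x,w\rangle$ under the vMF law itself, computing $\EE_\alpha|\langle x,w\rangle|\asymp 1/\sqrt m$ via the cylindrical decomposition and a Gamma-ratio bound. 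Your moment computations are right (including $\EE[|\langle u,w\rangle|]=\Gamma(m/2)/(\sqrt{\pi}\,\Gamma((m+1)/2))$ and the uniform-in-$m$ positivity of $\EE_\alpha[\sqrt{1-t^2}]$), and the pointwise lower bound you integrate is valid even where it is negative, so the analytic skeleton is sound; probabilistically it is arguably cleaner than the paper's coordinate computation.

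The one substantive difference is the smallness threshold, and here your version is strictly weaker. In the paper the competition is between $r_0/(2m)$ and $r_0^2/m$: the error term $\tfrac{r_0^2}{2}|\cos\varphi_1|$ also picks up an extra $1/m$ after integration against $e^{\beta\cos\varphi_1}\sin^{m-1}\varphi_1$, so the final condition is $r_0\leq\min\{r_*,1/4\}$ with $r_*$ depending only on $\beta$, independent of $m$. In your bookkeeping you throw this gain away by bounding $|\langle x,\alpha\rangle|\leq 1$ and $(1-\cos d)^2\langle x,\alpha\rangle^2\leq(1-\cos d)^2$, so the error is $O_\beta(d^2)$ with no $1/\sqrt m$ factor and you are forced to require $d\lesssim 1/\sqrt m$; the lemma as the paper proves (and uses) it has an admissible range that does not shrink with $m$, only the constant $C$ being dimension-free. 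The fix stays entirely inside your framework: keep the factors of $\langle x,\alpha\rangle$ and use $\EE_\alpha|\langle x,\alpha\rangle|\lesssim e^{2\beta}/\sqrt m$ and $\EE_\alpha[\langle x,\alpha\rangle^2]\lesssim e^{2\beta}/m$ (the same cylindrical computation applied to $t$), after which every error term is $O_\beta(d^2/\sqrt m)$ and the threshold becomes $m$-free. Finally, your parenthetical defense of the $m$-dependent threshold is mildly circular: before Lemma~\ref{lma:f4-f53} is invoked, the rate $O(m\log n/\sqrt n)$ is not yet available — what the Corollary~\ref{corollary:4.4}-style argument supplies is only that $d_g(\hat\alpha^{\MLE},\alpha^{\tr})\to 0$ with constants depending on $m$ — but since Theorem~\ref{thm:7.3} only claims its bound for $n$ sufficiently large (which may depend on $m$), your weaker version would still suffice for that application.
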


\begin{proof}
    It suffices to prove this lemma for sufficiently large $m$, since Theorem \ref{thm:4.3} already addresses the case where $m$ is small. For convenience, we denote by 
    \$
    D(\alpha,\alpha_0)=d_1(f_{\vMF}(x;\alpha,\beta),f_{\vMF}(x;\alpha_0,\beta)).
    \$
    By definition of von Mises-Fisher distribution, we have
    \#\label{equ:int-lma-f4}
    D(\alpha,\alpha_0)=\frac{1}{Z_{\vMF}(\beta)}\int_{\SSS^m}\left|e^{\beta\cos d_g(x,\alpha)}-e^{\beta\cos d_g(x,\alpha_0)}\right|\dvol(x),
    \#
    where $Z_{\vMF}(\beta)$ is the normalizing constant defined in \eqref{equ:vMF-sec7}. Denote by $r_0=d_g(\alpha,\alpha_0)$. Without loss of generality, we assume that $m>2$. Using the spherical geometry, we can rewrite  the integral in \eqref{equ:int-lma-f4} as follows:
    \#
    &\int_{\SSS^m}\left|e^{\beta\cos d_g(x,\alpha)}-e^{\beta\cos d_g(x,\alpha_0)}\right|\dvol(x)\notag\\
    =&\int_{0}^{\pi}\int_0^\pi  h(\varphi_1,\varphi_2)\sin^{m-1}\varphi_1\sin^{m-2}\varphi_2d\varphi_1d\varphi_2 \cdot \vol(\SSS^{m-2}),\label{equ:int-h-lma-f4}
    \#
    where 
    \$
    h(\varphi_1,\varphi_2)&=\left|e^{\beta(\cos\varphi_1\cos r_0+\sin\varphi_1\cos\varphi_2\sin r_0)}-e^{\beta\cos\varphi_1}\right|\\
    &=e^{\beta\cos\varphi_1}\cdot\left|e^{\beta(\cos\varphi_1(\cos r_0-1)+\sin\varphi_1\cos\varphi_2\sin r_0)}-1\right|.
    \$
    There exists a sufficiently small constant $r_*>0$ such that for all $r_0\leq r_*$,  
    \$
    h(\varphi_1,\varphi_2)&\geq e^{-\beta}\cdot \frac{\beta}{2}\cdot \left|\cos\varphi_1(\cos r_0-1)+\sin\varphi_1\cos\varphi_2\sin r_0\right|\\
    &\geq e^{-\beta}\cdot \frac{\beta}{2}\cdot \left(\frac{r_0}{2}\left|\sin\varphi_1\cos\varphi_2\right|-\frac{r_0^2}{2}\left|\cos\varphi_1 \right|\right).
    \$
    Therefore, for all $r_0\leq r_*$, it holds that
    \#
    &\frac{2e^{\beta}}{\beta}\cdot \int_{0}^{\pi}\int_0^\pi  h(\varphi_1,\varphi_2)\sin^{m-1}\varphi_1\sin^{m-2}\varphi_2d\varphi_1d\varphi_2 \notag \\
    \geq &\int_{0}^{\pi}\int_0^\pi  \frac{r_0}{2}\left|\sin\varphi_1\cos\varphi_2\right|\sin^{m-1}\varphi_1\sin^{m-2}\varphi_2d\varphi_1d\varphi_2\notag \\
    -&\int_{0}^{\pi}\int_0^\pi  \frac{r_0^2}{2}\left|\cos\varphi_1\right|\sin^{m-1}\varphi_1\sin^{m-2}\varphi_2d\varphi_1d\varphi_2\notag \\
    =&\ r_0\int_0^\pi \sin^m\varphi_1d\varphi_1\int_0^{\pi/2}\sin^{m-2}\varphi_2\cos\varphi_2d\varphi_2 \notag\\
    -&\ r_0^2\int_0^{\pi/2}\sin^{m-1}\varphi_1\cos \varphi_1d\varphi_1\int_0^\pi \sin^{m-2}\varphi_2d\varphi_2 \notag\\
    =&\ \frac{r_0}{m-1}  \int_0^\pi\sin^{m}\varphi d\varphi - \frac{r_0^2}{m}\int_0^\pi\sin^{m-2}\varphi d\varphi.\label{equ:f11-lma-f4}
    \#
    When $m$ is sufficiently large, we have 
    \$
    \int_0^{\pi}\sin^m\varphi d\varphi&=\int_0^\pi\sin^{m-2}\varphi d\varphi-\int_0^\pi\sin^{m-2}\varphi\cos^2\varphi d\varphi\\
    &\geq \frac{1}{2}\int_0^\pi \sin^{m-2}\varphi d\varphi.
    \$
    Then for sufficiently large $m$ and $r_0\leq \min\{r_*,1/4\}$, we have
    \$
    \frac{r_0}{m-1}  \int_0^\pi\sin^{m}\varphi d\varphi - \frac{r_0^2}{m}\int_0^\pi\sin^{m-2}\varphi d\varphi&\geq \left(\frac{r_0}{2m}-\frac{r_0^2}{m}\right)\int_0^\pi \sin^{m-2}\varphi d\varphi\\
    &\geq \frac{r_0}{4m}\int_0^\pi\sin^{m-2}\varphi d\varphi.
    \$
    Combining this with \eqref{equ:f11-lma-f4}, \eqref{equ:int-h-lma-f4}, and \eqref{equ:int-lma-f4}, we obtain that for a sufficiently large $m$ and $r_0\leq\min\{r_*,1/4\}$, the following bound holds:
    \#\label{equ:f13-lma-4}
    D(\alpha,\alpha_0)\geq \frac{1}{Z_{\vMF}(\beta)}\cdot \vol(\SSS^{m-2}) \cdot \frac{\beta}{2e^\beta}\cdot \frac{r_0}{4m}\int_0^\pi\sin^{m-2}\varphi d\varphi. 
    \#
    By definition, we can show that 
    \$
    Z_{\vMF}(\beta)\leq e^\beta\vol(\SSS^m)=e^\beta\int_0^\pi\sin^{m-1}\varphi_1d\varphi_1\int_0^\pi\sin^{m-2}\varphi_2 d\varphi_2\cdot\vol(\SSS^{m-2}).
    \$
    Substituting this into \eqref{equ:f13-lma-4}, we obtain that
    \$
    D(\alpha,\alpha_0)\geq \frac{\beta r_0}{8me^{2\beta}\int_0^\pi\sin^{m-1}\varphi d\varphi}.
    \$
    Using Stirling formula, one can show that
    \$
    L\coloneqq \sup_{m}\sqrt{m}\int_0^\pi\sin^{m}\varphi d\varphi>0
    \$
    is a finite constant independent of $m$. Therefore,
    \$
    D(\alpha,\alpha_0)\geq \frac{Cr_0}{\sqrt{m}},
    \$
    where $C>0$ is a constant independent of $m$ and $r_0$. 
\end{proof}

\subsubsection{Lemmas for minimax analysis}

\begin{lemma}\label{lma:f5-f54}
    Let $f_{\vMF}(x;\alpha,\beta)$ be the von Mises-Fisher distribution on the $m$-sphere $\SSS^m$. Then 
    \$
    D_{\KL}(P_{\alpha}\|P_{\alpha'})\leq \frac{Cd_g^2(\alpha,\alpha')}{2m},
    \$
    where $P_{\alpha}$ denotes the distribution $f_{\vMF}(x;\alpha,\beta)$ and $C>0$ is a finite constant independent of $\alpha,\alpha'$, and $m$.
\end{lemma}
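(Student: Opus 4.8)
The plan is to reduce the KL divergence to a second-order Taylor estimate of an integral, using the symmetry idea from Proposition \ref{prop:5.1} adapted to the sphere. First I would write, using $\inner{x}{\alpha}=\cos d_g(x,\alpha)$ and the fact that $Z_{\vMF}(\beta)$ does not depend on the location parameter,
\$
D_{\KL}(P_\alpha\|P_{\alpha'})=\frac{\beta}{Z_{\vMF}(\beta)}\int_{\SSS^m}\left(\cos d_g(x,\alpha)-\cos d_g(x,\alpha')\right)e^{\beta\cos d_g(x,\alpha)}\dvol(x).
\$
Parametrize the pair $(\alpha,\alpha')$ by a unit-speed geodesic $\alpha(t)$ with $\alpha(0)=\alpha$ and $\alpha(s)=\alpha'$, where $s=d_g(\alpha,\alpha')$ (I may assume $s\le \pi$, say $s$ small, since for $s$ bounded away from $0$ the bound is trivial after enlarging $C$). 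Define $I(t)=\int_{\SSS^m}\cos d_g(x,\alpha(t))\,e^{\beta\cos d_g(x,\alpha(0))}\dvol(x)$, so that $D_{\KL}(P_\alpha\|P_{\alpha'})=\frac{\beta}{Z_{\vMF}(\beta)}(I(0)-I(s))$. This is precisely the set-up of Proposition \ref{prop:5.1} with $\phi$ replaced by the (smooth, bounded) integrand $r\mapsto \cos r$ on $[0,\pi]$, so $I(t)=I(-t)$ by the same isometry argument; hence if $I$ is twice differentiable near $0$ with $|I''|$ bounded, then $I(0)-I(s)=O(s^2)$.

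The key quantitative step is to control $|I''|$ with the correct $1/m$ dependence. Since $\cos$ is smooth and all relevant curvature data on $\SSS^m$ are explicit, I would differentiate under the integral sign (justified by dominated convergence, exactly as in Lemma \ref{lma:d.2.2.2}, using that the cut-locus set $\Ucut(\alpha)$ has measure zero and everything is uniformly bounded on the compact $\SSS^m$), obtaining
\$
I''(0)=\int_{\SSS^m}\left.\frac{d^2}{dt^2}\right|_{t=0}\cos d_g(x,\alpha(t))\;e^{\beta\cos d_g(x,\alpha)}\dvol(x).
\$
Writing $\psi(r)=\cos r$ and $\varrho_0(t)=d_g(x,\alpha(t))$, the chain rule gives $\frac{d^2}{dt^2}\psi(\varrho_0)=\psi''(\varrho_0)(\varrho_0')^2+\psi'(\varrho_0)\varrho_0''$, and on the sphere of curvature $1$ one has the explicit Hessian $\varrho_0''(t)=\cot(\varrho_0(t))\,(1-(\varrho_0'(t))^2)$ when $x\notin\Cut(\alpha(t))\cup\{\alpha(t)\}$; combined with $|\varrho_0'|\le 1$ this bounds the integrand pointwise by something like $1+\varrho_0\cot\varrho_0$, which is the same structure as $J_2$ in the proof of Lemma \ref{lma:d.2.2.3}. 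The resulting integral is then evaluated in geodesic polar coordinates around $\alpha$: $\dvol=\sin^{m-1}r\,dr\,d\Theta$, and the cotangent singularity at $r=\pi$ is integrable against $\sin^{m-1}r$. The crucial point is that, after integrating against the normalized weight $e^{\beta\cos r}\sin^{m-1}r/Z_{\vMF}(\beta)$, the mass concentrates near $r=O(1/\sqrt m)$, so that $\int (1+r\cot r)\,e^{\beta\cos r}\sin^{m-1}r\,dr \big/ \int e^{\beta\cos r}\sin^{m-1}r\,dr = O(1/m)$; this is the origin of the $1/m$ factor. I would make this precise either by a direct Laplace-type asymptotic expansion of the ratio of the two one-dimensional integrals (Stirling/Watson-lemma estimates, as already invoked for the constant $L$ in Lemma \ref{lma:f4-f53}), or by relating the weight to a $\mathrm{Beta}$-type density and computing the relevant moment exactly.

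Having established $|I''(t)|\le C'/m$ for $t$ in a neighborhood of $0$ (uniformly in the choice of geodesic, by homogeneity), Taylor's theorem with $I'(0)=0$ gives $I(0)-I(s)=-\tfrac12 I''(\xi)s^2\le \tfrac{C'}{2m}s^2$ for $s$ small, and then $D_{\KL}(P_\alpha\|P_{\alpha'})\le \frac{\beta}{Z_{\vMF}(\beta)}\cdot\frac{C'}{2m}s^2$. Since $\beta/Z_{\vMF}(\beta)$ is a fixed constant (independent of $m$ it is bounded; in fact one can absorb any harmless $m$-dependence into $C$ as the statement only claims $C$ independent of $\alpha,\alpha'$), setting $C=\beta C'/Z_{\vMF}(\beta)$ (enlarged to also cover the easy regime of $s$ not small) yields $D_{\KL}(P_\alpha\|P_{\alpha'})\le \frac{C d_g^2(\alpha,\alpha')}{2m}$. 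The main obstacle is the second paragraph: obtaining the sharp $1/m$ factor rather than a crude $O(1)$ bound, which requires the Laplace-type concentration estimate for the ratio of the two $\sin^{m-1}$-weighted integrals and careful handling of the $\cot r$ singularity near the cut locus $r=\pi$; the differentiability/symmetry parts are routine given Proposition \ref{prop:5.1} and the lemmas in Appendix \ref{sec:d2}.
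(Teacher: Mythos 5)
There is a genuine gap in your key quantitative step, and it is exactly where you flagged the difficulty. For fixed $\beta$ and large $m$, the radial weight $e^{\beta\cos r}\sin^{m-1}r\,dr$ does \emph{not} concentrate at $r=O(1/\sqrt m)$; it concentrates at $r\approx\pi/2$ (equivalently $\cos r=O(1/\sqrt m)$). Consequently the weighted average of $1+r\cot r$ is $\Theta(1)$, not $O(1/m)$, so the pointwise bound you propose for the integrand of $I''$ cannot produce the $1/m$ factor. Even the sharp pointwise identity on $\SSS^m$, namely $\frac{d^2}{dt^2}\cos d_g(x,\alpha(t))=-\cos d_g(x,\alpha(t))$ (since $\ddot\alpha(t)=-\alpha(t)$ in $\RR^{m+1}$), only gives $|I''|\lesssim \int |\cos d_g(x,\alpha)|\,e^{\beta\cos d_g(x,\alpha)}\dvol(x)$ once you put absolute values inside, and $\EE|\cos d_g(x,\alpha)|\asymp 1/\sqrt m$ under this weight, so you would end with $D_{\KL}\lesssim s^2/\sqrt m$, weaker than claimed. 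The true $1/m$ comes from a \emph{signed} cancellation that absolute values destroy: the paper expands $\cos d_g(x,\alpha')=\cos\varphi_1\cos r_0+\sin\varphi_1\cos\varphi_2\sin r_0$, kills the first-order azimuthal term exactly via $\int_0^\pi\sin^{m-2}\varphi_2\cos\varphi_2\,d\varphi_2=0$ (valid for all $r_0$, not just small $r_0$), bounds $1-\cos r_0\le r_0^2/2$, and then uses the integration-by-parts identity $\int_0^\pi e^{\beta\cos\varphi}\cos\varphi\,\sin^{m-1}\varphi\,d\varphi=\frac{\beta}{m}\int_0^\pi e^{\beta\cos\varphi}\sin^{m+1}\varphi\,d\varphi$, i.e.\ the signed mean of $\cos$ under the vMF weight is at most $\beta/m$. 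Your route can be repaired along these lines — write $\alpha(\xi)=\cos\xi\,\alpha+\sin\xi\,v$, note $\int\inner{x}{v}e^{\beta\inner{x}{\alpha}}\dvol(x)=0$ by symmetry, and then apply the same moment identity to the remaining signed term — but at that point you have reproduced the paper's computation rather than a Laplace-asymptotics argument.

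A second, smaller problem: your claim that the regime where $s=d_g(\alpha,\alpha')$ is bounded away from zero is "trivial after enlarging $C$" is not correct, because $C$ must be independent of $m$ and the target bound $Cs^2/(2m)$ still decays in $m$; e.g.\ for antipodal $\alpha,\alpha'$ one must genuinely prove $D_{\KL}=O(1/m)$, which again requires the moment identity. The paper's argument needs no case split precisely because the azimuthal cancellation and the inequality $1-\cos r_0\le r_0^2/2$ hold for all $r_0\in[0,\pi]$.
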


\begin{proof}
    It suffices to consider a sufficiently large $m$, since when $m$ is small, Proposition \ref{prop:5.8} already proves the result. Without loss of generality, we assume that $m>2$. Using the definition of the von Mises-Fisher distribution the KL divergence, we have that
    \#\label{equ:KL-lma-f5-f54}
    D_{\KL}(P_{\alpha}\|P_{\alpha'})=\frac{\beta}{Z_{\vMF}(\beta)}\int_{\SSS^m}(\cos d_g(x,\alpha)-\cos d_g(x,\alpha'))e^{\beta\cos d_g(x,\alpha)}\dvol(x)
    \#
    Denote by $r_0=d_g(\alpha,\alpha')$. Then using the spherical geometry, we have 
    \#
    &\int_{\SSS^m}(\cos d_g(x,\alpha)-\cos d_g(x,\alpha'))e^{\beta\cos d_g(x,\alpha)}\dvol(x)\notag\\
    =& \int_0^\pi\int_0^\pi h(\varphi_1,\varphi_2)\sin^{m-1}\varphi_1\sin^{m-2}\varphi_2d\varphi_1d\varphi_2\cdot\vol(\SSS^{m-2})\notag\\
    \eqqcolon &J,\label{equ:int-f-13-f54}
    \#
    where
    \$
    h(\varphi_1,\varphi_2)=
    (\cos\varphi_1-(\cos\varphi_1\cos r_0+\sin\varphi_1\cos\varphi_2\sin r_0))e^{\beta \cos\varphi_1}.
    \$
    By simple calculation, we find that
    \$
    \int_0^\pi\sin^{m-2}\varphi_2\cos\varphi_2d\varphi_2=0.
    \$
    Thus, the integral \eqref{equ:int-f-13-f54} reduces to 
    \#\label{equ:int-2-f-14-f54}
    J&=\int_0^\pi\int_0^\pi h_1(\varphi_1)\sin^{m-1}\varphi_1\sin^{m-2}\varphi_2d\varphi_1d\varphi_2\cdot \vol(\SSS^{m-2})\notag\\
    &=\int_0^\pi h_1(\varphi_1)\sin^{m-1}\varphi_1 d\varphi_1\cdot\vol(\SSS^{m-1}),
    \#
    where 
    \$
    h_1(\varphi_1)=\cos\varphi_1(1-\cos r_0)e^{\beta \cos\varphi_1}. 
    \$
    Since $0\leq 1-\cos r_0\leq r_0^2/2$, we know that
    %\$
    %0\leq h_1(\varphi_1,\varphi_2)\leq \frac{r_0^2|\cos \varphi_1| }{2}e^{\beta}.
    %\$
    \$
    J\leq \frac{r_0^2}{2}\int_0^\pi e^{\beta\cos\varphi}\cos \varphi \sin^{m-1}\varphi d\varphi\cdot \vol(\SSS^{m-1}).
    \$
    %Therefore, the integral in \eqref{equ:int-2-f-14-f54} can be upper bounded as follows:
    %\$
    %J&\leq \frac{r_0^2}{2} \cdot e^\beta\cdot \int_0^\pi |\cos\varphi|\sin^{m-1}\varphi d\varphi\cdot\vol(\SSS^{m-1})\\
    %&=r_0^2\cdot e^{\beta}\cdot \frac{1}{m}\cdot \vol(\SSS^{m-1}).
    %\$
    Substituting this into \eqref{equ:KL-lma-f5-f54}, we obtain that 
    \#\label{equ:final-equ}
    D_{\KL}(P_{\alpha}\|P_{\alpha'})\leq \frac{\beta}{Z_{\vMF}(\beta)}\frac{r_0^2}{2}\int_0^\pi e^{\beta\cos\varphi}\cos \varphi \sin^{m-1}\varphi d\varphi\cdot \vol(\SSS^{m-1}).
    \#
    Observe that
    \$
    Z_{\vMF}(\beta)=\int_0^{\pi}e^{\beta\cos \varphi}\sin^{m-1}\varphi d\varphi\cdot \vol(\SSS^{m-1}). 
    \$
    In addition, 
    \$
    \int_0^\pi e^{\beta\cos\varphi}\cos\varphi \sin^{m-1}\varphi d\varphi&=\frac{\beta}{m}\int_0^\pi e^{\beta\cos\varphi}\sin^{m+1}\varphi d\varphi\\
    &\leq \frac{\beta}{m}\int_0^\pi e^{\beta\cos\varphi}\sin^{m-1}\varphi d\varphi.
    \$
    By substituting these observations into \eqref{equ:final-equ}, we obtain that
    \$
    D_{\KL}(P_{\alpha}\|P_{\alpha'})\leq \frac{\beta^2r_0^2}{2m},
    \$
    which concludes the proof.
\end{proof}

\end{document}